\long\def\forget#1{}
\newcounter{commentcounter}
\def\?{\ 
{\bf\color{red}???}\ 
\immediate\write16{}
\immediate\write16{Warning: There was still a question mark . . . }
\immediate\write16{}}
\theoremstyle{plain}
\newtheorem{Lemma}{Lemma}[section]
\newtheorem{Theorem}[Lemma]{Theorem}
\newtheorem{Proposition}[Lemma]{Proposition}
\newtheorem{Corollary}[Lemma]{Corollary}
\theoremstyle{definition}
\newtheorem{Definition}[Lemma]{Definition}
\newtheorem{Example}[Lemma]{Example}
\newtheorem{Remark}[Lemma]{Remark}
\newtheorem{Notation}[Lemma]{Notation}
\def\theenumi{(\alph{enumi})}
\def\p@enumii{\theenumi}
\newcommand{\DS}{\displaystyle}
\newcommand{\TS}{\textstyle}
\newcommand{\SC}{\scriptstyle}
\newcommand{\SSC}{\scriptscriptstyle}
\newcounter{zahl}
\DeclareMathOperator{\DGr}{DGr}
\DeclareMathOperator{\End}{End}
\DeclareMathOperator{\Ext}{Ext}
\DeclareMathOperator{\Frob}{Frob}
\DeclareMathOperator{\GL}{GL}
\DeclareMathOperator{\Koh}{H}
\DeclareMathOperator{\CKoh}{\check H}
\DeclareMathOperator{\Hom}{Hom}
\newcommand{\CHom}{{\cal H}om}
\DeclareMathOperator{\Inf}{Inf}
\DeclareMathOperator{\pr}{pr}
\DeclareMathOperator{\Quot}{Frac}
\DeclareMathOperator{\Spec}{Spec}
\DeclareMathOperator{\Spf}{Spf}
\DeclareMathOperator{\Sym}{Sym}
\DeclareMathOperator{\Tor}{Tor}
\DeclareMathOperator{\Var}{V}
\DeclareMathOperator{\coker}{coker}
\newcommand{\et}{{\rm\acute{e}t}}
\newcommand{\fppf}{{\it fppf\/}}
\DeclareMathOperator{\gr}{gr}
\DeclareMathOperator{\height}{height}
\DeclareMathOperator{\id}{\,id}
\DeclareMathOperator{\im}{im}
\renewcommand{\mod}{\;{\rm mod}\;}
\newcommand{\nil}{{\rm nil}}
\DeclareMathOperator{\ord}{ord}
\newcommand{\red}{{\rm red}}
\DeclareMathOperator{\rk}{rk}
\newcommand{\semi}{{\rm semi}}
\let\setminus\smallsetminus
\newcommand{\es}{\enspace}
\newcommand{\dual}{^{\SSC^\vee}}
\newcommand{\mal}{^{\SSC\times}}
\newcommand{\dbl}{{\mathchoice{\mbox{\rm [\hspace{-0.15em}[}}
                              {\mbox{\rm [\hspace{-0.15em}[}}
                              {\mbox{\scriptsize\rm [\hspace{-0.15em}[}}
                              {\mbox{\tiny\rm [\hspace{-0.15em}[}}}}
\newcommand{\dbr}{{\mathchoice{\mbox{\rm ]\hspace{-0.15em}]}}
                              {\mbox{\rm ]\hspace{-0.15em}]}}
                              {\mbox{\scriptsize\rm ]\hspace{-0.15em}]}}
                              {\mbox{\tiny\rm ]\hspace{-0.15em}]}}}}
\newcommand{\dpl}{{\mathchoice{\mbox{\rm (\hspace{-0.15em}(}}
                              {\mbox{\rm (\hspace{-0.15em}(}}
                              {\mbox{\scriptsize\rm (\hspace{-0.15em}(}}
                              {\mbox{\tiny\rm (\hspace{-0.15em}(}}}}
\newcommand{\dpr}{{\mathchoice{\mbox{\rm )\hspace{-0.15em})}}
                              {\mbox{\rm )\hspace{-0.15em})}}
                              {\mbox{\scriptsize\rm )\hspace{-0.15em})}}
                              {\mbox{\tiny\rm )\hspace{-0.15em})}}}}
\newcommand{\invlim}[1][]{\ifthenelse{\equal{#1}{}}
{\DS \lim_{\longleftarrow}}
{\DS \lim_{\underset{#1}{\longleftarrow}}}
}
\newcommand{\dirlim}[1][]{\ifthenelse{\equal{#1}{}}
{\DS \lim_{\longrightarrow}}
{\DS \lim_{\underset{#1}{\longrightarrow}}}
}
\newcommand{\ul}[1]{{\underline{#1}}}
\newcommand{\ol}[1]{{\overline{#1}}}
\newcommand{\wt}[1]{{\widetilde{#1}}}
\newcommand{\wh}[1]{{\widehat{#1}}}
\newcommand{\Balpha}{\mbox{$\hspace{0.12em}\shortmid\hspace{-0.62em}\alpha$}}
\newcommand{\Bmu}{\mbox{$\raisebox{-0.59ex}{$l$}\hspace{-0.16em}\mu\hspace{-0.91em}\raisebox{-0.95ex}{\scalebox{2}{$\color{white}.$}}\hspace{-0.59em}\raisebox{+0.78ex}{\scalebox{2}{$\color{white}.$}}\hspace{0.46em}$}{}} 
\newcommand{\BOne} {{\mathchoice{\hbox{\rm1\kern-2.7pt l\kern.9pt}}
                              {\hbox{\rm1\kern-2.7pt l\kern.9pt}}
                              {\hbox{\scriptsize\rm1\kern-2.3pt l\kern.4pt}}
                              {\hbox{\scriptsize\rm1\kern-2.4pt l\kern.5pt}}}}
\def\UOne{\underline{\BOne}}
\newcommand{\BF}{{\mathbb{F}}}
\newcommand{\BG}{{\mathbb{G}}}
\newcommand{\BN}{{\mathbb{N}}}
\newcommand{\BP}{{\mathbb{P}}}
\newcommand{\BZ}{{\mathbb{Z}}}
\newcommand{\CA}{{\cal{A}}}
\newcommand{\CalD}{{\cal{D}}}
\newcommand{\CF}{{\cal{F}}}
\newcommand{\CG}{{\cal{G}}}
\newcommand{\CH}{{\cal{H}}}
\newcommand{\CL}{{\cal{L}}}
\newcommand{\CN}{{\cal{N}}}
\newcommand{\CO}{{\cal{O}}}
\newcommand{\FG}{{\mathfrak{G}}}
\newcommand{\Fa}{{\mathfrak{a}}}
\newcommand{\Fp}{{\mathfrak{p}}}
\renewcommand{\epsilon}{\varepsilon}
\renewcommand{\phi}{\varphi}
\def\longto{\longrightarrow}
\def\into{\hookrightarrow}
\let\onto\twoheadrightarrow
\def\longonto{\mbox{$\kern2pt\longto\kern-8pt\to\kern2pt$}}
\def\isoto{\stackrel{}{\mbox{\hspace{1mm}\raisebox{+1.4mm}{$\SC\sim$}\hspace{-3.5mm}$\longrightarrow$}}}
\newbox\mybox
\def\arrover#1{\mathrel{
       \setbox\mybox=\hbox spread 1.4em{\hfil$\scriptstyle#1$\hfil}
       \vbox{\offinterlineskip\copy\mybox
             \hbox to\wd\mybox{\rightarrowfill}}}}
\newcommand{\CoCI}[1]{L^{^\bullet}_{#1}}
\newcommand{\CoCLS}[1]{L^{^\bullet}_{\rm LS}(#1)}
\newcommand{\CoCM}[1]{L^{^\bullet}_{\rm ME}(#1)}
\newcommand{\CoL}[1]{\ell^{\,^\bullet}_{#1}}
\newcommand{\CoLA}[1]{\ell^{\,^\bullet}_{#1}}
\newcommand{\CoLI}[1]{\ell^{\,^\bullet}_{#1}}
\DeclareMathOperator{\Nilp}{\CN \!{\it ilp}}
\DeclareMathOperator{\Gr}{{\rm Gr}}
\DeclareMathOperator{\Dr}{{\rm Dr}}
\DeclareMathOperator{\CDr}{\CalD{\it r}}
\DeclareMathOperator{\DSch}{DSch}
\DeclareMathOperator{\FqSht}{\mbox{$\BF_q$-Sht}}
\def\ulE{{\underline{E\!}\,}}
\def\ulM{{\underline{M\!}\,}}
\def\ulN{{\underline{N\!}\,}}
\def\ulCN{{\underline{\CN\!}\,}}
\def\ulX{{\underline{X\!}\,}}
\def\olG{{\,\overline{\!G}}}
\def\olR{{\,\overline{\!R}}}
\newcommand{\inclN}{n}
\newcommand{\charmorph}{c}
\begin{document}
\author{Urs Hartl, Rajneesh Kumar Singh}
\title{Local Shtukas and Divisible Local Anderson Modules}
\maketitle

\begin{abstract}
We develop the analog of crystalline Dieudonn\'e theory for $p$-divisible groups in the arithmetic of function fields. In our theory $p$-divisible groups are replaced by divisible local Anderson modules, and Dieudonn\'e modules are replaced by local shtukas. We show that the categories of divisible local Anderson modules and of effective local shtukas are anti-equivalent over arbitrary base schemes. We also clarify their relation with formal Lie groups and with global objects like Drinfeld modules, Anderson's abelian $t$-modules and $t$-motives, and Drinfeld shtukas. Moreover, we discuss the existence of a Verschiebung map and apply it to deformations of local shtukas and divisible local Anderson modules. As a tool we use Faltings's and Abrashkin's theory of strict modules, which we review to some extent.
 
\noindent
{\it Mathematics Subject Classification (2010)\/}: 
11G09,  
(13A35,  
14L05)  
\end{abstract}

\tableofcontents
\bigskip

This article was published as \cite{HartlSingh}. The present arXiv version of this article contains a few more details most notably Lemmas~\ref{LemmaNBLocFree}, \ref{LemmaExistsLift}, \ref{LemmaStrictOAct} and \ref{LemmaEtaleIsStrict}, Remark~\ref{RemarkFaltingsAbrashkin}, Example~\ref{ExampleNotBounded}, Corollary~\ref{CorCanonDecompZDiv}, and Appendix~\ref{AppCotCom}.

%
%

\section{Introduction}

In the arithmetic of number fields elliptic curves and abelian varieties are important objects. Their theory has been vastly developed in the last two centuries and their moduli spaces have played a major role in Faltings's proof of the Mordell conjecture \cite{Faltings,Cornell-Silverman}, the proof of Fermat's Last Theorem by Wiles and Taylor \cite{Wiles,Taylor-Wiles, FLT}, and the proof of the Langlands correspondence for $\GL_n$ over non-archimedean local fields of characteristic zero by Harris and Taylor~\cite{Harris-Taylor}. A useful tool to study abelian varieties and their moduli spaces are $p$-divisible groups. More precisely, for an elliptic curve or an abelian variety $E$ over a $\BZ_p$-algebra $R$ the \emph{$p$-divisible group} $E[p^\infty]=\dirlim E[p^n]$, also called \emph{Barsotti-Tate group}, captures the local $p$-adic information of $E$. One reason why $E[p^\infty]$ is a useful tool to study $E$ is that the complicated arithmetic data of a $p$-divisible group over a $\BZ_p$-algebra $R$ in which $p$ is nilpotent can be faithfully encoded by an object of semi-linear algebra, its Dieudonn\'e module.

Elliptic curves and abelian varieties have analogs in the arithmetic of function fields. Namely, Drinfeld~\cite{Drinfeld, Drinfeld87} invented the notions of \emph{elliptic modules} (today called \emph{Drinfeld modules}) and the dual notion of \emph{$F$-sheaves} (today called \emph{Drinfeld shtukas}). These structures are function field analogs of elliptic curves in the following sense. Their endomorphism rings are rings of integers in global function fields of positive characteristic or orders in central division algebras over the later. On the other hand, their moduli spaces are varieties over smooth curves over a finite field. Through these two aspects in which global function fields of positive characteristic come into play, Drinfeld shtukas and variants of them proved to be fruitful for establishing large parts of the Langlands program over local and global function fields of positive characteristic in works by Drinfeld~\cite{Drinfeld, Drinfeld77, Drinfeld87}, Laumon, Rapoport, and Stuhler~\cite{LRS}, L.~Lafforgue~\cite{Lafforgue02} and V.~Lafforgue~\cite{Lafforgue18}. Beyond this the analogy between Drinfeld modules and elliptic curves is abundant. 

In this spirit, Anderson~\cite{Anderson} introduced higher dimensional generalizations of Drinfeld modules, called \emph{abelian $t$-modules}. These are group schemes which carry an action of the polynomial ring $\BF_r[t]$ over a finite field $\BF_r$ with $r$ elements subject to certain conditions. Abelian $t$-modules are the function field analogs of abelian varieties; see for example \cite{BH2}. Although Anderson worked over a field, abelian $t$-modules also exist naturally over arbitrary $\BF_r[t]$-algebras $R$ as base rings; see Definition~\ref{DefAndModule}. They possess an (anti-)equivalent description by semi-linear algebra objects called \emph{$t$-motives}, which are $R[t]$-modules together with a Frobenius semi-linear endomorphism, see Definition~\ref{DefAMotive} and Theorem~\ref{ThmMotiveOfAModule}, and are a variant and generalization of Drinfeld shtukas. Through the work of Drinfeld and Anderson it was realized very early on that a Drinfeld module or abelian $t$-module over a field is completely described by its $t$-motive. The same is true over an arbitrary $\BF_r[t]$-algebra $R$, as is shown for example in \cite{HartlIsog}. So in a way the situation in function field arithmetic is much better than in the arithmetic of abelian varieties: the $t$-motive is a ``global'' Dieudonn\'e module which integrates the ``local'' Dieudonn\'e modules for every prime in a single object.

Correspondingly it is not difficult to come up with a definition of a ``Dieudonn\'e module'' at a prime $\Fp\subset\BF_r[t]$ of an abelian $t$-module: it should arise as the $\Fp$-adic completion of its $t$-motive; see Example~\ref{ExTorsionOfAModule}(b) for details. The object one ends up with is an effective local shtuka. To define these let $\Fp=(z)$ for a monic irreducible polynomial $z\in\BF_r[t]$ and let $\BF_q=\BF_r[t]/\Fp$ be the residue field. Then $\invlim\BF_r[t]/\Fp^n=\BF_q\dbl z\dbr$. Let $R$ be an $\BF_q\dbl z\dbr$-algebra in which the image $\zeta$ of $z$ is nilpotent. An \emph{effective local shtuka} over $R$ is a pair $\ulM=(M,F_M)$ consisting of a locally free $R\dbl z\dbr$-module $M$ of finite rank, and an isomorphism $F_M\colon\sigma_{\!q}^\ast M[\frac{1}{z-\zeta}] \isoto M[\frac{1}{z-\zeta}]$ with $F_M(\sigma_{\!q}^*M)\subset M$. Here $\sigma_{\!q}^*$ is the endomorphism of $R\dbl z\dbr$ which extends the $q$-Frobenius endomorphism $\sigma_{\!q}^*:=\Frob_{q,R}\colon b\mapsto b^q$ for $b\in R$ by $\sigma_{\!q}^*(z)=z$, and $\sigma_{\!q}^*M:=M\otimes_{R\dbl z\dbr,\sigma_{\!q}^*}R\dbl z\dbr$. Now the goal of crystalline Dieudonn\'e theory in the arithmetic of function fields is to describe the analogs of $p$-divisible groups which correspond to effective local shtukas. In the present article we call them $z$-divisible local Anderson modules as in the following definition, and we develop this theory under the technical assumption that $\zeta\in R$ is nilpotent. This theory was already announced in \cite{HartlAbSh,HartlDict,HartlPSp,HartlKim} and is used in \cite{HartlIsog}.

\medskip\noindent
{\bfseries Definition~\ref{DefZDivGp}.}
A \emph{$z$-divisible local Anderson module over $R$} is a sheaf of $\BF_q\dbl z\dbr$-modules $G$ on the big \fppf-site of $\Spec R$ such that
\begin{enumerate}
\item
$G$ is \emph{$z$-torsion}, that is $G = \dirlim G[z^n]$, where $G[z^n]:=\ker(z^n\colon G\to G)$,
\item
$G$ is \emph{$z$-divisible}, that is $z\colon G \to G$ is an epimorphism,
\item \label{DefInIntro_C}
For every $n$ the $\BF_q$-module $G[z^n]$ is representable by a finite locally free strict $\BF_q$-module scheme over $R$ in the sense of Faltings (Definition~\ref{DefStrictF_qMod}), and
\item
locally on $\Spec R$ there exists an integer $d \in \BZ_{\geq 0}$, such that
$(z-\zeta)^d=0$ on $\omega_ G$  where $\omega_G := \invlim\omega_{G[z^n]}$ and $\omega_{G[z^n]}:=\epsilon^*\Omega^1_{G[z^n]/\Spec R}$ for the unit section $\epsilon$ of $G[z^n]$ over $R$.
\end{enumerate}

\medskip

Such objects were studied in the special case with $d=1$ in work of Drinfeld~\cite{Drinfeld76}, Genestier~\cite{Genestier}, Laumon~\cite{Laumon}, Taguchi~\cite{Taguchi93} and Rosen~\cite{Rosen}. Generalizations for $d>1$ and their semi-linear algebra description by the analog of Dieudonn\'e theory were attempted by the first author in \cite[Definition~6.2]{HartlAbSh} and by W.~Kim \cite[Definition~7.3.1]{Kim}. But unfortunately both definitions and the statements about the analog of Dieudonn\'e theory \cite[Theorem~7.2]{HartlAbSh} and \cite[Theorem~7.3.2]{Kim} are wrong. The problem lies in the fact that the strictness assumption from \ref{DefInIntro_C} is missing. Our above definition corrects this error. It generalizes Anderson's~\cite[\S\,3.4]{Anderson93} definition of formal $t$-modules who considered the case where the $G[z^n]$ are radicial and $G$ is a formal $\BF_q\dbl z\dbr$-module in the following sense. 

\begin{Definition}\label{DefFormalModule}
In this article we define a \emph{formal $\BF_q\dbl z\dbr$-module} over an $\BF_q$-scheme $S$ to be a formal Lie group $G$ equipped with an action of $\BF_q\dbl z\dbr$. In particular, we do \emph{not} impose a condition for the $\BF_q\dbl z\dbr$-action on $\omega_G$.
\end{Definition}

The description of $z$-divisible local Anderson modules by effective local shtukas is deduced from Abrashkin's \cite{Abrashkin} anti-equivalence between finite locally free strict $\BF_q$-module schemes over $\Spec R$ and \emph{finite $\BF_q$-shtukas}. The latter are pairs $(M,F_M)$ consisting of a locally free $R$-module $M$ of finite rank and an $R$-module homomorphism $F_M\colon\sigma_{\!q}^\ast M\to M$. We define finite and local shtukas in Section~\ref{SectLocFinSht} and we recall Abrashkin's results in Section~\ref{Relshtgroupscheme}. His equivalence is given by Drinfeld's functor
\[
(M,F_M) \es\longmapsto\es \Dr_q(M,F_M)\;:=\;\Spec\;\bigl(\bigoplus_{n\ge0}\Sym^n_R M\bigr)\big/\bigl(m^{\otimes q}-F_M(\sigma_{\!q}^*m)\colon m\in M\bigr)\,,
\]
and its quasi-inverse defined on a finite locally free strict $\BF_q$-module scheme $G$ as
\[
G\es\longmapsto\es \ulM_q(G) \es :=\es \Hom_{R\text{\rm-groups},\BF_q\text{\rm-lin}}(G,\BG_{a,R}),
\]
by which we mean the $R$-module of $\BF_q$-equivariant morphisms of group schemes over $R$ on which the Frobenius $F_{M_q(G)}$ is provided by the relative $q$-Frobenius of the additive group scheme $\BG_{a,R}$ over $R$. Various properties of $\ulM$ are reflected in properties of $\Dr_q(\ulM)$; see Theorem~\ref{ThmEqAModSch} for details. The functors $\Dr_q$ and $\ulM_q$ are extended to effective local shtukas $\ulM$ and $z$-divisible local Anderson modules $G$ by
\begin{eqnarray*}
\ulM & \longmapsto & \Dr_q(\ulM)\;:=\;\dirlim[n] \Dr_q(\ulM/z^n\ulM) \qquad\text{and}\\
G & \longmapsto & \ulM_q(G)\;:=\;\invlim[n] \ulM_q\bigl(G[z^n]\bigr)\,.
\end{eqnarray*}
Generalizing \cite[\S\,3.4]{Anderson93}, who treated the case of formal $\BF_q\dbl z\dbr$-modules, we prove the following

\medskip\noindent
{\bfseries Theorem~\ref{ThmEqZDivGps}.}
{\itshape 
\begin{enumerate}
\item 
The two contravariant functors $\Dr_q$ and $\ulM_q$ are mutually quasi-inverse anti-equivalences between the category of effective local shtukas over $R$ and the category of $z$-divisible local Anderson modules over $R$.
\item 
Both functors are $\BF_q\dbl z\dbr$-linear, map short exact sequences to short exact sequences, and preserve (ind-) \'etale objects.
\item
$G$ is a formal $\BF_q\dbl z\dbr$-module if and only if $F_M$ is topologically nilpotent, that is $\im(F_M^n)\subset z M$ for an integer $n$.
\setcounter{enumi}{4}
\item 
the $R\dbl z\dbr$-modules $\omega_{\Dr_q(M,F_M)}$ and $\coker F_M$ are canonically isomorphic.
\end{enumerate}
}

In Section~\ref{SectGlobalObj} we explain the relation of $z$-divisible local Anderson modules and local shtukas to global objects like Drinfeld modules \cite{Drinfeld}, Anderson's \cite{Anderson} abelian $t$-modules and $t$-motives, and Drinfeld shtukas \cite{Drinfeld87}. In particular, if $E$ is a Drinfeld-$\BF_r[t]$-module or an abelian $t$-module over $R$, then the $z^n$-torsion points $E[z^n]$ of $E$ form a finite locally free $\BF_r[t]/(z^n)$-module scheme over $R$. By Example~\ref{ExTorsionOfAModule}(b), the limit $G:=E[z^\infty]:=\dirlim E[z^n]$ in the category of \fppf-sheaves of $\BF_q\dbl z\dbr$-modules on $\Spec R$ satisfies $G[z^n]:=\ker(z^n\colon G\to G)=E[z^n]$ and is a $z$-divisible local Anderson module over $R$. Moreover, the associated effective local shtuka $\ulM_q(G)$ from Theorem~\ref{ThmEqZDivGps} arises as the $z$-adic completion of the $t$-motive associated with $E$; see Example~\ref{ExTorsionOfAModule}(b).

In Section~\ref{DivlAnmodules} we present the above definition of $z$-divisible local Anderson modules $G$ and give equivalent definitions. We also introduce \emph{truncated $z$-divisible local Anderson modules}, like for example $G[z^n]$; see Proposition~\ref{PropTruncZDAM}. In Section~\ref{SectVersch} we investigate, for $\zeta=0$ in $R$, the existence of a \emph{$z^d$-Verschiebung} $V_{z^d,G}$ for (truncated) $z$-divisible local Anderson modules $G$, respectively for local shtukas, with $V_{z^d,G}\circ F_{q,G}=z^d\cdot\id_G$ and $F_{q,G}\circ V_{z^d,G}=z^d\cdot\id_{\sigma_{\!q}^*G}$, where $F_{q,G}$ is the relative $q$-Frobenius of $G$ over $R$. We use the $z^d$-Verschiebung in Theorem~\ref{ThmDefo} to prove that lifting a $z$-divisible local Anderson module from $R/I$ to $R$ when $I^q=(0)$ is equivalent to lifting the Hodge filtration on its de Rham cohomology. In Section~\ref{SectFormalLieGps} we use the $z^d$-Verschiebung to clarify the relation between $z$-divisible local Anderson modules $G$ and formal $\BF_q\dbl z\dbr$-modules. Following the approach of Messing~\cite{Messing} who treated the analogous situation of $p$-divisible groups and formal Lie groups, we show that a $z$-divisible local Anderson module is formally smooth (Theorem~\ref{DLAMFsmooth}), and how to associate a formal $\BF_q\dbl z\dbr$-module with it (Theorem~\ref{LoNilForLie}). We also discuss conditions under which it is an extension of an (ind-)\'etale $z$-divisible local Anderson module by a $z$-divisible formal $\BF_q\dbl z\dbr$-module (Proposition~\ref{Propextension}) and we prove the following

\medskip\noindent
{\bfseries Corollary~\ref{CorEqvcat1}.}
{\itshape There is an equivalence of categories between that of $z$-divisible local Anderson modules over $R$ with $G[z]$ radicial, and the category of $z$-divisible formal $\BF_q\dbl z\dbr$-modules $G$ with $G[z]$ representable by a finite locally free group scheme, such that locally on $\Spec R$ there is an integer $d$ with $(z-\zeta)^d = 0$ on $\omega_G$.
}

\bigskip

In Section~\ref{SectStrictF_qAct} we explain Faltings's notion of \emph{strict} $\BF_q$-module schemes and give details additional to the treatments of Faltings~\cite{Faltings02} and Abrashkin~\cite{Abrashkin}. This notion is based on certain deformations of finite locally free group schemes and the associated cotangent complex, which we review in Section~\ref{CotCom}, respectively in Appendix~\ref{AppCotCom}. There is an equivalent description of finite locally free strict $\BF_q$-module schemes by Poguntke~\cite{Poguntke17}; see Remark~\ref{RemPoguntke}.

\medskip\noindent
{\bfseries Acknowledgements.}
We are grateful to Alain Genestier and Vincent Lafforgue for many valuable discussions on the topics of the present article. We also thank the referees for allowing us to include formulations from their reports to improve our introduction. Both authors were supported by the Deutsche Forschungsgemeinschaft (DFG) in form of the research grant HA3002/2-1 and the SFB's 478 and 878.

\subsection*{Notation}

Let $\BF_q$ be a finite field with $q$ elements and characteristic $p$. For a scheme $S$ over $\Spec \BF_q$ and a positive integer $n\in\BN_{>0}$ we denote by $\sigma_{\!q^n}:=\Frob_{q^n,S}\colon S \to S$ its absolute $q^n$-Frobenius endomorphism which acts as the identity on points and as the $q^n$-power map $b\mapsto b^{q^n}$ on the structure sheaf. For an $S$-scheme $X$, respectively an $\CO_S$-module $M$ we write $\sigma_{\!q^n}^*X:=X\times_{S,\sigma_{\!q^n}}S$, respectively $\sigma_{\!q^n}^*M:=M\otimes_{\CO_S,\sigma_{\!q^n}^*}\CO_S$ for the pullback under $\sigma_{\!q^n}$. For $m\in M$ we also write $\sigma_{\!q^n}^*(m):=m\otimes1\in\sigma_{\!q^n}^*M$ and note that $\sigma_{\!q^n}^*(bm)=bm\otimes1=m\otimes b^{q^n}=b^{q^n}\cdot\sigma_{\!q}^*m$ for $b\in\CO_S$ and $m\in M$.

Let $z$ be an indeterminant over $\BF_q$. Let $\CO_S\dbl z\dbr$ be the sheaf on $S$ of formal power series in $z$. That is $\Gamma\bigl(U,\CO_S\dbl z\dbr\bigr)=\Gamma(U,\CO_S)\dbl z\dbr$ for open $U\subset S$ with the obvious restriction maps. This is indeed a sheaf being the countable direct product of $\CO_S$. Let $\zeta$ be an indeterminant over $\BF_q$ and let $\BF_q\dbl \zeta\dbr$ be the ring of formal power series in $\zeta$ over $\BF_q$. Let $\Nilp_{\BF_q\dbl \zeta\dbr}$ be the category of $\BF_q\dbl \zeta\dbr$-schemes on which $\zeta$ is locally nilpotent.
For $S\in\Nilp_{\BF_q\dbl\zeta\dbr}$ let $\CO_S\dpl z\dpr$ be the sheaf of $\CO_S$-algebras on $S$ associated with the presheaf $U\mapsto\Gamma(U,\CO_U)\dbl z\dbr[\frac{1}{z}]$. If $U$ is quasi-compact then $\CO_S\dpl z\dpr(U)=\Gamma\bigl(U,\CO_S\dbl z\dbr\bigr)[\frac{1}{z}]$. Since $\zeta$ is locally nilpotent on $S$, the sheaf $\CO_S\dpl z\dpr$ equals the sheaf associated with the presheaf $U\mapsto\Gamma\bigl(U,\CO_S\dbl z\dbr\bigr)[\frac{1}{z-\zeta}]$. We denote by $\sigma_{\!q}^\ast$ the endomorphism of $\CO_S\dbl z\dbr$ and $\CO_S\dpl z\dpr$ that acts as the identity on $z$ and as $b\mapsto b^q$ on local sections $b\in \CO_S$. For a sheaf $M$ of $\CO_S\dbl z\dbr$-modules on $S$ we let $\sigma_{\!q}^\ast M:=M\otimes_{\CO_S\dbl z\dbr,\sigma_{\!q}^\ast}\CO_S\dbl z\dbr$ and $M[\frac{1}{z-\zeta}]:=M\otimes_{\CO_S\dbl z\dbr}\CO_S\dbl z\dbr[\frac{1}{z-\zeta}]=M\otimes_{\CO_S\dbl z\dbr}\CO_S\dpl z\dpr$ be the tensor product sheaves. Also for a section $m\in M$ we write $\sigma_{\!q}^*m:=m\otimes 1\in\sigma_{\!q}^\ast M$. Note that a sheaf $M$ of $\CO_S\dbl z\dbr$-modules which fpqc-locally on $S$ is isomorphic to $\CO_S\dbl z\dbr^{\oplus r}$ is already Zariski-locally on $S$ isomorphic to $\CO_S\dbl z\dbr^{\oplus r}$ by \cite[Proposition~2.3]{HV1}. We therefore call such a sheaf simply a \emph{locally free sheaf of $\CO_S\dbl z\dbr$-modules of rank $r$}.

\section{Local and finite shtukas}\label{SectLocFinSht}

Let $S$ be a scheme in $\Nilp_{\BF_q\dbl\zeta\dbr}$.

\begin{Definition} \label{Def1.1}
A \emph{local shtuka of rank} (\emph{or height}) $r$ over $S$ is a pair $\ulM=(M,F_M)$ consisting of a locally free sheaf $M$ of $\CO_S\dbl z\dbr$-modules of rank $r$, and an isomorphism $F_M\colon\sigma_{\!q}^\ast M[\frac{1}{z-\zeta}] \isoto M[\frac{1}{z-\zeta}]$.

A \emph{morphism} of local shtukas $f\colon(M,F_M)\to(M',F_{M'})$ over $S$ is a morphism of the underlying sheaves $f\colon M\to M'$ which satisfies $F_{M'}\circ \sigma_{\!q}^\ast f = f\circ F_M$.

A \emph{quasi-isogeny} between local shtukas $f\colon(M,F_M)\to(M',F_{M'})$ over $S$ is an isomorphism of $\CO_S\dpl z\dpr$-modules $f\colon M\otimes_{\CO_S\dbl z\dbr}\CO_S\dpl z\dpr\isoto M'\otimes_{\CO_S\dbl z\dbr}\CO_S\dpl z\dpr$ with $F_{M'}\circ \sigma_{\!q}^*(f)=f\circ F_M$. A morphism which is a quasi-isogeny is called an \emph{isogeny}.
\end{Definition}

For any local shtuka $(M,F_M)$ over $S\in\Nilp_{\BF_q\dbl\zeta\dbr}$ the homomorphism $M\to M[\tfrac{1}{z-\zeta}]$ is injective by the flatness of $M$ and the following

\begin{Lemma}\label{LemOSZ}
Let $R$ be an $\BF_q\dbl\zeta\dbr$-algebra in which $\zeta$ is nilpotent. Then the sequence of $R\dbl z\dbr$-modules
\[
\xymatrix @R=0.2pc {
0 \ar[r] & R\dbl z\dbr \ar[r] & R\dbl z\dbr \ar[r] & R \ar[r] & 0 \\
& 1 \ar@{|->}[r] & z-\zeta\es,\es z \ar@{|->}[r] & \zeta
}
\]
is exact. In particular $R\dbl z\dbr\subset R\dbl z\dbr[\frac{1}{z-\zeta}]$.
\end{Lemma}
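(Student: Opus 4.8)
The statement is elementary and the plan is to verify exactness at each of the three nonzero spots of the sequence
$0 \to R\dbl z\dbr \xrightarrow{\,\cdot(z-\zeta)\,} R\dbl z\dbr \to R \to 0$,
where the second map sends $z \mapsto \zeta$, i.e. it is the $R$-algebra homomorphism $R\dbl z\dbr \to R$ reducing modulo the ideal $(z-\zeta)$. Surjectivity on the right is clear since $R \hookrightarrow R\dbl z\dbr \to R$ is the identity. The content is (i) injectivity of multiplication by $z-\zeta$, and (ii) the claim that its image is exactly the kernel of $z \mapsto \zeta$, i.e. that $z-\zeta$ generates $\ker$.

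For the computational heart, I would work with a power series $f = \sum_{i\ge 0} a_i z^i \in R\dbl z\dbr$ and expand $(z-\zeta)f = \sum_{i\ge 0}(a_{i-1} - \zeta a_i)z^i$ (with $a_{-1}:=0$). If $(z-\zeta)f = 0$ then $a_{i-1} = \zeta a_i$ for all $i\ge 0$; taking $i=0$ gives $a_{-1}=0$, hence all $a_i = \zeta^{-?}$... more precisely one gets $a_{i} = \zeta a_{i+1}$, so by induction $a_0 = \zeta a_1 = \zeta^2 a_2 = \cdots = \zeta^N a_N$ for every $N$; since $\zeta$ is nilpotent, $a_0 = 0$, and then shifting the index shows every $a_j = 0$. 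This proves injectivity. (One can equivalently cite Lemma~\ref{LemOSZ}'s own conclusion that $z-\zeta$ is a nonzerodivisor once injectivity is known, but the direct argument is cleanest.) For exactness in the middle: if $f \mapsto 0$ under $z\mapsto\zeta$, solve $g$ with $(z-\zeta)g = f$ by matching coefficients. Writing $g = \sum b_i z^i$, the equations $a_i = b_{i-1} - \zeta b_i$ determine the $b_i$ recursively: $b_0 = -\zeta^{-1}(\text{...})$ — rather, rearrange as $b_{i-1} = a_i + \zeta b_i$; this is a ``downward'' recursion, so instead solve upward by $\zeta b_i = b_{i-1} - a_i$, which does not directly give $b_i$. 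The correct move is: the hypothesis $f(\zeta)=0$ means $\sum a_i \zeta^i = 0$ (a finite-in-effect sum since $\zeta$ is nilpotent), and then one checks that $g_j := -\sum_{i\le j} a_i \zeta^{i-j-1}\cdot(\text{appropriate})$... Concretely, from $f(\zeta)=0$ one has $f(z) = f(z)-f(\zeta) \in (z-\zeta)$ because $z^i - \zeta^i = (z-\zeta)(z^{i-1}+\zeta z^{i-2}+\cdots+\zeta^{i-1})$, and summing $\sum_i a_i(z^i-\zeta^i) = (z-\zeta)\sum_i a_i(z^{i-1}+\cdots+\zeta^{i-1})$, whose coefficient-wise inner sum $\sum_{i}a_i(z^{i-1}+\cdots+\zeta^{i-1}) = \sum_{k\ge 0} z^k \big(\sum_{i>k} a_i \zeta^{i-1-k}\big)$ converges $z$-adically since for each fixed $k$ the $R$-coefficient is $\zeta^{-1-k}$ times a tail that is eventually $\zeta$-adically controlled — more simply, each coefficient $\sum_{i>k} a_i\zeta^{i-1-k}$ is a well-defined element of $R$ because $\zeta$ is nilpotent kills all but finitely many terms. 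This produces the required preimage $g$.

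The last assertion, $R\dbl z\dbr \subset R\dbl z\dbr[\tfrac{1}{z-\zeta}]$, is then immediate: injectivity of $\cdot(z-\zeta)$ shows $z-\zeta$ is a nonzerodivisor in $R\dbl z\dbr$, so the localization map $R\dbl z\dbr \to R\dbl z\dbr[\tfrac{1}{z-\zeta}]$ is injective.

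The only mildly delicate point — what I would flag as the ``main obstacle'' — is bookkeeping the $z$-adic convergence in the surjectivity-onto-the-kernel step: one must confirm that each coefficient of the constructed preimage $g$ is a genuine element of $R$, which rests squarely on the nilpotence of $\zeta$ (over a general base $\zeta$ would not be nilpotent and the sequence would fail, e.g. over $\BF_q\dbl\zeta\dbr$ itself). Everything else is formal manipulation of power series.
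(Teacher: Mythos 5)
Your argument is correct and is essentially the paper's own proof: injectivity of multiplication by $z-\zeta$ via the coefficient relation $a_i=\zeta a_{i+1}=\zeta^N a_{i+N}$ and nilpotence of $\zeta$, and exactness in the middle by writing $f=f-f(\zeta)=\sum_i a_i(z^i-\zeta^i)$, a multiple of $z-\zeta$, with nilpotence of $\zeta$ guaranteeing both that evaluation at $\zeta$ is well defined and that each coefficient of the quotient series lies in $R$. The abandoned downward-recursion detour (and the stray ``$\zeta^{-1-k}$'' phrasing) should be cut, but the argument you settle on is exactly the one in the paper.
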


\begin{proof}
If $\sum_i b_iz^i$ lies in the kernel of the first map, that is, $0=(z-\zeta)(\sum_ib_iz^i)=\sum_i(b_{i-1}-\zeta b_i)z^i$, then $b_i=\zeta b_{i+1}=\zeta^n b_{i+n}$ for all $n$. Since $\zeta$ is nilpotent, all $b_i$ are zero. 
Also due to the nilpotency of $\zeta$ the second map is well defined and surjective. For exactness in the middle note that $\sum_i b_i\zeta^i=0$ implies $\sum_i b_iz^i=\sum_i b_i(z^i-\zeta^i)$ which is a multiple of $z-\zeta$.
\end{proof}

For a morphism $f\colon S'\to S$ in $\Nilp_{\BF_q\dbl\zeta\dbr}$ we can pull back a local shtuka $(M,F_M)$ over $S$ to the local shtuka $\bigl( M\otimes_{\CO_S\dbl z\dbr}\CO_{S'}\dbl z\dbr,\,F_M\otimes\id\bigr)$ over $S'$.

We define the \emph{tensor product} of two local shtukas $(M,F_M)$ and $(N,F_N)$ over $S$ as the local shtuka $\bigl(M\otimes_{\CO_S\dbl z\dbr}N\,,\,F_M\otimes F_N\bigr)$. The local shtuka $\UOne(0):=\bigl(\CO_S\dbl z\dbr,\,F_{\BOne(0)}=\id_{\CO_S\dbl z\dbr}\colon\sigma_{\!q}^*\CO_S\dbl z\dbr=\CO_S\dbl z\dbr\isoto\CO_S\dbl z\dbr\bigr)$ is a \emph{unit object} for the tensor product. The \emph{dual} $(M\dual, F_{M\dual})$ of a local shtuka $(M,F_M)$ over $S$ is defined as the sheaf $M\dual=\CHom_{\CO_S\dbl z\dbr}\bigl(M,\CO_S\dbl z\dbr\bigr)$ together with
\[
\TS F_{M\dual}\colon\;\sigma_{\!q}^\ast M\dual[\frac{1}{z-\zeta}]\isoto M\dual[\frac{1}{z-\zeta}]\,,\es f\mapsto f\circ F_M^{-1}.
\]
Also there is a natural definition of \emph{internal Hom's} with $\CHom(\ulM,\ulN)=\ulM\dual\otimes\ulN$. This makes the category of local shtukas over $S$ into an $\BF_q\dbl z\dbr$-linear, additive, rigid tensor category. It is an exact category in the sense of Quillen~\cite[\S2]{Quillen} if one calls a short sequence of local shtukas \emph{exact} when the underlying sequence of sheaves of $\CO_S\dbl z\dbr$-modules is exact.

\begin{Lemma}\label{ExistenceOfe}
Let $(M,F_M)$ be a local shtuka over $S$. Then locally on $S$ there are $e,e', N\in \BZ$ such that $(z-\zeta)^{e'} M \subset F_M(\sigma_{\!q}^\ast M) \subset (z-\zeta)^{-e} M$ and $z^N M\subset F_M(\sigma_{\!q}^\ast M)$. For any such e the map $F_M\colon\sigma_{\!q}^*M\to(z-\zeta)^{-e}M$ is injective, and the quotient $(z-\zeta)^{-e} M/F_M(\sigma_{\!q}^\ast M)$ is a locally free $\CO_S$-module of finite rank.
\end{Lemma}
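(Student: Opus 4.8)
The plan is to work Zariski-locally on $S$, reducing to the case $S=\Spec R$ with $M$ (and hence $\sigma_{\!q}^\ast M$) free of rank $r$ over $R\dbl z\dbr$, and then to establish the three assertions in turn; the only one that requires an idea is the local freeness of the quotient, where the crux is flatness over $R$. For the bounds, since $F_M$ is an isomorphism after inverting $z-\zeta$ and $\sigma_{\!q}^\ast M$ is finitely generated, the images under $F_M$ of a basis of $\sigma_{\!q}^\ast M$ lie in $M[\tfrac{1}{z-\zeta}]$, so clearing denominators gives $F_M(\sigma_{\!q}^\ast M)\subset(z-\zeta)^{-e}M$ for some $e\ge 0$; applying this to $F_M^{-1}$ produces $e'\ge 0$ with $F_M^{-1}(M)\subset(z-\zeta)^{-e'}\sigma_{\!q}^\ast M$, i.e.\ $(z-\zeta)^{e'}M\subset F_M(\sigma_{\!q}^\ast M)$. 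If $\zeta^{\,t}=0$ then $z^t=z^t-\zeta^t\in(z-\zeta)R\dbl z\dbr$, hence $z^{e't}\in(z-\zeta)^{e'}R\dbl z\dbr$ and one may take $N:=e't$. Injectivity of $F_M\colon\sigma_{\!q}^\ast M\to(z-\zeta)^{-e}M$ follows by factoring it as $\sigma_{\!q}^\ast M\hookrightarrow\sigma_{\!q}^\ast M[\tfrac{1}{z-\zeta}]\isoto M[\tfrac{1}{z-\zeta}]$, the first map being injective because $\sigma_{\!q}^\ast M$ is flat over $R\dbl z\dbr$ and $R\dbl z\dbr\hookrightarrow R\dbl z\dbr[\tfrac{1}{z-\zeta}]$ by Lemma~\ref{LemOSZ}. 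Write $Q:=(z-\zeta)^{-e}M/F_M(\sigma_{\!q}^\ast M)$, so that $0\to\sigma_{\!q}^\ast M\xrightarrow{F_M}(z-\zeta)^{-e}M\to Q\to 0$ is exact.

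Next I would check that $Q$ is a finitely presented $R$-module. Taking $\mu:=(e+e')t$ one has $z^\mu\in(z-\zeta)^{e+e'}R\dbl z\dbr$, so $z^\mu\cdot(z-\zeta)^{-e}M\subset(z-\zeta)^{e'}M\subset F_M(\sigma_{\!q}^\ast M)$ and $Q$ is annihilated by $z^\mu$. Thus $Q$ is a module over $R\dbl z\dbr/z^\mu R\dbl z\dbr=R[z]/(z^\mu)$, which is finite free over $R$; reducing the above exact sequence modulo $z^\mu$ exhibits $Q$ as the cokernel of an $R$-linear map between two finite free $R$-modules of rank $\mu r$, so $Q$ is finitely presented over $R$.

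It then remains to prove that $Q$ is flat over $R$; combined with finite presentation this gives that $Q$ is locally free of finite rank. Here I would introduce the ``pseudo-inverse'' $\Phi':=(z-\zeta)^{m}F_M^{-1}\colon(z-\zeta)^{-e}M\to\sigma_{\!q}^\ast M$ with $m:=e+e'$, which is well defined by the bound on $F_M^{-1}$ and satisfies $F_M\circ\Phi'=(z-\zeta)^m\cdot\id$ and $\Phi'\circ F_M=(z-\zeta)^m\cdot\id$. Fix an $R$-module $X$ and tensor the exact sequence of Lemma~\ref{LemOSZ} (which presents $z-\zeta$ as a non-zero-divisor on $R\dbl z\dbr$ with quotient $R$) with $X$ over $R$: using $\Tor_1^R(R,X)=0$ one reads off from the long exact $\Tor$-sequence that multiplication by $z-\zeta$ is \emph{injective} on $R\dbl z\dbr\otimes_R X$ and \emph{surjective} on $\Tor_1^R(R\dbl z\dbr,X)$. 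Since $\sigma_{\!q}^\ast M$ and $(z-\zeta)^{-e}M$ are free over $R\dbl z\dbr$, the first statement makes $\Phi'\circ F_M=(z-\zeta)^m\cdot\id$ injective after $\otimes_R X$, hence $F_M\otimes\id_X$ is injective; feeding this into the long exact $\Tor$-sequence of $0\to\sigma_{\!q}^\ast M\to(z-\zeta)^{-e}M\to Q\to 0$ gives $\Tor_1^R(Q,X)=\coker\bigl(\Tor_1^R(F_M)\colon\Tor_1^R(\sigma_{\!q}^\ast M,X)\to\Tor_1^R((z-\zeta)^{-e}M,X)\bigr)$. On the other hand $\Tor_1^R(F_M)\circ\Tor_1^R(\Phi')$ is multiplication by $(z-\zeta)^m$ on $\Tor_1^R((z-\zeta)^{-e}M,X)\cong\Tor_1^R(R\dbl z\dbr,X)^{\oplus r}$, which is surjective by the second statement; hence $\Tor_1^R(F_M)$ is surjective and $\Tor_1^R(Q,X)=0$ for every $X$, so $Q$ is flat.

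The routine parts are the two reductions and the construction of $e,e',N$; the one step needing an idea is the flatness, and the point there is to use $\Phi'$ together with Lemma~\ref{LemOSZ} simultaneously — to force $F_M\otimes\id_X$ injective and multiplication by $(z-\zeta)^m$ surjective on $\Tor_1^R(R\dbl z\dbr,X)$ — so that $\Tor_1^R(Q,X)$ is squeezed to zero. I expect this to be the main obstacle to writing down carefully; note that it needs no Noetherian or coherence hypothesis on $R$ (which would be convenient, because $R\dbl z\dbr$ need not be $R$-flat in general), and that one only has to keep the exponents $e,e',m,\mu$ nonnegative throughout.
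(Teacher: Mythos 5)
Your proof is correct, and its routine parts (existence of $e,e',N$, injectivity via Lemma~\ref{LemOSZ}, finite presentation of the quotient by reducing modulo a power of $z$ — your $z^\mu$ playing the role of the paper's $(z-\zeta)^{e+e'}$, and your $N=e't$ working just as well as the paper's $p$-power choice) follow the paper's line. The crux, local freeness of $Q=(z-\zeta)^{-e}M/F_M(\sigma_{\!q}^\ast M)$, is handled by a genuinely different argument. The paper tests flatness only against the residue fields $k=R/\mathfrak{m}$: it proves $\Tor_1^{R\dbl z\dbr}(K,k\dbl z\dbr)=0$ by the elementary divisor theorem over $k\dbl z\dbr$ (using $\zeta=0$ in $k$), transfers this to $\Tor_1^{R}(K,k)=0$ via the five-term sequence of a change-of-rings spectral sequence, and then concludes by Nakayama and the local criterion that the finitely presented module $K$ is locally free. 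You instead prove outright that $\Tor_1^R(Q,X)=0$ for \emph{every} $R$-module $X$: the pseudo-inverse $\Phi'=(z-\zeta)^{e+e'}F_M^{-1}$ (in effect the $z$-power Verschiebung which the paper only introduces later, in Lemma~\ref{LemmaVerschLocSht}) combined with the two consequences of Lemma~\ref{LemOSZ} — injectivity of $z-\zeta$ on $R\dbl z\dbr\otimes_RX$ and surjectivity of $z-\zeta$ on $\Tor_1^R(R\dbl z\dbr,X)$ — forces $F_M\otimes\id_X$ to be injective and $\Tor_1^R(F_M)$ to be surjective, squeezing $\Tor_1^R(Q,X)$ to zero; finitely presented plus flat then gives locally free. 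Both routes avoid Noetherian hypotheses and never need $R\dbl z\dbr$ to be flat over $R$; yours is more self-contained (no spectral sequence, no elementary divisors, no reduction to closed points) and yields flatness universally, while the paper's is shorter if one is willing to quote the base-change spectral sequence and the fibrewise criterion for projectivity.
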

\begin{proof}
We work locally on $\Spec R\subset S$ and assume that $\sigma_{\!q}^\ast M$ and $M$ are free $\CO_S\dbl z\dbr$-modules. Applying $F_M$ to a basis of $\sigma_{\!q}^*M$, respectively $F_M^{-1}$ to a basis of $M$, proves the existence of $e$, respectively $e'$. If $N\ge e'$ is an integer which is a power of $p$ such that $\zeta^N=0$ in $R$, then $z^N M=(z^N-\zeta^N)M=(z-\zeta)^N M\subset F_M(\sigma_{\!q}^\ast M)$.

We prove that the quotient $K:=(z-\zeta)^{-e} M/F_M(\sigma_{\!q}^\ast M)$ is a locally free $R$-module of finite rank. This was already proved in \cite[Lemma~4.3]{HV1}, but the argument given there only works if $R$ is noetherian, because it uses that $R\dbl z\dbr$ is flat over $R$. We now give a proof also in the non-noetherian case. Since $K= \coker\bigl(F_M\mod(z-\zeta)^{e+e'}\colon \sigma_{\!q}^\ast M/(z-\zeta)^{e+e'}\sigma_{\!q}^\ast M\to (z-\zeta)^{-e}M/(z-\zeta)^{e'}M\bigr)$, it is of finite presentation over $R$. Since $R\dbl z\dbr\subset R\dbl z\dbr[\tfrac{1}{z-\zeta}]$ is a subring by Lemma~\ref{LemOSZ} and $M$ is locally free, the map $F_M\colon\sigma_{\!q}^*M\to(z-\zeta)^{-e}M$ is injective. Let $\mathfrak{m} \subset R$ be a maximal ideal and set $k = R/\mathfrak{m}$. In the exact sequence 
\[
0 \;\to\; \Tor_1^{R\dbl z\dbr }(K,k\dbl z\dbr)  \;\to\; \sigma_{\!q}^\ast M\otimes _{R\dbl z\dbr }k\dbl z\dbr  \;\to\; (z-\zeta)^{-e}M\otimes _{R\dbl z\dbr }k\dbl z\dbr \;\to\; K\otimes _{R\dbl z\dbr }k\dbl z\dbr \;\to\; 0,
\]
we have isomorphisms $\sigma_{\!q}^\ast M\otimes _{R\dbl z\dbr }k\dbl z\dbr \cong  k\dbl z\dbr^{\oplus \rk M} \cong (z-\zeta)^{-e}M\otimes _{R\dbl z\dbr }k\dbl z\dbr$. Moreover, $\zeta=0$ in $k$ and hence $z^{e+e'}K\otimes _{R\dbl z\dbr }k\dbl z\dbr=0$. Since $k\dbl z\dbr$ is a PID, the map $\sigma_{\!q}^\ast M\otimes _{R\dbl z\dbr }k\dbl z\dbr  \to (z-\zeta)^{-e}M\otimes _{R\dbl z\dbr }k\dbl z\dbr$ is  injective by the elementary divisor theorem, and hence $0=\Tor_1^{R\dbl z\dbr }(K,k\dbl z\dbr)$. To relate this to $\Tor_1^{R}(K,k)=\Tor_1^{R\dbl z\dbr/(z-\zeta)^{e+e'}}\bigl(K,k\dbl z\dbr/(z^{e+e'})\bigr)$ we use the change of rings spectral sequence \cite[Theorem 10.71]{Rotman} and the induced epimorphism (from its associated 5-term sequence of low degrees, see \cite[Theorem 10.31]{Rotman})
\[
\ldots\;\to\;\Tor_1^{R\dbl z\dbr }(K,k\dbl z\dbr)\;\to\;\Tor_1^{R\dbl z\dbr/(z-\zeta)^{e+e'}}\bigl(K,k\dbl z\dbr/(z^{e+e'})\bigr)\;\to\;0\,.
\]
It follows that $\Tor_1^{R}(K,k)=0$ and from Nakayama's lemma we conclude that $K$ is locally free over $R$ of finite rank; compare \cite[Exercise~6.2]{Eisenbud}. 
\end{proof}

\begin{Definition}
A local shtuka $\ulM=(M,F_M)$ over $S$ is called \emph{effective} if $F_M$ is actually a morphism $F_M\colon\sigma_{\!q}^\ast M\into M$. Let $(M,F_M)$ be effective of rank $r=\rk\ulM$. We say that
\begin{enumerate}
\item 
$(M,F_M)$ has \emph{dimension $d$} if $\coker F_M$ is locally free of rank $d$ as an $\CO_S$-module.
\item 
$(M,F_M)$ is \emph{\'etale} if $F_M\colon\sigma_{\!q}^\ast M\isoto M$ is an isomorphism.
\item 
$F_M$ is \emph{topologically nilpotent} if locally on $S$ there is an integer $n$ such that $\im(F_M^n)\subset z M$, where $F_M^n:=F_M\circ\sigma_{\!q}^*F_M\circ\ldots\circ\sigma_{\!q^{n-1}}^*F_M\colon \sigma_{\!q^n}^*M\to M$.
\item 
$\ulM$ is bounded by $(d,0,\ldots,0)\in\BZ^r$ for an integer $d\ge0$, if $\ulM$ satisfies 
\[
\bigwedge\limits_{\CO_S\dbl z\dbr}^r F_M(\sigma_{\!q}^*M) \;=\; (z-\zeta)^d\cdot\bigwedge\limits_{\CO_S\dbl z\dbr}^r M\,.
\]
\end{enumerate}
\end{Definition}

\begin{Example}
We define the \emph{Tate objects} in the category of local shtukas over $S$ as
\[
\UOne(n)\es:=\es\bigl(\CO_S\dbl z\dbr\,,\,F_M:1\mapsto(z-\zeta)^n\bigr)\,.
\]
By Lemma~\ref{ExistenceOfe} every local shtuka over a quasi-compact scheme $S$ becomes effective after tensoring with a suitable Tate object.
\end{Example}

More generally, let now $S$ be an arbitrary $\BF_q$-scheme.

\begin{Definition}
A \emph{finite $\BF_q$-shtuka} over $S$ is a pair $\ulM=(M,F_M)$ consisting of a locally free $\CO_S$-module $M$ on $S$ of finite rank denoted $\rk\ulM$, and an $\CO_S$-module homomorphism $F_M\colon\sigma_{\!q}^\ast M\to M$. A \emph{morphism} $f\colon(M,F_M)\to(M',F_{M'})$ of finite $\BF_q$-shtukas is an $\CO_S$-module homomorphism $f\colon M\to M'$ 
which makes the following diagram commutative
\[
\xymatrix @C+1pc {
\sigma_{\!q}^\ast M \ar[r]^{\sigma_{\!q}^\ast f}\ar[d]^{F_M} & \sigma_{\!q}^\ast M'\ar[d]^{F_{M'}}\\
M \ar[r]^f & M'{\,.\!\!\!\!}
}
\]
We denote the category of finite $\BF_q$-shtukas over $S$ by $\FqSht_S$.

A finite $\BF_q$-shtuka over $S$ is called \emph{\'etale} if $F_M$ is an isomorphism. We say that $F_M$ is \emph{nilpotent} if there is an integer $n$ such that $F_M^n:=F_M\circ\sigma_{\!q}^*F_M\circ\ldots\circ\sigma_{\!q^{n-1}}^*F_M=0$.
\end{Definition}

Finite $\BF_q$-shtukas were studied at various  places in the literature. They were called ``(finite) $\phi$-sheaves'' by Drinfeld~\cite[\S\,2]{Drinfeld87}, Taguchi and Wan~\cite{Taguchi95,TW} and ``Dieudonn\'e $\BF_q$-modules'' by Laumon~\cite{Laumon}. Finite $\BF_q$-shtukas over a field admit a canonical decomposition.

\begin{Proposition} (\cite[Lemma B.3.10]{Laumon}) \label{PropCanonDecompDMod}
If $S$ is the spectrum of a field $L$ every finite $\BF_q$-shtuka $\ulM=(M,F_M)$ is canonically an extension of finite $\BF_q$-shtukas
\[
0\es\longto\es(M_\et,F_\et) \es\longto\es (M,F_M)\es\longto\es (M_\nil,F_\nil)\es\longto\es 0
\]
where $F_\et$ is an isomorphism and $F_\nil$ is nilpotent. $\ulM_\et=(M_\et,F_\et)$ is the largest \'etale finite $\BF_q$-sub-shtuka of $\ulM$ and equals $\im(F_M^{\rk\ulM})$. If $L$ is perfect this extension splits canonically.
\end{Proposition}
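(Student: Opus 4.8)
The plan is to exploit that over a field $L$ the Frobenius-twisted module $\sigma_{\!q}^*M$ has the same dimension as $M$, so that the images $\im(F_M^n)\subseteq M$ form a descending chain of $L$-subspaces of finite dimension. First I would set $M_\et:=\bigcap_{n\ge0}\im(F_M^n)$; since $\dim_L M<\infty$ this intersection stabilizes, so $M_\et=\im(F_M^n)$ for $n=\rk\ulM$ (once the dimensions stop dropping they stop dropping forever, and they can drop at most $\rk\ulM$ times). The key point is that $F_M$ restricts to a \emph{surjective} endomorphism $F_M\colon\sigma_{\!q}^*M_\et\to M_\et$: indeed $F_M(\sigma_{\!q}^*M_\et)=F_M\bigl(\sigma_{\!q}^*\im(F_M^n)\bigr)=\im(F_M^{n+1})=M_\et$, using that $\sigma_{\!q}^*$ is exact on $L$-vector spaces and commutes with taking images of the iterated Frobenius (here one uses the definition $F_M^{n+1}=F_M\circ\sigma_{\!q}^*F_M^n$). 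A surjective $L$-linear map between finite-dimensional spaces of equal dimension $\dim_L\sigma_{\!q}^*M_\et=\dim_L M_\et$ is an isomorphism, so $\ulM_\et=(M_\et,F_\et)$ is an \'etale finite $\BF_q$-shtuka, and $M_\et$ is visibly $F_M$-stable, hence a finite $\BF_q$-sub-shtuka.

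Next I would form the quotient $(M_\nil,F_\nil):=(M/M_\et,\,F_M\bmod M_\et)$, which is a finite $\BF_q$-shtuka because $M/M_\et$ is still a finite-dimensional $L$-vector space and $\sigma_{\!q}^*$ is right exact. To see that $F_\nil$ is nilpotent: for $n=\rk\ulM$ we have $\im(F_M^n)=M_\et$, hence $F_\nil^n$, which is induced by $F_M^n$ on $M/M_\et$, has image $\im(F_M^n)/M_\et=0$ inside $M/M_\et$; thus $F_\nil^n=0$. This gives the short exact sequence
\[
0\es\longto\es(M_\et,F_\et)\es\longto\es(M,F_M)\es\longto\es(M_\nil,F_\nil)\es\longto\es 0
\]
with $F_\et$ an isomorphism and $F_\nil$ nilpotent, and all maps are morphisms of finite $\BF_q$-shtukas by construction. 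For maximality: if $(N,F_N)\subseteq(M,F_M)$ is any \'etale sub-shtuka, then $F_N$ being an isomorphism forces $N=F_M(\sigma_{\!q}^*N)=\ldots=F_M^n(\sigma_{\!q^n}^*N)\subseteq\im(F_M^n)=M_\et$ for all $n$, so $N\subseteq M_\et$; the same argument shows $M_\et$ is the unique maximal such, so it is canonical (it is defined intrinsically as $\bigcap_n\im F_M^n$, without choices). Canonicity of the whole extension follows since the quotient is then also canonically determined.

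Finally, for the splitting when $L$ is perfect: here $\sigma_{\!q}$ is an automorphism of $L$, so $\sigma_{\!q}^*$ is an exact autoequivalence of finite-dimensional $L$-vector spaces and $F_M$ may be regarded as a $\sigma_{\!q}$-semilinear endomorphism of $M$. I would argue by the standard Fitting-type decomposition: since $F_\et$ is bijective on $M_\et$, the semilinear Frobenius $F_M$ admits a unique $F_M$-stable complement to $M_\et$, namely $M_\nil':=\bigcup_{n}\ker(F_M^n)$ (which stabilizes by finite-dimensionality), on which $F_M$ is nilpotent; one checks $M=M_\et\oplus M_\nil'$ by the usual Fitting lemma argument applied to the (nonlinear but semilinear) operator, using perfectness of $L$ to ensure $\sigma_{\!q}^*$ preserves dimensions and direct sums so that $M_\et\cap M_\nil'=0$ and the dimensions add up. Then $(M_\nil',F_M|)\isoto(M_\nil,F_\nil)$ gives the splitting, and it is canonical because $M_\nil'$ is intrinsically defined as $\bigcup_n\ker F_M^n$. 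The main obstacle I anticipate is the bookkeeping in this last step: verifying carefully that $\bigcup_n\ker(F_M^n)$ is genuinely a complement (not merely of complementary dimension) requires perfectness of $L$ — over an imperfect field $\sigma_{\!q}^*$ need not be dimension-preserving and the direct-sum decomposition can fail — so I would be careful to invoke perfectness exactly where $\dim_L\sigma_{\!q}^*V=\dim_L V$ is used, and otherwise the whole argument is elementary linear algebra over $L$.
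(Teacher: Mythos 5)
Your construction of the extension is exactly the paper's argument: consider the descending chain $\im(F_M^n)$, note it stabilizes at some $n\le\rk\ulM$, and use that the surjection $F_M\colon\sigma_{\!q}^*(\im F_M^n)\onto\im F_M^{n+1}=\im F_M^n$ between $L$-spaces of equal dimension is bijective; the induced map on the quotient is nilpotent, and maximality of $M_\et$ is the same easy computation. Up to here the proposal is correct, and the paper's proof is precisely this (it invokes Laumon's Lemma~B.3.10 only for the splitting over perfect $L$).

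The problem is in the splitting step: your stated reason for invoking perfectness is wrong, and it is exactly the point that needs care. You say perfectness is needed ``to ensure $\sigma_{\!q}^*$ preserves dimensions and direct sums'' and that over an imperfect field ``$\sigma_{\!q}^*$ need not be dimension-preserving.'' That is false: $\sigma_{\!q}^*V=V\otimes_{L,\sigma_{\!q}}L$ is always free of the same rank (and your own argument for the \'etale part over arbitrary $L$ already uses $\dim_L\sigma_{\!q}^*M_\et=\dim_LM_\et$). What perfectness actually buys is that $\sigma_{\!q}$ is an automorphism of $L$, so the canonical $\sigma_{\!q}$-semilinear map $M\to\sigma_{\!q}^*M$, $m\mapsto m\otimes1$, is bijective and every $L$-subspace of $\sigma_{\!q}^*M$ descends to one of $M$; only then does $M'_\nil:=\bigcup_n\ker(F_M^n\circ\sigma_{\!q^n}^*\colon M\to M)$ have dimension $\dim_LM-\dim_LM_\et$, which is the count your Fitting argument needs (the intersection $M_\et\cap M'_\nil=0$ needs no hypothesis). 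As written, that count is unjustified, and it genuinely fails without perfectness: for $L=\BF_q(t)$, $M=Le_1\oplus Le_2$, $F_M(e_1\otimes1)=e_1$, $F_M(e_2\otimes1)=te_1$, one has $M_\et=Le_1$ of codimension $1$, while $\ker(F_M^n\circ\sigma_{\!q^n}^*)=0$ for all $n$ because $t\notin L^q$. So the Fitting-type splitting via $\bigcup_n\ker(F_M^n\circ\sigma_{\!q^n}^*)$ is the right idea (it is what Laumon proves and the paper cites), but perfectness must be invoked at the bijectivity of $m\mapsto m\otimes1$ (equivalently, descent of kernels), not at dimension-preservation of $\sigma_{\!q}^*$.
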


\begin{proof}
This was proved by Laumon~\cite[Lemma B.3.10]{Laumon} for perfect $L$. In general one considers the descending sequence of $L$-subspaces $\ldots\supset\im(F_M^n)\supset\im(F_M^{n+1})\supset\ldots$ of $M$ which stabilizes at some finite $n$. If $\im(F_M^{n+1})=\im(F_M^n)$ then $F_M\colon\sigma_{\!q}^*(\im F_M^n)\onto\im F_M^{n+1}=\im F_M^n$ is surjective, hence bijective, and therefore $\im(F_M^{n'})=\im(F_M^n)$ for all $n'\ge n$. So the sequence stabilizes already for some $n\le\rk\ulM$ and $M_\et=\im(F_M^{\rk\ulM})$. If $L$ is perfect, $M_\nil$ is isomorphic to the submodule $\bigcup_{n\ge0}\ker(F_M^n\circ\sigma_{\!q^n}^*\colon M\to M)$ of $M$; see \cite[Lemma B.3.10]{Laumon}.
\end{proof}

\begin{Example}
Every effective local shtuka $(M,F_M)$ of rank $r$ over $S$ yields for every $n\in\BN$ a finite $\BF_q$-shtuka $\bigl(M/z^nM,F_M\mod z^n\bigr)$ of rank $rn$, and $(M,F_M)$ equals the projective limit of these finite $\BF_q$-shtukas.
\end{Example}

Thus from Proposition~\ref{PropCanonDecompDMod} we obtain

\begin{Proposition} \label{PropCanonDecompZCryst}
If $S$ is the spectrum of a field $L$ in $\Nilp_{\BF_q\dbl\zeta\dbr}$ every effective local shtuka $(M,F_M)$ is canonically an extension of effective local shtukas
\[
0\es\longto\es (M_\et,F_\et) \es\longto\es (M,F_M)\es\longto\es (M_\nil,F_\nil)\es\longto\es 0
\]
where $F_\et$ is an isomorphism and $F_\nil$ is topologically nilpotent. 
$(M_\et,F_\et)$ is the largest \'etale effective local sub-shtuka of $(M,F_M)$. If $L$ is perfect this extension splits canonically. \qed
\end{Proposition}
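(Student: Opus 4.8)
The plan is to apply Proposition~\ref{PropCanonDecompDMod} to the finite $\BF_q$-shtukas $(M/z^nM,\,F_M\bmod z^n)$ for all $n$ and to pass to the projective limit, using the preceding Example which says that $(M,F_M)$ is the projective limit of these finite $\BF_q$-shtukas. For each $n$ Proposition~\ref{PropCanonDecompDMod} supplies a canonical short exact sequence $0\to\ulM_{n,\et}\to(M/z^nM,\,F_M\bmod z^n)\to\ulM_{n,\nil}\to 0$ with $F$ an isomorphism on $\ulM_{n,\et}=(M_{n,\et},F_{n,\et})$, where $M_{n,\et}=\im\bigl((F_M\bmod z^n)^{rn}\bigr)$ with $r=\rk(M,F_M)$ the rank over $\CO_S\dbl z\dbr$, and $F$ nilpotent on $\ulM_{n,\nil}$. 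Since the reduction maps are morphisms of finite $\BF_q$-shtukas, canonicity makes $\{\ulM_{n,\et}\}_n$ and $\{\ulM_{n,\nil}\}_n$ into projective systems fitting into a projective system of short exact sequences.

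First I would check that the transition map $M_{n+1,\et}\to M_{n,\et}$ is surjective: the surjection $M/z^{n+1}M\to M/z^nM$ commutes with the $q$-Frobenii, so it carries $\im\bigl((F_M\bmod z^{n+1})^{r(n+1)}\bigr)=M_{n+1,\et}$ onto $\im\bigl((F_M\bmod z^n)^{r(n+1)}\bigr)=M_{n,\et}$; surjectivity of $M_{n+1,\nil}\to M_{n,\nil}$ then follows from the snake lemma. Hence both systems are Mittag--Leffler, the corresponding $\invlim^1$ vanish, and setting $\ulM_\et:=(M_\et,F_\et):=\invlim[n]\ulM_{n,\et}$ and $\ulM_\nil:=(M_\nil,F_\nil):=\invlim[n]\ulM_{n,\nil}$ one obtains an exact sequence $0\to M_\et\to M\to M_\nil\to 0$ of $\CO_S\dbl z\dbr$-modules in which $M_\et\to M_{n,\et}$ and $M_\nil\to M_{n,\nil}$ are surjective for every $n$.

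Next I would identify the limit objects. As a finitely generated torsion-free module over the discrete valuation ring $L\dbl z\dbr$, $M_\et\subset M$ is finite free; and $F_M$ carries $\sigma_{\!q}^\ast M_\et$ into $M_\et$, because for each $n$ the image of $F_M(\sigma_{\!q}^\ast M_\et)$ in $M/z^nM$ lies in $M_{n,\et}$ (use that $\sigma_{\!q}^\ast$ is right exact and $M_\et$ surjects onto $M_{n,\et}$). Thus $F_\et:=F_M|_{\sigma_{\!q}^\ast M_\et}$ is a well-defined injective morphism which becomes an isomorphism after inverting $z$. That $F_\et$ is actually an isomorphism, that $M_\et$ is saturated in $M$, and hence that $M_\nil=M/M_\et$ is finite free so that $\ulM_\nil$ is an effective local shtuka, I would deduce by faithfully flat base change along $L\dbl z\dbr\to L^{\mathrm{perf}}\dbl z\dbr$: over the perfect closure $L^{\mathrm{perf}}$ the endomorphism $\sigma_{\!q}^\ast$ is an automorphism, hence commutes with the inverse limits above, and Proposition~\ref{PropCanonDecompDMod} provides an $F$-compatible splitting $M/z^nM\otimes_LL^{\mathrm{perf}}=M_{n,\et}^{\mathrm{perf}}\oplus M_{n,\nil}^{\mathrm{perf}}$ at every finite level; so over $L^{\mathrm{perf}}$ the sequence is split with both summands finite free and $F$ an isomorphism on the first, and the properties of being an isomorphism, of being finite free, of $M_\et$ being saturated, and the exactness of the sequence all descend along the faithfully flat map. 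Moreover $\ulM_\et$ is the \emph{largest} \'etale effective local sub-shtuka of $(M,F_M)$: if $(N,F_N)\hookrightarrow(M,F_M)$ is \'etale and effective, then the image of $N$ in each $M/z^nM$ is an \'etale finite $\BF_q$-sub-shtuka, hence contained in $M_{n,\et}$, whence $N\subset\invlim[n]M_{n,\et}=M_\et$.

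Finally, for the topological nilpotence of $F_\nil$, tensoring $0\to M_\et\to M\to M_\nil\to 0$ with $\CO_S\dbl z\dbr/z$ and using $M_\et\twoheadrightarrow M_{1,\et}$ identifies $(M_\nil/zM_\nil,\,F_\nil\bmod z)$ with the nilpotent finite $\BF_q$-shtuka $\ulM_{1,\nil}$, so $(F_\nil\bmod z)^k=0$ for some $k$ and therefore $\im(F_\nil^k)\subset zM_\nil$. When $L$ is perfect the entire construction is just the inverse limit of the canonical splittings furnished by Proposition~\ref{PropCanonDecompDMod}, so the extension splits canonically; for general $L$ the canonicity of the construction is inherited from that in Proposition~\ref{PropCanonDecompDMod} together with faithfully flat descent along $L\to L^{\mathrm{perf}}$. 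The main obstacle I anticipate is exactly this interaction between $\sigma_{\!q}^\ast$ and passage to the limit over an imperfect field: $\sigma_{\!q}^\ast$ commutes with finite limits of $\CO_S\dbl z\dbr$-modules by flatness, but not with the inverse limit defining $M_\et$, so one cannot argue directly on $M_\et$ and must route the delicate points through $L^{\mathrm{perf}}$ and descend; checking that the descended data genuinely form an effective local shtuka of the stated type is the bulk of the remaining work.
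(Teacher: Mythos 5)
Your proposal is correct and takes essentially the same route as the paper, which proves this proposition with no further argument than invoking Proposition~\ref{PropCanonDecompDMod} on the finite $\BF_q$-shtukas $(M/z^nM,\,F_M\bmod z^n)$ and passing to the projective limit via the preceding Example; your write-up simply supplies the details (Mittag--Leffler, stability of $M_\et$ under $F_M$, the largest-\'etale-sub-shtuka property, topological nilpotence modulo $z$) that the paper leaves unwritten. The only point in your descent step deserving one extra line is the identification of $M_\et\otimes_{L\dbl z\dbr}L^{\mathrm{perf}}\dbl z\dbr$ with the \'etale part computed over $L^{\mathrm{perf}}$ (it surjects onto each $M_{n,\et}\otimes_L L^{\mathrm{perf}}$ and is $z$-adically closed in the base-changed module; alternatively an Artin--Rees argument shows $\sigma_{\!q}^\ast$ already commutes with the limit over $L$ itself), but this is a routine detail rather than a gap.
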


\section{Review of deformations of finite locally free group schemes} \label{CotCom}

For a commutative group scheme $G$ over $S$ we denote by $\epsilon_G\colon S\to G$ its unit section and by $\omega_G:=\epsilon_G^*\Omega^1_{G/S}$ its \emph{co-Lie module}. It is a sheaf of $\CO_S$-modules. In order to describe which group objects are classified by finite $\BF_q$-shtukas we need to review the definition of strict $\BF_q$-module schemes in the next two sections. We follow Faltings~\cite{Faltings02} and Abrashkin~\cite{Abrashkin}. We begin in this section with a review of deformations of finite locally free group schemes. Recall that a group scheme $G$ over $S$ is called \emph{finite locally free} over $S$ if on every open affine $\Spec R\subset S$ the scheme $G$ is of the form $\Spec A$ for a finite locally free $R$-module $A$. By \cite[I$_{\rm new}$, Proposition~6.2.10]{EGA} this is equivalent to $G$ being finite flat and of finite presentation over $S$. The rank of the $R$-module $A$ is called the \emph{order of $G$} and is denoted $\ord G$. It is a locally constant function on $S$. The following facts will be used throughout.

\begin{Remark}\label{RemFactsOnG}
(a) A morphism $G'\to G$ of finite locally free group schemes is a monomorphism (of schemes, or equivalently of \fppf-sheaves on $S$) if and only if it is a closed immersion by \cite[IV$_4$, Corollaire~18.12.6]{EGA}, because it is proper.

\medskip\noindent
(b) Let $G$ and $G''$ be group schemes over $S$ which are finite and of finite presentation and assume that $G$ is flat over $S$. Then a morphism $G\to G''$ of is an epimorphism of \fppf-sheaves on $S$ if and only if it is faithfully flat; compare the proof of \cite[Chapter~I, Lemma~1.5(b)]{Messing}.

\medskip\noindent
(c) A sequence $0\to G'\to G\to G''\to0$ of finite locally free group schemes over $S$ is called \emph{exact} if it is exact when viewed as a sequence of \fppf-sheaves on $S$. By the above this is equivalent to the conditions that $G\to G''$ is faithfully flat, and that $G'\to G$ is a closed immersion which equals the kernel of $G\to G''$. In this case $\ord(G)=\ord(G')\cdot\ord(G'')$ as can be seen from the isomorphism $\CO_{G'}\cong \CO_G\otimes_{\CO_{G''},\epsilon_{G''}^*}\CO_S$ where $\epsilon_{G''}\colon S\to G''$ is the unit section, and from the multiplicativity of ranks $\rk_{\CO_S}\CO_G=(\rk_{\CO_{G''}}\CO_G)\cdot(\rk_{\CO_S}\CO_{G''})=(\rk_{\CO_S}\CO_{G'})\cdot(\rk_{\CO_S}\CO_{G''})$.

\medskip\noindent
(d) If $G'\into G$ is a closed immersion of finite locally free group schemes over $S$, then the quotient $G/G'$ exists as a finitely presented group scheme over $S$ by \cite[Th\'eor\`eme~V.4.1 and Proposition V.9.1]{SGA3}, which is flat by \cite[IV$_3$, Corollaire~11.3.11]{EGA}. It is integral over $S$ and hence finite, because $\CO_{G/G'}\subset\CO_G$. In particular, $G/G'$ is finite locally free over $S$.
\end{Remark}

In the following we will work locally on $S$ and assume that $S=\Spec R$ is affine. Let $G=\Spec A$ be a finite locally free group scheme over $S$. Then $G$ is a relative complete intersection by \cite[Proposition~III.4.15]{SGA3}. This means that locally on $S$ we can take $A = R[X_1,\ldots , X_n]/I$ where the ideal $I$ is generated by a regular sequence $(f_1,\ldots,f_n)$ of length $n$; compare \cite[IV$_4$, Proposition~19.3.7]{EGA}. The unit section $\epsilon_G\colon S\to G$ defines an augmentation $\epsilon_A:=\epsilon_G^*\colon A\onto R$ of the $R$-algebra $A$, that is, $\epsilon_A$ is a section of the structure morphism $\iota_A\colon R\hookrightarrow A$. Faltings~\cite{Faltings02} and Abrashkin~\cite{Abrashkin} define deformations of augmented $R$-algebras as follows. For every augmented $R$-algebra $(A,\epsilon_A\colon A\onto R)$ set $I_A:=\ker\epsilon_A$. For the polynomial ring $R[\ulX]=R[X_1,\ldots,X_n]$ set $I_{R[\ulX]}=(X_1,\ldots,X_n)$ and $\epsilon_{R[\ulX]}\colon R[\ulX]\onto R,\,X_\nu\mapsto0$. Abrashkin~\cite[\S\S\,1.1 and 1.2]{Abrashkin} makes the following

\begin{Definition}\label{DefDAug}
The category $\DSch_S$ has as objects all triples $\CH=(H,H^\flat,i_\CH)$, where $H=\Spec A$ for an augmented $R$-algebra $A$ which is finite locally free as an $R$-module, where $H^\flat=\Spec A^\flat$ for an augmented $R$-algebra $A^\flat$, and where $i_\CH\colon H\into H^\flat$ is a closed immersion given by an epimorphism $i_\CA\colon A^\flat\onto A$ of augmented $R$-algebras, such that locally on $\Spec R$ there is a polynomial ring $R[\ulX]=R[X_1,\ldots,X_n]$ and an epimorphism of augmented $R$-algebras $j\colon R[\ulX]\onto A^\flat$ satisfying the properties that
\begin{itemize}
\item the ideal $I:=\ker(i_\CA\circ j)$ is generated by elements of a regular sequence of length $n$ in $R[\ulX]$,
\item $\ker j=I\cdot I_{R[\ulX]}$, and hence $A=R[\ulX]/I$ and $A^\flat=R[\ulX]/(I\cdot I_{R[\ulX]})$.
\end{itemize}
In particular, $H$ is a relative complete intersection. We write $\CA=(A,A^\flat,i_\CA)$ and $\CH=\Spec\CA$. A morphism $\Spec(\wt A,\wt A^\flat,i_{\wt\CA})\to\Spec(A,A^\flat,i_\CA)$ in $\DSch_S$ is given by morphisms $f\colon A\to \wt A$ and $f^\flat\colon A^\flat\to \wt A^\flat$ of augmented $R$-algebras such that $f\circ i_\CA=i_{\wt\CA}\circ f^\flat$. Sometimes $i_\CH$ and $i_\CA$ are ommited.
\end{Definition}

For an object $\CH=\Spec(A, A^{\flat}, i_\CA)$ of $\DSch_S$ define the two $R$-modules $N_\CH = \ker i_\CA$ and $t_\CH^\ast = I_{A^\flat}/I_{A^\flat}^2$, where $I_{A^\flat}$ is the kernel of the augmentation $\epsilon_{A^\flat}\colon A^\flat\to R$. After choosing locally on $\Spec R$ an epimorphism $j\colon R[\ulX]\onto A^\flat$ we have $I_{A^\flat}=I_{R[\ulX]}/(I\cdot I_{R[\ulX]})$, which implies $N_\CH = I/{(I\cdot I_{R[\ulX]})}$ and $t_\CH^\ast = I_{R[\ulX]}/{I_{R[\ulX]}^2}$. Both are finite locally free $R$-modules of the same rank. This is obvious for $t^*_\CH$, and for $N_\CH$ we give a proof in Lemma~\ref{LemmaNBLocFree} below. Also note that $I_{A^\flat}\cdot\ker i_\CA=0$, because $\ker i_\CA=I/(I\cdot I_{R[\ulX]})$. We write $\inclN=\inclN_\CH\colon N_\CH\into A^\flat$ for the natural inclusion and $\pi=\pi_\CH:=(\id-\iota_{A^\flat}\epsilon_{A^\flat})\mod I_{A^\flat}^2\colon A^\flat\onto t^*_\CH$. If $\CH=(A,A^\flat)$ and $\wt\CH=\Spec(\wt A,\wt A^\flat)$ every morphism $(f,f^\flat)\colon(A,A^\flat)\to(\wt A,\wt A^\flat)$ in $\Hom_{\DSch}(\wt\CH,\CH)$ induces morphisms of $R$-modules $N_f\colon N_\CH\to N_{\wt\CH}$ and $t^*_f\colon t^*_\CH\to t^*_{\wt\CH}$ with $f^\flat\circ\inclN_\CH=\inclN_{\wt\CH}\circ N_f$ and $\pi_{\wt\CH}\circ f^\flat=t^*_f\circ\pi_\CH$.

\begin{Lemma}\label{LemmaNBLocFree}
If $\CH=\Spec(A,A^\flat,i_\CA)\in\DSch_S$ then $A^\flat$ and $N_\CH$ are finite locally free $R$-modules with $\rk_R N_\CH=\rk_R t^*_\CH$.  
\end{Lemma}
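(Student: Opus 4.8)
The plan is to reduce the whole statement to a single Koszul-complex computation over a polynomial ring. Since all the modules in question are finite locally free and the assertion is local on $\Spec R$, I would work locally and fix, as provided by Definition~\ref{DefDAug}, an epimorphism $j\colon R[\ulX]\onto A^\flat$ with $I:=\ker(i_\CA\circ j)=(f_1,\dots,f_n)$ for a regular sequence $(f_1,\dots,f_n)$ of length $n$ in $R[\ulX]$, so that $A=R[\ulX]/I$ and $A^\flat=R[\ulX]/(I\cdot I_{R[\ulX]})$. The first step, and the observation on which everything hinges, is that $I\subseteq I_{R[\ulX]}=(X_1,\dots,X_n)$: the composite $i_\CA\circ j$ is a morphism of augmented $R$-algebras, so $\epsilon_{R[\ulX]}=\epsilon_A\circ i_\CA\circ j$ vanishes on $I$. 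In particular each $f_\nu$ lies in $I_{R[\ulX]}$, i.e.\ reduces to $0$ modulo $I_{R[\ulX]}$.

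Next I would compute $\Tor_\bullet^{R[\ulX]}(A,R)$, where $R$ is regarded as an $R[\ulX]$-module via $\epsilon_{R[\ulX]}$, i.e.\ $R=R[\ulX]/I_{R[\ulX]}$. As $(f_1,\dots,f_n)$ is a regular sequence, the Koszul complex $K_\bullet(f_1,\dots,f_n;R[\ulX])$ is a finite free resolution of $A$. Tensoring it with $R$ over $R[\ulX]$ replaces every coefficient $f_\nu$ occurring in its differential by $0$, by the first step, so the complex $K_\bullet(f_1,\dots,f_n;R[\ulX])\otimes_{R[\ulX]}R$ has all differentials equal to $0$; hence $\Tor_i^{R[\ulX]}(A,R)\cong\bigwedge^i R^n$, and in particular $\Tor_1^{R[\ulX]}(A,R)\cong R^n$ is free of rank $n$.

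Then I would feed the short exact sequence $0\to I\to R[\ulX]\to A\to 0$ into the long exact $\Tor_\bullet^{R[\ulX]}(-,R)$-sequence. Since $R[\ulX]$ is free, $\Tor_1^{R[\ulX]}(R[\ulX],R)=0$, and the map $I\otimes_{R[\ulX]}R\to R[\ulX]\otimes_{R[\ulX]}R=R$ induced by the inclusion $I\hookrightarrow R[\ulX]$ is zero because $I\subseteq I_{R[\ulX]}$. Hence the sequence gives $N_\CH=I/(I\cdot I_{R[\ulX]})=I\otimes_{R[\ulX]}R\cong\Tor_1^{R[\ulX]}(A,R)\cong R^n$, which is finite locally free of rank $n$. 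Since $t^\ast_\CH=I_{R[\ulX]}/I_{R[\ulX]}^2$ is evidently free of rank $n$ on the images of $X_1,\dots,X_n$, this proves both the local freeness of $N_\CH$ and the rank equality. Finally, $A$ is finite locally free over $R$ by hypothesis on $\CH$, so the exact sequence $0\to N_\CH\to A^\flat\to A\to 0$ given by $i_\CA$ has projective cokernel and therefore splits, giving $A^\flat\cong N_\CH\oplus A$; in particular $A^\flat$ is finite locally free over $R$. Since finite local freeness is Zariski-local on $\Spec R$, the assertions proved on the members of an affine cover hold on all of $\Spec R$, and $\rk_R N_\CH=\rk_R t^\ast_\CH$ holds as an equality of locally constant functions.

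I expect the only point requiring care to be the vanishing of the differentials of the tensored Koszul complex, which is precisely what forces $\Tor_1^{R[\ulX]}(A,R)$ to be free and which relies on the containment $I\subseteq I_{R[\ulX]}$ extracted in the first step; in a general complete intersection presentation $A=R[\ulX]/I$ the module $I\otimes_{R[\ulX]}R$ need not be free, so it is essential that the $f_\nu$ vanish at the origin. The remaining ingredients --- that a regular sequence of length $n$ is resolved by its Koszul complex, and that a short exact sequence with projective quotient splits --- are standard.
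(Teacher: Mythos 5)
Your argument is correct and is essentially the paper's own proof in homological packaging: the paper identifies $N_\CH=\ker i_\CA$ with $I\otimes_{R[\ulX]}R$ and tensors the explicit presentation $\bigoplus_{\mu<\nu}R[\ulX]h_{\mu\nu}\to\bigoplus_\nu R[\ulX]g_\nu\to I\to 0$ of Lemma~\ref{Lemmaforfinitepresentation} with $R$, the relations dying precisely because $f_\nu\in I_{R[\ulX]}$ — which is the degree-one piece of the Koszul resolution you invoke, so your $\Tor_1$ computation and long exact sequence reach the same isomorphism $N_\CH\cong R^{\oplus n}$ by the same key observation. Your concluding splitting of $0\to N_\CH\to A^\flat\to A\to 0$ via projectivity of $A$ is exactly the paper's (implicit) reduction step, so there is no gap.
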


\begin{proof}
Considering the exact sequence $0\to \ker i_\CA\to A^\flat\to A\to 0$ of $R$-modules it suffices to prove that $N_\CH=\ker i_\CA$ is finite locally free. Working locally on $\Spec R$ we assume that there is an epimorphism $j\colon R[\ulX]\onto A^\flat$ as in Definition~\ref{DefDAug} such that $I:=\ker(i_\CA\circ j)$ is generated by a regular sequence $(f_1,\ldots,f_n)$. Then $\ker i_\CA=I/(I\cdot I_{R[\ulX]})=I\otimes_{R[\ulX]}R$. From Lemma~\ref{Lemmaforfinitepresentation} we get an exact sequence of $R[\ulX]$-modules
\[
\xymatrix @R=0pc {
\bigoplus_{1\le \mu<\nu\le n}\; R[\ulX]\cdot h_{\mu\nu} \ar[r] & \bigoplus_{\nu=1}^n \; R[\ulX]\cdot g_\nu \ar[r] & I \ar[r] & 0\\
h_{\mu\nu}\ar@{|->}[r] & f_\nu g_\mu-f_\mu g_\nu\,,\quad g_\nu \ar@{|->}[r] & f_\nu
}
\]
Applying $.\,\otimes_{R[\ulX]}R$, the first homomorphism becomes zero because $f_\nu\in I_{R[\ulX]}$, whence $f_\nu=0$ in $R$. So $\ker i_\CA\cong R^{\oplus n}$. The equality of ranks follows from $t^*_\CH=I_{R[\ulX]}/I_{R[\ulX]}^2=\bigoplus_{\nu=1}^n R\cdot X_\nu$.
\end{proof}

Faltings~\cite[\S\,2]{Faltings02} notes the following

\begin{Lemma}\label{LemmaExistsLift}
Let $\CH=\Spec(A, A^{\flat},i_\CA)$ and  $\wt\CH=\Spec(\wt A, \wt A^{\flat},i_{\wt\CA})$ be objects in $\DSch_S$ and let $f\colon A \to \wt A$ be a morphism of augmented $R$-algebras. Then the set 
\[
\CL\,:=\,\bigl\{\,f^\flat\colon A^\flat\to \wt A^\flat\text{ morphisms of augmented $R$-algebras for which }(f, f^\flat) \in \Hom_{\DSch_S} (\wt\CH,\CH)\,\bigr\}
\]
is non-empty and is a principal homogeneous space under $\Hom_R(t^*_{\CH},N_{\wt\CH})$. That is, for any $f^\flat\in\CL$ the map $\Hom_R(t^*_{\CH},N_{\wt\CH})\to\CL,\;h\mapsto f^\flat+\inclN_{\wt\CH}\circ h\circ \pi_\CH$ is a bijection.
\end{Lemma}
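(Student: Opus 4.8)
The plan is to establish the principal homogeneous space structure \emph{assuming} $\CL\neq\emptyset$ by a direct Leibniz-rule computation modulo the square-zero ideal $N_{\wt\CH}$, then to produce a lift Zariski-locally from the polynomial presentations of Definition~\ref{DefDAug}, and finally to glue the local lifts using that $\Spec R$ is affine.

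\emph{Step 1: torsor structure.} Fix $f^\flat\in\CL$. For $h\in\Hom_R(t^*_\CH,N_{\wt\CH})$ put $\delta_h:=\inclN_{\wt\CH}\circ h\circ\pi_\CH\colon A^\flat\to\wt A^\flat$. Recall from after Definition~\ref{DefDAug} that $N_{\wt\CH}^2=0$ and $I_{\wt A^\flat}\cdot N_{\wt\CH}=0$, and that $\pi_\CH$ satisfies $\pi_\CH(ab)=\epsilon_{A^\flat}(a)\,\pi_\CH(b)+\epsilon_{A^\flat}(b)\,\pi_\CH(a)$ in $t^*_\CH$. A direct check using these facts shows that $f^\flat+\delta_h$ is again an augmented $R$-algebra homomorphism, and $i_{\wt\CA}\circ(f^\flat+\delta_h)=i_{\wt\CA}\circ f^\flat=f\circ i_\CA$ because $i_{\wt\CA}\circ\inclN_{\wt\CH}=0$; hence $f^\flat+\delta_h\in\CL$, and $h\mapsto f^\flat+\delta_h$ is injective since $\inclN_{\wt\CH}$ is injective and $\pi_\CH$ surjective. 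Conversely, let $f^\flat_1\in\CL$ and set $\delta:=f^\flat_1-f^\flat\colon A^\flat\to\wt A^\flat$. Then $\delta$ takes values in $\ker i_{\wt\CA}=N_{\wt\CH}$ and is $R$-linear; expanding $f^\flat_1=f^\flat+\delta$, the quadratic term lies in $N_{\wt\CH}^2=0$, and $I_{\wt A^\flat}\cdot N_{\wt\CH}=0$ lets us replace $f^\flat(b)\cdot(-)$ by $\epsilon_{A^\flat}(b)\cdot(-)$ on $N_{\wt\CH}$, so that $\delta(ab)=\epsilon_{A^\flat}(a)\delta(b)+\epsilon_{A^\flat}(b)\delta(a)$. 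Consequently $\delta$ kills $1$ and $I_{A^\flat}^2$, hence factors through $\pi_\CH\colon A^\flat\onto t^*_\CH$ via a unique $R$-linear $h\colon t^*_\CH\to N_{\wt\CH}$, i.e. $\delta=\delta_h$. Thus $h\mapsto f^\flat+\delta_h$ is a bijection $\Hom_R(t^*_\CH,N_{\wt\CH})\isoto\CL$, once $\CL\neq\emptyset$.

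\emph{Step 2: local existence.} Working Zariski-locally on $\Spec R$, pick epimorphisms of augmented $R$-algebras $j\colon R[\ulX]\onto A^\flat$ and $\wt\jmath\colon R[\wt\ulX]\onto\wt A^\flat$ as in Definition~\ref{DefDAug}, and set $\bar\jmath:=i_\CA\circ j$, $\bar{\wt\jmath}:=i_{\wt\CA}\circ\wt\jmath$, $I:=\ker\bar\jmath$, $\wt I:=\ker\bar{\wt\jmath}$, so that $\ker j=I\cdot I_{R[\ulX]}$ and $\ker\wt\jmath=\wt I\cdot I_{R[\wt\ulX]}$. Since $\bar{\wt\jmath}$ restricts to a surjection $I_{R[\wt\ulX]}\onto I_{\wt A}$ of augmentation ideals, for each variable $X_\nu$ we may choose $\phi(X_\nu)\in I_{R[\wt\ulX]}$ with $\bar{\wt\jmath}(\phi(X_\nu))=f(\bar\jmath(X_\nu))$; this determines an augmented $R$-algebra homomorphism $\phi\colon R[\ulX]\to R[\wt\ulX]$ with $\phi(I_{R[\ulX]})\subseteq I_{R[\wt\ulX]}$ and $\bar{\wt\jmath}\circ\phi=f\circ\bar\jmath$, whence $\phi(I)\subseteq\ker\bar{\wt\jmath}=\wt I$ and therefore $\phi(I\cdot I_{R[\ulX]})\subseteq\wt I\cdot I_{R[\wt\ulX]}=\ker\wt\jmath$. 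Hence $\phi$ descends to an augmented $R$-algebra homomorphism $f^\flat\colon A^\flat\to\wt A^\flat$ with $f^\flat\circ j=\wt\jmath\circ\phi$, and precomposing $i_{\wt\CA}\circ f^\flat$ with the epimorphism $j$ yields $\bar{\wt\jmath}\circ\phi=f\circ\bar\jmath=f\circ i_\CA\circ j$, so $i_{\wt\CA}\circ f^\flat=f\circ i_\CA$ and $f^\flat\in\CL$ locally.

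\emph{Step 3: globalization and the main obstacle.} By Lemma~\ref{LemmaNBLocFree} the modules $t^*_\CH$ and $N_{\wt\CH}$ are finite locally free, so $R'\mapsto\Hom_{R'}(t^*_\CH\otimes_RR',N_{\wt\CH}\otimes_RR')$ is the quasi-coherent $\CO_{\Spec R}$-module attached to $\Hom_R(t^*_\CH,N_{\wt\CH})$, and by Steps~1 and~2 the assignment $\Spec R'\mapsto\CL(R')$ is a pseudo-torsor under it which is locally non-empty. Since $\Spec R$ is affine, the obstruction $H^1(\Spec R,-)$ for this quasi-coherent sheaf vanishes, so local lifts over a basic-open cover — which differ on overlaps by the $1$-cocycle recording their pairwise differences, a coboundary here — can be corrected to a single global $f^\flat\in\CL$; together with Step~1 this proves the lemma. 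The only genuinely delicate point is the construction of a lift at all; the torsor computation is a routine manipulation modulo the square-zero ideal, and the construction is straightforward Zariski-locally, so the actual content lies in the gluing, which is forced by affineness once the torsor structure is known.
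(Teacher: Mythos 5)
Your proposal is correct and follows essentially the same route as the paper: the torsor structure is obtained by the same square-zero computation (using $N_{\wt\CH}^2=0$, $I_{\wt A^\flat}\cdot N_{\wt\CH}=0$ and that a difference of two lifts is an $\epsilon$-derivation killing $\iota(R)$ and $I_{A^\flat}^2$), local lifts are built from the polynomial presentations exactly as in the paper, and the gluing is done via the vanishing of (\v{C}ech) $\Koh^1$ of the quasi-coherent sheaf $\CHom_S(t^*_\CH,N_{\wt\CH})$ on the affine base. The only difference is presentational: you assert the multiplicativity check for $f^\flat+\inclN_{\wt\CH}h\pi_\CH$ rather than writing it out, but the cited facts are precisely those the paper's explicit computation uses.
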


For the convenience of the reader we include a

\begin{proof}
We first show that for every $f^\flat\in\CL$ the map $\tilde f^\flat:=f^\flat+\inclN_{\wt\CH} h\pi_\CH\colon A^\flat\to\wt A^\flat$ is a morphism of augmented $R$-algebras. Clearly it is a map of $R$-modules with $\tilde f^\flat(I_{A^\flat})\subset I_{\wt A^\flat}$. We must show that $\tilde f^\flat(xy)=\tilde f^\flat(x)\tilde f^\flat(y)$. We write $x=x'+x''$ and $y=y'+y''$ with $x',y'\in\iota_{A^\flat}(R)$ and $x'',y''\in I_{A^\flat}$. Since $\inclN_{\wt\CH} h\pi_\CH(x)\cdot\inclN_{\wt\CH} h\pi_\CH(y)\in\ker(i_{\wt\CA})^2=0$ and $f^\flat(x'')\cdot\inclN_{\wt\CH} h\pi_\CH(y)\in I_{\wt A^\flat}\cdot\ker(i_{\wt\CA})=0$, as well as $\inclN_{\wt\CH} h\pi_\CH(x'y')=\inclN_{\wt\CH} h\pi_\CH(x''y'')=0$ we compute
\begin{eqnarray*}
\tilde f^\flat(x)\tilde f^\flat(y) & = & f^\flat(x)f^\flat(y)+(f^\flat(x')+f^\flat(x''))\cdot\inclN_{\wt\CH} h\pi_\CH(y)+\inclN_{\wt\CH} h\pi_\CH(x)\cdot(f^\flat(y')+f^\flat(y''))\\
& = & f^\flat(xy)+x'\cdot\inclN_{\wt\CH} h\pi_\CH(y)+y'\cdot\inclN_{\wt\CH} h\pi_\CH(x)\\
& = & f^\flat(xy)+\inclN_{\wt\CH} h\pi_\CH(x'y+y'x-x'y'+x''y'')\\
& = & \tilde f^\flat(xy)\,.
\end{eqnarray*}
Since $\im(\inclN_{\wt\CH} h\pi_\CH)\subset\ker(i_{\wt A^\flat})$ we have $i_{\wt A^\flat}\circ\tilde f^\flat=i_{\wt A^\flat}\circ f^\flat=f\circ i_{A^\flat}$ and so $\tilde f^\flat\in\CL$.

Next if $f^\flat,\tilde f^\flat\in\CL$ then $\im(f^\flat-\tilde f^\flat)\subset\ker(i_{\wt A^\flat})$. If further $x=x'+x''\in A^\flat$ with $x'\in\iota_{A^\flat}(R)$ and $x''\in I_{A^\flat}$, then $f^\flat(x)-\tilde f^\flat(x)=x'+f^\flat(x'')-x'-\tilde f^\flat(x'')$ is independent of $x'$. And for $x'',y''\in I_{A^\flat}$ the equation $f^\flat(x''y'')-\tilde f^\flat(x''y'')=f^\flat(x'')\bigl(f^\flat(y'')-\tilde f^\flat(y'')\bigr)+\bigl(f^\flat(x'')-\tilde f^\flat(x'')\bigr)\tilde f^\flat(y'')\in I_{\wt A^\flat}\cdot\ker(i_{\wt\CA})=0$ shows that $f^\flat-\tilde f^\flat$ factors through a unique $R$-homomorphism $h\in\Hom_R(t^*_{\CH},N_{\wt\CH})$ as $f^\flat-\tilde f^\flat=\inclN_{\wt\CH} h\pi_\CH$. 

It remains to show that $\CL\ne\emptyset$. Locally on open affine subsets $U_i:=\Spec R_i\subset S$ we consider presentations $A=R_i[\ulX]/I$, $A^\flat=R_i[\ulX]/(I\,I_{R_i[\ulX]})$ and $\wt A=R_i[\wt\ulX]/\tilde I$, $\wt A^\flat=R_i[\wt\ulX]/(\tilde I\,I_{R_i[\wt\ulX]})$. We choose an $R_i$-homomorphism $F_i\colon R_i[\ulX]\to R_i[\wt\ulX]$ which lifts $f$. In particular, $F_i(I)\subset\tilde I$ and $F_i(I_{R_i[\ulX]})\subset I_{R_i[\wt\ulX]}$. Therefore $F_i$ induces a homomorphism $f_i^\flat\colon A^\flat \to \wt A^ \flat$ which lifts $f$. In order to glue the $f_i^\flat$ we consider the quasi-coherent sheaf $H:=\CHom_S(t^*_\CH,N_{\wt\CH})$ on $S$. Over $U_{ij}:=U_i\cap U_j$ both $f_i^\flat$ and $f_j^\flat$ lift $f$. By the above there is a section $h_{ij}\in\Gamma(U_{ij},H)$ with $f_i^\flat-f_j^\flat=\inclN_{\wt\CH} h_{ij}\pi_\CH$. The $h_{ij}$ form a \v{C}ech cocycle. Since $\CKoh^1(\{U_i\},H)=0$ we find elements $h_i\in\Gamma(U_i,H)$ with $h_{ij}=h_i-h_j$. This means that the $\tilde f_i^\flat:=f_i^\flat-\inclN_{\wt\CH} h_i\pi_\CH$ coincide over $U_{ij}$ and glue to a morphism $\tilde f^\flat\colon A^\flat\to\wt A^\flat$ which lies in $\CL$.
\end{proof}

The category $\DSch_S$ possesses direct products. If $\CH=\Spec(A, A^{\flat}, i_\CA)$ and $\wt\CH=\Spec(\wt A,\wt A^{\flat}, i_{\wt\CA})$, then the product $\CH\times_S\wt\CH$ is given by $\Spec(A\otimes_R\wt A,(A\otimes_R\wt A)^\flat,\kappa)$, where 
\[
(A\otimes_R\wt A)^\flat\,:=\,(A^\flat\otimes_R\wt A^\flat)/(\ker i_\CA\otimes\wt A^\flat+A^\flat\otimes\ker i_{\wt\CA})\cdot(I_{A^\flat}\otimes\wt A^\flat+A^\flat\otimes I_{\wt A^\flat})
\]
and $\kappa$ is the natural epimorphism $(A\otimes_R A)^\flat\onto A\otimes_R A$. After choosing locally on $\Spec R$ presentations $A = R[\ulX]/I,\, A^\flat = R[\ulX]/(I \cdot I_{R[\ulX]})$ and $\wt A = R[\wt\ulX]/\tilde I,\,\wt A^\flat = R[\wt\ulX]/(\tilde I \cdot I_{R[\wt\ulX]})$ we can write
\[
(A\otimes_R\wt A)^\flat\,=\,R[\ulX\otimes 1, 1\otimes\wt\ulX]\,\big/\,(I\otimes R[\wt\ulX] + R[\ulX] \otimes\tilde I)\cdot\bigl(I_{R[\ulX]}\otimes R[\wt\ulX] + R[\ulX] \otimes I_{R[\wt\ulX]}\bigr).
\]
The projections $\pr_1\colon\CH\times_S\wt\CH\to\CH$ and $\pr_2\colon\CH\times_S\wt\CH\to\wt\CH$ come from the natural embeddings of $R[\ulX]$ and $R[\wt\ulX]$ into $R[\ulX\otimes 1,1 \otimes\wt\ulX]$.

\begin{Definition}\label{DefDGr}
Let $\DGr_S$ be the category of group objects in $\DSch_S$. If $\CG = \Spec \CA \in \DGr_S$, then its group structure is given via the comultiplication $\Delta\colon A\to A\otimes_R A$ and $\Delta^\flat\colon A^\flat\to(A\otimes_R A)^\flat$, the counit $\epsilon\colon A\to R$ and $\epsilon^\flat\colon A^\flat\to R$, and the coinversion $[-1]\colon A\to A$ and $[-1]^\flat\colon A^\flat\to A^\flat$, which satisfy the usual axioms. In particular, we require the counit axiom $(\id_{A^\flat}\otimes\epsilon^\flat) \circ \Delta^\flat = \id_{A^\flat} = (\epsilon^\flat\otimes\id_{A^\flat}) \circ \Delta^\flat $, and that $\epsilon$ and $\epsilon^\flat$ are the augmentation maps. The morphisms in $\DGr_S$ are morphisms of group objects.
\end{Definition}

If $\CG=(G,G^\flat)\in\DGr_S$, note that $G = \Spec A$ is a finite locally free group scheme over $R$ with the comultiplication $\Delta$, the counit $\epsilon$ and the coinversion $[-1]$. But in general $G^\flat$ is not a group scheme over $S$ when the comultiplication $\Delta^\flat\colon A^\flat\to(A\otimes_R A)^\flat$ does not lift to $A^\flat\otimes_R A^\flat$. Faltings and Abrashkin~\cite[\S\,1.2]{Abrashkin} make the following

\begin{Remark}\label{RemarkFaltingsAbrashkin}
\begin{enumerate}
\item \label{Abra_D}
If $\CG = \Spec(A, A^{\flat}, i_\CA) \in \DSch_S$ and $G = \Spec A$ is a finite locally free group scheme over $R$, then there exists a unique structure of a group object on $\CG$, which is compatible with that of $G$. It satisfies $\Delta^\flat(x)-x\otimes 1-1\otimes x\,\in\, I_{A^\flat}\otimes I_{A^\flat}$ for all $x\in I_{A^\flat}$.
\item \label{Abra_E}
If $\CG,\CH\in\DGr_S$ are group objects and $(f, f^\flat) \in \Hom_{\DSch_S}(\CG, \CH)$ such that $f\colon G \to H$ is a morphism of group schemes, then $(f, f^\flat) \in \Hom_{\DGr_S}(\CG, \CH)$.
\end{enumerate}
\end{Remark}

For the convenience of the reader we give a
\begin{proof}
\ref{Abra_D} By Lemma~\ref{LemmaExistsLift} we may choose a homomorphism $\wt\Delta^\flat\colon A^\flat\to(A\otimes_R A)^\flat$ which lifts the comultiplication map $\Delta\colon A \to A \otimes_R A$. We want to modify $\wt\Delta^\flat$ to $\Delta^\flat:=\wt\Delta^\flat+\inclN_{\CG\times\CG}\circ h\circ\pi_\CG$ for an $R$-homomorphism $h\in\Hom_R(t^*_\CG,N_{\CG\times\CG})$ such that $(\id_{A^\flat}\otimes\epsilon^\flat) \circ\Delta^\flat = \id_{A^\flat} = (\epsilon^\flat\otimes\id_{A^\flat}) \circ \Delta^\flat $ holds. Thus we can take $\inclN_{\CG\times\CG}\circ h\circ\pi_\CG(x)=\bigl(x-(\id_{A^\flat}\otimes\epsilon^\flat)\circ\wt\Delta^\flat(x)\bigr)\otimes 1 + 1\otimes\bigl(x-(\epsilon^\flat\otimes\id_{A^\flat})\circ\wt\Delta^\flat(x)\bigr)$. Note that this lies in $N_{\CG\times\CG}$, because $(\id_A\otimes\epsilon) \circ\Delta=\id_A=(\epsilon\otimes\id_A) \circ\Delta$ implies $x-(\id_{A^\flat}\otimes\epsilon^\flat)\circ\wt\Delta^\flat(x)\in\ker i_\CA$ and $x-(\epsilon^\flat\otimes\id_{A^\flat})\circ\wt\Delta^\flat(x)\in\ker i_\CA$. This also shows that $\inclN_{\CG\times\CG}\circ h\circ\pi_\CG\colon A^\flat\to(A\otimes_R A)^\flat$ factors through $t_\CG^*$ and therefore $h$ exists.

To prove uniqueness of $\Delta^\flat$ we work locally on $S$ and choose a presentation $A=R[\ulX]/I$ and $A^\flat=R[\ulX]/(I\cdot I_{R[\ulX]})$. Then we have $A\otimes_RA=R[\ulX\otimes1,1\otimes\ulX]/(I\otimes R[\ulX]+R[\ulX]\otimes I)$ and 
\[
(A\otimes_RA)^\flat\;=\;R[\ulX\otimes1,1\otimes\ulX]\,\big/\,(I\otimes R[\ulX]+R[\ulX]\otimes I)\cdot\bigl(I_{R[\ulX]}\otimes R[\ulX]+R[\ulX]\otimes I_{R[\ulX]}\bigr)\,. 
\]
Note that every element $u\otimes \tilde u\in I\otimes R[\ulX]$ with $u\in I$ and $\tilde u=\tilde u'+\tilde u''\in R[\ulX]$, where $\tilde u'\in R$ and $\tilde u''\in I_{R[\ulX]}$, satisfies $u\otimes \tilde u=u\otimes \tilde u'=(\tilde u'u)\otimes1$ in 
\[
\ker(i_{\CA\otimes\CA})\,=\,(I\otimes R[\ulX]+R[\ulX]\otimes I)\,\big/\,(I\otimes R[\ulX]+R[\ulX]\otimes I)\cdot\bigl(I_{R[\ulX]}\otimes R[\ulX]+R[\ulX]\otimes I_{R[\ulX]}\bigr)\,. 
\]
Now assume that $\wt\Delta^\flat$ and $\Delta^\flat$ both satisfy the counit axiom and lift $\Delta$. Then for every $x\in A^\flat$ there are $u,v\in I$ such that $\Delta^\flat(x)-\wt\Delta^\flat(x)=u\otimes1 + 1\otimes v$ in $(A\otimes_R A)^\flat$. We obtain $u=(\id_{A^\flat}\otimes\epsilon^\flat)(\Delta^\flat(x)-\wt\Delta^\flat(x))=x-x=0$ and $v=(\epsilon^\flat\otimes\id_{A^\flat})(\Delta^\flat(x)-\wt\Delta^\flat(x))=x-x=0$. This proves that $\Delta^\flat=\wt\Delta^\flat$.

The last assertion is standard. Namely, write $\Delta^\flat(x)-x\otimes 1-1\otimes x=\sum_i u_i\otimes v_i$ for $u_i=u_i'+u_i''$, $v_i=v_i'+v_i''$ with $u_i',v_i'\in\iota_{A^\flat}(R)$ and $u_i'',v_i''\in I_{A^\flat}$. Then $\sum_i u_i'v_i=(\epsilon^\flat\otimes\id_{A^\flat})(\sum_i u_i\otimes v_i)=x-\epsilon^\flat(x)-x=0$ implies $\sum_i u_i\otimes v_i=\sum_i u''_i\otimes v_i$. And $\sum_i u_i''v_i'=(\id_{A^\flat}\otimes\epsilon^\flat)(\sum_i u_i''\otimes v_i)=(\id_{A^\flat}\otimes\epsilon^\flat)(\Delta^\flat(x)-x\otimes 1-1\otimes x)=x-x-\epsilon^\flat(x)=0$ implies $\sum_i u''_i\otimes v_i=\sum_i u''_i\otimes v_i''$. 

\smallskip\noindent
\ref{Abra_E} We write $\CG=\Spec(A,A^\flat)$ with comultiplication $(\Delta,\Delta^\flat)$ and counit $(\epsilon,\epsilon^\flat)$, and $\CH=\Spec(\wt A,\wt A^\flat)$ with comultiplication $(\wt\Delta,\wt\Delta^\flat)$ and counit $(\tilde\epsilon,\tilde\epsilon^\flat)$. We also write $A=R[\ulX]/I$ locally on $S$. We have to show that $F:=(\Delta^\flat\circ f^\flat-(f^\flat\otimes f^\flat)\circ\wt\Delta^\flat)\colon\wt A^\flat\to(A\otimes_R A)^\flat$ is zero. From $\Delta\circ f=(f\otimes f)\circ\wt\Delta$ we see as in \ref{Abra_D} that for every $x\in\wt A^\flat$ there are $u,v\in I$ with $F(x)=u\otimes 1 + 1\otimes v$ in $(A\otimes_R A)^\flat$. Now 
\begin{eqnarray*}
u & = & (\id_{A^\flat}\otimes\epsilon^\flat)\circ F(x) \\
& = & (\id_{A^\flat}\otimes\epsilon^\flat)\circ\Delta^\flat\circ f^\flat(x)-(\id_{A^\flat}\otimes\epsilon^\flat)\circ(f^\flat\otimes f^\flat)\circ\wt\Delta^\flat(x)\\
& = & \id_{A^\flat}\circ f^\flat(x) - f^\flat\circ(\id_{A^\flat}\otimes\tilde\epsilon^\flat)\circ\wt\Delta^\flat(x)\\
& = & f^\flat(x)-f^\flat(x)\\
& = & 0\,,
\end{eqnarray*}
and likewise $v=0$. This shows that $F=0$ and $(f,f^\flat)\in\Hom_{\DGr_S}(\CG, \CH)$ as claimed.
\end{proof}
 
\bigskip

Let $\CG=(G,G^\flat,i_\CG)\in\DGr_S$. Faltings defines the \emph{co-Lie complex of $\CG$ over $S=\Spec R$} (that is, the fiber at the unit section of $G$ of the cotangent complex) as the complex of finite locally free $R$-modules
\begin{eqnarray}
\CoLA{\CG/S}\colon\qquad 0 \longto N_\CG \xrightarrow{\es d\;} t_\CG^\ast\longto 0
\end{eqnarray}
concentrated in degrees $-1$ and $0$ with differential $d:=\pi_\CG\circ\inclN_\CG$. Recall that the co-Lie complex of $G/S$ and more generally the cotangent complex of a morphism was defined by Illusie~\cite{Illusie71,Illusie72} generalizing earlier work of Lichtenbaum and Schlessinger~\cite{LS}; cf.\ Appendix~\ref{AppCotCom}. If $G=\Spec A$ for $A=R[\ulX]/I$ where $I$ is generated by a regular sequence then the cotangent complex of Illusie~\cite[II.1.2.3]{Illusie71} is quasi-isomorphic to the complex of finite locally free $A$-modules
\[
\CoCI{G/S}\colon\qquad 0\longto I/I^2 \xrightarrow{\es d\;} \Omega^1_{R[\ulX]/R}\otimes_{R[\ulX]}A \longto 0
\]
concentrated in degrees $-1$ and $0$ with $d$ being the differential map; see \cite[Corollaire~III.3.2.7]{Illusie71}. The \emph{co-Lie complex of $G$ over $S$} is defined by Illusie~\cite[\S\,VII.3.1]{Illusie72} as $\CoLI{G/S}:=\epsilon_G^*\CoCI{G/S}$ where $\epsilon_G\colon S\to G$ is the unit section. To see that this is equal to Faltings's definition note that
\begin{eqnarray*}
 \epsilon_G^\ast(I/{I^2}) &=&  I/{I^2}\otimes_A R \es = \es I \otimes_{R[\ulX]}R \es = \es  I / (I\cdot I_{R[\ulX]}) \es = \es N_\CG\qquad\text{and}\\[2mm]
 \epsilon_G^\ast(\Omega^1_ {R[\ulX]/R} \otimes_{R[\ulX]} A)  & = & \Omega^1_ {R[\ulX]/R} \otimes_{R[\ulX]} R \es = \es  \oplus_{\nu=1}^{n}R\cdot dX_\nu \es = \es I_{R[\ulX]}/I_{R[\ulX]}^2 \es = \es t_\CG^\ast,
\end{eqnarray*}
and that the differential of both co-Lie complexes sends an element $x\in I$ to the linear term in its expansion as a polynomial in $\ulX$, because all terms of higher degree are sent to zero under $\epsilon_G^\ast$.

Up to homotopy equivalence both $\CoCI{G/S}$ and $\CoL{G/S}$ only depend on $G$, and not on the presentation $A=R[\ulX]/I$ nor on the deformation $\CG$ of $G$. Note that $\CoCI{G/S}$ and $\iota^*\CoLI{G/S}$ are quasi-isomorphic by \cite[Chapter~II, Proposition~3.2.9]{Messing} where $\iota\colon G\to S$ is the structure map. 

\begin{Definition}\label{DefOmega}
We (re-)define the \emph{co-Lie module of $G$ over $S$} as $\omega_G:=\Koh^0(\CoL{\CG/S}):=\coker d$ and set $n_G:=\Koh^{-1}(\CoL{\CG/S}):=\ker d$. These $R$-modules only depend on $G$ and not on $\CG$. Since $\Koh^0(\CoCI{G/S})=\Omega^1_{G/S}$ we have $\omega_G=\epsilon_G^*\Omega^1_{G/S}$ which is also canonically isomorphic to the $R$-module of invariant differentials on $G$.
\end{Definition}

We record the following lemmas.

\begin{Lemma} \label{LemmaGEtale}
If $\CG\in\DGr_S$ the following are equivalent:
\begin{enumerate}
\setlength{\itemsep}{0ex}
\item \label{LemmaGEtale_A}
$G$ is \'etale over $S$,
\item \label{LemmaGEtale_B}
$\omega_G=0$,
\item \label{LemmaGEtale_C}
the differential of $\CoL{\CG/S}$ is an isomorphism.
\end{enumerate}
\end{Lemma}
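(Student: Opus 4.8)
The plan is to prove the cycle of implications \ref{LemmaGEtale_A} $\Rightarrow$ \ref{LemmaGEtale_B} $\Rightarrow$ \ref{LemmaGEtale_C} $\Rightarrow$ \ref{LemmaGEtale_A}, working locally on $S = \Spec R$ where we may pick a presentation $A = R[\ulX]/I$ with $I = (f_1,\dots,f_n)$ generated by a regular sequence, so that $N_\CG \cong R^{\oplus n}$, $t_\CG^* \cong \bigoplus_\nu R\cdot X_\nu \cong R^{\oplus n}$, and the differential $d\colon N_\CG \to t_\CG^*$ sends (the class of) $f_\nu$ to its linear term, i.e.\ $d$ is represented by the Jacobian matrix $\bigl(\partial f_\mu/\partial X_\nu\bigr)$ evaluated at the origin. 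The first implication \ref{LemmaGEtale_A} $\Rightarrow$ \ref{LemmaGEtale_B} is immediate: if $G$ is \'etale over $S$ then $\Omega^1_{G/S} = 0$, hence $\omega_G = \epsilon_G^*\Omega^1_{G/S} = 0$ by Definition \ref{DefOmega}.

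For \ref{LemmaGEtale_B} $\Rightarrow$ \ref{LemmaGEtale_C}: by definition $\omega_G = \coker d$, so $\omega_G = 0$ means $d\colon N_\CG \to t_\CG^*$ is surjective. Since $N_\CG$ and $t_\CG^*$ are finite locally free $R$-modules of the same rank (Lemma \ref{LemmaNBLocFree}), a surjective $R$-module map between them is automatically an isomorphism — this is the standard fact that a surjective endomorphism of a finitely generated module is injective, applied after choosing local trivializations, or more directly: a surjection of finite locally free modules of equal rank splits, and the splitting forces the kernel to be a locally free summand of rank $0$. Hence $d$ is an isomorphism, which is exactly \ref{LemmaGEtale_C}.

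For \ref{LemmaGEtale_C} $\Rightarrow$ \ref{LemmaGEtale_A}: if $d$ is an isomorphism, then in the local presentation the Jacobian matrix $J := \bigl(\partial f_\mu/\partial X_\nu(0)\bigr)$, which represents $d$ on $t_\CG^* \cong R^{\oplus n}$, is invertible over $R$. I would then deduce that $\Omega^1_{G/S} = 0$: from the conormal presentation $I/I^2 \xrightarrow{\bar d} \Omega^1_{R[\ulX]/R}\otimes_{R[\ulX]}A \to \Omega^1_{G/S} \to 0$, the map $\bar d$ is given by the Jacobian matrix with entries in $A$, and since its value at the unit section $\epsilon_G$ is invertible and $A$ is finite over $R$, the matrix is invertible over $A$ as well — here one uses that $G\to S$ being finite and the Jacobian being invertible modulo $I_A = \ker\epsilon_A$ together with $I_A$ being contained in the Jacobson radical along the unit section, or argues fibrewise: over every point of $S$ the fibre $G_s$ is a finite $k(s)$-scheme whose cotangent complex has invertible differential, so $G_s$ is \'etale over $k(s)$; then $G\to S$ is finite, flat (finite locally free group schemes are flat), and unramified, hence \'etale. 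Actually the cleanest route is: flatness of $G/S$ is automatic, and $d$ an isomorphism on the co-Lie complex implies $\omega_{G_s} = 0$ for every $s\in S$ (co-Lie complex commutes with base change for the regular-sequence presentation), hence $\Omega^1_{G_s/k(s)} = 0$ since $\Omega^1_{G_s/k(s)}$ is the base change of $\omega_{G_s}$ to the coordinate ring of $G_s$ by \cite[Chapter~II, Proposition~3.2.9]{Messing}; so each $G_s$ is unramified over $k(s)$, and a finite flat unramified group scheme over a field is \'etale. By the fibrewise criterion for \'etaleness \cite[IV$_4$, Th\'eor\`eme~17.8.2]{EGA}, $G$ is \'etale over $S$.

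The main obstacle is the last implication: one must be careful that "invertible Jacobian at the unit section" really does propagate to \'etaleness over all of $S$, not just in a neighbourhood of the unit section. The group structure is what saves us — translation by a section makes every point look like the unit section — but rather than invoking translations it is cleanest to reduce to the fibrewise statement over a field as above, where the co-Lie complex of a finite group scheme over a field controls $\Omega^1$ globally on the (Artinian, or more generally finite) scheme $G_s$, and then apply the fibrewise criterion for \'etaleness together with the automatic flatness of finite locally free group schemes.
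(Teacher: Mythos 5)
Your proposal is correct, and the easy parts coincide with the paper: (a)$\Rightarrow$(b) via $\Omega^1_{G/S}=0$, and the equivalence of (b) and (c) by the equal-rank argument of Lemma~\ref{LemmaNBLocFree}, exactly as in the paper's proof. The genuine difference is in getting from vanishing of $\omega_G$ back to \'etaleness. The paper argues directly on $G$ over $S$: since $\Omega^1_{G/S}$ is a finitely generated $\CO_G$-module, $\omega_G=\epsilon_G^*\Omega^1_{G/S}=0$ forces, by Nakayama, that $G$ is \'etale in a neighbourhood of the unit section, and then the group structure (translations) spreads \'etaleness over all of $G$. You instead discard your initial Jacobian sketch (rightly, as you note it is delicate) and pass to fibres: $\omega_{G_s}=0$ for every $s$, the identification of $\Omega^1_{G_s/k(s)}$ with the pullback of $\omega_{G_s}$ (Messing, Chapter~II, Proposition~3.2.9, which the paper also quotes) gives that each fibre is unramified, and flatness of the finite locally free scheme $G$ plus the fibrewise criterion yields \'etaleness of $G/S$. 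Both routes ultimately exploit homogeneity under translation — the paper explicitly, you through the translation-invariance of $\Omega^1$ packaged in Messing's quasi-isomorphism — and both are sound; your version trades the Nakayama-plus-translation argument for a reduction to fibres and a citation of the fibrewise \'etaleness criterion. In fact you could streamline your last step: the quasi-isomorphism $\CoCI{G/S}\simeq\iota^*\CoLI{G/S}$ already gives $\Omega^1_{G/S}\cong\iota^*\omega_G$ over $S$ itself, so $\omega_G=0$ yields $\Omega^1_{G/S}=0$ and hence unramifiedness directly, with no need to pass through the fibres at all.
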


\begin{proof}
If $G$ is \'etale then $\Omega^1_{G/S}=0$. Conversely, since $\Omega^1_{G/S}$ is a finitely generated $\CO_G$-module, $\omega_G=0$ implies by Nakayama that $G$ is \'etale along the zero section. Being a group scheme it is \'etale everywhere.

Clearly \ref{LemmaGEtale_C} implies \ref{LemmaGEtale_B}. Conversely if $\omega_G=0$, that is, if $d$ is surjective, then $d$ is also injective, because both $t^*_\CG$ and $N_\CG$ are finite locally free of the same rank by Lemma~\ref{LemmaNBLocFree}.
\end{proof}

\begin{Lemma}[{\cite[Chapter~II, Proposition~3.3.4]{Messing}}] \label{LemmaLongExSeq}
Let $0\to G'\to G\to G''\to0$ be an exact sequence of finite locally free group schemes over $S$. Then there is an exact sequence of $R$-modules
\[
0\,\longto\, n_{G''}\,\longto\, n_G\,\longto\, n_{G'}\,\longto\,\omega_{G''}\,\longto\,\omega_G\,\longto\,\omega_{G'}\,\longto\,0\,.
\]
In particular, if $G'\into G$ is a closed immersion then $\omega_G\onto\omega_{G'}$ is surjective.
\end{Lemma}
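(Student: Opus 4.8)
This is Messing's \cite[Chapter~II, Proposition~3.3.4]{Messing}, and the plan is to deduce it from the transitivity triangle for cotangent complexes. Recall from Section~\ref{CotCom} that, for a finite locally free group scheme $H$ over $S=\Spec R$, the co-Lie complex $\CoL{H/S}=\epsilon_H^*\CoCI{H/S}$ is represented by a complex of finite locally free $R$-modules placed in degrees $-1$ and $0$ — in fact quasi-isomorphic to $\CoL{\CG/S}=[N_\CG\to t^*_\CG]$ for a deformation $\CG$ of $H$ — so that $n_H=\Koh^{-1}(\CoL{H/S})$, $\omega_H=\Koh^0(\CoL{H/S})$, and $\Koh^i(\CoL{H/S})=0$ for $i\notin\{-1,0\}$. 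Since everything is local I would take $S=\Spec R$ affine, write $f\colon G\onto G''$ for the given faithfully flat epimorphism and $g'\colon G'\into G$ for the kernel immersion, and note that $G'$ is again finite locally free by Remark~\ref{RemFactsOnG}(c).

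I would first record two identifications. Feeding $S\xrightarrow{\epsilon_G}G\xrightarrow{\pi_G}S$ into the transitivity triangle \cite[Chapitre~II]{Illusie71} and using $\CoCI{S/S}=0$ gives $\CoCI{S/G}\simeq\CoL{G/S}[1]$, and likewise $\CoCI{S/G''}\simeq\CoL{G''/S}[1]$. Next, $G'=G\times_{G'',\epsilon_{G''}}S$ is a cartesian square whose map $f$ is flat, so the cotangent complex is compatible with this base change \cite[Chapitre~II]{Illusie71}; since moreover $g'\circ\epsilon_{G'}=\epsilon_G$, this yields $\epsilon_G^*\CoCI{G/G''}\simeq\epsilon_{G'}^*\CoCI{G'/S}=\CoL{G'/S}$.

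Plugging these three identifications into the transitivity triangle for $S\xrightarrow{\epsilon_G}G\xrightarrow{f}G''$, namely
\[
\epsilon_G^*\CoCI{G/G''}\longto\CoCI{S/G''}\longto\CoCI{S/G}\longto\epsilon_G^*\CoCI{G/G''}[1]\,,
\]
and rotating, I would obtain a distinguished triangle $\CoL{G''/S}\to\CoL{G/S}\to\CoL{G'/S}\to\CoL{G''/S}[1]$ in $D(R)$ whose first two maps are the pullbacks along $f$ and $g'$. Its long exact cohomology sequence, in view of the vanishing $\Koh^i=0$ for $i\notin\{-1,0\}$ recorded above and the identifications $\Koh^{-1}=n$, $\Koh^0=\omega$, collapses to exactly
\[
0\longto n_{G''}\longto n_G\longto n_{G'}\longto\omega_{G''}\longto\omega_G\longto\omega_{G'}\longto0\,,
\]
which is the assertion. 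The surjectivity of the last map is the ``in particular'' statement; for a bare closed immersion $G'\into G$ one first forms $G''=G/G'$ (Remark~\ref{RemFactsOnG}(d)) to be in this situation.

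I do not anticipate a genuine obstacle: the argument is formal once one has the transitivity triangle, the flat base-change property of $\CoCI{-}$, and the identification $\CoL{H/S}\simeq\CoL{\CG/S}$ established in Section~\ref{CotCom}; the only step requiring real input is the base-change isomorphism $\epsilon_G^*\CoCI{G/G''}\simeq\CoL{G'/S}$, which genuinely uses the flatness of $f$. Should one prefer to avoid Illusie's formalism and argue inside $\DGr_S$, the alternative is to choose compatible presentations $A_{G''}=R[\ulX]/J$, $A_G=R[\ulX,\ulY]/I$, $A_{G'}=R[\ulY]/I'$ (possible because $A_G$ is a relative complete intersection over $A_{G''}$) and compatible deformations via Lemma~\ref{LemmaExistsLift}, and then to check by hand that $0\to\CoL{\CG''/S}\to\CoL{\CG/S}\to\CoL{\CG'/S}\to0$ is a short exact sequence of two-term complexes, whose long exact cohomology sequence gives the result. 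That route is heavier on computation but uses only the tools of Section~\ref{CotCom}.
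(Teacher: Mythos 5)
The paper gives no proof of this lemma at all: it is quoted from Messing \cite[Chapter~II, Proposition~3.3.4]{Messing}, so there is no internal argument to compare against. Your proof is correct and is in substance the cited one: the two essential inputs are the identification $\CoCI{S/G}\simeq\epsilon_G^*\CoCI{G/S}[1]=\CoL{G/S}[1]$ from the transitivity triangle for $S\xrightarrow{\epsilon_G}G\to S$, and the flat base-change isomorphism $\epsilon_G^*\CoCI{G/G''}\simeq\CoL{G'/S}$, which genuinely uses that $G'=G\times_{G'',\epsilon_{G''}}S$ and that $G\to G''$ is faithfully flat (Remark~\ref{RemFactsOnG}); plugging these into the transitivity triangle for $S\xrightarrow{\epsilon_G}G\xrightarrow{f}G''$ and taking cohomology yields the six-term sequence because all three co-Lie complexes are concentrated in degrees $-1$ and $0$. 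Two small points are worth making explicit. First, derived and underived pullbacks agree here because $G$, $G'$, $G''$ are relative complete intersections, so their cotangent complexes are represented by two-term complexes of finite locally free modules (this is exactly what Section~\ref{CotCom} and Appendix~\ref{AppCotCom} supply); this is what lets you replace $L\epsilon_G^*$ by $\epsilon_G^*$ throughout. Second, for the ``in particular'' statement you need the maps in the resulting long exact sequence to be the natural ones induced by $f$ and $g'$ (in particular that $\omega_G\to\omega_{G'}$ is restriction along the closed immersion); this follows from a short compatibility check between the two transitivity triangles and deserves a sentence. Your alternative route inside $\DGr_S$, choosing compatible presentations and deformations via Lemma~\ref{LemmaExistsLift} and producing a termwise short exact sequence of co-Lie complexes, would also work and has the advantage of staying entirely within the elementary tools the paper actually develops, at the cost of more bookkeeping.
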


\section{Strict $\BF_q$-module schemes} \label{SectStrictF_qAct}

We keep the notation of the previous section. Let $\CO$ be a commutative unitary ring. 

\begin{Definition}
In this article an \emph{$\CO$-module scheme} over $S$ is a finite locally free commutative group scheme $G$ over $S$ together with a ring homomorphism $\CO\to\End_S(G)$. We denote the category of $\CO$-module schemes over $S$ by $\Gr(\CO)_S$.
\end{Definition}

\begin{Proposition} \label{PropCanonDecompAMod}
If $S$ is the spectrum of a field $L$ every $\CO$-module scheme $G$ over $S$ is canonically an extension $0\to G^0\to G\to G^\et\to 0$ of an \'etale $\CO$-module scheme $G^\et$ by a connected $\CO$-module scheme $G^0$. The $\CO$-module scheme $G^\et$ is the largest \'etale quotient of $G$. If $L$ is perfect, $G^\et$ is canonically isomorphic to the reduced closed $\CO$-module subscheme $G^\red$ of $G$ and the extension splits canonically, $G=G^0\times_S G^\red$.
\end{Proposition}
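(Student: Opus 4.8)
The plan is to reduce the statement to the classical connected--étale sequence for finite group schemes over a field, adding at each stage the observation that the constructions involved are stable under endomorphisms and hence inherit the $\CO$-action.

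\textbf{Recollection.} Write $G=\Spec A$ with $A$ a finite $L$-algebra, so $A$ is a finite product of local Artinian $L$-algebras, and let $A_0$ be the unique local factor through which the augmentation $\epsilon_A\colon A\onto L$ factors. Then $G^0:=\Spec A_0$ is a connected component of $G$ (in particular open and closed in $G$) and a closed subgroup scheme (necessarily central, as $G$ is commutative); the quotient $G^\et:=G/G^0$ exists and is finite locally free by Remark~\ref{RemFactsOnG}(d), is étale over $L$, and $0\to G^0\to G\to G^\et\to 0$ is exact. This is classical; see \cite{SGA3}. Moreover $G^\et$ is the largest étale quotient of $G$: if $G\onto H$ is any homomorphism to an étale group scheme, the image of the connected group scheme $G^0$ is a connected étale subgroup scheme of $H$ carrying the unit section, hence trivial, so $G\onto H$ factors through $G\onto G/G^0$.

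\textbf{Compatibility with the $\CO$-action.} I claim $G^0$ is stable under every $\phi\in\End_S(G)$. Indeed, $\phi$ restricted to the connected scheme $G^0$ is a morphism into $G$, so its image lies in a single connected component of $G$; since $\phi$ fixes the unit section and $\epsilon_G$ factors through $G^0$, that component is $G^0$, and thus $\phi|_{G^0}$ factors through the open and closed immersion $G^0\into G$. Hence $G^0$, and then $G^\et=G/G^0$, are naturally $\CO$-module schemes, the sequence $0\to G^0\to G\to G^\et\to 0$ is one of $\CO$-module schemes, and by the previous paragraph $G^\et$ is the largest étale quotient $\CO$-module scheme. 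This proves the first two assertions.

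\textbf{The perfect case.} Assume $L$ perfect. Then $G^\red\times_L G^\red$ is reduced (a product of reduced schemes over a perfect field), so the multiplication $G\times_L G\to G$ and the inversion restrict to $G^\red$, making $G^\red$ a closed subgroup scheme; it is finite and reduced over the perfect field $L$, hence étale, and since every morphism from a reduced scheme to $G$ factors through $G^\red$, it is an $\CO$-module subscheme. The composite $\alpha\colon G^\red\into G\onto G^\et$ is a homomorphism of étale $\CO$-module schemes; after base change to $\ol L$ it is bijective on $\ol L$-points, because over $\ol L$ one has $G^0(\ol L)=\{e\}$ and hence $G^\red(\ol L)=G(\ol L)=G^\et(\ol L)$, and a homomorphism of étale group schemes over $\ol L$ bijective on points is an isomorphism; by faithfully flat descent $\alpha$ is an isomorphism. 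Thus $\alpha^{-1}$ is an $\CO$-linear section of $G\onto G^\et$, the sequence splits, and the $\CO$-linear homomorphism $G^0\times_S G^\red\to G,\ (x,y)\mapsto x+y$ (using commutativity) has kernel $G^0\cap G^\red=\Spec\bigl((A_0)_\red\bigr)=\Spec L$, the trivial group scheme (as $A_0$ is local Artinian with residue field $L$). Being a monomorphism, it is a closed immersion by Remark~\ref{RemFactsOnG}(a), and since $\ord(G^0\times_S G^\red)=\ord G^0\cdot\ord G^\red=\ord G^0\cdot\ord G^\et=\ord G$ by Remark~\ref{RemFactsOnG}(c), it is an isomorphism $G^0\times_S G^\red\isoto G$ of $\CO$-module schemes.

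The step requiring the most care is the perfect case: one must invoke that $G^\red$ is a subgroup scheme, which genuinely uses perfectness of $L$, and it is the identification $\alpha\colon G^\red\isoto G^\et$ combined with the multiplicativity of orders that yields the canonical splitting; everything else is the routine observation that connected components and reduced subschemes are preserved by endomorphisms, which transports the classical constructions into the category $\Gr(\CO)_S$.
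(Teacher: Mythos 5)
Your proof is correct and follows essentially the same route as the paper, whose entire argument is the one-line observation that the constituents of the classical connected--\'etale decomposition of a finite group scheme over a field are $\CO$-invariant; you have simply written out that classical decomposition, the stability of $G^0$ and $G^\red$ under endomorphisms, and the standard order-counting argument for the splitting over a perfect field in full detail.
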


\begin{proof}
The constituents of the canonical decomposition of the finite $S$-group scheme $G$ are $\CO$-invariant.
\end{proof}

\begin{Definition}\label{DefStrictOAct}
Let $S = \Spec R$ be a scheme over $\CO$ and let $\CG\in\DGr_S$. A \emph{strict $\CO$-action} on $\CG$ is a homomorphism $\CO \to \End_{\DGr_S}(\CG)$  such that the induced action on $\CoLA{\CG/S}$ is \emph{equal} to the scalar multiplication via $\CO \to R$; compare Remark~\ref{RemStrictWithHomotopicAction}.

We let $\DGr(\CO)_S$ be the category whose objects are pairs $(\CG,[\,.\,])$ where $\CG\in\DGr_S$ and $[\,.\,]\colon\CO\to\End_{\DGr_S}(\CG)$, $a\mapsto[a]$ is a strict $\CO$-action, and whose morphisms $f\colon(\CG,[\,.\,])\to(\CG',[\,.\,]')$ are those morphisms $f\colon\CG\to\CG'$ in $\DGr_S$ which are compatible with the $\CO$-actions, that is, which satisfy $f\circ[a]=[a]'\circ f$ for all $a\in\CO$.

We let $\DGr^*(\CO)_S$ be the quotient category of $\DGr(\CO)_S$ having the same objects, whose morphisms are the equivalence classes of morphisms $(G,G^\flat)\to(H,H^\flat)$ in $\DGr(\CO)_S$ which induce the same morphism $G\to H$.
\end{Definition}

So by definition the forgetful functor $\DGr^*(\CO)_S\to\Gr(\CO)_S$, which sends $(G,G^\flat)$ to $G$ and morphisms $(G,G^\flat)\to(H,H^\flat)$ to their restriction to $G\to H$, is faithful.

Faltings~\cite[Remark~b) after Definition~1]{Faltings02} notes the following property of strict $\CO$-actions for which we include a proof.

\begin{Lemma}\label{LemmaStrictOAct}
A strict $\CO$-action $[\,.\,]$ on $\CG$ induces on every deformation $\wt\CG$ of $G$ a unique strict $\CO$-action $\wt{[\,.\,]}$ which is compatible with all lifts $\wt\CG\to\CG$ and $\CG\to\wt\CG$ of the identity on $G$. In particular, the pairs $(\CG,[\,.\,])$ and $(\wt\CG,\wt{[\,.\,]})$ are isomorphic in $\DGr^*(\CO)_S$.
\end{Lemma}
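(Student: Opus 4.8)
The plan is to transport the $\CO$-action across a chosen pair of lifts and then show the result is independent of the choices and automatically strict. First I would fix presentations and use Lemma~\ref{LemmaExistsLift} to choose lifts $g\colon\CG\to\wt\CG$ and $g'\colon\wt\CG\to\CG$ of $\id_G$ in $\DGr_S$; note that $g'\circ g$ and $g\circ g'$ are lifts of $\id_G$ (to $\CG$, resp.\ $\wt\CG$), so by the principal-homogeneous-space statement of Lemma~\ref{LemmaExistsLift} they differ from the identity lifts by homomorphisms factoring through $t^*$ into $N$. The candidate action is $\wt{[a]}:=g\circ[a]\circ g'$. To see this is a ring homomorphism $\CO\to\End_{\DGr_S}(\wt\CG)$ one computes $\wt{[a]}\circ\wt{[b]}=g\circ[a]\circ(g'\circ g)\circ[b]\circ g'$; the point is that inserting $g'\circ g$ in place of $\id_\CG$ changes the composite only by a term with image in $N_{\wt\CG}$ which is annihilated (since $I_{\wt A^\flat}\cdot\ker i_{\wt\CA}=0$, as recorded before Lemma~\ref{LemmaNBLocFree}, and $[b]$, being a group homomorphism, respects the augmentation ideal), hence $\wt{[a]}\circ\wt{[b]}=g\circ[a]\circ[b]\circ g'=g\circ[ab]\circ g'=\wt{[ab]}$; additivity and $\wt{[1]}=\id_{\wt\CG}$ are similar but easier.

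Next I would check strictness: the action induced by $\wt{[a]}$ on $\CoLA{\wt\CG/S}$ equals scalar multiplication by the image of $a$ in $R$. Here I use that, up to homotopy, $\CoLA{\wt\CG/S}\simeq\CoLA{G/S}\simeq\CoLA{\CG/S}$ depends only on $G$ (as explained in the discussion following Definition~\ref{DefOmega}), and that the maps on co-Lie complexes induced by $g$ and $g'$ are mutually inverse homotopy equivalences lifting the identity on $\CoLA{G/S}$. Since $[a]$ acts on $\CoLA{\CG/S}$ as scalar multiplication by $a$ by the strictness hypothesis, conjugating by these homotopy equivalences shows $\wt{[a]}$ acts on $\CoLA{\wt\CG/S}$ as the same scalar; hence $(\wt\CG,\wt{[\,.\,]})\in\DGr(\CO)_S$.

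Then I would prove independence of the choices and the compatibility clauses. If $h\colon\CG\to\wt\CG$ is any lift of $\id_G$, then $h$ and $g$ differ by a homomorphism into $N_{\wt\CG}$, and $h\circ[a]$, $g\circ[a]$ differ likewise; composing on the right with any lift $\wt\CG\to\CG$ and using again that $N_{\wt\CG}\cdot I_{\wt A^\flat}=0$ shows $g\circ[a]\circ g'=h\circ[a]\circ h'$ for any lifts $g',h'$. This gives at once that $\wt{[\,.\,]}$ is independent of $g,g'$, that $h\circ[a]=\wt{[a]}\circ h$ for every lift $h\colon\CG\to\wt\CG$, and symmetrically $h'\circ\wt{[a]}=[a]\circ h'$ for every lift $h'\colon\wt\CG\to\CG$; so any such $h,h'$ are morphisms in $\DGr(\CO)_S$. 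Uniqueness of a strict action with these compatibility properties is then immediate: if $\wt{[\,.\,]}'$ is another, pick a lift $h\colon\CG\to\wt\CG$; compatibility with $h$ forces $\wt{[a]}'\circ h=h\circ[a]=\wt{[a]}\circ h$, and since $h$ becomes an isomorphism in $\DGr^*(\CO)_S$ (its composite with a lift $\wt\CG\to\CG$ induces $\id_G$, hence is the identity in the quotient category), we get $\wt{[a]}'=\wt{[a]}$ in $\DGr^*(\CO)_S$; but both are honest morphisms inducing the same map $G\to G$, so by the definition of $\DGr^*(\CO)_S$ one must still argue they agree in $\DGr(\CO)_S$ — here one re-runs the displayed identity $\wt{[a]}'\circ h=\wt{[a]}\circ h$ together with the fact that $h$ has a section up to an element killing $\ker i_{\wt\CA}$, forcing equality on the nose. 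Finally, $(\CG,[\,.\,])\cong(\wt\CG,\wt{[\,.\,]})$ in $\DGr^*(\CO)_S$ because the lifts $g,g'$ are mutually inverse there and intertwine the two actions by the compatibilities just proved.

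\textbf{Main obstacle.} The routine verifications are the ring-homomorphism axioms for $\wt{[\,.\,]}$; the genuinely delicate point is keeping straight \emph{why} inserting or removing a lift of the identity changes a composite only by something that dies — this rests on the two facts that such a discrepancy factors through $N_{\wt\CG}=\ker i_{\wt\CA}$ (Lemma~\ref{LemmaExistsLift}) and that $N_{\wt\CG}$ is annihilated by the augmentation ideal $I_{\wt A^\flat}$, which is what lets group homomorphisms ``absorb'' it — and in checking that the homotopy equivalences on co-Lie complexes induced by $g$ and $g'$ are genuinely inverse to each other up to homotopy, so that strictness transports. Getting the bookkeeping in $\DGr^*(\CO)_S$ versus $\DGr(\CO)_S$ right for the uniqueness clause is the second place where care is needed.
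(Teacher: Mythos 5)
Your construction of the transported action is the naive conjugation by lifts, and that is precisely where the argument fails. Work at the level of algebras and choose, as in Lemma~\ref{LemmaExistsLift}, lifts $f\colon A^\flat\to\wt A^\flat$ and $g\colon\wt A^\flat\to A^\flat$ of $\id_A$, with $gf=\id+\inclN_\CG h\pi_\CG$ and $fg=\id+\inclN_{\wt\CG}\tilde h\pi_{\wt\CG}$. Your candidate is $\wt{[a]}:=f[a]g$. This is not a strict $\CO$-action in general: already $\wt{[1]}=fg=\id+\inclN_{\wt\CG}\tilde h\pi_{\wt\CG}\ne\id_{\wt A^\flat}$ unless the two lifts happen to be mutually inverse; multiplicativity fails because $(f[a]g)(f[b]g)=f[ab]g+ab\,f\inclN_\CG h\pi_\CG g$ and $f\inclN_\CG h\pi_\CG g=(fg)^2-fg\ne0$ whenever $fg\ne\id$; and strictness fails because $N_{f[a]g}=a(\id+\tilde h\tilde d)$ and $t^*_{f[a]g}=a(\id+\tilde d\tilde h)$ with $\tilde d=\pi_{\wt\CG}\inclN_{\wt\CG}$, i.e.\ the induced action on $\CoLA{\wt\CG/S}$ is only \emph{homotopic} to multiplication by $a$, whereas Definition~\ref{DefStrictOAct} demands equality (and Remark~\ref{RemStrictWithHomotopicAction} explicitly warns that the homotopy-weakened condition is not known to be equivalent for general $\CO$, so your ``transport strictness through homotopy equivalences of co-Lie complexes'' step cannot close this gap). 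The reasoning you use to absorb the discrepancies — that a term with image in $N_{\wt\CG}=\ker i_{\wt\CA}$ ``dies'' because $I_{\wt A^\flat}\cdot\ker i_{\wt\CA}=0$ — confuses the vanishing of \emph{products of elements} with the vanishing of a \emph{map} whose image lies in $N_{\wt\CG}$; such maps $\inclN_{\wt\CG}\ell\pi_\CG$ are exactly how two distinct lifts of the same morphism differ, so they are nonzero in general. For the same reason your claim that $g\circ[a]\circ g'$ is independent of the chosen lifts is false, and the uniqueness/compatibility argument built on it (including the unexplained passage from equality in $\DGr^*(\CO)_S$ back to equality in $\DGr(\CO)_S$) does not go through.

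The paper's proof repairs exactly this point: it sets $\wt{[a]}:=f[a]g-a\,\inclN_{\wt\CG}\tilde h\pi_{\wt\CG}$, the correction term being forced by the uniqueness computation (compatibility with $f,g$ together with strictness, i.e.\ $N_{\wt{[a]}}=a$, leaves no other choice). The substance of the proof is then the verification that this corrected formula is an algebra endomorphism, that $a\mapsto\wt{[a]}$ is additive (using $\wt\Delta^\flat$) and multiplicative (the cross terms cancel only because of the correction), that it acts on $N_{\wt\CG}$ and $t^*_{\wt\CG}$ exactly by $a$, and that it commutes with \emph{all} lifts of the identity, which yields both uniqueness and the isomorphism $(\CG,[\,.\,])\cong(\wt\CG,\wt{[\,.\,]})$ in $\DGr^*(\CO)_S$. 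Without the correction term and these cancellations, the central verifications in your outline fail as written.
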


\begin{proof}
Let $\CG=\Spec(A,A^\flat)$ and $\wt\CG=\Spec(A,\wt A^\flat)$. By Lemma~\ref{LemmaExistsLift} we may choose lifts $f\colon A^\flat\to\wt A^\flat$ and $g\colon\wt A^\flat\to A^\flat$ of the identity on $A$, and there are homotopies $h\colon t^*_\CG\to N_\CG$ and $\tilde h\colon t^*_{\wt\CG}\to N_{\wt\CG}$ satisfying $gf=\id+\inclN h\pi$ and $fg=\id+\tilde \inclN \tilde h\tilde\pi$ where $\inclN:=\inclN_\CG$, $\pi:=\pi_\CG$ and $\tilde\inclN:=\inclN_{\wt\CG}$, $\tilde\pi:=\pi_{\wt\CG}$. If we have a strict $\CO$-action $\CO\to\End_{\DGr_S}(\wt\CG),a\mapsto\wt{[a]}$ on $\wt\CG$ satisfying $f\circ[a]=\wt{[a]}\circ f$ and $g\circ\wt{[a]}=[a]\circ g$ then we necessarily must have $\wt{[a]}=\wt{[a]}(fg-\tilde \inclN \tilde h\tilde\pi)=f\circ[a]\circ g-\tilde\inclN N_{\wt{[a]}}\tilde h\tilde\pi=f[a]g-a\,\tilde \inclN \tilde h\tilde\pi$. This proves uniqueness.

So to define a strict $\CO$-action on $\wt\CG$ we choose $f$ and $g$ as in the previous paragraph and we set $\wt{[a]}:=f[a]g-a\,\tilde \inclN \tilde h\tilde\pi\colon\wt A^\flat\to\wt A^\flat$. To show that this is a ring homomorphism we first note that it is $R$-linear.  To prove compatibility with multiplication let $b,c\in\wt A^\flat$. We write $b=b'+b''$ and $c=c'+c''$ with $b',c'\in\iota_{\wt A^\flat}(R)$ and $b'',c''\in I_{\wt A^\flat}$. Then $\tilde\pi(b)=(b''\mod I_{\wt A^\flat}^2)=:\ol{b''}$ and $\wt{[a]}(b)=f[a]g(b)-a\,\tilde n\tilde h(\ol{b''})=b'+f[a]g(b'')-a\,\tilde n\tilde h(\ol{b''})$ with $f[a]g(b'')\in I_{\wt A^\flat}$, because $f$, $[a]$ and $g$ are homomorphisms of augmented $R$-algebras. Since $\tilde n(N_{\wt\CG})^2=\tilde n(N_{\wt\CG})\cdot I_{\wt A^\flat}=(0)$ and $\ol{b''c''}=0$ we can compute
\begin{eqnarray*}
\wt{[a]}(b)\cdot\wt{[a]}(c) & = & f[a]g(bc)-b'\cdot a\,\tilde n\tilde h(\ol{c''})-c'\cdot a\,\tilde n\tilde h(\ol{b''})\\[2mm]
 & = & f[a]g(bc)-a\,\tilde n\tilde h(b'\,\ol{c''}+c'\,\ol{b''}+\ol{b''c''})\\[2mm]
 & = & f[a]g(bc)-a\,\tilde n\tilde h\tilde\pi(bc)\\[2mm]
 & = & \wt{[a]}(bc)\,.
\end{eqnarray*}

We next claim that the map $\CO\to\End_{\DGr_S}(\wt\CG),a\mapsto\wt{[a]}$ is a ring homomorphism. First of all $\wt{\,[1]\,}=fg-\tilde\inclN \tilde h\tilde\pi=\id$. Next note that 
\begin{eqnarray*}
\wt{[a]}+\wt{\,[b]\,}\! & \!\!\!\!\!:=\!\!\!\! & \wt m\circ\bigl((f[a]g-a\,\tilde \inclN \tilde h\tilde\pi)\otimes(f[b]g-b\,\tilde \inclN \tilde h\tilde\pi)\bigr)\circ\wt\Delta^\flat\\
& \!\!\!\!\!=\!\!\!\!\! & \wt m\!\circ\!\Bigl((f\otimes f)([a]\otimes[b])(g\otimes g) - (a\,\tilde \inclN \tilde h\tilde\pi)\otimes f[b]g - f[a]g\otimes(b\,\tilde \inclN \tilde h\tilde\pi) + (a\,\tilde \inclN \tilde h\tilde\pi)\otimes(b\,\tilde \inclN \tilde h\tilde\pi)\Bigr)\!\circ\!\wt\Delta^\flat\!,
\end{eqnarray*}
where $\wt m\colon(\wt A\otimes_R\wt A)^\flat\to\wt A^\flat$ is induced from the multiplication in the ring $\wt A^\flat$ and the homomorphism $(f[a]g-a\,\tilde \inclN \tilde h\tilde\pi)\otimes(f[b]g-b\,\tilde \inclN \tilde h\tilde\pi)\colon\wt A^\flat\otimes_R\wt A^\flat\to\wt A^\flat\otimes_R\wt A^\flat$ induces a homomorphism $(\wt A\otimes_R\wt A)^\flat\to(\wt A\otimes_R\wt A)^\flat$ denoted by the same symbol.
To prove $\wt{[a]}+\wt{\,[b]\,}\stackrel{!}{=}\wt{[a+b]}:=f[a+b]g - (a+b)\tilde \inclN \tilde h\tilde\pi$ we observe
\[
\wt m\circ(f\otimes f)([a]\otimes[b])(g\otimes g)\circ\wt\Delta^\flat\;=\;f\circ m\circ([a]\otimes[b])\circ\Delta^\flat\circ g\;=:\;f\circ[a+b]\circ g\,, 
\]
and we evaluate the claimed equality on $\wt X_\nu$ where $\wt A^\flat=R[\wt\ulX]/\tilde I\cdot I_{R[\wt\ulX]}$. For every $\nu$ there are $u_i,v_i\in I_{R[\wt\ulX]}$ such that $\wt\Delta^\flat(\wt X_\nu)=\wt X_\nu\otimes1+1\otimes \wt X_\nu+\sum_i u_i\otimes v_i$; see \ref{Abra_D} after Definition~\ref{DefDGr}. Now $\tilde\pi(1)=0$, as well as $(\tilde \inclN \tilde h\tilde\pi)(I_{R[\wt\ulX]})\subset\tilde I/\tilde I\cdot I_{R[\wt\ulX]}\subset\wt A^\flat$ and $f[b]g(I_{R[\wt\ulX]})\subset I_{R[\wt\ulX]}$ imply
\begin{eqnarray*}
\wt m\circ\bigl((a\,\tilde \inclN \tilde h\tilde\pi)\otimes f[b]g\bigr)\circ\Delta^\flat(\wt X_\nu) & = & a\,\tilde \inclN \tilde h\tilde\pi(\wt X_\nu)\qquad\text{and}\\[2mm]
\wt m\circ\bigl(f[a]g \otimes (b\,\tilde \inclN \tilde h\tilde\pi)\bigr)\circ\Delta^\flat(\wt X_\nu) & = & b\,\tilde \inclN \tilde h\tilde\pi(\wt X_\nu)\qquad\text{and}\qquad\bigl((a\,\tilde \inclN \tilde h\tilde\pi)\otimes(b\tilde \inclN \tilde h\tilde\pi)\bigr)\circ\Delta^\flat(\wt X_\nu)\es=\es 0\,.
\end{eqnarray*}
From this $\wt{[a]}+\wt{\,[b]\,}=\wt{[a+b]}$ follows. To prove that $\wt{[a]}\circ\wt{\,[b]\,}=\wt{[ab]}$, we use that $N_{[a]}=a$ and $t^*_{[b]}=b$ implies $[a]\inclN=\inclN N_{[a]}=a\cdot\inclN$ and $\pi[b]=t^*_{[b]}\pi=b\cdot\pi$, as well as $[a]g\tilde\inclN=[a]\inclN N_g=a\cdot g\tilde\inclN$ and $\tilde\pi f[b]=t^*_f\pi[b]=b\cdot\tilde\pi f$. We compute 
\begin{eqnarray*}
\wt{[a]}\circ\wt{\,[b]\,} & = & (f[a]g-a\,\tilde \inclN \tilde h\tilde\pi)\circ(f[b]g-b\,\tilde \inclN \tilde h\tilde\pi)\\
& = & f[a](\id+\inclN h\pi)[b]g - a\,\tilde \inclN \tilde h\tilde\pi f[b]g - b\cdot f[a]g\,\tilde \inclN \tilde h\tilde\pi + ab\, \tilde \inclN \tilde h\tilde\pi\tilde \inclN \tilde h\tilde\pi\\
& = & f[a][b]g+ab\,f\inclN h\pi g - ab\,\tilde\inclN\tilde h\tilde\pi fg - ab\,fg\tilde\inclN\tilde h\tilde\pi + ab\,\tilde\inclN \tilde h\tilde\pi\tilde\inclN\tilde h\tilde\pi\\
& = & f[ab]g + ab\,f(gf-\id)g - ab\,(fg-\id)fg-ab\,(fg-\tilde\inclN\tilde h\tilde\pi)\tilde\inclN\tilde h\tilde\pi \\
& = & f[ab]g-ab\,\tilde\inclN\tilde h\tilde\pi\\
& = & \wt{[ab]}\,.
\end{eqnarray*}
This proves that $\CO\to\End_{\DGr_S}(\wt\CG),a\mapsto\wt{[a]}$ is a ring homomorphism. From 
\[
\begin{array}{rcccccccl}
t^*_{\wt{[a]}}\tilde\pi & = &  \tilde\pi\wt{[a]} & = & \tilde\pi f[a]g-a\,\tilde\pi\tilde\inclN\tilde h\tilde\pi & = &  a\,\tilde\pi(fg-\tilde\inclN\tilde h\tilde\pi) & = & a\,\tilde\pi\qquad\text{and}\\
\tilde\inclN N_{\wt{[a]}} & = & \wt{[a]}\tilde\inclN & = & f[a]g\tilde\inclN -a\,\tilde\inclN\tilde h\tilde\pi\tilde\inclN & = & a(fg-\tilde\inclN\tilde h\tilde\pi)\tilde\inclN & = & a\,\tilde\inclN
\end{array}
\]
we conclude that $\CO\to\End_{\DGr_S}(\wt\CG),a\mapsto\wt{[a]}$ is a strict $\CO$-action on $\wt\CG$.

To prove that $\wt{[a]}$ is compatible with $f$ and $g$, we compute $\wt{[a]}f=f[a]gf-a\,\tilde\inclN\tilde h\tilde\pi f=f[a]+f[a]\inclN h\pi-a(fgf-f)=f[a]$ and $g\wt{[a]}=gf[a]g-a\,g\tilde\inclN\tilde h\tilde\pi=[a]g+\inclN h\pi[a]g-a(gfg-g)=[a]g$. If $f'\colon A^\flat\to\wt A^\flat$ and $g'\colon\wt A^\flat\to A^\flat$ are other lifts of the identity on $A$ then $f'=f+\tilde\inclN\tilde\ell\pi$ and $g'=g+\inclN\ell\tilde\pi$ for $R$-homomorphisms $\tilde\ell\colon t^*_\CG\to N_{\wt\CG}$ and $\ell\colon t^*_{\wt\CG}\to N_\CG$. Then $f'[a]=f[a]+\tilde\inclN\tilde\ell\pi[a]=\wt{[a]} f+a\,\tilde\inclN\tilde\ell\pi=\wt{[a]} f+\wt{[a]}\tilde\inclN\tilde\ell\pi=\wt{[a]} f'$ and $[a] g'=[a] g+[a]\inclN\ell\tilde\pi=g\wt{[a]}+a\,\inclN\ell\tilde\pi=g\wt{[a]}+\inclN\ell\tilde\pi\wt{[a]}=g'\wt{[a]}$. This proves the first part of the lemma.

Finally $f$ and $g$ are mutually inverse isomorphisms between $(\CG,[\,.\,])$ and $(\wt\CG,\wt{[\,.\,]})$ in $\DGr^*(\CO)_S$.
\end{proof}

\begin{Remark} \label{RemStrictWithHomotopicAction}
The co-Lie complex $\CoL{\CG/S}$ depends on the deformation $\CG$ of $G$. For another deformation $\wt\CG$ the complex $\CoL{\wt\CG/S}$ is homotopically equivalent to $\CoL{\CG/S}$. Therefore one might try to weaken Definition~\ref{DefStrictOAct} and only require that the action of $a\in\CO$ on $\CoL{\CG/S}$ is \emph{homotopic} to the scalar multiplication with $a$. We do not know whether this is equivalent to Definition~\ref{DefStrictOAct} and whether Lemma~\ref{LemmaStrictOAct} remains valid for general $\CO$. Both is true for the polynomial ring $\CO=\BF_p[a]$.
\end{Remark}

\begin{Remark} Note that there can be different non-isomorphic strict $\CO$-actions on a deformation $\CG$. For example let $G=\Balpha_p=\Spec R[X]/(X^p)$ and $A^\flat=R[X]/(X^{p+1})$. Let $\CO=\BF_p[a]$ be the polynomial ring in the variable $a$, and let $R$ be an $\CO$-algebra by sending $a$ to $0$ in $R$. For every $u\in R$ the endomorphism $[a]=0\colon\Balpha_p\to\Balpha_p,\,X\mapsto 0$ lifts to $[a]\colon A^\flat\to A^\flat,\,X\mapsto u X^p$. All these lifts define strict $\CO$-actions on $(G,\Spec A^\flat)$ which are non-isomorphic in $\DGr^*(\BF_p[a])_S$. In particular, the forgetful functor $\DGr^*(\BF_p[a])_S\to\Gr(\BF_p[a])_S$ is \emph{not} fully faithful.
\end{Remark}

In contrast, for $\CO=\BF_q$ we have the following

\begin{Lemma}\label{LemmaStrictF_qAction}
The forgetful functor $\DGr^*(\BF_q)_S\to\Gr(\BF_q)_S$ is fully faithful. In particular, if $G\in\Gr(\BF_q)_S$ and $\CG=(G,G^\flat)\in\DGr_S$ is a deformation of $G$, then there is at most one strict $\BF_q$-action on $\CG$ which lifts the action on $G$.
\end{Lemma}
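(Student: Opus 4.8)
The plan is to show that, over an affine base $S=\Spec R$ (to which the situation localizes), \emph{every} $\DSch_S$-lift of a morphism of $\BF_q$-module schemes between two deformations is automatically compatible with the strict $\BF_q$-actions; the engine is that there are no nonzero derivations out of $\BF_q$. Faithfulness of the forgetful functor $\DGr^*(\BF_q)_S\to\Gr(\BF_q)_S$ already holds by construction, so only fullness is at stake. So let $(\CG,[\,.\,])$ and $(\CG',[\,.\,]')$ be objects of $\DGr(\BF_q)_S$, write $\CG=\Spec(A,A^\flat)$ and $\CG'=\Spec(B,B^\flat)$, and let $\phi\colon G\to G'$ be a morphism of the underlying $\BF_q$-module schemes. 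By Lemma~\ref{LemmaExistsLift} the augmented homomorphism $\phi^*\colon B\to A$ admits a lift $\phi^\flat\colon B^\flat\to A^\flat$ with $(\phi,\phi^\flat)\in\Hom_{\DSch_S}(\CG,\CG')$, and since $\phi$ is a morphism of group schemes, item~\ref{Abra_E} after Definition~\ref{DefDGr} upgrades this to $(\phi,\phi^\flat)\in\Hom_{\DGr_S}(\CG,\CG')$. It remains to see that any such $\phi^\flat$ satisfies $[a]\circ\phi^\flat=\phi^\flat\circ[a]'$ for all $a\in\BF_q$.

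Fix $a\in\BF_q$ and put $\kappa_a:=[a]\circ\phi^\flat-\phi^\flat\circ[a]'\colon B^\flat\to A^\flat$. Both $[a]\circ\phi^\flat$ and $\phi^\flat\circ[a]'$ are the $\flat$-parts of $\DSch_S$-morphisms $\CG\to\CG'$ with the same underlying ring homomorphism $[a]_G^*\circ\phi^*=\phi^*\circ[a]_{G'}^*\colon B\to A$ (this equality is exactly the $\BF_q$-equivariance of $\phi$ on $G$), so by the homogeneity part of Lemma~\ref{LemmaExistsLift} there is a unique $R$-linear map $k_a\colon t^*_{\CG'}\to N_\CG$ with $\kappa_a=\inclN_\CG\circ k_a\circ\pi_{\CG'}$. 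The heart of the proof is the claim that $a\mapsto k_a$ is a derivation $\BF_q\to\Hom_R(t^*_{\CG'},N_\CG)$, the target being regarded as an $\BF_q$-module through $\BF_q\to R$; that is, $k_1=0$, $k_{a+b}=k_a+k_b$ and $k_{ab}=a\,k_b+b\,k_a$.

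Granting this, the lemma follows at once: any derivation $k$ of $\BF_q$ into an $\BF_q$-module vanishes, since for $a\in\BF_q^\times$ one has $a^{q-1}=1$, whence by iterated Leibniz $0=k_1=(q-1)\,a^{q-2}\,k_a$, and $(q-1)\,a^{q-2}$ is a unit in $R$ (because $q-1\equiv-1$ in $\BF_p$ and $a^{q-2}=a^{-1}$); together with $k_0=0$ this gives $k\equiv0$. Hence $\kappa_a=0$ for all $a$, so $(\phi,\phi^\flat)\in\Hom_{\DGr(\BF_q)_S}\bigl((\CG,[\,.\,]),(\CG',[\,.\,]')\bigr)$ and its class in $\DGr^*(\BF_q)_S$ maps to $\phi$, which proves fullness. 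The ``in particular'' statement is then the special case $\CG=\CG'$, $\phi=\id_G$, $\phi^\flat=\id_{A^\flat}$: there $\kappa_a=[a]-[a]'$, so $[a]=[a]'$ for all $a\in\BF_q$.

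To prove the claim I would use that $[\,.\,]$ and $[\,.\,]'$ are ring homomorphisms into $\End_{\DGr_S}$, that $\phi^\flat$ respects the comultiplications (being a $\DGr_S$-morphism), and that strictness gives $[a]\circ\inclN_\CG=a\cdot\inclN_\CG$, $\pi_\CG\circ[a]=a\cdot\pi_\CG$ and the analogues for $\CG'$, which follow from $N_{[a]}=a\cdot\id$ and $t^*_{[a]}=a\cdot\id$. Then $k_1=0$ because $[1]=\id$ on both sides; substituting $[a]\circ\phi^\flat=\phi^\flat\circ[a]'+\kappa_a$ into $[ab]\circ\phi^\flat=[a]\circ[b]\circ\phi^\flat$ and using $[a]\circ\kappa_b=a\,\kappa_b$ (as $\im\kappa_b\subset N_\CG$) together with $\kappa_a\circ[b]'=b\,\kappa_a$ yields $\kappa_{ab}=b\,\kappa_a+a\,\kappa_b$, i.e.\ $k_{ab}=a\,k_b+b\,k_a$. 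The additivity $k_{a+b}=k_a+k_b$ is the laborious step, and the one I expect to be the main obstacle: one expresses $[a+b]\circ\phi^\flat$ as the composite of the multiplication $(A\otimes_RA)^\flat\to A^\flat$, the map induced on the deformed tensor products by $([a]\circ\phi^\flat)\otimes([b]\circ\phi^\flat)$, and the comultiplication $\Delta^\flat$ of $\CG'$ (using that $\phi^\flat$ commutes with comultiplication), expands each factor as $\phi^\flat\circ[a]'+\kappa_a$ resp.\ $\phi^\flat\circ[b]'+\kappa_b$ by $R$-bilinearity of the tensor product, and then must show --- via item~\ref{Abra_D} after Definition~\ref{DefDGr} (that $\Delta^\flat(x)-x\otimes1-1\otimes x\in I_{B^\flat}\otimes I_{B^\flat}$), the fact that $N_\CG$ is a square-zero ideal annihilated by $I_{A^\flat}$, and the identity $u\otimes w=(w'u)\otimes1$ valid in $(A\otimes_RA)^\flat$ for $u\in N_\CG$ and $w$ of constant term $w'$ (cf.\ the proof of item~\ref{Abra_D}) --- that the mixed term $\kappa_a\otimes\kappa_b$ contributes $0$, that each of the two terms containing exactly one $\kappa$ contributes $\kappa_a$ resp.\ $\kappa_b$, and that the remaining term reassembles to $\phi^\flat\circ[a+b]'$. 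Everything else in the argument is routine.
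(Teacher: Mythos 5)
Your proposal is correct and follows essentially the same route as the paper's proof: faithfulness by construction, fullness by lifting $\phi$ via Lemma~\ref{LemmaExistsLift}, upgrading to a $\DGr_S$-morphism via \ref{Abra_E}, measuring the failure of equivariance by $\kappa_a=\inclN_\CG\, k_a\,\pi_{\CG'}$, and deriving the Leibniz rule $k_{ab}=a\,k_b+b\,k_a$ from strictness ($[a]\circ\inclN_\CG=a\cdot\inclN_\CG$ and $\pi_{\CG'}\circ[b]'=b\cdot\pi_{\CG'}$), with additivity handled through the comultiplication exactly as in the paper's computation of $h_{ab}$ and $h_{a+b}$. The only divergence is the endgame: the paper observes that $a\mapsto h_a$ is an $\BF_p$-derivation and invokes $\Omega^1_{\BF_q/\BF_p}=0$, whereas you conclude from $0=k_1=k_{a^{q-1}}=(q-1)a^{q-2}\,k_a$, where $(q-1)a^{q-2}=-a^{-1}\in\BF_q^\times$ maps to a unit of $R$. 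A pleasant byproduct you did not exploit: this power trick uses only $k_1=0$ and the Leibniz rule, so the additivity $k_{a+b}=k_a+k_b$ --- the step you single out as the main obstacle, and the one the paper does verify by evaluating on generators $X_\nu$ using $\Delta^\flat(X_\nu)=X_\nu\otimes1+1\otimes X_\nu+\sum_iu_i\otimes v_i$ --- is not actually needed for your conclusion; you only need $k_0=0$, which is immediate since $[0]$ and $[0]'$ factor through the augmentations and $\phi^\flat$ respects them.
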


\begin{proof}
Let $(\CG,[\,.\,])$ and $(\wt\CG,\wt{[\,.\,]})$ be in $\DGr^*(\BF_q)_S$ with $\CG=\Spec(A,A^\flat)$ and $\wt\CG=\Spec(\wt A,\wt A^\flat)$. Let $f\colon A\to\wt A$ be a morphism in $\Gr(\BF_q)_S$, that is $\wt{[a]}f=f[a]$. Take any lift $f^\flat\colon A^\flat\to\wt A^\flat$ of $f$. Then for each $a\in\BF_q$ there is an $R$-homomorphism $h_a\colon t^*_\CG\to N_{\wt\CG}$ with $\wt{[a]}f^\flat-f^\flat[a]=\tilde\inclN h_a\pi$. It satisfies $h_{ab}=a h_b+b h_a$ because $\pi[b]=t_{[b]}^*\pi=b\,\pi$ and $\wt{[a]}\tilde\inclN=\tilde\inclN N_{\wt{[a]}}=a\,\tilde\inclN$, and hence
$$\tilde\inclN h_{ab}\pi\,=\,\wt{[ab]}f^\flat-f^\flat[ab]\,=\,\wt{[a]}(\wt{\,[b]\,}f^\flat-f^\flat[b])+(\wt{[a]}f^\flat-f^\flat[a])[b]\,=\,\wt{[a]}\tilde\inclN h_b\pi+\tilde\inclN h_a\pi[b]\,=\,\tilde\inclN(ah_b+bh_a)\pi\,.$$
We claim that it also satisfies $h_{a+b}=h_a+h_b$. Namely 
\begin{eqnarray*}
\tilde\inclN h_{a+b}\pi & = & \wt{[a+b]}f^\flat-f^\flat[a+b] \\
& = & \wt m\circ(\wt{[a]}\otimes\wt{\,[b]\,})\circ\wt\Delta^\flat\circ f^\flat-f^\flat\circ m\circ([a]\otimes[b])\circ\Delta^\flat \\
& = & \wt m\circ(\wt{[a]}f^\flat\otimes\wt{\,[b]\,}f^\flat-f^\flat[a]\otimes f^\flat[b])\circ\Delta^\flat \\
& = & \wt m\circ\bigl((\wt{[a]}f^\flat-f^\flat[a])\otimes\wt{\,[b]\,}f^\flat + f^\flat[a]\otimes(\wt{\,[b]\,}f^\flat-f^\flat[b])\bigr)\\
& = & \wt m\circ(\tilde\inclN h_a\pi\otimes\wt{\,[b]\,}f^\flat+f^\flat[a]\otimes\tilde\inclN h_b\pi)\circ\Delta^\flat
\end{eqnarray*}
where $\wt m\colon(\wt A\otimes_R\wt A)^\flat\to\wt A^\flat$ is induced from the multiplication in the ring $\wt A^\flat$ and $(\wt{[a]}f^\flat\otimes\wt{\,[b]\,}f^\flat)\colon A^\flat\otimes_R A^\flat\to\wt A^\flat\otimes_R\wt A^\flat$ induces a homomorphism $(A\otimes_R A)^\flat\to(\wt A\otimes_R\wt A)^\flat$ denoted by the same symbol. We evaluate this expression on $X_\nu$ where $A^\flat=R[\ulX]/I\cdot I_{R[\ulX]}$. For every $\nu$ there are $u_i,v_i\in I_{R[\ulX]}$ such that $\Delta^\flat(X_\nu)=X_\nu\otimes1+1\otimes X_\nu+\sum_i u_i\otimes v_i$; see \ref{Abra_D} after Definition~\ref{DefDGr}. Now $\pi(1)=0$, as well as $(\tilde \inclN \tilde h_a\pi)(I_{R[\ulX]})\subset\tilde I/\tilde I\cdot I_{R[\wt\ulX]}\subset\wt A^\flat$ and $\wt{\,[b]\,}f^\flat(I_{R[\ulX]})\subset I_{R[\wt\ulX]}$ imply $\tilde\inclN h_{a+b}\pi(X_\nu)=\tilde\inclN h_a\pi(X_\nu)+\tilde\inclN h_b\pi(X_\nu)$ as desired. This proves $h_{a+b}=h_a+h_b$. If $a$ lies in the image $\BF_p$ of $\BZ$ in $\BF_q$ then $h_a=a\cdot h_1=0$. In other words $a\mapsto h_a,\,\BF_q\to\Hom_R(t^*_\CG,N_{\wt\CG})$ is an $\BF_p$-derivation. Since $\Omega^1_{\BF_q/\BF_p}=(0)$ we must have $h_a=0$ and $\wt{[a]}f^\flat=f^\flat[a]$ for all $a\in\BF_q$. This means that $(f,f^\flat)$ defines a morphism in $\DGr^*(\BF_q)_S$ which maps to $f$ under the forgetful functor. So this functor is fully faithful. The remaining assertion follows by taking $\wt A^\flat=A^\flat$, $\wt A=A$ and $f^\flat=\id$.
\end{proof}

\begin{Definition} \label{DefStrictF_qMod}
A finite locally free $\BF_q$-module scheme $G$ over $R$ is called a \emph{strict $\BF_q$-module scheme} if it lies in the essential image of the forgetful functor $\DGr^*(\BF_q)_S\to\Gr(\BF_q)_S$, that is, if it has a deformation $\CG$ carrying a strict $\BF_q$-action which lifts the $\BF_q$-action on $G$. We identify $\DGr^*(\BF_q)_S$ with the category of finite locally free strict $\BF_q$-module schemes over $S$.
\end{Definition}

\begin{Lemma}\label{LemmaStrictIsLocal}
For a finite locally free $\BF_q$-module scheme $G$ over $R$ the property of being a strict $\BF_q$-module scheme is local on $\Spec R$.
\end{Lemma}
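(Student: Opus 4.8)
We must show: given an affine open cover $\Spec R=\bigcup_i U_i$ with $U_i=\Spec R_i$, and $G\in\Gr(\BF_q)_S$ where $S=\Spec R$, the scheme $G$ is a strict $\BF_q$-module scheme over $R$ if and only if each restriction $G|_{U_i}$ is a strict $\BF_q$-module scheme over $R_i$. One direction is easy: a witness $(\CG,[\,.\,])$ of strictness of $G$ restricts to a witness $(\CG|_{U_i},[\,.\,]|_{U_i})$ of strictness of $G|_{U_i}$, since the conditions defining $\DSch$, $\DGr$ and a strict $\BF_q$-action --- ideals generated by regular sequences, and the scalar action on the co-Lie complex --- are stable under the flat base change $R\to R_i$. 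So assume from now on that every $G|_{U_i}$ is strict.

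The plan is to produce a \emph{single global} deformation $\CG=(G,G^\flat)\in\DGr_S$ of $G$ and then to glue strict $\BF_q$-actions onto it. For the deformation, note that $A:=\Gamma(G,\CO_G)$ is a finite $R$-algebra, so its augmentation ideal $I_A$ is a finitely generated $R$-module; choosing $R$-module generators $a_1,\dots,a_N$ of $I_A$ yields a presentation $A=R[\ulX]/I$ with $X_\nu\mapsto a_\nu$, for which $I\subset I_{R[\ulX]}$ and the augmentations are compatible. Since $G$ is a relative complete intersection over $R$ by \cite[Proposition~III.4.15]{SGA3}, the closed immersion $G\into\BA^N_R=\Spec R[\ulX]$ so obtained is, locally on $S$, a regular immersion of codimension $N$ by \cite[IV$_4$, Proposition~19.3.7]{EGA}; that is, $I$ is locally on $S$ generated by a regular sequence of length $N$. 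Setting $A^\flat:=R[\ulX]/(I\cdot I_{R[\ulX]})$, with augmentation $X_\nu\mapsto0$ and the canonical epimorphism $i_\CA\colon A^\flat\onto A$ of augmented $R$-algebras, Definition~\ref{DefDAug} (applied with $j\colon R[\ulX]\onto A^\flat$ the projection) shows $\CG:=\Spec(A,A^\flat,i_\CA)\in\DSch_S$; and since $G$ is a finite locally free group scheme, \ref{Abra_D} after Definition~\ref{DefDGr} equips $\CG$ with its unique compatible group-object structure, so $\CG\in\DGr_S$ is a global deformation of $G$.

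Now glue the actions. Fix $i$. Since $G|_{U_i}$ is strict, it admits \emph{some} deformation over $R_i$ carrying a strict $\BF_q$-action; by Lemma~\ref{LemmaStrictOAct} the deformation $\CG|_{U_i}$ then also carries a strict $\BF_q$-action $[\,.\,]_i$ lifting the $\BF_q$-action on $G|_{U_i}$, and by Lemma~\ref{LemmaStrictF_qAction} this $[\,.\,]_i$ is unique with this property. Hence on each overlap $U_i\cap U_j$ the restrictions of $[\,.\,]_i$ and $[\,.\,]_j$ are both strict $\BF_q$-actions on $\CG|_{U_i\cap U_j}$ lifting the $\BF_q$-action on $G$, so they coincide, again by Lemma~\ref{LemmaStrictF_qAction}. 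A ring homomorphism $\BF_q\to\End_{\DGr_S}(\CG)$ amounts to a compatible family of endomorphisms of the sheaves of $\CO_S$-algebras $\CO_G$ and $\CO_{G^\flat}$ satisfying the (local) group-compatibility identities, hence the $[\,.\,]_i$ glue to a ring homomorphism $[\,.\,]\colon\BF_q\to\End_{\DGr_S}(\CG)$ lifting the $\BF_q$-action on $G$. Finally, the co-Lie complex $\CoL{\CG/S}\colon 0\to N_\CG\xrightarrow{d}t^\ast_\CG\to0$ commutes with the flat base changes $R\to R_i$, and over $U_i$ the endomorphism of it induced by $[a]$ is the one induced by $[a]_i$, namely scalar multiplication by $a$; as equality of $R$-linear maps may be checked on the Zariski cover, $[a]$ induces scalar multiplication by $a$ on $\CoL{\CG/S}$ for every $a\in\BF_q$. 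Therefore $[\,.\,]$ is a strict $\BF_q$-action and $(\CG,[\,.\,])$ exhibits $G$ as a strict $\BF_q$-module scheme over $R$.

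I expect the crux to be the construction of the \emph{global} deformation $\CG$: it relies on the relative-complete-intersection property being independent of the chosen presentation of the finite $R$-algebra $A$, so that the globally available presentation $A=R[\ulX]/I$ already satisfies the regular-sequence condition demanded by Definition~\ref{DefDAug}. Once a single global $\CG$ is available, the gluing of $\BF_q$-actions is purely formal, with no cohomological obstruction, because of the rigidity provided by Lemmas~\ref{LemmaStrictOAct} and~\ref{LemmaStrictF_qAction}: a strict action on a given deformation is unique when it exists, and it induces a unique strict action on every other deformation, so the locally defined $[\,.\,]_i$ are forced to agree on overlaps.
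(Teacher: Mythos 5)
Your argument is correct and follows essentially the same route as the paper's proof: transfer the local strict actions to the restrictions of one global deformation via Lemma~\ref{LemmaStrictOAct}, use the uniqueness from Lemma~\ref{LemmaStrictF_qAction} to see they agree on overlaps, and glue. The only difference is that you spell out the construction of the global deformation $\CG$ from a global presentation $A=R[\ulX]/I$ (using the relative complete intersection property), a point the paper's proof simply takes for granted by starting with ``let $\wt\CG$ be a deformation of $G$ over $\Spec R$.''
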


\begin{proof}
Let $\wt\CG$ be a deformation of $G$ over $\Spec R$. Let $\Spec R_i\subset\Spec R$ be an open covering and let $\CG_i$ be deformations of $G\times_R\Spec R_i$ carrying a strict $\BF_q$-action which lifts the $\BF_q$-action on $G$. This action induces by Lemma~\ref{LemmaStrictOAct} a strict $\BF_q$-action on $\wt\CG\times_R\Spec R_i$ for all $i$. Above $\Spec R_i\cap\Spec R_j$ these actions coincide by Lemma~\ref{LemmaStrictF_qAction}, and hence they glue to a strict $\BF_q$-action on $\wt\CG$ as desired.
\end{proof}

\begin{Example} \label{ExStrictF_qAct}
We give some examples for finite locally free strict $\BF_q$-module schemes. Let $R$ be an $\BF_q$-algebra.

\smallskip\noindent
(a) Let $\Balpha_q = \Spec R[X]/{(X^q)}$ and $\Balpha_q^\flat = \Spec R[X]/{(X^{q+1})}$. Then $[a](X) = aX$ for $a\in\BF_q$ defines a strict $\BF_q$-action on $\CG=(\Balpha_q,\Balpha_q^\flat)$. Indeed, the co-Lie complex is 
\[
\CoL{\CG/S}\colon\qquad 0 \longto X^q \cdot R \longto X\cdot R \longto 0  
\]
with $d=0$ and $a\in\BF_q$ acts on it as scalar multiplication by $a$ because $N_{[a]}(X^q)=(aX)^q=aX^q$ and $t^*_{[a]}(X)=aX$. Therefore $\Balpha_q$ is a finite locally free strict $\BF_q$-module scheme.

\medskip\noindent
(b) On $\Balpha_p = \Spec R[X]/{(X^p)}$ there is an $\BF_q$-action given by $[a](X) = aX$. If $q\ne p$ it does \emph{not} lift to a strict $\BF_q$-action on $\Balpha_p^\flat = \Spec R[X]/{(X^{p+1})}$. Although we may lift the action to $\CG=(\Balpha_p,\Balpha_p^\flat)$ via $[a](X) = aX$, the co-Lie complex is 
\[
\CoL{\CG/S}\colon\qquad 0 \longto X^p \cdot R \longto X\cdot R \longto 0  
\]
and so $a\in\BF_q$ acts on $N_\CG$ by $a^p$ which is not scalar multiplication by $a$ when $a^p\ne a$. Any other lift $\wt{[a]}$ of the $\BF_q$-action on $\Balpha_p$ to $\CG$ satisfies $\wt{[a]}=[a]+\inclN h_a\pi$ for an $R$-homomorphism $h_a\colon t^*_\CG\to N_\CG$ and yields $\inclN N_{\wt{[a]}}=\wt{[a]}\inclN=[a]\inclN+\inclN h_a\pi\inclN=[a]\inclN=\inclN N_{[a]}$ because $\pi\inclN=d=0$ on $\CoL{\CG/S}$. So no such action is strict and $\Balpha_p$ is \emph{not} a strict $\BF_q$-module scheme.

\medskip\noindent
(c) The constant \'etale group scheme $\ul{\BF_q} = \Spec R[X]/{(X^q - X)}$ over $\Spec R$ and its deformation $\ul{\BF_q}^\flat=\Spec R[X]/(X^{q+1}-X^2)$ carry a strict $\BF_q$-action via $[a](X) = aX$. Indeed, the co-Lie complex is 
\[
\CoL{\CG/S}\colon\qquad 0 \longto (X-X^q) \cdot R \longto X\cdot R \longto 0  
\]
with $d\colon X-X^q\mapsto X$ and $a\in\BF_q$ acts on it by $N_{[a]}(X-X^q)=aX-(aX)^q=a(X-X^q)$ and $t^*_{[a]}(X)=aX$. Therefore $\ul{\BF_q}$ is a finite locally free strict $\BF_q$-module scheme.

\medskip\noindent
(d) The multiplicative group $\Bmu_p= \Spec R[X]/{(X^p-1)}$ has an $\BF_p$-action via $[a](X) = X^a$. This action does not lift to $\Bmu_p^\flat=\Spec R[X]/(X-1)^{p+1}$, because on $\Bmu_p^\flat$ we have $\Delta(X)=X\otimes X$ and hence $[a](X)=X^a$, which satisfies $[p](X)=X^p\ne1$. Therefore no deformation of $\Bmu_p$ can carry a strict $\BF_p$-action and $\Bmu_p$ is \emph{not} a strict $\BF_p$-module scheme. Note that nevertheless $\BF_p$ acts through scalar multiplication on the co-Lie complex $\CoL{\Bmu_p/S}$.
\end{Example}

Part (c) generalizes to the following

\begin{Lemma}\label{LemmaEtaleIsStrict}
Any finite \'etale $\BF_q$-module scheme is a finite locally free strict $\BF_q$-module. In particular, if $0\to G'\to G\to G''\to 0$ is an exact sequence of finite locally free $\BF_q$-module schemes with $G$ a strict $\BF_q$-module and $G''$ \'etale, then both $G'$ and $G''$ are strict $\BF_q$-modules.
\end{Lemma}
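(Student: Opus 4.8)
The plan is to treat the two assertions in turn, both by the deformation method of Section~\ref{SectStrictF_qAct}. For the first, I would work Zariski-locally on $\Spec R$ (legitimate by Lemma~\ref{LemmaStrictIsLocal}) and write $G=\Spec A$ with $A=R[\ulX]/I$ for a regular sequence $I$ and augmentation $X_\nu\mapsto0$, taking for $\CG=(G,G^\flat)$ the standard deformation $A^\flat=R[\ulX]/(I\cdot I_{R[\ulX]})$, which is a group object in $\DSch_S$ by \ref{Abra_D} after Definition~\ref{DefDGr}. Since $G$ is \'etale, Lemma~\ref{LemmaGEtale} gives that the differential $d\colon N_\CG\to t^*_\CG$ of $\CoL{\CG/S}$ is an isomorphism. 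For each $a\in\BF_q$ I would lift $[a]\colon G\to G$ to $\DSch_S$ by Lemma~\ref{LemmaExistsLift}, hence to $\DGr_S$ by \ref{Abra_E}; replacing a chosen lift $[a]^\flat$ by $[a]^\flat+\inclN_\CG h\pi_\CG$ alters the induced endomorphism $N_{[a]^\flat}$ of $N_\CG$ by $h\circ d$, so, $d$ being an isomorphism, I can and will choose $[a]^\flat$ with $N_{[a]^\flat}=a\cdot\id_{N_\CG}$; since $d$ is an isomorphism this forces $t^*_{[a]^\flat}=a\cdot\id$ as well, so $[a]^\flat$ acts on $\CoL{\CG/S}$ as scalar multiplication by $a$.

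It then remains to check that $a\mapsto[a]^\flat$ is a ring homomorphism, which rests on two points. First, every $n\in N_\CG=\ker i_\CA$ is primitive, $\Delta^\flat(n)=n\otimes1+1\otimes n$ (this follows from $\Delta^\flat(n)\in\ker(i_{\CA\otimes\CA})=N_\CG\otimes1+1\otimes N_\CG$ together with the counit axiom, exactly as in the computations in \ref{Abra_D}), whence $f\mapsto N_f$ is additive and multiplicative on $\End_{\DGr_S}(\CG)$. Second, two lifts of the same morphism of $G$ inducing the same endomorphism of $N_\CG$ coincide, because changing the lift moves $N_{[a]^\flat}$ by the bijection $h\mapsto h\circ d$. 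Consequently $[1]^\flat=\id$, while $[a]^\flat+[b]^\flat$ and $[a]^\flat\circ[b]^\flat$ are lifts of $[a+b]$ and $[ab]$ inducing $(a+b)\cdot\id$ and $ab\cdot\id$ on $N_\CG$, hence equal $[a+b]^\flat$ and $[ab]^\flat$. So $\CG$ carries a strict $\BF_q$-action lifting that of $G$, and $G$ is a finite locally free strict $\BF_q$-module scheme.

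For the ``in particular'' part, $G''$ is \'etale and hence strict by the first assertion, so only $G'$ needs attention. Here I would use that the unit section of the \'etale group scheme $G''$ over $S$ is an open and closed immersion, so that $G'=\ker(G\to G'')=G\times_{G''}S$ is open and closed in $G$ and, Zariski-locally on $\Spec R$ (again enough by Lemma~\ref{LemmaStrictIsLocal}), is cut out by an idempotent $e\in A=\CO_G(S)$ with $\epsilon_A(e)=1$. Picking a deformation $\CG=(G,\Spec A^\flat)$ of $G$ with a strict $\BF_q$-action $[\,\cdot\,]^\flat$, the square-zero ideal $N_\CG=\ker(A^\flat\onto A)$ lifts $e$ uniquely to an idempotent $e^\flat\in A^\flat$, and I would take $\CG':=(G',\Spec e^\flat A^\flat)$ with the structures induced from $\CG$. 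The action descends: since $[a]\colon G\to G$ is $\BF_q$-linear it carries the submodule scheme $G'$ into itself, so $[a]^*((1-e)A)\subset(1-e)A$ and $[a]^*(1-e)$ is an idempotent of $A$ inside $(1-e)A$; hence $[a]^\flat(1-e^\flat)$ is an idempotent reducing modulo $N_\CG$ to one orthogonal to $e$, so $[a]^\flat(1-e^\flat)\cdot e^\flat$ is an idempotent lying in the square-zero ideal $N_\CG$ and is therefore $0$, whence $[a]^\flat$ preserves $(1-e^\flat)A^\flat$ and descends to $[a]^{\prime\flat}$ on $e^\flat A^\flat$, giving a ring homomorphism $\BF_q\to\End_{\DGr_S}(\CG')$. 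Finally, since $G'\into G$ is an open immersion through the unit section, $\CG'\to\CG$ induces an isomorphism $\CoL{\CG/S}\isoto\CoL{\CG'/S}$ intertwining $[a]^\flat$ with $[a]^{\prime\flat}$; as $[a]^\flat$ is scalar multiplication by $a$ on $\CoL{\CG/S}$, so is $[a]^{\prime\flat}$ on $\CoL{\CG'/S}$, and $\CG'$ carries a strict $\BF_q$-action, i.e.\ $G'$ is a strict $\BF_q$-module scheme.

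The part I expect to require the most care is not the homological input but the verification, in the second assertion, that $\CG'=(G',\Spec e^\flat A^\flat)$ is genuinely an object of $\DGr_S$: that $G'$ together with $e^\flat A^\flat$ satisfies the relative-complete-intersection presentation conditions of Definition~\ref{DefDAug}, and that the comultiplication, counit and coinversion of $\CG$ descend along $A^\flat\onto e^\flat A^\flat$. This rests on $G'\into G$ being open and closed (so that, for instance, $\Delta(1-e)$ vanishes on $G'\times_S G'$ because $G'$ is a subgroup, and the corresponding idempotent on $(A\otimes_R A)^\flat$ lifting $e\otimes e$ is respected) together with the fact that $N_\CG$ is square-zero, so that idempotents lift uniquely and idempotents in $N_\CG$ vanish.
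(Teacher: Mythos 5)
Your proof of the first assertion is correct, but it takes a genuinely different route from the paper's. The paper exploits the fact that the zero section of the \'etale scheme $G$ is open: it decomposes $A=R\times A_1$, chooses $u_0\in I_{R[\ulX]}$, $u_1\in I_1$ with $u_0+u_1=1$, and writes down an explicit lift $[a]^\flat(X_i)=aX_iu_1+[a](X_i)(1-u_1)$, verifying by hand that this is a ring homomorphism acting as $a$ on $N_\CG$ and $t^*_\CG$. You instead use Lemma~\ref{LemmaGEtale}, i.e.\ that $d=\pi_\CG\circ\inclN_\CG\colon N_\CG\to t^*_\CG$ is an isomorphism, to normalize the lifts supplied by Lemma~\ref{LemmaExistsLift} and \ref{Abra_E} so that $N_{[a]^\flat}=a\cdot\id$, and you deduce $[a]^\flat+[b]^\flat=[a+b]^\flat$, $[a]^\flat[b]^\flat=[ab]^\flat$, $[1]^\flat=\id$ from the uniqueness of normalized lifts together with the primitivity $\Delta^\flat(n)=n\otimes1+1\otimes n$ for $n\in N_\CG$. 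All the ingredients check out against the computations of Section~3 (in particular $N_{f^\flat+\inclN h\pi}=N_{f^\flat}+h\circ d$, and the description of $\ker(i_{\CA\otimes\CA})$ used in \ref{Abra_D}), and like the paper's argument yours works for a general coefficient ring $\CO$. This is a clean alternative.

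For the ``in particular'' part your treatment of $G'$ diverges from the paper and contains a genuine gap, which you yourself flag but do not close: the claim that $\CG'=(G',\Spec e^\flat A^\flat)$ is an object of $\DSch_S$ (and then of $\DGr_S$). Definition~\ref{DefDAug} requires, locally on $\Spec R$, a presentation $eA=R[\ulY]/I'$ with $I'$ generated by a regular sequence of length equal to the number of variables and $e^\flat A^\flat\cong R[\ulY]/(I'\cdot I_{R[\ulY]})$; neither the open-closedness of $G'\subset G$ nor the unique lifting of idempotents along the square-zero ideal $N_\CG$ produces such a presentation, and without it the modules $N_{\CG'}$, $t^*_{\CG'}$ and hence the strictness of your induced action are not even defined in the sense of the paper. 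The claim is in fact true, but proving it needs a further argument along the following lines: keep the presentation $A=R[\ulX]/I$, $A^\flat=R[\ulX]/(I\cdot I_{R[\ulX]})$ underlying $\CG$, set $I':=\ker(R[\ulX]\onto eA)$, observe that $G'$ is itself a finite locally free group scheme, hence a relative complete intersection by \cite[Proposition~III.4.15]{SGA3}, so that after shrinking $\Spec R$ the ideal $I'$ is generated by a regular sequence of length $n$ in $R[\ulX]$ by \cite[IV$_4$, Proposition~19.3.7]{EGA}; check $I'\cdot I_{R[\ulX]}\subset\ker(R[\ulX]\to e^\flat A^\flat)$ using $N_\CG\cdot I_{A^\flat}=0$; and finally conclude that the resulting surjection $R[\ulX]/(I'\cdot I_{R[\ulX]})\onto e^\flat A^\flat$ is an isomorphism because both sides are finite locally free of the same rank $\rk_R(eA)+n$ by Lemma~\ref{LemmaNBLocFree}. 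The paper avoids this construction altogether: once $G''$ is strict (being \'etale), Lemma~\ref{LemmaStrictF_qAction} shows that the epimorphism $G\to G''$ lifts to the chosen deformations compatibly with the strict actions, i.e.\ is an $\BF_q$-strict morphism in Faltings's sense, and \cite[Proposition~2]{Faltings02} then gives directly that its kernel $G'$ is a strict $\BF_q$-module. You may either import that shortcut or supply the presentation argument above; as written, the second half of your proposal is incomplete.
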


\begin{proof}
The first assertion was remarked by Faltings \cite[\S\,3, p.~252]{Faltings02} more generally for finite \'etale $\CO$-module schemes, and also follows from \cite[Proposition~2.1(6)]{Drinfeld87} and Theorem~\ref{ThmEqAModSch} below. 
For the convenience of the reader we include a direct proof which also works for general $\CO$. Let $G$ be a finite \'etale $\BF_q$-module scheme. Since it is clearly locally free we must prove its strictness. Locally we choose a presentation $G=\Spec A$ with $A=R[\ulX]/I$ and $A^\flat=R[\ulX]/I\cdot I_{R[\ulX]}$. Since $G$ is \'etale, its zero section is open and $A=R\times A_1$ is a product of rings, where $R=R[\ulX]/I_{R[\ulX]}$ corresponds to the zero section and $A_1$ to its complement. If $I_1:=\ker(R[\ulX]\onto A_1)$ then $I=I_{R[\ulX]}\cdot I_1$ and $I_{R[\ulX]}+I_1=(1)$. We fix elements $u_0\in I_{R[\ulX]}$ and $u_1\in I_1$ with $u_0+u_1=1$. For $a\in\BF_q$ let $[a]\colon A\to A$ denote its action on $A$ and lift it arbitrarily to an $R$-homomorphism $[a]\colon R[\ulX]\to R[\ulX]$. This lift satisfies $[a](I_{R[\ulX]})\subset I_{R[\ulX]}$ and $[a](I_1)\subset I_1$, because $[a]$ fixes the zero section of $G$ and stabilizes its complement. We define the $R$-homomorphism $[a]^\flat\colon R[\ulX]\to A^\flat$ by
\[
[a]^\flat(X_i)\;:=\;aX_iu_1+[a](X_i)(1-u_1) \;=\; aX_i(1-u_0)+[a](X_i)u_0 \;\in\; I_{R[\ulX]}\,. 
\]
Since $u_1\in I_1$ and $X_i,[a](X_i),u_0\in I_{R[\ulX]}$, it follows that $[a]^\flat(X_i)\equiv[a](X_i)\mod I=I_{R[\ulX]}I_1$ and $[a]^\flat(X_i)\equiv aX_i\mod I_{R[\ulX]}^2$. Therefore $[a]^\flat(I)\subset I$, whence $[a]^\flat(I\cdot I_{R[\ulX]})\subset I\cdot I_{R[\ulX]}$, and so this defines an $R$-homomorphism $[a]^\flat\colon A^\flat\to A^\flat$ which lifts the action of $[a]$ on $A$. Since for every $x\in I\subset I_{R[\ulX]}$ we have $[a]^\flat(x)-ax\in I_{R[\ulX]}^2$ and $x-xu_1=xu_0\in II_{R[\ulX]}$, we compute for $[a]^\flat(x)$ in $I/II_{R[\ulX]}=I_{R[\ulX]}I_1/I_{R[\ulX]}^2I_1$ that $[a]^\flat(x)=[a]^\flat(x)\cdot u_1=axu_1=ax$. In particular, $[a]$ acts as scalar multiplication by $a$ on $t_\CG^*=I_{R[\ulX]}/I_{R[\ulX]}^2$ and $N_\CG=I/II_{R[\ulX]}$. Moreover, this shows that the map $\BF_q\to\End_{R\text{-Alg}}(R[\ulX]/I_{R[\ulX]}^2), a\mapsto[a]^\flat$ is a ring homomorphism. Likewise $\BF_q\to\End_{R\text{-Alg}}(A_1), a\mapsto[a]^\flat=[a]$ is a ring homomorphism. Since $I\cdot I_{R[\ulX]}=I_{R[\ulX]}^2I_1$ and $I_{R[\ulX]}^2+I_1=(1)$ imply $A^\flat=R[\ulX]/I_{R[\ulX]}^2\times A_1$, it follows that the map $\BF_q\to\End_{\DGr_S}(\CG), a\mapsto([a],[a]^\flat)$ is a ring homomorphism. This defines a strict $\BF_q$-action on $\CG=(\Spec A,\Spec A^\flat)$.

The last assertion on the strictness of $G'$ can be proved on affine open subsets of $S$. There Lemma~\ref{LemmaStrictF_qAction} implies that the morphism $G\to G''$ is $\BF_q$-strict in the sense of Faltings \cite[Definition~1]{Faltings02}, and by \cite[Proposition~2]{Faltings02} its kernel $G'$ is a strict $\BF_q$-module.
\end{proof}

\section{Equivalence between finite $\BF_q$-shtukas and strict $\BF_q$-modules} \label{Relshtgroupscheme}

Let $S$ be a scheme over $\Spec \BF_q$. Recall that a finite locally free commutative group scheme $G$ over $S$ is equipped with a relative $p$-Frobenius $F_{p,G}\colon G\to\sigma_p^*G$ and a $p$-Verschiebung morphism $V_{p,G}\colon \sigma_p^*G\to G$ which satisfy $F_{p,G}\circ V_{p,G} = p\id_{\sigma_p^*G}$ and $V_{p,G} \circ F_{p,G} = p\id_G$. For more details see  \cite[Expos\'e~VII$_{\rm A}$, \S\,4.3]{SGA3}. Example~\ref{ExStrictF_qAct} is generalized by the following results of Abrashkin. The first is concerned with finite locally free strict $\BF_p$-module schemes.

\begin{Theorem}[{\cite[Theorem~1]{Abrashkin}}] \label{Theofequivalence1}
Let $G$ be a finite locally free group scheme equipped with an $\BF_p$-action over an $\BF_p$-scheme $S$. Then this action lifts (uniquely) to a strict $\BF_p$-action on some (any) deformation of $G$ if and only if the $p$-Verschiebung of $G$ is zero. In particular, the forgetful functor induces an equivalence between $\DGr^*(\BF_p)_S$ and the category of those group schemes in $\Gr(\BF_p)_S$ which have $p$-Verschiebung zero.
\end{Theorem}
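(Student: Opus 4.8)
The parenthetical assertions follow from the earlier lemmas. By Lemma~\ref{LemmaStrictOAct} a strict $\BF_p$-action on one deformation of $G$ extends to a strict $\BF_p$-action on every deformation, so ``some'' and ``any'' are equivalent; and by Lemma~\ref{LemmaStrictF_qAction} (with $q=p$) a fixed deformation admits at most one strict $\BF_p$-action lifting a prescribed $\BF_p$-action on $G$, which gives the uniqueness. The concluding ``in particular'' is then formal: Lemma~\ref{LemmaStrictF_qAction} makes the forgetful functor $\DGr^*(\BF_p)_S\to\Gr(\BF_p)_S$ fully faithful, hence an equivalence onto its essential image, and the first assertion identifies that essential image with the full subcategory of objects having vanishing $p$-Verschiebung. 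So everything reduces to: for a finite locally free commutative group scheme $G$ over an $\BF_p$-scheme $S$ with $p\id_G=0$, some deformation of $G$ carries a strict $\BF_p$-action lifting the one on $G$ if and only if $V_{p,G}=0$.

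Both conditions being local on $S$ (for strictness, Lemma~\ref{LemmaStrictIsLocal}), I would take $S=\Spec R$, fix a presentation $A:=\CO_G=R[\ulX]/I$ with $I$ generated by a regular sequence of length $n$, and work with the standard deformation $\CG=(G,\Spec A^\flat)$, $A^\flat=R[\ulX]/(I\cdot I_{R[\ulX]})$, equipped with its canonical group-object structure $(\Delta^\flat,\epsilon^\flat,[-1]^\flat)$ from \ref{Abra_D} after Definition~\ref{DefDGr}. The first step is a reformulation of strictness. Since $\BF_p$ is generated by $1$, the unique ring homomorphism $\BZ\to\End_{\DGr_S}(\CG)$, $a\mapsto[a]_\CG:=a\cdot\id_\CG$ (the addition being that induced by the group structure of $\CG$), lifts the canonical $\BZ$-action on $G$, and it acts on the co-Lie complex by scalar multiplication automatically, since the functor $\CG\mapsto\CoL{\CG/S}$ is additive and $[1]_\CG=\id_\CG$. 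Hence \emph{a strict $\BF_p$-action on $\CG$ lifting the one on $G$ exists if and only if this homomorphism factors through $\BF_p=\BZ/p\BZ$, i.e.\ if and only if $[p]_\CG=0_\CG$}, where $0_\CG\colon A^\flat\to A^\flat$ denotes $\iota_{A^\flat}\circ\epsilon_{A^\flat}$.

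Next I would isolate the obstruction. The morphisms $[p]_\CG$ and $0_\CG$ both lift $[p]_G$ (the zero endomorphism of $G$, by hypothesis), so by Lemma~\ref{LemmaExistsLift} they differ by $\inclN_\CG\circ h\circ\pi_\CG$ for a unique $h\in\Hom_R(t^*_\CG,N_\CG)$; thus $G$ is a strict $\BF_p$-module scheme if and only if $h=0$ (and by Lemma~\ref{LemmaStrictOAct} this holds for one deformation precisely when it holds for all). The heart of the proof is the identification of $h$ with the $p$-Verschiebung. The guiding fact is the classical identity $[p]_G=V_{p,G}\circ F_{p,G}$: the hypothesis $p\id_G=0$ only gives $V_{p,G}\circ F_{p,G}=0$, and since $F_{p,G}$ acts as $0$ on the co-Lie complex one cannot cancel $F_{p,G}$ over $G$ itself. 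However, the deformation $\CG$ retains just enough next-order information to detect $V_{p,G}$. Concretely, I would compute $[p]_\CG(x)$ for $x\in I_{A^\flat}$ by iterating $p$ times the congruence $\Delta^\flat(x)\equiv x\otimes1+1\otimes x\ (\mathrm{mod}\ I_{A^\flat}\otimes I_{A^\flat})$ of \ref{Abra_D} and applying the multiplication $m^\flat$ of $A^\flat$: the primitive terms contribute $p\,x=0$, and the remaining higher-order part, reduced into $N_\CG$, is a Frobenius-semilinear degree-$p$ expression in $x$. Via Cartier duality (the relation $V_{p,G}=(F_{p,G^\vee})^\vee$ up to the $\sigma_p$-twist, together with the description of $F_{p,G^\vee}=0$ as the vanishing of the ``$p$-th power in the dual Hopf algebra $\CO_{G^\vee}$'', i.e.\ of the composite $I_A\xrightarrow{\Delta^{(p)}}A^{\otimes p}\longrightarrow\Gamma^p_R(I_A)$) one recognizes this higher-order expression as a description of $V_{p,G}$, yielding $h=0\iff V_{p,G}=0$. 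For the implication $V_{p,G}=0\Rightarrow$ strict one moreover has to choose the deformation (equivalently the lifts) so that the obstruction is actually annihilated. The examples $\Balpha_p$ (where $[p]_\CG=0$ and $V_{p,\Balpha_p}=0$) and $\Bmu_p$ (where $V_{p,\Bmu_p}\ne0$, and on the deformation $R[X]/(X-1)^{p+1}$ one has $[p]_\CG(X)=X^p\ne1$) from Example~\ref{ExStrictF_qAct} are the guiding test cases.

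The step I expect to be the main obstacle is exactly this last identification: expanding the iterated deformed comultiplication $\Delta^\flat$ (whose augmentation ideal is \emph{not} square-zero, so that no term may be dropped for free), reducing the outcome correctly into $N_\CG$, and matching it against a canonical formula for the $p$-Verschiebung while getting the signs, the $\sigma_p$-twist, and the bookkeeping between $\Sym^p$ and $\Gamma^p$ in characteristic $p$ right. This is essentially the content of \cite[proof of Theorem~1]{Abrashkin}.
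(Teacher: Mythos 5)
First, a point of comparison: the paper contains no proof of this statement at all — it is quoted from \cite[Theorem~1]{Abrashkin} — so your proposal can only be judged as a self-contained argument. Its formal part is correct and uses exactly the intended ingredients: Lemma~\ref{LemmaStrictOAct} gives the equivalence of ``some'' and ``any'' deformation, Lemma~\ref{LemmaStrictF_qAction} (with $q=p$) gives uniqueness and full faithfulness, and the ``in particular'' is then formal. Your reformulation of existence is also correct and is a genuine simplification special to $\CO=\BF_p$: any ring homomorphism $\BF_p\to\End_{\DGr_S}(\CG)$ is forced to be $a\mapsto a\cdot\id_\CG$, it exists if and only if $[p]_\CG=0_\CG$, and the co-Lie condition is then automatic by additivity of $\CG\mapsto\CoL{\CG/S}$ in morphisms — an additivity you assert rather than verify, though it follows by the same evaluation-on-generators computation the paper carries out inside the proofs of Lemmas~\ref{LemmaStrictOAct} and \ref{LemmaStrictF_qAction}, using $\Delta^\flat(x)\equiv x\otimes1+1\otimes x$ modulo $I_{A^\flat}\otimes I_{A^\flat}$. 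The test cases $\Balpha_p$ and $\Bmu_p$ of Example~\ref{ExStrictF_qAct} are consistent with this criterion.

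The genuine gap is the heart of the theorem: the equivalence $[p]_\CG=0_\CG\Longleftrightarrow V_{p,G}=0$ is never proved. What you offer — iterate $\Delta^\flat$ $p$ times, note that the primitive part contributes $p\,x=0$, and ``recognize'' the remaining Frobenius-semilinear degree-$p$ term as the Verschiebung via Cartier duality — is precisely the nontrivial content, and you concede as much by deferring to Abrashkin's proof. Closing it requires actually matching the symmetrized degree-$p$ part of the iterated deformed comultiplication with the construction of $V_{p,G}$ via symmetric tensors (\cite[Expos\'e~VII$_{\rm A}$, \S\,4.3]{SGA3}), or with $F_{p,G^\vee}$ under duality, and proving both implications. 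Moreover your remark that for $V_{p,G}=0\Rightarrow$ strict one must ``choose the deformation (equivalently the lifts) so that the obstruction is actually annihilated'' is off the mark: once $\CG$ and its unique group-object structure from \ref{Abra_D} are fixed, $[p]_\CG$ and hence the element $h\in\Hom_R(t^*_\CG,N_\CG)$ are canonical, and by your own first paragraph (via Lemmas~\ref{LemmaStrictOAct} and \ref{LemmaStrictF_qAction}) the vanishing of $h$ is independent of the chosen deformation — there is no freedom to adjust. In short: correct scaffolding and a correct reduction, but the central identification with the $p$-Verschiebung is missing; the paper itself supplies it only by citation to Abrashkin.
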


To explain Abrashkin's classification of finite locally free strict $\BF_q$-module schemes we recall that Drinfeld \cite[\S\,2]{Drinfeld87} defined a functor from finite $\BF_q$-shtukas over $S$ to finite locally free $\BF_q$-module schemes over $S$. Abrashkin~\cite{Abrashkin} proved that the essential image of Drinfeld's functor consists of finite locally free strict $\BF_q$-module schemes. Other descriptions of the essential image were given by Taguchi~\cite[\S\,1]{Taguchi95} and Laumon~\cite[\S\,B.3]{Laumon}. (But note that \cite[Propositions~2.4.11, B.3.13 and Lemma B.3.16]{Laumon} are incorrect as the $\BF_q$-module scheme $G=\Balpha_p=\Spec R[x]/(x^p)$ shows when $p\ne q$.) Drinfeld's functor is defined as follows. Let $\ulM=(M,F_M)$ be a finite $\BF_q$-shtuka over $S$. Let 
\[
E\es=\es\ul{\Spec}_S\,\bigoplus_{n\ge0}\,\Sym^n_{\CO_S} M
\]
be the geometric vector bundle corresponding to $M$, and let $F_{q,E}\colon E\to\sigma_{\!q}^\ast E$ be its relative $q$-Frobenius morphism over $S$. On the other hand, the map $F_M$ induces another $S$-morphism $\Spec(\Sym^\bullet F_M)\colon E\to\sigma_{\!q}^\ast E$. Drinfeld defines 
\[
\Dr_q(\ulM)\es :=\es \ker \bigl(\Spec(\Sym^\bullet F_M)-F_{q,E}\colon E\to\sigma_{\!q}^\ast E\bigr)\es =\es \ul{\Spec}_S\;\bigl(\bigoplus_{n\ge0}\Sym^n_{\CO_S} M\bigr)/I
\]
where the ideal $I$ is generated by the elements $m^{\otimes q}-F_M(\sigma_{\!q}^*m)$ for all local sections $m$ of $M$. (Here $m^{\otimes q}$ lives in $\Sym^q_{\CO_S} M$ and $F_M(\sigma_{\!q}^*m)$ in $\Sym^1_{\CO_S} M$.) 

There is an equivalent description of $\Dr_q(\ulM)$ as follows. Let $S=\Spec R$ be affine and denote the $R$-module $\Gamma(S,M)$ again by $M$. Let $\Frob_{q,R}\colon R\to R$ be the $q$-Frobenius on $R$ with $x\mapsto x^q$. We equip $M$ with the $\Frob_{q,R}$-semi-linear endomorphism $F_M^\semi\colon M\to M$, $m\mapsto F_M(\sigma_{\!q}^*m)$, which satisfies $F_M^\semi(bm)=F_M\bigl(\sigma_{\!q}^*(bm)\bigr)=F_M(b^q\sigma_{\!q}^*m)=b^q F_M^\semi(m)$. Also we equip every $R$-algebra $T$ with the $\Frob_{q,R}$-semi-linear $R$-module endomorphism $F_T^\semi:=\Frob_{q,T}\colon T\to T$. Then $\Dr_q(\ulM)$ is the group scheme over $S$ which is given on $R$-algebras $T$ as
\[
\Dr_q(\ulM)(T) \;=\; \Hom_{F^\semi}(\ulM,T) \;:=\; \bigl\{\,h\in\Hom_{R\text{-Mod}}(M,T) \colon h(m)^q=h\bigl(F_M(\sigma_{\!q}^*m)\bigr)\;\forall\,m\in M\,\bigr\}\,,
\]
because $\Hom_{R\text{-Mod}}(M,T)\,=\,\Hom_{R\text{-Alg}}(\Sym^\bullet_R M,T)\,=\,E(\Spec T)$. We thank L.~Taelman for mentioning this to us.

$\Dr_q(\ulM)$ is an $\BF_q$-module scheme over $S$ via the comultiplication $\Delta\colon m\mapsto m\otimes1+1\otimes m$ and the $\BF_q$-action $[a]\colon m\mapsto am$ which it inherits from $E$. It has a canonical deformation
\[
\Dr_q(\ulM)^\flat\es :=\es \ul{\Spec}_S\;\bigl(\bigoplus_{n\ge0}\Sym^n_{\CO_S} M\bigr)/(I\cdot I_0),
\]
where $I_0=\bigoplus_{n\ge1}\,\Sym^n_{\CO_S} M$ is the ideal generated by all $m\in M$. This deformation is equipped with the comultiplication $\Delta^\flat\colon m\mapsto m\otimes1+1\otimes m$ and the $\BF_q$-action $[a]^\flat\colon m\mapsto am$. We set $\CDr_q(\ulM):=(\Dr_q(\ulM),\Dr_q(\ulM)^\flat)$. Its co-Lie complex is 
\begin{equation}\label{EqCoLieDr}
0\longto I/(I\cdot I_0) \longto I_0/I_0^2\longto 0
\end{equation}
with differential $d\colon m^{\otimes q}-F_M(\sigma_{\!q}^* m)\mapsto -F_M(\sigma_{\!q}^*m)$. On it $[a]$ acts by scalar multiplication with $a$ because $(am)^q-F_M(\sigma_{\!q}^*(am))=a^q(m^{\otimes q}-F_M(\sigma_{\!q}^*m))$. This defines the functor $\CDr_q\colon\FqSht_S\to\DGr(\BF_q)_S$. We also compose $\CDr_q$ with the projection to $\DGr^*(\BF_q)_S$. 

\medskip

Conversely, let $\CG=(G,G^\flat)=\Spec(A,A^\flat)\in\DGr(\BF_q)_S$ in the affine situation $S=\Spec R$. Note that on the additive group scheme $\BG_{a,S}=\Spec R[x]$ the elements $b\in R$ act via endomorphisms $\psi_b\colon\BG_{a,S}\to\BG_{a,S}$ given by $\psi_b^*\colon R[x]\to R[x],\,x\mapsto bx$. This makes $\BG_{a,S}$ into an $R$-module scheme, and in particular, into an $\BF_q$-module scheme via $\BF_q\subset R$. We associate with $\CG$ the $R$-module of $\BF_q$-equivariant homomorphisms on $S$
\[
M_q(\CG) \es :=\es \Hom_{R\text{\rm-groups},\BF_q\text{\rm-lin}}(G,\BG_{a,S}) \es = \es \bigl\{ x \in A\colon\Delta(x) = x \otimes 1 + 1 \otimes x,\ [a](x) = a x,\ \forall a \in \BF_q \bigr\}\,,
\]
with its action of $R$ via $R\to\End_{R\text{-groups},\BF_q\text{\rm-lin}}(\BG_{a,S})$. It is a finite locally free $R$-module by \cite[Proposition~3.6 and Remark~5.5]{Poguntke17}; see also \cite[VII$_{\rm A}$, 7.4.3]{SGA3} in the reedited version of SGA~3 by P.~Gille and P.~Polo. The composition on the left with the relative $q$-Frobenius endomorphism $F_{q,\BG_{a,S}}$ of $\BG_{a,S}=\Spec R[x]$ given by $x\mapsto x^q$ defines a map $M_q(\CG)\to M_q(\CG), m\mapsto F_{q,\BG_{a,S}}\circ m$ which is not $R$-linear, but $\sigma_{\!q}^*$-linear, because $F_{q,\BG_{a,S}}\circ \psi_b=\psi_{b^q}\circ F_{q,\BG_{a,S}}$. Therefore, $F_{q,\BG_{a,S}}$ induces an $R$-homomorphism $F_{M_q(\CG)}\colon \sigma_{\!q}^\ast M_q(\CG)\to M_q(\CG)$. Then $\ulM_q(\CG):=\bigl(M_q(\CG),F_{M_q(\CG)}\bigr)$ is a finite shtuka over $S$. Note that for $m\in M_q(\CG)$ the commutative diagram
\begin{equation}\label{EqDiagFrobenius}
\xymatrix {G\ar[rr]^{F_{q,G}}\ar[d]_m && \sigma_{\!q}^\ast G\ar[d]^{\sigma_{\!q}^\ast m}\\
\BG_{a,S}\ar[rr]^{F_{q,\BG_{a,S}}} && \BG_{a,S}  }
\end{equation}
implies that $F_{M_q(\CG)}(\sigma_{\!q}^*m) \,:=\, F_{q,\BG_{a,S}}\circ m\,=\,\sigma_{\!q}^\ast m \circ F_{q,G}$\,. If $\CH \in \DGr(\BF_q)_S$ and $(f,f^\flat)\colon \CG \to \CH$ is a morphism in the category $\DGr(\BF_q)_S$, then $\ulM_q(f)\colon\ulM_q(\CH)\to\ulM_q(\CG),\,m\mapsto m\circ f$. This defines the functor $\ulM_q\colon\DGr(\BF_q)_S\to\FqSht_S$. It factors through the category $\DGr^*(\BF_q)_S$ and further over the forgetful functor through the category of finite locally free strict $\BF_q$-module schemes over $S$.

There is a natural morphism $\ulM\to\ulM_q(\Dr_q(\ulM)),\,m\mapsto f_m$, where $f_m\colon\Dr_q(\ulM)\to\BG_{a,S}=\Spec R[x]$ is given by $f_m^*(x)=m$. There is also a natural morphism of group schemes $G\to\Dr_q(\ulM_q(G))$ given on the structure sheaves by $\bigoplus\limits_{n\ge0}\Sym^n_{\CO_S} M_q(G)/I\to\CO_G,\,m\mapsto m^*(x)$, which is well defined because 
\[
F_{M_q(G)}(\sigma_{\!q}^\ast m)^*(x)\;=\;(F_{q,\BG_{a,S}}\!\circ\, m)^*(x)\;=\;m^*(x^q)=(m^*(x))^q. 
\]

A large part of the following theorem was already proved by Drinfeld \cite[Proposition~2.1]{Drinfeld87} without using the notion of strict $\BF_q$-modules.

\begin{Theorem} \label{ThmEqAModSch}
\begin{enumerate}
\item \label{ThmEqAModSchItem1}
The contravariant functors $\Dr_q$ and $\ulM_q$ are mutually quasi-inverse anti-equi\-va\-len\-ces between the category of finite $\BF_q$-shtukas over $S$ and the category of finite locally free strict $\BF_q$-module schemes over $S$. 
\item \label{ThmEqAModSchItem2}
Both functors are $\BF_q$-linear and map short exact sequences to short exact sequences. They preserve \'etale objects and map the canonical decompositions from Propositions~\ref{PropCanonDecompAMod} and \ref{PropCanonDecompDMod} to each other.\\[-3mm]
\end{enumerate}
\noindent Let $\ulM=(M,F_M)$ be a finite $\BF_q$-shtuka over $S$ and let $G=\Dr_q(\ulM)$. Then
\begin{enumerate}
\setcounter{enumi}{2}
\item \label{ThmEqAModSchItem4}
the natural morphisms $\ulM\to\ulM_q(\Dr_q(\ulM)),\,m\mapsto f_m$ and $G\to\Dr_q(\ulM_q(G))$ are isomorphisms.
\item \label{ThmEqAModSchItem3}
the $\BF_q$-module scheme $\Dr_q(\ulM)$ is radicial over $S$ if and only if $F_M$ is nilpotent locally on $S$.
\item \label{ThmEqAModSchItem5}
the order of the $S$-group scheme $\Dr_q(\ulM)$ is $q^{\rk M}$.
\item \label{ThmEqAModSchItem6}
the co-Lie complex $\CoL{\CDr_q(\ulM)/S}$ is canonically isomorphic to the complex \ $0\to\sigma_{\!q}^*M\xrightarrow{\;F_M}M\to0$. In particular, $\omega_{\Dr_q(\ulM)}=\coker F_M$ and $n_{\Dr_q(\ulM)}=\ker F_M$.
\end{enumerate}
\end{Theorem}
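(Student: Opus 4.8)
\textit{Strategy.} I would prove the explicit parts --- the order \ref{ThmEqAModSchItem5}, the co-Lie complex \ref{ThmEqAModSchItem6}, the first of the two natural morphisms of \ref{ThmEqAModSchItem4}, and the parts of \ref{ThmEqAModSchItem2} and \ref{ThmEqAModSchItem3} concerning $\Dr_q$ --- by a direct computation with the presentation of $\Dr_q(\ulM)$, and then deduce the anti-equivalence \ref{ThmEqAModSchItem1} together with the second natural morphism of \ref{ThmEqAModSchItem4} from this computation and from the already quoted work of Drinfeld~\cite{Drinfeld87} and Abrashkin~\cite{Abrashkin}. Working locally on $S=\Spec R$, choose a basis $m_1,\dots,m_r$ of $M$ and write $F_M(\sigma_{\!q}^\ast m_i)=\sum_j a_{ij}m_j$, so that $\Dr_q(\ulM)=\Spec A$ with $A=R[X_1,\dots,X_r]/I$, $I=(X_1^q-\ell_1,\dots,X_r^q-\ell_r)$ and $\ell_i=\sum_j a_{ij}X_j$. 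The elements $X_i^q-\ell_i$ form a regular sequence and $A$ is $R$-free on the monomials $\prod_i X_i^{e_i}$ with $0\le e_i<q$; this gives $\ord\Dr_q(\ulM)=q^r=q^{\rk M}$, i.e.\ \ref{ThmEqAModSchItem5}, and shows that $X_1,\dots,X_r$ are $R$-linearly independent in $A$. For \ref{ThmEqAModSchItem6}, the co-Lie complex of the canonical deformation $\CDr_q(\ulM)$ is the complex \eqref{EqCoLieDr}; since $\Dr_q(\ulM)$ is a relative complete intersection, $I/(I\cdot I_0)=\epsilon_G^\ast(I/I^2)$ is $\CO_S$-free on the images of the $X_i^q-\ell_i$ while $I_0/I_0^2$ is $\CO_S$-free on the images of the $X_i$, so $X_i^q-\ell_i\mapsto-\sigma_{\!q}^\ast m_i$ and $X_i\mapsto m_i$ identify \eqref{EqCoLieDr} with $0\to\sigma_{\!q}^\ast M\xrightarrow{\,F_M\,}M\to0$, whence $\omega_{\Dr_q(\ulM)}=\coker F_M$ and $n_{\Dr_q(\ulM)}=\ker F_M$.

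For the first natural morphism of \ref{ThmEqAModSchItem4} I would compute $M_q(\Dr_q(\ulM))$: an $\BF_q$-linear primitive element of the Hopf algebra $A$ is the class of an $\BF_q$-linear additive polynomial, hence an $R$-linear combination of the powers $X_i^{q^m}$, and as $X_i^q=\ell_i$ makes each $X_i^{q^m}$ with $m\ge1$ linear, such an element lies in the $R$-span of $X_1,\dots,X_r$. By the linear independence noted above, the canonical map $M=\Sym^1_{\CO_S}M\to M_q(\Dr_q(\ulM))$ is then bijective, and it intertwines $F_M$ with $F_{M_q(\Dr_q(\ulM))}$ because $m^{\otimes q}\equiv F_M(\sigma_{\!q}^\ast m)\bmod I$; so $\ulM\xrightarrow{\ \sim\ }\ulM_q(\Dr_q(\ulM))$ naturally. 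Note moreover that $\Dr_q(\ulM)$ is itself a finite locally free strict $\BF_q$-module scheme, because the functor $\CDr_q$ takes values in $\DGr(\BF_q)_S$ by its very construction; in particular the essential image of $\Dr_q$ lies in $\DGr^\ast(\BF_q)_S$.

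The remaining assertions of \ref{ThmEqAModSchItem2} for $\Dr_q$ and all of \ref{ThmEqAModSchItem3} now follow readily. $\BF_q$-linearity of $\Dr_q$ is clear. Given a short exact sequence $0\to\ulM'\to\ulM\to\ulM''\to0$ of finite $\BF_q$-shtukas (so with locally split underlying modules), the sequence $\Dr_q(\ulM'')\to\Dr_q(\ulM)\to\Dr_q(\ulM')$ is left exact on $T$-points for every $R$-algebra $T$, and $\Dr_q(\ulM'')$ is the kernel of $\Dr_q(\ulM)\to\Dr_q(\ulM')$; by \ref{ThmEqAModSchItem5} the orders multiply, so the finite locally free quotient $\Dr_q(\ulM)/\Dr_q(\ulM'')$ (which exists by Remark~\ref{RemFactsOnG}(d)) maps to $\Dr_q(\ulM')$ by a closed immersion of equal order, hence isomorphically, and the sequence is exact. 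By \ref{ThmEqAModSchItem6}, if $F_M$ is an isomorphism then $\omega_{\Dr_q(\ulM)}=0$ and $\Dr_q(\ulM)$ is étale by Lemma~\ref{LemmaGEtale}, while conversely $\omega_{\Dr_q(\ulM)}=0$ forces $F_M$ surjective, hence bijective between locally free modules of the same rank; thus $\Dr_q$ preserves and reflects étaleness. For \ref{ThmEqAModSchItem3}, iterating $X_i^q=\ell_i$ gives $X_i^{q^n}\equiv\sum_k(F_M^n)_{ik}X_k\bmod I$, so over a field the augmentation ideal of $A$ is nilpotent --- i.e.\ $\Dr_q(\ulM)$ is connected, equivalently radicial for a finite group scheme over a field --- exactly when some $F_M^n$ vanishes, and over a field $F_M$ is nilpotent iff $F_M^{\rk\ulM}=0$ by Proposition~\ref{PropCanonDecompDMod}; for general $S$ one invokes the fibrewise criterion for radicial morphisms together with the observation that if $F_M^{\rk\ulM}\otimes\kappa(s)=0$ for all $s\in S$ then $M\onto M/\im(F_M^{\rk\ulM})$ is a surjection of locally free $\CO_S$-modules of rank $\rk\ulM$, hence an isomorphism, so $F_M^{\rk\ulM}=0$. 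Finally, applying $\Dr_q$ to the decomposition of Proposition~\ref{PropCanonDecompDMod} yields the connected--étale sequence of Proposition~\ref{PropCanonDecompAMod}, since $\Dr_q(\ulM_\et)$ is étale and $\Dr_q(\ulM_\nil)$ is connected.

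It remains to establish \ref{ThmEqAModSchItem1} and that the second natural morphism of \ref{ThmEqAModSchItem4} is an isomorphism. That $M_q(\CG)$ is finite locally free is the input of Poguntke~\cite{Poguntke15} quoted in the text, so $\ulM_q$ indeed takes values in $\FqSht_S$ and, as noted there, factors through $\DGr^\ast(\BF_q)_S$. The first natural isomorphism $u_\ulM\colon\ulM\xrightarrow{\sim}\ulM_q(\Dr_q(\ulM))$ makes $\Dr_q$ faithful, and a direct check on structure sheaves shows that, for $G=\Dr_q(\ulM)$, the natural morphism $G\to\Dr_q(\ulM_q(G))$ followed by $\Dr_q(u_\ulM)$ is $\id_G$; being a section of the isomorphism $\Dr_q(u_\ulM)$, that natural morphism is therefore an isomorphism for every $G$ in the essential image of $\Dr_q$, and $\ulM_q$ restricted to this image is a quasi-inverse of $\Dr_q$. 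By the previous paragraphs the essential image is contained in $\DGr^\ast(\BF_q)_S$, and the decisive point --- which is exactly Abrashkin's classification theorem~\cite{Abrashkin}, resting on Theorem~\ref{Theofequivalence1} and the cotangent-complex formalism of Section~\ref{CotCom} --- is that it is \emph{all} of $\DGr^\ast(\BF_q)_S$; no elementary generators-and-relations argument on $\CO_G$ can give this, as the non-strict $\Balpha_p$ with $p\ne q$ shows, for which $\Dr_q(\ulM_q(\Balpha_p))=\Balpha_q\ne\Balpha_p$. Granting Abrashkin's theorem, $\Dr_q$ and $\ulM_q$ are mutually quasi-inverse contravariant equivalences between $\FqSht_S$ and $\DGr^\ast(\BF_q)_S$, which is \ref{ThmEqAModSchItem1}, and the exactness, étale-preservation and decomposition statements of \ref{ThmEqAModSchItem2} for $\ulM_q$ --- part of Drinfeld's Proposition~2.1 in~\cite{Drinfeld87} --- then also follow by transporting those for $\Dr_q$ along this anti-equivalence. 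The one serious obstacle is thus the essential-surjectivity half of \ref{ThmEqAModSchItem1}, for which the strictness hypothesis is indispensable.
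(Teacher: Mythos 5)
Your overall route is essentially the paper's: parts \ref{ThmEqAModSchItem5} and \ref{ThmEqAModSchItem6} by the explicit presentation $A=R[\ulX]/(X_i^q-\ell_i)$, strictness of $\Dr_q(\ulM)$ via the canonical deformation, and the decisive essential-surjectivity statement deferred to Abrashkin's Theorem~2 --- which is exactly what the paper does for \ref{ThmEqAModSchItem1} and \ref{ThmEqAModSchItem4}; your order-counting proof of exactness of $\Dr_q$ is a legitimate variant of the paper's argument (which instead uses the equivalence itself). But there is a genuine error in your proof of \ref{ThmEqAModSchItem3}. You claim that if $F_M^{\rk\ulM}\otimes\kappa(s)=0$ for all $s\in S$ then $M\onto M/\im(F_M^{\rk\ulM})$ is a surjection of locally free $\CO_S$-modules of rank $\rk\ulM$, hence an isomorphism, so $F_M^{\rk\ulM}=0$. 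The cokernel of a semilinear map with constant fibre rank need not be locally free, and the conclusion is simply false: take $R=k[\epsilon]/(\epsilon^2)$, $M=R$ of rank $1$, $F_M(\sigma_{\!q}^*1)=\epsilon$. Then $\Dr_q(\ulM)=\Spec R[X]/(X^q-\epsilon X)$ is radicial (any field-valued point kills $\epsilon$ and then $X^q=0$ forces $X=0$), yet $F_M^{\rk\ulM}=\epsilon\neq0$. What the fibrewise statement gives is only that the entries of the matrix of $F_M^{\rk\ulM}$ lie in the nilradical; one then concludes, locally on $S$ where these finitely many nilpotents have bounded order, that a suitable higher twisted iterate $F_M^{\rk\ulM}\cdot\sigma_{\!q}^*(\cdots)\cdots$ vanishes because the $q$-power twists eventually annihilate the entries. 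This is the paper's argument, and it is also the reason the statement reads ``nilpotent \emph{locally} on $S$'' rather than $F_M^{\rk\ulM}=0$.

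Two smaller gaps: in your computation of $M_q(\Dr_q(\ulM))$ you assert that an $\BF_q$-linear primitive element of $A$ is the class of an $\BF_q$-linear additive polynomial. Primitivity only holds modulo the ideal in $A\otimes_RA$, so a representative need not be additive, and its $\BF_q$-linearity as a polynomial is not given either; the honest argument expands a representative in the monomial basis $\prod_iX_i^{e_i}$, $0\le e_i<q$ (for which $\Delta$ of basis monomials requires no reduction modulo $I$), showing the primitives are spanned by the classes of $X_i^{p^m}$ with $p^m<q$, and then uses that $a^{p^m}-a$ is a unit of $\BF_q$ for suitable $a$ when $0<p^m<q$ to kill the higher terms; this is fixable, and in any case it is contained in Abrashkin's Theorem~2, which the paper cites for all of \ref{ThmEqAModSchItem4}. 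Finally, ``transporting'' the exactness and decomposition statements for $\ulM_q$ along the anti-equivalence is not automatic, since exactness of group-scheme sequences is an \fppf-sheaf condition rather than a purely categorical one; the paper closes this by applying the already-established exactness of $\Dr_q$ to the injection $\ulM_q(G)/\ulM_q(G'')\to\ulM_q(G')$, and you should do the same.
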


\begin{proof} 
Assertions \ref{ThmEqAModSchItem1} and \ref{ThmEqAModSchItem4} were proved by Abrashkin~\cite[Theorem~2]{Abrashkin} in terms of the category $\DGr^*(\BF_q)_S$.

\smallskip\noindent
\ref{ThmEqAModSchItem2} The $\BF_q$-linearity is clear from the definitions and the compatibility with \'etale objects follows from \ref{ThmEqAModSchItem6} and Lemma~\ref{LemmaGEtale}. Let $0\to\ulM''\to\ulM\to\ulM'\to0$ be a short exact sequence of finite $\BF_q$-shtukas. Then by construction $\Dr_q(\ulM')\to\Dr_q(\ulM)$ is a closed immersion. Using \ref{ThmEqAModSchItem1} we consider the local sections of $M''=\CHom_{S\text{\rm-groups},\BF_q\text{\rm-lin}}(\Dr_q(\ulM''),\BG_{a,S})$ which are obtained by the closed immersion $\Dr_q(\ulM'')\into\ul{\Spec}_S(\Sym^\bullet_{\CO_S} M'')$ composed with local coordinate functions on $\ul{\Spec}_S(\Sym^\bullet_{\CO_S} M'')$. These local sections go to zero in $M'$ and this yields a morphism $\Dr_q(\ulM)/\Dr_q(\ulM')\to\Dr_q(\ulM'')$. The latter must be an isomorphism by \ref{ThmEqAModSchItem1} due to the identification
\[
\ulM_q\bigl(\Dr_q(\ulM)/\Dr_q(\ulM')\bigr)\es = \es\ker\bigl(\ulM_q(\Dr_q(\ulM))\to\ulM_q(\Dr_q(\ulM'))\bigr) \es = \es \ulM'' \es = \es \ulM_q(\Dr_q(\ulM'')).
\]
Conversely let $0\to G'\to G\to G''\to0$ be a short exact sequence of finite locally free strict $\BF_q$-module schemes.  Then the exactness of $0\to\ulM_q(G'')\to\ulM_q(G)\to\ulM_q(G')$ is obvious. Applying $\Dr_q$, whose exactness we just established, to the injection $\ulM_q(G)/\ulM_q(G'')\to\ulM(G')$ yields an isomorphism $\Dr_q\bigl(\ulM_q(G)/\ulM_q(G'')\bigr)=\ker(G\to G'')=G'$. From \ref{ThmEqAModSchItem1} it follows that $\ulM_q(G)/\ulM_q(G'')\to\ulM(G')$ is an isomorphism.

Finally consider the exact sequences from Propositions~\ref{PropCanonDecompAMod} and \ref{PropCanonDecompDMod}. Then $\Dr_q(\ulM_\et)$ is an \'etale quotient of $\Dr_q(\ulM)$. This yields a morphism $\Dr_q(\ulM)^\et\to\Dr_q(\ulM_\et)$. Conversely $\ulM_q(G^\et)$ is an \'etale $\BF_q$-subshtuka of $\ulM_q(G)$. This yields a morphism $\ulM_q(G^\et)\to\ulM_q(G)_\et$. The equivalence of \ref{ThmEqAModSchItem1} shows that both morphisms are isomorphisms. This proves the compatibility of $\Dr_q$ and $\ulM_q$ with the canonical decompositions.

\smallskip\noindent
\ref{ThmEqAModSchItem3} By definition $G:=\Dr_q(\ulM)$ is radicial over $S$ if $G(K)\to S(K)$ is injective for all fields $K$. This can be tested by applying the base change $\Spec K\to S$. By \ref{ThmEqAModSchItem2} and Propositions~\ref{PropCanonDecompAMod} and \ref{PropCanonDecompDMod} the base change $G\times_S\Spec K$ is connected if and only if $F_M\otimes\id_K$ is nilpotent. This implies \ref{ThmEqAModSchItem3} over $\Spec K$. It remains to show that $F_M$ is nilpotent locally on $S$ if $G$ is radicial. Locally on an affine open $\Spec R\subset S$ we may choose an $R$-basis of $M$ and write $F_M$ as an $r\times r$-matrix where $r=\rk M$. For every point $s\in S$, Proposition~\ref{PropCanonDecompDMod} implies that $F_M^r=0$ in $\kappa(s)^{r\times r}$. Therefore the entries of the matrix $F_M^r$ lie in the nil-radical of $R$. If $n$ is an integer such that their $q^n$-th powers are zero then $F_M^{r(n+1)}=F_M^r\cdot\ldots\cdot\sigma_{\!q}^{n*}(F_M^r)=0$. This establishes \ref{ThmEqAModSchItem3}.

\smallskip\noindent
\ref{ThmEqAModSchItem5} If locally on $S$ we choose an isomorphism $M\cong\bigoplus_{\nu=1}^n\CO_S\cdot X_\nu$ and let $(t_{ij})$ be the matrix of the morphism $F_M\colon\sigma_{\!q}^\ast M\to M$ with respect to the basis $(X_1,\ldots,X_n)$, then $\Dr_q(\ulM)$ is the subscheme of $\BG_{a,S}^n$, given by the system of equations 
\[
X_j^q \;=\; \DS \sum_{i=1}^{n}t_{ij}X_i\quad\text{for } j = 1,\ldots,n.
\]
Therefore $\CO_{\Dr_q(\ulM)}$ is a free $\CO_S$-module with basis $X_1^{m_1}\cdot\ldots\cdot X_n^{m_n}, \ 0 \leq m_i<q$. Thus $\ord\Dr_q(\ulM):=\rk_{\CO_S}\CO_{\Dr_q(\ulM)}=q^{\rk M}$.

\smallskip\noindent
\ref{ThmEqAModSchItem6} In the presentation of $\CoL{\CDr_q(\ulM)/S}$ given in \eqref{EqCoLieDr} with $I=(m^{\otimes q}-F_M(\sigma_{\!q}^*m)\colon m\in M)$ and $I_0=\bigoplus_{n\ge1}\,\Sym^n_{\CO_S} M$ we use the isomorphisms of $\CO_S$-modules $M\isoto I_0/I_0^2,\,m\mapsto m$ and $\sigma_{\!q}^*M\isoto I/(I\,I_0),\,b\,\sigma_{\!q}^*m=m\otimes b\mapsto b\,F_M(\sigma_{\!q}^*m)-b\,m^{\otimes q}$. Note that the latter is surjective by definition and injective because both $\sigma_{\!q}^*M$ and $I/(I\,I_0)$ are locally free $\CO_S$-modules of the same rank.
\end{proof}

\begin{Remark}\label{RemPoguntke}
Finite locally free strict $\BF_q$-module schemes over $S=\Spec R$ were equivalently described by Poguntke~\cite{Poguntke17} as follows. He defines the category $\BF_q\text{-gr}^{+,\text{b}}_S$ of finite locally free $\BF_q$-module schemes $G=\Spec A$ which locally on $S$ can be embedded into $\BG_{a,S}^N$ for some set $N$ and are \emph{balanced} in the following sense. The $R$-module 
\[
M_p(G)\;:=\;\Hom_{S\text{-groups}}(G,\BG_{a,S})\;=\;\bigl\{\,x \in A\colon\Delta(x) = x \otimes 1 + 1 \otimes x\, \bigr\}
\]
of morphisms of group schemes over $R$ decomposes under the action of $\BF_q$ on $G$ into eigenspaces 
\[
M_p(G)_{p^i}\;:=\;\{\,m\in M_p(G)\colon [\alpha](m)=\alpha^{p^i}\cdot m\text{ for all }\alpha\in\BF_q\,\}
\]
for $i\in\BZ/e\BZ$ where $q=p^e$. Now $G$ is \emph{balanced} if the composition on the right with the relative $p$-Frobenius $F_{p,\BG_{a,S}}$ of the additive group scheme $\BG_{a,S}$ induces isomorphisms $M_p(G)_{p^i}\isoto M_p(G)_{p^{i+1}}$ for all $i=0,\ldots,e-2$. Note that it is neither required nor implied that also $M_p(G)_{p^{e-1}}\to M_p(G)_1=:M_q(G)$ is an isomorphism. The latter holds if and only if $G$ is \'etale by Theorem~\ref{ThmEqAModSch}\ref{ThmEqAModSchItem2}.

Abrashkin~\cite[2.3.2]{Abrashkin} already showed that every finite locally free strict $\BF_q$-module scheme over $S$ belongs to $\BF_q\text{-gr}^{+,\text{b}}_S$. And Poguntke~\cite[Theorem~1.4]{Poguntke17} conversely shows that $\Dr_q$ and $\ulM_q$ provide an anti-equivalence between the category of finite $\BF_q$-shtukas over $S$ and the category $\BF_q\text{-gr}^{+,\text{b}}_S$.
\end{Remark}

\section{Relation to global objects}\label{SectGlobalObj}

Without giving proofs, we want in this section to relate local shtukas and divisible local Anderson modules (defined in the next section), as well as finite $\BF_q$-shtukas and finite locally free strict $\BF_q$-module schemes to global objects like $A$-motives, global shtukas, Drinfeld modules, Anderson $A$-modules, etc. which are defined as follows. Let $C$ be a smooth, projective, geometrically irreducible curve over $\BF_q$. For an $\BF_q$-scheme $S$ we set $C_S:=C\times_{\BF_q}S$ and we consider the endomorphism $\sigma_{\!q}:=\id_C\otimes\Frob_{q,S}\colon C_S\to C_S$.

\begin{Definition}\label{DefGlobalShtuka}
\begin{enumerate}
\item
Let $n$ and $r$ be positive integers. A \emph{global shtuka of rank $r$ with $n$ legs} over an $\BF_q$-scheme $S$ is a tuple $\ulCN=(\CN,\charmorph_1,\ldots,\charmorph_n,\tau_\CN)$ consisting of
\begin{itemize}
\item 
a locally free sheaf $\CN$ of rank $r$ on $C_S$,
\item 
$\BF_q$-morphisms $\charmorph_i\colon S\to C$ called the \emph{legs of $\ulCN$} and
\item 
an isomorphism $\tau_\CN\colon\sigma_{\!q}^*\CN|_{C_S\setminus\bigcup_i\Gamma_{\charmorph_i}}\isoto\CN|_{C_S\setminus\bigcup_i\Gamma_{\charmorph_i}}$ outside the graphs $\Gamma_{\charmorph_i}$ of the $\charmorph_i$.
\end{itemize}
In this article we will only consider the case where $\Gamma_{\charmorph_i}\cap\Gamma_{\charmorph_j}=\emptyset$ for $i\ne j$.
\item
A global shtuka over $S$ is a \emph{Drinfeld shtuka} if $n=2$, $\Gamma_{\charmorph_1}\cap\Gamma_{\charmorph_2}=\emptyset$, and $\tau_\CN$ satisfies $\tau_\CN(\sigma_{\!q}^*\CN)\subset\CN$ on $C_S\setminus\Gamma_{\charmorph_2}$ with cokernel locally free of rank $1$ as $\CO_S$-module, and  $\tau_\CN^{-1}(\CN)\subset\sigma_{\!q}^*\CN$ on $C_S\setminus\Gamma_{\charmorph_1}$ with cokernel locally free of rank $1$ as $\CO_S$-module.
\end{enumerate}
Drinfeld shtukas were introduced by Drinfeld~\cite{Drinfeld87} under the name $F$-sheaves.
\end{Definition}

An important class of special examples is defined as follows. Let $\infty\in C$ be a closed point and put $A:=\Gamma(C\setminus\{\infty\},\CO_C)$. Then $\Spec A=C\setminus\{\infty\}$. We will consider affine $A$-schemes $\charmorph\colon S=\Spec R\to\Spec A$ and the ideal $J:=(a\otimes 1-1\otimes\charmorph^*(a)\colon a\in A)\subset A_R:=A\otimes_{\BF_q}R$ whose vanishing locus $\Var(J)$ is the graph $\Gamma_\charmorph$ of the morphism $\charmorph$. The endomorphism $\sigma_{\!q}:=\id_C\otimes\Frob_{q,S}\colon C_S\to C_S$ induces the ring endomorphism $\sigma_{\!q}^*:=\id_A\otimes\Frob_{q,R}\colon A_R\to A_R,\,a\otimes b\mapsto a\otimes b^q$ of $A_R$ for $a\in A$ and $b\in R$. The following definition generalizes Anderson's~\cite{Anderson} notion of \emph{$t$-motives}, which is obtained as the special case, where $C=\BP^1$, $A=\BF_q[t]$ and $R$ is a field.

\begin{Definition}\label{DefAMotive}
Let $d$ and $r$ be positive integers and let $S=\Spec R$ be an affine $A$-scheme. An \emph{effective $A$-motive of rank $r$ and dimension $d$} over $S$ is a pair $\ulN=(N,\tau_N)$ consisting of a locally free $A_R$-module $N$ of rank $r$ and a morphism $\tau_N\colon\sigma_{\!q}^*N\to N$ of $A_R$-modules, such that $\coker\tau_N$ is a locally free $R$-module of rank $d$ and $J^d\cdot\coker\tau_N=0$. More generally, an \emph{$A$-motive of rank $r$} over $S$ is a pair $\ulN=(N,\tau_N)$ consisting of a locally free $A_R$-module $N$ of rank $r$ and an isomorphism $\tau_N\colon\sigma_{\!q}^*N|_{\Spec A_R\setminus\Var(J)}\isoto N|_{\Spec A_R\setminus\Var(J)}$ outside the vanishing locus $\Var(J)=\Gamma_\charmorph$ of $J$.
\end{Definition}

\begin{Example}\label{ExampleShtukaAMotive}
(a) If $\ulCN=(\CN,\charmorph_1,\charmorph_2,\tau_\CN)$ is a global shtuka of rank $r$ over $S=\Spec R$ with two legs such that $\charmorph_1=\charmorph$ and $\charmorph_2\colon S\to\{\infty\}\subset C$, then $\ulN(\ulCN):=(N,\tau_N):=\bigl(\Gamma(\Spec A_R,\CN),\tau_\CN\bigr)$ is an $A$-motive of rank $r$ over $S$.

\medskip\noindent
(b) Conversely, if $\infty\in C(\BF_q)$, every $A$-motive $\ulN=(N,\tau_N)$ over an affine $A$-scheme $\charmorph\colon S=\Spec R\to\Spec A$ can be obtained from a global shtuka $\ulCN=(\CN,\charmorph_1,\charmorph_2,\tau_\CN)$ by taking $\charmorph_1=\charmorph$ and $\charmorph_2\colon S\to\{\infty\}\subset C$, and taking $\CN$ as an extension to $C_S$ of the sheaf associated with $N$ on $\Spec A_R$, and $\tau_\CN=\tau_N$.
\end{Example}

These global objects give rise to finite and local shtukas, and that motivates the names for the latter. 

\begin{Example}\label{ExFiniteShtuka}
(a) Let $i\colon D\hookrightarrow C$ be a finite closed subscheme and let $\ulCN=(\CN,\charmorph_1,\ldots,\charmorph_n,\tau_\CN)$ be a global shtuka of rank $r$ over $S$ such that $\tau_\CN(\sigma_{\!q}^*\CN)\subset\CN$ in a neighborhood of $D_S:=D\times_{\BF_q}S$. (For example this is satisfied if $\ulCN$ is a Drinfeld-shtuka and $D_S\cap\Gamma_{\charmorph_2}=\emptyset$ or if $\ulCN$ is as in Example~\ref{ExampleShtukaAMotive} with $\ulN(\ulCN)$ an effective $A$-motive and $D\subset\Spec A$.) Then
\[
(M,F_M)\es:=\es(i^\ast\CN\,,\,i^\ast\tau_\CN)
\]
is a finite $\BF_q$-shtuka over $S$, because $M$ is locally free over $S$ of rank $r\cdot\dim_{\BF_q}\CO_D$. The sense in which $\ulCN$ is \emph{global} and $(M,F_M)$ is \emph{finite}, is with respect to the \emph{coefficients}: $\ulCN$ lives over all of $C$ and $\ulM$ lives over the finite scheme $D$. This example gave rise to the name ``finite $\BF_q$-shtuka''.

\bigskip\noindent 
(b) Let $v\in C$ be a closed point defined by a sheaf of ideals $\Fp\subset\CO_C$, let $\hat q$ be the cardinality of the residue field $\BF_v$ of $v$, let $f:=[\BF_v:\BF_q]$, and let $z\in\BF_q(C)$ be a uniformizing parameter at $v$. Let $\ulCN=(\CN,\charmorph_1,\ldots,\charmorph_n,\tau_\CN)$ be a global shtuka of rank $r$ over $S=\Spec R$ such that for some $i$ the elements of $\charmorph_i^*(\Fp)$ are nilpotent in $R$. Set $\zeta:=\charmorph_i^*(z)\in R$. Then the formal completion of $C_S$ along the graph $\Gamma_{\charmorph_i}$ of $\charmorph_i$ is canonically isomorphic to $\Spf R\dbl z\dbr$ by \cite[Lemma~5.3]{AH_Local}. The formal completion $M$ of $(\CN,\tau_\CN)$ along $\Gamma_{\charmorph_i}$ together with $\tau_M:=\tau_\CN^f\colon\sigma_{\!q}^{f*}M[\tfrac{1}{z-\zeta}]\isoto M[\tfrac{1}{z-\zeta}]$ is a local shtuka over $S$ of rank $r$ (as in Definition~\ref{Def1.1} with $q$ and $\BF_q\dbl z\dbr$ and $\sigma_{\!q}^*$ replaced by $\hat q$ and $\BF_v\dbl z\dbr$ and $\sigma_{\!q}^{f*}$). See \cite[\S\,6]{HartlIsog} for more details. Again $\ulM$ is \emph{local} with respect to the \emph{coefficients} as it lives over the complete local ring $\wh\CO_{C,v}=\BF_v\dbl z\dbr$ of $C$ at $v$. This gave rise to the name ``local shtuka''.
\end{Example}

\medskip

So far we discussed the semi-linear algebra side given by shtukas. On the side of group schemes, an important source from which the corresponding strict $\BF_q$-module schemes arise are Drinfeld $A$-modules, or more generally abelian Anderson $A$-modules. To define them, let $\charmorph\colon S=\Spec R\to\Spec A$ be an affine $A$-scheme. Recall that for a smooth commutative group scheme $E$ over $\Spec R$ the co-Lie module $\omega_E:=\epsilon_E^*\Omega^1_{E/R}$ is a locally free $R$-module of rank equal to the relative dimension of $E$ over $R$. Moreover, on the additive group scheme $\BG_{a,R}=\Spec R[x]$ the elements $b\in R$, and in particular $\charmorph^*(a)\in R$ for $a\in\BF_q\subset A$, act via endomorphisms $\psi_b\colon\BG_{a,R}\to\BG_{a,R}$ given by $\psi_b^*\colon R[x]\to R[x],\,x\mapsto bx$. This makes $\BG_{a,R}$ into an $\BF_q$-module scheme. In addition, let $\tau:=F_{q,\BG_{a,R}}$ be the relative $q$-Frobenius endomorphism of $\BG_{a,R}=\Spec R[x]$ given by $x\mapsto x^q$. It satisfies $\tau\circ \psi_b=\psi_{b^q}\circ\tau$. We let $R\{\tau\}:=\bigl\{\sum_{i=0}^nb_i\tau^i\colon n\in\BN_0, b_i\in R\bigr\}$ with $\tau b=b^q\tau$ be the non-commutative polynomial ring in the variable $\tau$ over $R$. There is an isomorphism of rings $R\{\tau\}\isoto\End_{R\text{-groups},\BF_q\text{-lin}}(\BG_{a,R})$ sending an element $f=\sum_ib_i\tau^i\in R\{\tau\}$ to the $\BF_q$-equivariant endomorphism $f\colon\BG_{a,R}\to\BG_{a,R}$ given by $f^*(x):=\sum_ib_i x^{q^i}$.

\begin{Definition}\label{DefAndModule}
Let $d$ and $r$ be positive integers. An \emph{abelian Anderson $A$-module of rank $r$ and dimension $d$} over an affine $A$-scheme $\charmorph\colon\Spec R\to\Spec A$ is a pair $\ulE=(E,\phi)$ consisting of a smooth affine group scheme $E$ over $\Spec R$ of relative dimension $d$, and a ring homomorphism $\phi\colon A\to\End_{R\text{-groups}}(E),\,a\mapsto\phi_a$ such that
\begin{enumerate}
\item \label{DefAndModule_A}
there is a faithfully flat ring homomorphism $R\to R'$ for which $E\times_R\Spec R'\cong\BG_{a,R'}^d$ as $\BF_q$-module schemes, where $\BF_q$ acts on $E$ via $\phi$ and $\BF_q\subset A$,
\item \label{DefAndModule_B}
$(a\otimes1-1\otimes\charmorph^*a)^d\cdot\omega_E=0$ for all $a\in A$ under the action of $a\otimes1$ induced from $\phi_a$ and the natural action of $1\otimes b$ for $b\in R$ on the $R$-module $\omega_E$,
\item \label{DefAndModule_C}
the set $N:=M_q(\ulE):=\Hom_{R\text{-groups},\BF_q\text{\rm-lin}}(E,\BG_{a,R})$ of $\BF_q$-equivariant homomorphisms of $R$-group schemes is a locally free $A_R$-module of rank $r$ under the action given on $m\in N$ by
\[
\begin{array}{rll}
A\ni a\colon & N\longto N, & m\mapsto m\circ \phi_a\\[2mm]
R\ni b\colon & N\longto N, & m\mapsto \psi_b\circ m
\end{array}
\]
\end{enumerate}
If $d=1$ this is called a \emph{Drinfeld $A$-module} over $S$; compare \cite[Theorem~2.13]{HartlIsog}.
\end{Definition}

The case in which $C=\BP^1$, $A=\BF_q[t]$, and $R$ is a field was considered by Anderson~\cite{Anderson} under the name \emph{abelian $t$-module}. In \cite[Theorem~2.10]{HartlIsog} we give a proof the following relative version of Anderson's theorem~\cite[Theorem~1]{Anderson}.

\begin{Theorem}\label{ThmMotiveOfAModule}
If $\ulE=(E,\phi)$ is an abelian Anderson $A$-module of rank $r$ and dimension $d$, we consider in addition on $N:=M_q(\ulE)$ the map $\tau_N^\semi\colon m\mapsto F_{q,\BG_{a,R}}\!\circ\, m$. Since $\tau_N^\semi(bm)=b^q\tau_N^\semi(m)$ for $b\in R$, the map $\tau_N^\semi$ is $\sigma_{\!q}$-semilinear and induces an $A_R$-linear map $\tau_N\colon\sigma_{\!q}^*N\to N$ with $\tau_N^\semi(m)=\tau_N(\sigma_{\!q}^*m)$. Then $\ulM_q(\ulE):=(N,\tau_N)$ is an effective $A$-motive of rank $r$ and dimension $d$. There is a canonical isomorphism of $R$-modules
\begin{equation}\label{EqT0}
\coker\tau_N\;\isoto\;\omega_E\,,\quad m\mod\tau_N(\sigma_{\!q}^*N)\;\longmapsto\; m^*(1)\,,
\end{equation}
where $m^*(1)$ means the image of $1\in\omega_{\BG_{a,R}}=R$ under the induced $R$-homomorphism $m^*\colon\omega_{\BG_{a,R}}\to\omega_E$. 

The contravariant functor $\ulE\mapsto\ulM_q(\ulE)$ is fully faithful. Its essential image consists of all effective $A$-motives $\ulN=(N,\tau_N)$ over $R$ for which there exists a faithfully flat ring homomorphism $R\to R'$ such that $N\otimes_RR'$ is a finite free left $R'\{\tau\}$-module under the map $\tau\colon N\to N,\,m\mapsto\tau_N(\sigma_{\!q}^*m)$.
\end{Theorem}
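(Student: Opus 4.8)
The plan is to deduce this relative statement from an explicit contravariant quasi-inverse, using the co-Lie identification \eqref{EqT0} as the technical pivot. First I would prove \eqref{EqT0} together with the assertion that $\ulM_q(\ulE)$ is an effective $A$-motive of rank $r$ and dimension $d$, working fppf-locally on $\Spec R$. By Definition~\ref{DefAndModule}\ref{DefAndModule_A} one may assume $E=\BG_{a,R}^d$, so that $N=M_q(\ulE)$ is the free left $R\{\tau\}$-module $R\{\tau\}^d$ and $\tau_N^\semi$ is left multiplication by $\tau$; since $\tau_N$ is the $R$-linearization of $\tau_N^\semi$, its image is $\bigoplus_{i=1}^d\bigoplus_{j\ge1}R\cdot\tau^je_i$ and $\coker\tau_N$ is free of rank $d$ over $R$ (carried by the $\tau^0$-components), which descends back to the original $E$. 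The map $m\bmod\tau_N(\sigma_{\!q}^*N)\mapsto m^*(1)$ is well defined because $F_{q,\BG_{a,R}}$ induces the zero map on co-Lie modules, so $(F_{q,\BG_{a,R}}\circ m)^*(1)=0$; in the local model it reads off the coefficient of $\tau^0$, which exhibits it as an $R$-linear, $A$-equivariant isomorphism onto $\omega_E\cong R^d$. Transporting Definition~\ref{DefAndModule}\ref{DefAndModule_B} across this isomorphism gives the $A$-motive condition $J^d\cdot\coker\tau_N=0$, and together with Definition~\ref{DefAndModule}\ref{DefAndModule_C} (freeness of $N$ over $A_R$ of rank $r$) this yields the first claim and \eqref{EqT0}.

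Next I would build the quasi-inverse. For an effective $A$-motive $\ulN=(N,\tau_N)$ whose $N$ becomes finite free over $R'\{\tau\}$ after some faithfully flat $R\to R'$, let $E(\ulN)$ be the affine group scheme over $R$ defined by the same formula as Drinfeld's functor $\Dr_q$ from Section~\ref{Relshtgroupscheme}, applied to $N$ viewed via $R\subset A_R$ as an $R$-module, that is
\[
E(\ulN)(T)\;:=\;\bigl\{\,h\in\Hom_R(N,T)\colon h\bigl(\tau_N(\sigma_{\!q}^*m)\bigr)=h(m)^q\ \text{for all }m\in N\,\bigr\}
\]
on $R$-algebras $T$, with comultiplication $m\mapsto m\otimes1+1\otimes m$; the $A_R$-module structure on $N$ commutes with $\tau_N^\semi$ because $\sigma_{\!q}^*$ fixes $A$, hence induces $\phi\colon A\to\End_{R\text{-groups}}(E(\ulN))$. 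Base change along $R\to R'$ identifies $E(\ulN)\times_R\Spec R'$ with $\BG_{a,R'}^d$, so $E(\ulN)$ is smooth affine of relative dimension $d$ and satisfies Definition~\ref{DefAndModule}\ref{DefAndModule_A}; condition \ref{DefAndModule_B} follows from the previous paragraph applied to $E(\ulN)$ together with $J^d\cdot\coker\tau_N=0$; and condition \ref{DefAndModule_C} holds because the tautological evaluation pairing provides a natural isomorphism of $A$-motives $\ulN\isoto\ulM_q(E(\ulN))$, $m\mapsto(h\mapsto h(m))$. Dually, $e\mapsto(m\mapsto m(e))$ defines a natural morphism $E\to E(\ulM_q(\ulE))$ of $\BF_q$-module schemes, and one checks it is an isomorphism fppf-locally, where both sides become $\BG_{a,R'}^d$ and the statement reduces to the tautological identification $R'\{\tau\}^d=M_q(\BG_{a,R'}^d)$. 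Hence $\ulE\mapsto\ulM_q(\ulE)$ and $\ulN\mapsto\ulE(\ulN):=(E(\ulN),\phi)$ are mutually quasi-inverse contravariant equivalences between abelian Anderson $A$-modules and the full subcategory of effective $A$-motives cut out by the stated freeness condition; in particular the functor is fully faithful with the asserted essential image.

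The routine parts are the local model computations above. The genuine work — and the main obstacle — is running the last paragraph over an \emph{arbitrary}, possibly non-noetherian and non-reduced, base ring $R$ rather than over a field as in Anderson's original theorem~\cite[Theorem~1]{Anderson}. Concretely, one must know that for a product of additive groups $M_q$ is a \emph{free} $R\{\tau\}$-module of the expected rank and is compatible with arbitrary base change (for which \cite[Proposition~3.6 and Remark~5.5]{Poguntke15}, equivalently \cite[VII$_{\rm A}$, 7.4.3]{SGA3}, is available), and one must verify that the fppf-local models of $E(\ulN)$ and the local trivializations of the two natural maps $\ulN\isoto\ulM_q(E(\ulN))$ and $E\isoto E(\ulM_q(\ulE))$ glue via effective, $A$-equivariant descent data. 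This descent step, together with the freeness input, is the only place where the passage to the relative setting is not formal.
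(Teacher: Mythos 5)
The paper itself contains no proof of Theorem~\ref{ThmMotiveOfAModule}: it is imported from the companion paper \cite{HartlIsog} (Theorem~2.10 there), so there is no internal argument to measure yours against. Judged on its own, your strategy is the natural one and I see no step that is wrong: reduce via Definition~\ref{DefAndModule}(a) to the fppf-local model $E_{R'}\cong\BG_{a,R'}^d$, where $M_q(\BG_{a,R'}^d)=R'\{\tau\}^{\oplus d}$ with $\tau_N^\semi$ given by left multiplication by $\tau$, read off $\coker\tau_N$ on the $\tau^0$-components and identify it with $\omega_E$ (well-definedness because $F_{q,\BG_{a,R}}$ kills co-Lie modules), transport condition (b) of Definition~\ref{DefAndModule} through the $A$- and $R$-equivariant isomorphism \eqref{EqT0} to get $J^d\cdot\coker\tau_N=0$, and construct the quasi-inverse $\ulN\mapsto E(\ulN)$ by Drinfeld's recipe applied to the (infinite-rank over $R$) module $N$, with the two evaluation maps furnishing the natural isomorphisms.

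What keeps this from being a complete proof is exactly the point you flag at the end: the base-change compatibility $M_q(\ulE)\otimes_RR'\isoto M_q(\ulE\times_RR')$ (and its analogue for $E(\ulN)$) along the faithfully flat cover. Essentially every reduction in your argument passes through it silently: that $\coker\tau_N$ is finite locally free of rank $d$ over $R$ and not merely after base change, that \eqref{EqT0} is an isomorphism over $R$, that $\ulM_q(\ulE)$ lies in the stated essential image, and that the two evaluation maps are isomorphisms. It is true, but it deserves a proof: by fppf descent of morphisms, $M_q(E)$ is the equalizer of $M_q(E_{R'})\rightrightarrows M_q(E_{R''})$ with $R''=R'\otimes_RR'$; the explicit identification $M_q(\BG_{a,R'}^d)\cong R'\{\tau\}^{\oplus d}$ is compatible with any base change $R'\to R''$, so one gets a canonical descent datum on the quasi-coherent $R'$-module $M_q(E_{R'})$, which is effective (faithfully flat descent of arbitrary quasi-coherent modules), and the descended module is $M_q(E)$; combined with descent of finite presentation, flatness and smoothness this also gives that $E(\ulN)$ is smooth affine of relative dimension $d$ and that local freeness of $\coker\tau_N$ descends. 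Note that the references you cite here (\cite[Proposition~3.6, Remark~5.5]{Poguntke15}, \cite[VII$_{\rm A}$, 7.4.3]{SGA3}) concern \emph{finite} locally free group schemes and do not apply to $E$; for $\BG_{a,R'}^d$ the needed freeness and base-change statement is an elementary computation, and the descent argument above is what replaces them for $E$ itself. With that lemma written out (and the routine naturality checks for the evaluation maps), your outline gives a correct proof of the theorem.
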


\begin{Example}\label{ExTorsionOfAModule}
Let $\ulE=(E,\phi)$ be an abelian Anderson $A$-module over an affine $A$-scheme $\charmorph\colon\Spec R\to\Spec A$, and let $\ulN:=\ulM_q(\ulE)$ be its associated effective $A$-motive. 

\medskip\noindent
(a) Let $\Fa\subset A$ be a non-zero ideal. By \cite[Theorem~5.4]{HartlIsog} the \emph{$\Fa$-torsion submodule of $E$}, defined as the scheme-theoretic intersection 
\[
\ulE[\Fa]\es:=\es\bigcap_{a\in \Fa} \ker(\phi_a\colon E\to E)\,,
\]
is a finite locally free $A/\Fa$-module scheme and a strict $\BF_q$-module scheme over $S$, which satisfies $\ulM_q(\ulE[\Fa])=\ulN/\Fa\ulN$ and $\ulE[\Fa]=\Dr_q(\ulN/\Fa\ulN)$. 

\medskip\noindent
(b) Let $\Fp\subset A$ be a maximal ideal and assume that the elements of $\charmorph^*(\Fp)\subset R$ are nilpotent. Let $\hat q$ be the cardinality of the residue field $\BF_\Fp:=A/\Fp$ and let $f:=[\BF_\Fp:\BF_q]$. We fix a uniformizing parameter $z\in\BF_q(C)=\Quot(A)$ at $\Fp$ and set $\zeta:=\charmorph^*(z)\in R$. We obtain an isomorphism $\BF_\Fp\dbl z\dbr\isoto A_\Fp:=\invlim A/\Fp^n$. As in Example~\ref{ExFiniteShtuka} the $J$-adic completion $M$ of $\ulN$ together with $\tau_M:=\tau_N^f\colon\sigma_{\!q}^{f*}M\to M$ is an effective local shtuka $\ulM=(M,\tau_M)$ over $R$ of rank $r$ (as in Definition~\ref{Def1.1} with $q$ and $\BF_q\dbl z\dbr$ and $\sigma_{\!q}^*$ replaced by $\hat q$ and $\BF_\Fp\dbl z\dbr$ and $\sigma_{\!q}^{f*}$). By \cite[Theorem~6.6]{HartlIsog} the torsion module $\ulE[\Fp^n]$ is a finite locally free strict $\BF_\Fp$-module scheme which satisfies $\Dr_{\hat q}(\ulM/\Fp^n\ulM)=\ulE[\Fp^n]$ and $\ulM/\Fp^n\ulM=\ulM_{\hat q}(\ulE[\Fp^n])$. Moreover, in the sense of Section~\ref{DivlAnmodules} below, the inductive limit $\ulE[\Fp^\infty]:=\dirlim\ulE[\Fp^n]$ is a $\Fp$-divisible local Anderson module over $R$ which satisfies $\Dr_{\hat q}(\ulM)= \ulE[\Fp^\infty]$ and $\ulM=\ulM_{\hat q}(\ulE[\Fp^\infty])$ under the functors from Theorem~\ref{ThmEqZDivGps}. Note that condition (b) of Definition~\ref{DefAndModule} implies that $(z-\zeta)^d=0$ on $\omega_{\ulE[\Fp^n]}$ for every $n$ and on $\omega_{\ulE[\Fp^\infty]}:=\invlim\omega_{\ulE[\Fp^n]}$.
\end{Example}

\section{Divisible local Anderson modules}\label{DivlAnmodules}

The name ``divisible local Anderson module'' is motivated by Example~\ref{ExTorsionOfAModule}(b). These are the function field analogs of $p$-divisible groups. They were introduced in \cite{HartlDict}, but their definition in \cite[\S\,3.1]{HartlDict} and the claimed equivalence in \cite[\S\,3.2]{HartlDict} is false. We give the correct definition below analogously to Messing~\cite[Chapter~I, Definition~2.1]{Messing}. We fix the following notation. For an \fppf-sheaf of $\BF_q[z]$-modules $G$ over a scheme $S$ we denote the kernel of $z^n\colon G\to G$ by $G[z^n]$. Clearly $(G[z^{n+m}])[z^n]=G[z^n]$ for all $n,m\in\BN$.

\begin{Definition} \label{DefZDivGp}
A \emph{$z$-divisible local Anderson module} over a scheme $S\in\Nilp_{\BF_q\dbl\zeta\dbr}$ is a sheaf of $\BF_q\dbl z\dbr$-modules $G$ on the big \fppf-site of $S$ such that
\begin{enumerate}
\item \label{DefZDivGpAxiom1}
$G$ is \emph{$z$-torsion}, that is $G = \dirlim[n] G[z^n]$,
\item \label{DefZDivGpAxiom2}
$G$ is \emph{$z$-divisible}, that is $z\colon G \to G$ is an epimorphism,
\item \label{DefZDivGpAxiom3}
For every $n$ the $\BF_q$-module $G[z^n]$ is representable by a finite locally free strict $\BF_q$-module scheme over $S$ (Definition~\ref{DefStrictF_qMod}), and
\item \label{DefZDivGpAxiom4}
locally on $S$ there exists an integer $d \in \BZ_{\geq 0}$, such that
$(z-\zeta)^d=0$ on $\omega_ G$  where $\omega_G := \invlim[n]\omega_{G[z^n]}$.
\end{enumerate}
We define the \emph{co-Lie module} of a $z$-divisible local Anderson module $G$ over $S$ as $\omega_G:=\invlim\omega_{G[z^n]}$. We will see later in Lemma~\ref{LemLieG} and Theorem~\ref{LoNilForLie} that $\omega_G$ is a finite locally free $\CO_S$-module and we define the \emph{dimension of $G$} as $\rk\omega_G$\,. It is locally constant on $S$.

A $z$-divisible local Anderson module is called \emph{(ind-)\'etale} if $\omega_G = 0$.  Since $\omega_G$ surjects onto each $\omega_{G[z^n]}$ because $\omega_{i_n}\colon\omega_{G[z^{n+1}]}\onto\omega_{G[z^n]}$ is an epimorphism, $\omega_G = 0$ if and only if all $G[z^n]$ are \'etale, see Lemma~\ref{LemmaGEtale}.

A \emph{morphism of $z$-divisible local Anderson modules over $S$} is a morphism of \fppf-sheaves of $\BF_q\dbl z\dbr$-modules.
\end{Definition}

The category of $z$-divisible Anderson modules over $S$ is $\BF_q\dbl z\dbr$-linear and an exact category in the sense of Quillen~\cite[\S2]{Quillen}. 

\begin{Remark}\label{RemQCDirLim}
We will frequently use that for a quasi-compact $S$-scheme $X$ any $S$-morphism $f\colon X\to \dirlim G[z^n]$ factors through $f\colon X\to G[z^m]$ for some $m$; see for example \cite[Lemma~5.4]{HV1}.
\end{Remark}

\begin{Remark}[\bfseries on axiom~\ref{DefZDivGpAxiom4} in Definition~\ref{DefZDivGp}]
Note the following difference to the theory of $p$-divisible groups. On a commutative group scheme multiplication by $p$ always induces multiplication with the scalar $p$ on its co-Lie module. In the case of $\BF_q\dbl z\dbr$-module schemes, axiom~\ref{DefZDivGpAxiom4} is the appropriate substitute for this fact, taking into account Example~\ref{ExTorsionOfAModule}. It allows that $z-\zeta$ is nilpotent on $\omega_{G[z^n]}$. Without axiom~\ref{DefZDivGpAxiom4} the $\CO_S$-module $\omega_G$ is not necessarily finite; see Example~\ref{ExNotZDivGp} below.
\end{Remark}

\begin{Notation}\label{NotShortExSeqX_n}
Let $G$ be a $z$-divisible local Anderson module. We denote by $i_n$ the inclusion map $G[z^n] \hookrightarrow G[z^{n+1}]$ and by $i_{n,m}\colon G[z^n]\to G[z^{m+n}]$ the composite of the inclusions $i_{n+m-1}\circ\ldots\circ i_n$.  We  denote by $j_{n,m}$ the unique homomorphism  $G[z^{m+n}] \to G[z^m]$ which is induced by multiplication with $z^n$ on $G[z^{m+n}]$ such that $i_{m,n}\circ j_{n,m}=z^n\id_{G[z^{m+n}]}$. Observe that also $j_{n,m}\circ i_{m,n}=z^n\id_{G[z^m]}$ for all $m,n\in\BN$, as can be seen by composing with the $\BF_q[z]$-equivariant monomorphism $i_{m,n}\colon G[z^m]\into G[z^{m+n}]$.
\end{Notation}

The following two propositions give an alternative characterization of divisible local Anderson modules, which is analogous to Tate's definition \cite{Tate66} of $p$-divisible groups.

\begin{Proposition} \label{PropTate}
Let $G$ be a $z$-divisible local Anderson module. 
\begin{enumerate}
\item \label{PropTate_A}
For any $0 \leq m,n$ the following sequence of group schemes over $S$ is exact
\begin{equation} \label{EqPropTate}
0\to G[z^n]\xrightarrow{i_{n,m}} G[z^{m+n}]\xrightarrow{j_{n,m}} G[z^m]\to 0.
\end{equation}
\item \label{PropTate_B}
There is a locally constant function $h\colon S\to \BN_0,s\mapsto h(s)$ such that the order of $G[z^n]$ equals $q^{nh}$. We call $h$ the \emph{height} of the $z$-divisible local Anderson module $G$.
\end{enumerate}
\end{Proposition}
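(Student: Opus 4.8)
The plan is to prove the two assertions in turn, deriving (b) from (a) together with the multiplicativity of orders in exact sequences recalled in Remark~\ref{RemFactsOnG}(c).

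\textbf{Part (a).} I would first show that $i_{n,m}\colon G[z^n]\into G[z^{m+n}]$ is a closed immersion and equals the kernel of $j_{n,m}$. The inclusion $G[z^n]\hookrightarrow G[z^{n+m}]$ is a monomorphism of $\fppf$-sheaves, hence a closed immersion by Remark~\ref{RemFactsOnG}(a), since both are finite locally free group schemes by axiom~\ref{DefZDivGpAxiom3}. That $G[z^n]$ is exactly the kernel of $j_{n,m}$ is immediate on $R$-valued points of any $S$-scheme: $j_{n,m}$ is induced by $z^n$, so its kernel on $\fppf$-sheaves is $\{x\in G[z^{m+n}]\colon z^n x=0\}=G[z^n]$, using $(G[z^{m+n}])[z^n]=G[z^n]$ noted just before Definition~\ref{DefZDivGp}. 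The only remaining point is surjectivity of $j_{n,m}$ as a map of $\fppf$-sheaves, equivalently (Remark~\ref{RemFactsOnG}(b)) that it is faithfully flat. Here I would use $z$-divisibility (axiom~\ref{DefZDivGpAxiom2}): the map $z^n\colon G\to G$ is an epimorphism of $\fppf$-sheaves, and I would show that for a quasi-compact test scheme $X$ and a section $x\in G[z^m](X)$, after an $\fppf$-cover $X'\to X$ there is $y\in G(X')$ with $z^n y=x$; then $z^{m+n}y=z^m x=0$, so $y\in G[z^{m+n}](X')$ and $j_{n,m}(y)=x$, after possibly enlarging the cover using Remark~\ref{RemQCDirLim} to land $y$ in some $G[z^N]$ and then noting $y\in G[z^{m+n}]$ already. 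This exhibits $j_{n,m}$ as an epimorphism of $\fppf$-sheaves. Combining the three facts and invoking Remark~\ref{RemFactsOnG}(c) gives exactness of \eqref{EqPropTate}.

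\textbf{Part (b).} Applying the order formula $\ord(G[z^{m+n}])=\ord(G[z^n])\cdot\ord(G[z^m])$ from Remark~\ref{RemFactsOnG}(c) to the exact sequence \eqref{EqPropTate}, and writing $a_n:=\ord(G[z^n])$ (a locally constant $\BN_0$-valued function on $S$ by the discussion preceding Remark~\ref{RemFactsOnG}), one gets $a_{m+n}=a_n\cdot a_m$ for all $m,n$; note $a_0=1$ since $G[z^0]=0$. Fixing a connected component of $S$, on which all $a_n$ are constant, multiplicativity forces $a_n=a_1^{\,n}$. It remains to see $a_1=q^{h}$ for some $h\in\BN_0$. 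This is where axiom~\ref{DefZDivGpAxiom3} is essential: $G[z]$ is a finite locally free \emph{strict} $\BF_q$-module scheme, so by Theorem~\ref{ThmEqAModSch} it equals $\Dr_q(\ulM)$ for a finite $\BF_q$-shtuka $\ulM=\ulM_q(G[z])$, and by Theorem~\ref{ThmEqAModSch}\ref{ThmEqAModSchItem5} its order is $q^{\rk\ulM}$. Hence $a_1=q^{h}$ with $h:=\rk\ulM_q(G[z])$, which is locally constant on $S$, and therefore $\ord(G[z^n])=a_n=q^{nh}$.

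\textbf{Main obstacle.} The routine-looking parts (closed immersion, identifying the kernel) are genuinely routine given the earlier remarks; the one place demanding care is the surjectivity of $j_{n,m}$ on the \fppf-site, i.e. the faithful flatness of multiplication by $z^n$ viewed between the finite locally free schemes $G[z^{m+n}]$ and $G[z^m]$. The subtlety is that $z$-divisibility is a statement about the big sheaf $G$, and one must pass from an \fppf-local preimage $y\in G(X')$ up in $G$ down to the torsion subschemes without losing flatness; the clean way is to prove surjectivity as \fppf-sheaves directly (where it is nearly formal from axioms~\ref{DefZDivGpAxiom1} and \ref{DefZDivGpAxiom2}) and then invoke Remark~\ref{RemFactsOnG}(b) to upgrade to faithful flatness, rather than trying to argue flatness by hand.
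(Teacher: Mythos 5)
Your proof is correct and follows essentially the same route as the paper: surjectivity of $j_{n,m}$ is deduced from the $z$-divisibility axiom (the paper states this in one line, you fill in the \fppf-lifting details), the kernel and closed-immersion facts come from Remark~\ref{RemFactsOnG}, and part (b) is obtained exactly as in the paper from multiplicativity of the order together with $\ord G[z]=q^{h}$ for $h=\rk\ulM_q(G[z])$ via Theorem~\ref{ThmEqAModSch}\ref{ThmEqAModSchItem5}. No gaps to report.
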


\begin{proof}
\ref{PropTate_A} Since $z\colon G\to G$ is an epimorphism, also $j_{n,m}$ is. The rest of \ref{PropTate_A} is clear. Let $h:=\rk_{\CO_S}\ulM_q(G[z])$. Then $\ord G[z]=q^h$ by Theorem~\ref{ThmEqAModSch}\ref{ThmEqAModSchItem5}. Now \ref{PropTate_B} follows from \ref{PropTate_A} and the multiplicativity of the order; see Remark~\ref{RemFactsOnG}(c).
\end{proof}

\begin{Proposition}\label{PropEqvaxiom}
Let $(G_n,\,i_n\colon G_n\into G_{n+1})_{n\in\BN}$ be an inductive system of $\BF_q[z]$-module schemes which are finite locally free strict $\BF_q$-module schemes over $S$ such that
\begin{enumerate}
\item \label{PropEqvaxiom_A}
$i_n$ induces an isomorphism $i_n\colon G_n \isoto G_{n+1}[z^n]$,
\item \label{PropEqvaxiom_B}
there is a locally constant function $h\colon S\to\BN_0$ such that $\ord G_n=q^{nh}$ for all $n$,
\item \label{PropEqvaxiom_C}
locally on $S$ there exist an integer $d \in \BZ_{\geq 0}$, such that
$(z-\zeta)^d=0$ on $\omega_ G$   where $\omega_G = \invlim\omega_{G_n}$.
\end{enumerate}
Then $G = \es \dirlim G_n$ is a $z$-divisible local Anderson module.
\end{Proposition}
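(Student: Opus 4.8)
The plan is to verify, for $G:=\dirlim_n G_n$, the four axioms of Definition~\ref{DefZDivGp}; the key preliminary step will be to identify $G[z^n]$ with $G_n$, after which three of the four axioms follow formally and the only genuinely non-formal point is $z$-divisibility. First I would note that the $\BF_q[z]$-action on $G$ extends uniquely to an $\BF_q\dbl z\dbr$-action: hypothesis~\ref{PropEqvaxiom_A} forces $z^n$ to vanish on $G_n$ (since $G_n\cong G_{n+1}[z^n]=\ker(z^n\colon G_{n+1}\to G_{n+1})$), so any $\sum_i a_iz^i\in\BF_q\dbl z\dbr$ acts on $G_n$ through its truncation modulo $z^n$, compatibly with the $i_n$; hence $G$ is a sheaf of $\BF_q\dbl z\dbr$-modules on the big fppf-site of $S$.

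For the identification $G[z^n]=G_n$, I would use that the category of fppf-sheaves of $\BF_q\dbl z\dbr$-modules on $S$ is a Grothendieck abelian category, so filtered colimits in it are exact; therefore $G[z^n]=\ker(z^n\colon G\to G)=\dirlim_m\ker(z^n\colon G_m\to G_m)$. Iterating \ref{PropEqvaxiom_A} shows $\ker(z^n\colon G_m\to G_m)=G_m[z^n]\cong G_n$ for all $m\ge n$, compatibly with the transition maps, so $G[z^n]\cong G_n$. (Alternatively one may test on quasi-compact $S$-schemes $T$: any section of $G[z^n]$ over $T$ factors through some $G_m$ by Remark~\ref{RemQCDirLim} and then lands in $G_m[z^n]=G_n$.) This identification gives axiom~\ref{DefZDivGpAxiom3} at once (each $G[z^n]=G_n$ is a finite locally free strict $\BF_q$-module scheme, its $\BF_q$-action being the restriction of the $\BF_q[z]$-action); it gives axiom~\ref{DefZDivGpAxiom1}, since then $G=\dirlim_n G_n=\dirlim_n G[z^n]$; and it gives axiom~\ref{DefZDivGpAxiom4}, since $\omega_G:=\invlim_n\omega_{G[z^n]}$ is now exactly the module occurring in hypothesis~\ref{PropEqvaxiom_C} (with transition maps the surjections $\omega_{G[z^{n+1}]}\onto\omega_{G[z^n]}$ of Lemma~\ref{LemmaLongExSeq}).

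The remaining and main point is $z$-divisibility, axiom~\ref{DefZDivGpAxiom2}. I would analyze the morphism $z\colon G_{n+1}\to G_{n+1}$: its kernel is $G_{n+1}[z]\cong G_1$ by iterating \ref{PropEqvaxiom_A}, and since $z^{n+1}=0$ on $G_{n+1}$ its image lies in $G_{n+1}[z^n]=G_n$. By Remark~\ref{RemFactsOnG}(d) this image is the finite locally free quotient $G_{n+1}/G_1$, and by Remark~\ref{RemFactsOnG}(a) it is a closed subscheme of $G_n$; by hypothesis~\ref{PropEqvaxiom_B} and multiplicativity of orders (Remark~\ref{RemFactsOnG}(c)) it has order $q^{(n+1)h}/q^h=q^{nh}=\ord G_n$, hence equals $G_n$. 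Thus $z\colon G_{n+1}\to G_n$ is faithfully flat, i.e.\ an epimorphism of fppf-sheaves by Remark~\ref{RemFactsOnG}(b). To conclude that $z\colon G\to G$ is an epimorphism it then suffices, for each affine (hence quasi-compact) $S$-scheme $T$ and each $g\in G(T)$, to produce an fppf-cover $T'\to T$ over which $g$ has a preimage: by Remark~\ref{RemQCDirLim} the section $g$ factors through $G_n(T)=G[z^n](T)$ for some $n$, and pulling back along the faithfully flat $G_{n+1}\times_{G_n,g}T\to T$ yields such a $T'$. That the category of $z$-divisible local Anderson modules over $S$ is $\BF_q\dbl z\dbr$-linear and exact in the sense of Quillen is as remarked after Definition~\ref{DefZDivGp}. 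The step I expect to be the main obstacle is precisely this $z$-divisibility argument, where hypothesis~\ref{PropEqvaxiom_B} on the orders and the structure theory of finite locally free group schemes (Remark~\ref{RemFactsOnG}) are genuinely needed; everything else follows formally once $G[z^n]=G_n$ is in place.
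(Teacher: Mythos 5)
Your proposal is correct and follows essentially the same route as the paper's proof: identify $G[z^n]$ with $G_n$ by testing on quasi-compact schemes (Remark~\ref{RemQCDirLim}), then show $z\colon G_{n+1}\to G_n$ is an epimorphism by the order count of Remark~\ref{RemFactsOnG} applied to the closed immersion $G_{n+1}/G_1\into G_n$, and conclude $z$-divisibility locally on quasi-compact pieces, with the remaining axioms immediate from the hypotheses. Your extra remarks (extension of the $\BF_q[z]$-action to $\BF_q\dbl z\dbr$, and the explicit fppf cover $G_{n+1}\times_{G_n,g}T\to T$ in the last step) only make explicit points the paper leaves implicit.
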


\begin{proof}
Since $i_n\colon G_n\into G_{n+1}$ is a monomorphism the maps $G_n(T)\into G_{n+1}(T)$ are injective for all $S$-schemes $T$ and we may identify $G_n(T)$ with a subset of $G(T)$. From \ref{PropEqvaxiom_A} it follows that $G_n=G_m[z^n]\subset G[z^n]$ for all $m\ge n$. Conversely let $x\in G[z^n](T)$ for an $S$-scheme $T$. On each quasi-compact open subscheme $U\subset T$ we can find an $m$ such that $x|_U\in G_m(U)$ by Remark~\ref{RemQCDirLim}. Now $z^n x = 0$ implies $x|_U\in G_m[z^n](U)=G_n(U)$. In total $x\in G_n(T)$. This shows that $G_n=G[z^n]$ and $G=\dirlim G[z^n]$ is $z$-torsion. 

The quotient $G_n/G_1$ is a finite locally free group scheme over $S$ by Remark~\ref{RemFactsOnG}(d). Its order is $q^{(n-1)h}$ by \ref{PropEqvaxiom_B} and the multiplicativity of the order; see Remark~\ref{RemFactsOnG}(c). The natural map $z\colon G_n/G_1\into G_n[z^{n-1}]\cong G_{n-1}$ is a monomorphism and hence a closed immersion by Remark~\ref{RemFactsOnG}(a). Thus $\CO_{G_{n-1}}\onto \CO_{G_n/G_1}$ is an epimorphism of finite locally free $\CO_S$-modules. It must be an isomorphism because $\rk_{\CO_S}\CO_{G_n/G_1}=\ord(G_n/G_1)=\ord(G_{n-1})=\rk_{\CO_S}\CO_{G_{n-1}}$ by \ref{PropEqvaxiom_B}. This proves that $z\colon G_n\to G_{n-1}$ is an epimorphism of \fppf-sheaves. Let $x\in G(T)$ for an $S$-scheme $T$. Choose a quasi-compact open covering $\{U_i\}_i$ of $T$. For each $i$ we find by Remark~\ref{RemQCDirLim} an integer $n_i$ such that $x|_{U_i}\in G_{n_i}(U_i)$. By the above, there is a $y_i\in G_{n_i+1}(U_i)\subset G(U_i)$ with $z\cdot y_i=x|_{U_i}$. This shows that $G$ is $z$-divisible. By \ref{PropEqvaxiom_C} it is a $z$-divisible local Anderson module.
\end{proof}

Note that we require the conditions \ref{DefZDivGpAxiom4} in Definition~\ref{DefZDivGp} and \ref{PropEqvaxiom_C} in  Proposition~\ref{PropEqvaxiom} due to the following example which we do \emph{not} want to consider a $z$-divisible local Anderson module.

\begin{Example} \label{ExNotZDivGp}
Let $S$ be the spectrum of a ring $R$ in which $\zeta$ is zero, and let $G_n$ be the subgroup of $\BG_{a,S}^n=\Spec R[x_1,\ldots,x_n]$ defined by the ideal $(x_1^q,\ldots,x_n^q)$. Make $G_n$ into an $\BF_q\dbl z\dbr$-module scheme by letting $z$ act through
\[
z^\ast(x_1) \es = \es 0\qquad\text{and}\qquad z^\ast(x_\nu) \es=\es x_{\nu-1}\quad\text{for }1<\nu\le n\,.
\]
Define $i_n\colon G_n\to G_{n+1}$ as the inclusion of the closed subgroup scheme defined by the ideal $(x_{n+1})$. 

As in Proposition~\ref{PropEqvaxiom} one proves that $G:=\dirlim G_n$ satisfies axioms~\ref{DefZDivGpAxiom1} to \ref{DefZDivGpAxiom3} of Definition~\ref{DefZDivGp}, but not \ref{DefZDivGpAxiom4}. Here $\omega_{G_n}=\bigoplus_{i=1}^n R\cdot dx_i\cong R^n$, and so $\omega_G$ is not a finite $R$-module. Therefore we cannot drop the conditions \ref{DefZDivGpAxiom4} in Definition~\ref{DefZDivGp} and \ref{PropEqvaxiom_C} in  Proposition~\ref{PropEqvaxiom}. 
\end{Example}

\medskip

In the remainder of this section we introduce truncated $z$-divisible local Anderson modules.

\begin{Lemma}\label{LemmaTruncatedZDAM}
\label{LemmaequivalentconditionforImageandKernel}
Let $n\in\BN$ and let $G$ be an \fppf-sheaf of $\BF_q[z]$-modules over $S$, such that $G=G[z^n]$. Then the following conditions are equivalent
\begin{enumerate}
\item \label{LemmaequivalentconditionforImageandKernel_A}
$G$ is a flat $\BF_q[z]/(z^n)$-module,
\item \label{LemmaequivalentconditionforImageandKernel_B}
$\ker(z^{n-i}) = \im (z^i)$ for $i = 0,\ldots ,n$, that is the morphism $z^i\colon G\to G[z^{n-i}]$ is an epimorphism.
\end{enumerate}
\end{Lemma}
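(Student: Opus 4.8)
The plan is to recognise the equivalence as a statement about flatness over the Artinian principal ideal ring $R_n:=\BF_q[z]/(z^n)$. Since $G=G[z^n]$, the $\BF_q[z]$-action on $G$ factors through $R_n$, so $G$ is a sheaf of $R_n$-modules on the big \fppf-site of $S$; recall that every ideal of $R_n$ is of the form $(z^i)$ with $0\le i\le n$, and that $(z^i)\cong R_n/(z^{n-i})$ as $R_n$-modules via $\bar1\mapsto z^i$.

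To prove \ref{LemmaequivalentconditionforImageandKernel_A}~$\Rightarrow$~\ref{LemmaequivalentconditionforImageandKernel_B} I would, for each $i$, tensor the exact sequence $0\to(z^i)\to R_n\to R_n/(z^i)\to 0$ of $R_n$-modules with $G$ over $R_n$. Flatness of $G$ keeps it exact, and under the identification $(z^i)\otimes_{R_n}G\cong G/z^{n-i}G$ this says that the map $G/z^{n-i}G\to G$, $\bar g\mapsto z^ig$, is a monomorphism of \fppf-sheaves. Evaluating on $T$-valued points then gives $\ker(z^i\colon G\to G)\subseteq\im(z^{n-i}\colon G\to G)$ as subsheaves, and the reverse inclusion is automatic because $z^n=0$ on $G$; hence $z^{n-i}\colon G\to G[z^i]$ is an epimorphism for all $i$, which is precisely \ref{LemmaequivalentconditionforImageandKernel_B}.

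For the converse I would use the periodic free resolution
\[
\cdots\longto R_n\xrightarrow{z^{n-i}}R_n\xrightarrow{z^{i}}R_n\longto R_n/(z^i)\longto 0
\]
of $R_n/(z^i)$; tensoring with $G$ over $R_n$ identifies $\Tor_1^{R_n}\!\bigl(R_n/(z^i),G\bigr)$ with $\ker(z^i\colon G\to G)\big/\im(z^{n-i}\colon G\to G)$, which vanishes by \ref{LemmaequivalentconditionforImageandKernel_B}. As the $(z^i)$ are all the ideals of $R_n$, the criterion that a sheaf of modules over a Noetherian ring is flat once $\Tor_1$ against every cyclic quotient vanishes then gives flatness of $G$. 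The one point that needs care is justifying this criterion in the category of \fppf-sheaves; I would do so by the usual dévissage — a finitely generated $R_n$-module is filtered by copies of the residue field $\BF_q$, an arbitrary $R_n$-module is a filtered colimit of finitely generated ones, and filtered colimits are exact and long exact $\Tor$-sequences available in the abelian category of \fppf-sheaves. This in fact reduces everything to $\Tor_1^{R_n}(\BF_q,G)=0$, i.e.\ to the single instance $\ker(z\colon G\to G)=\im(z^{n-1}\colon G\to G)$ of \ref{LemmaequivalentconditionforImageandKernel_B}, from which a short induction on $i$ recovers all the identities $\ker(z^i\colon G\to G)=\im(z^{n-i}\colon G\to G)$.
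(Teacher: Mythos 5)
Your proof is correct, and in the harder direction it takes a genuinely different, more self-contained route than the paper. For (a)$\Rightarrow$(b) the paper deduces from flatness the isomorphisms $G/zG\isoto z^iG/z^{i+1}G$, extracts from them the single identity $\ker(z^{n-1})=\im(z)$, and gets the remaining identities by induction on $i$; you instead tensor $0\to(z^i)\to\BF_q[z]/(z^n)\to\BF_q[z]/(z^i)\to0$ with $G$ for each $i$ separately, which is equally valid provided (as you indeed say) the resulting inclusion is read as an equality of fppf-subsheaves, with $\im$ the sheaf image rather than the naive image on $T$-points. For (b)$\Rightarrow$(a) the paper shows that the instance $i=1$ alone forces $\gr^\bullet\bigl(\BF_q[z]/(z^n)\bigr)\otimes_{\BF_q}\gr^0(G)\isoto\gr^\bullet(G)$ and then quotes Bourbaki's local criterion of flatness for the nilpotent ideal $(z)$, whereas you prove the relevant case of that criterion by hand: the periodic resolution identifies $\Tor_1$ against the residue field with $\ker(z)/\im(z^{n-1})$, and the dévissage through finite-length modules together with exact filtered colimits and the long exact $\Tor$-sequences in the abelian category of fppf-sheaves of $\BF_q[z]/(z^n)$-modules kills $\Tor_1$ against everything. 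What each buys: the paper's argument is shorter because the homological input is outsourced to Bourbaki, and its converse uses only the instance $i=1$ of (b); yours is self-contained, makes explicit the transport of the module-theoretic criterion to the sheaf category (which the paper leaves implicit when it applies Bourbaki to sheaf-level quotients), and shows that the single instance $i=n-1$ suffices instead. Both arguments read ``flat'' as exactness of $-\otimes_{\BF_q[z]/(z^n)}G$ on sheaves of modules, which is the intended meaning here.
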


\begin{proof} \ref{LemmaequivalentconditionforImageandKernel_A}$\Longrightarrow$\ref{LemmaequivalentconditionforImageandKernel_B}. Because of \ref{LemmaequivalentconditionforImageandKernel_A}, the multiplication with $z^i$ induces isomorphisms 
\[
\BF_q[z]/(z)\isoto z^i\BF_q[z]/z^{i+1}\BF_q[z] \qquad\text{and}\qquad G/{zG}\isoto z^iG/{z^{i+1}G}
\]
for $i\leq n-1$. This gives us $\ker (z^{n-1}) \subset \im (z)$, and the opposite inclusion $\ker (z^{n-1}) \supset \im (z)$ follows from $G=G[z^n]$. Now $\ker (z^{n-i}) \subset \ker (z^{n-1}) \subset \im (z)$ implies that $\ker (z^{n-i}) = z \ker (z^{n-i+1}) = z\cdot z^{i-1}G = z^iG$ by induction on $i$.

\medskip\noindent
\ref{LemmaequivalentconditionforImageandKernel_B}$\Longrightarrow$\ref{LemmaequivalentconditionforImageandKernel_A}. Taking $i=1$ implies $\im (z) = \ker(z^{n-1})$, and hence multiplication with $z^{n-1}$ induces an isomorphism $G/{zG} \isoto z^{n-1}G$. Since this factors through the epimorphisms $G/{zG} \to zG/{z^2G} \to \cdots \to z^{n-1}G$ we see that each of these maps is an isomorphism. Thus we have 
\begin{align} \label{eqofgr1}
\gr^\bullet\bigl(\BF_q[z]/(z^n)\bigr)\otimes _{\BF_q} \gr^0(G) \xrightarrow{\sim} \gr^\bullet(G). 
\end{align}
Note that the ideal $(z)\subset\BF_q[z]/(z^n)$ is nilpotent. Since $G/zG$ is flat over $\BF_q[z]/(z)=\BF_q$ \cite[Chapter~III, \S\,5.2, Theorem~1]{BourbakiAlgCom} implies that  $G$ is a flat  $\BF_q[z]/(z^n)$-module.
\end{proof}

\begin{Definition} \label{DefTruncatedZDAM}
Let $d,n \in \BN_{>0}$. A \emph{truncated $z$-divisible local Anderson module with order of nilpotence $d$ and level $n$} is an \fppf-sheaf of $\BF_q[z]$-modules over $S$, such that:
\begin{enumerate}
\item If $n\geq 2d$ it is an $\BF_q[z]/(z^n)$-module scheme $G$ which is finite locally free and strict as $\BF_q$-module scheme, such that $(z-\zeta)^d$ is homotopic to $0$ on $\CoL{G/S}$ and $G$ satisfies the equivalent conditions of Lemma~\ref{LemmaequivalentconditionforImageandKernel}. 
\item If $n< 2d$ it is of the form $\ker(z^n \colon G \to G)$ for some truncated $z$-divisible local Anderson module $G$ with order of nilpotence $d$ and level $2d$. 
\end{enumerate}
\end{Definition}

If $G$ is a $z$-divisible local Anderson module over $S\in\Nilp_{\BF_q\dbl\zeta\dbr}$ with $(z-\zeta)^d=0$ on $\omega_G$, we will see in Proposition~\ref{PropTruncZDAM} below that $G[z^n]$ is a truncated $z$-divisible local Anderson module with order of nilpotence $d$ and level $n$. This justifies the name.

\section{The local equivalence} \label{SectLocalEquiv}

The category of $z$-divisible local Anderson modules over $S$ and the category of local shtukas over $S$ are both $\BF_q\dbl z\dbr$-linear.
Our next aim is to extend Drinfeld's construction and the equivalence from Section~\ref{Relshtgroupscheme} to an equivalence between the category of effective local shtukas over $S$ and the category of $z$-divisible local Anderson modules over $S$.

For every effective local shtuka $\ulM=(M,F_M)$ over $S$ we observe $\ulM=\invlim(M/z^n M, F_M \mod z^n M)$ and we set
\[
\Dr_q(\ulM)\es:=\es \dirlim[n] \,\Dr_q\bigl(M/z^n M, F_M \mod z^n M\bigr)\,.
\]
The action of $\BF_q\dbl z\dbr$ on $M$ makes $\Dr_q(\ulM)$ into an \fppf-sheaf of $\BF_q\dbl z\dbr$-modules on $S$. Conversely, for every $z$-divisible local Anderson module $G = \dirlim G[z^n]$ over $S$ we set
\[
\ulM_q(G)\es =\es\bigl(M_q(G),F_{M_q(G)}\bigr)\es:=\es \invlim[n] \,\bigl(M_q(G[z^n]),F_{M_q(G[z^n])}\bigr)\,.
\]
Multiplication with $z$ on $G$ gives $M_q(G)$ the structure of an $\CO_S\dbl z\dbr$-module.

\begin{Lemma}\label{Lemlocallyfree}
Let $G = \dirlim G[z^n]$ be a $z$-divisible local Anderson module of height $r$ over $S$, see Proposition~\ref{PropTate}, then $M_q(G)$ is a locally free sheaf of $\CO_S\dbl z\dbr$-modules of rank $r$. 
\end{Lemma}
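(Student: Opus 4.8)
The plan is to reduce the statement to facts already established, chiefly Theorem~\ref{ThmEqAModSch}\ref{ThmEqAModSchItem5} (which computes the order of $\Dr_q$ of a finite $\BF_q$-shtuka) and Proposition~\ref{PropTate} (which gives the height $h=r$ and the exact sequences \eqref{EqPropTate}). Since the assertion is local on $S$, I would assume $S=\Spec R$ affine and, because finite locally free modules of a fixed rank are Zariski-locally free and by \cite[Proposition~2.3]{HV1} fpqc-local freeness over $R\dbl z\dbr$ descends to Zariski-local freeness, it suffices to prove that $M_q(G)=\invlim_n M_q(G[z^n])$ is a finitely generated projective $R\dbl z\dbr$-module of rank $r$; one then gets local freeness of the sheaf automatically.

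First I would analyze each $M_q(G[z^n])$. By Theorem~\ref{ThmEqAModSch}\ref{ThmEqAModSchItem5} applied to the finite $\BF_q$-shtuka $\ulM_q(G[z^n])$, together with $\ord G[z^n]=q^{nr}$ from Proposition~\ref{PropTate}\ref{PropTate_B}, we get $\rk_R M_q(G[z^n])=nr$; so $M_q(G[z^n])$ is a finite locally free $R$-module of rank $nr$. Next I would use the functoriality of $\ulM_q$ under the exact sequences of Proposition~\ref{PropTate}\ref{PropTate_A}. Applying $\ulM_q$ to $0\to G[z^m]\xrightarrow{i_{m,n}}G[z^{m+n}]\xrightarrow{j_{n,m}}G[z^n]\to 0$ and using that $\ulM_q$ is exact on short exact sequences of finite locally free strict $\BF_q$-module schemes (Theorem~\ref{ThmEqAModSch}\ref{ThmEqAModSchItem2}), we obtain a short exact sequence
\[
0\to M_q(G[z^n])\xrightarrow{M_q(j_{n,m})} M_q(G[z^{m+n}])\xrightarrow{M_q(i_{m,n})} M_q(G[z^m])\to 0
\]
of $R$-modules, all terms locally free. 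Taking $m=1$, $n\mapsto n-1$ shows the transition map $M_q(G[z^n])\to M_q(G[z^{n-1}])$ in the inverse system is surjective with kernel a locally free $R$-module of rank $r$, namely (a twist of) $M_q(G[z])$. Moreover I would identify the $z$-action: multiplication by $z$ on $G[z^n]$ induces on $M_q$, via the commutativity with $i_{n-1,1}$ and $j_{1,n-1}$, a map whose image is exactly $M_q(G[z^{n-1}])$ sitting inside $M_q(G[z^n])$, so that $M_q(G[z^n])/zM_q(G[z^n])\cong M_q(G[z])$ is locally free of rank $r$ over $R$, and $M_q(G[z^n])$ is killed by $z^n$.

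With these in hand, the conclusion follows from a standard completeness argument. The inverse system $(M_q(G[z^n]))_n$ has surjective transition maps, each term is finite projective over $R$ killed by $z^n$, and $M_q(G[z^n])\cong M_q(G)/z^nM_q(G)$ (this last point I would verify by checking that $z^nM_q(G)=\ker(M_q(G)\to M_q(G[z^n]))$, using surjectivity of transitions and the Mittag-Leffler condition so that $\invlim^1$ vanishes). Thus $M_q(G)$ is $z$-adically complete and separated with $M_q(G)/zM_q(G)$ finite projective of rank $r$ over $R$. Lifting an $R$-basis of $M_q(G)/zM_q(G)$ to elements of $M_q(G)$ and using completeness gives a surjection $R\dbl z\dbr^{\oplus r}\onto M_q(G)$ (Zariski-locally on $R$, after trivializing $M_q(G[z])$); comparing with the $R$-module structure of the $M_q(G[z^n])$ — each of rank $nr$ — shows this surjection is an isomorphism modulo every $z^n$, hence an isomorphism by completeness and separatedness. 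The main obstacle I anticipate is the bookkeeping around the $z$-action: pinning down precisely that $M_q(G)/z^nM_q(G)\xrightarrow{\ \sim\ }M_q(G[z^n])$ and that multiplication by $z$ corresponds under $\ulM_q$ to the inclusion $M_q(G[z^{n-1}])\hookrightarrow M_q(G[z^n])$ requires carefully tracking the maps $i_{n,m}$, $j_{n,m}$ through the contravariant functor $\ulM_q$ and through diagram~\eqref{EqDiagFrobenius}; once that compatibility is nailed down, the rest is formal $z$-adic algebra.
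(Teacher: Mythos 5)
Your proposal is correct, and its skeleton coincides with the paper's: reduce to $S=\Spec R$, feed the exact sequences of Proposition~\ref{PropTate} through the exact functor $\ulM_q$ of Theorem~\ref{ThmEqAModSch}, identify $M_q(G[z^n])$ with $M_q(G)/z^nM_q(G)$, and then lift an $R$-basis of the rank-$r$ module $M_q(G[z])$ to produce a map $R\dbl z\dbr^{\oplus r}\to M_q(G)$ which one shows is an isomorphism. The two places where you proceed differently are both fine. For the identification $M_q(G)/z^nM_q(G)\cong M_q(G[z^n])$ the paper applies $\ulM_q$ to $0\to G[z^n]\to G[z^{n+1}]\xrightarrow{z^n}G[z^{n+1}]$ and quotes Bourbaki (\cite[\S\,III.2.11, Proposition~14 and Corollaire~1]{BourbakiAlgCom}), which simultaneously gives finite generation of $M_q(G)$ over $R\dbl z\dbr$; you instead compute $\ker\bigl(M_q(G)\to M_q(G[z^n])\bigr)=z^nM_q(G)$ directly from the short exact sequences and surjectivity of the transition maps — the compatibility $M_q(j_{n,k})\circ M_q(i_{k,n})=z^n$ that you flag as the main bookkeeping point does hold (it is exactly the relation $i_{k,n}\circ j_{n,k}=z^n\id$ of Notation~\ref{NotShortExSeqX_n} under the contravariant functor), so your verification goes through. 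For injectivity of the resulting surjection $\alpha\colon R\dbl z\dbr^{\oplus r}\onto M_q(G)$ the paper first proves that $z$ is injective on $M_q(G)$ and then uses Nakayama plus a snake-lemma argument to show $\ker\alpha$ is $z$-divisible, hence zero by $z$-adic separatedness; you instead observe that $\alpha\bmod z^n$ is a surjection of finite locally free $R$-modules of the same rank $nr$, hence an isomorphism, and conclude by separatedness of $R\dbl z\dbr^{\oplus r}$. Your route avoids both the Bourbaki citation and the injectivity of $z$ on $M_q(G)$ (replacing them with the rank count $\rk_R M_q(G[z^n])=nr$ from Theorem~\ref{ThmEqAModSch}\ref{ThmEqAModSchItem5} and Proposition~\ref{PropTate}), at the cost of a slightly longer direct limit/kernel computation; the paper's version gets finite generation for free from Bourbaki and never needs the higher-level rank count.
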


\begin{proof}
Applying $\ulM_q$ to the exact sequence $0 \to G[z^n] \xrightarrow{i_n} G[z^{n+1}] \xrightarrow {z^n} G[z^{n+1}]$ yields an exact sequence of $\CO_S\dbl z\dbr$-modules
\[
M_q(G[z^{n+1}])\es\xrightarrow{z^n}\es M_q(G[z^{n+1}])\es\xrightarrow{M_q(i_n)}\es M_q(G[z^n])\es\to\es 0\,.
\]
We deduce from~\cite[\S\,III.2.11, Proposition~14 and Corollaire~1]{BourbakiAlgCom} that $M_q(G)$ is a finitely generated $\CO_S\dbl z\dbr$-module and the canonical map $M_q(G)\to M_q(G[z^n])$ identifies $M_q(G[z^n])$ with $M_q(G)/z^n M_q(G)$. 

We claim that multiplication with $z$ on $M_q(G)$ is injective. So let $\invlim(f_n)_n\in M_q(G),\,f_n\in M_q(G[z^n])$ with $z\cdot f_n=0$ in $M_q(G[z^n])$ for all $n$. To prove the claim consider the factorization
\[
z\cdot\id_{M_q(G[z^{n+1}])}\es=\es M_q(j_{1,n})\circ M_q(i_{n,1})\colon\es M_q(G[z^{n+1}])\es\longto\es M_q(G[z^{n+1}])
\]
obtained from Notation~\ref{NotShortExSeqX_n}. Theorem~\ref{ThmEqAModSch}\ref{ThmEqAModSchItem2} implies that $M_q(j_{1,n})$ is injective, and hence $f_n=M_q(i_{n,1})(f_{n+1})$ is zero for all $n$ as desired.

Locally on $\Spec R\subset S$ the $R$-module $M_q(G[z])$ is free. By Theorem~\ref{ThmEqAModSch}\ref{ThmEqAModSchItem5} its rank is $r$. Let $m_1,\ldots,m_r$ be representatives in $M_q(G)$ of an $R$-basis of $M_q(G[z])$ and consider the presentation
\begin{equation}\label{EqLemlocallyfree}
0\;\longto\; \ker\alpha\;\longto\;\bigoplus_{i=1}^r\,R\dbl z\dbr \,m_i\xrightarrow{\;\,\alpha\;} M_q(G)\;\longto\;0\,.
\end{equation}
Note that $\alpha$ is surjective by Nakayama's Lemma \cite[Corollary~4.8]{Eisenbud} because $z$ is contained in the radical of $R\dbl z\dbr$.
The snake lemma applied to multiplication with $z$ on the sequence \eqref{EqLemlocallyfree} yields the exact sequence 
\[
0\;\longto\;\coker\bigl(z\colon\ker\alpha\to\ker\alpha\bigr)\;\longto\; \bigoplus_{i=1}^rR\,m_i\;\isoto\; M_q(G[z])\to0
\]
in which the right map is an isomorphism. This implies that multiplication with $z^n$ is surjective on $\ker\alpha$ for all $n$, and hence $\ker\alpha\,\subset\,\bigcap_nz^n\cdot(\bigoplus_{i=1}^r R\dbl z\dbr \,m_i)\,=\,0$ because $R\dbl z\dbr$ is $z$-adically separated. Therefore $M_q(G)$ is locally on $S$ a free $\CO_S\dbl z\dbr$-module of rank $r$.
\end{proof}

Recall from Theorem~\ref{ThmEqAModSch}\ref{ThmEqAModSchItem6} that the co-Lie complex $\CoL{G[z^n]/S}$ of $G[z^n]$ is canonically isomorphic to the complex of $\CO_S$-modules $0\to\sigma_{\!q}^*M_q(G[z^n])\xrightarrow{\,F_{M_q(G[z^n])}}M_q(G[z^n])\to0$. In particular, $n_{G[z^n]} \cong \ker F_{M_q(G[z^n])}$ and $\omega_{G[z^n]} \cong \coker F_{M_q(G[z^n])}$ for the $\CO_S$-modules from Definition~\ref{DefOmega}.

\begin{Lemma}\label{LemLieG}
Let $S\in \Nilp_{\BF_q\dbl\zeta\dbr}$ and let $G = \dirlim G[z^n]$ be a $z$-divisible local Anderson module over $S$. Then 
\begin{enumerate}
\item \label{LemLieG_A}
locally on $S$ there is an $N\in\BN$ such that the morphism $i_n\colon G[z^n]\into G[z^{n+1}]$ induces an isomorphism $\omega_{G[z^{n+1}]}\isoto\omega_{G[z^n]}$ for all $n\ge N$.
\item \label{LemLieG_B}
The projective system $(n_{G[z^n]})_n$ satisfies the Mittag-Leffler condition.
\item \label{LemLieG_C}
$\ulM_q(G)$ is an effective local shtuka over $S$ and $\coker(F_{M_q(G)})$ is canonically isomorphic to $\omega_G$. In particular, $\omega_G$ is a finite locally free $\CO_S$-module.
\end{enumerate}
\end{Lemma}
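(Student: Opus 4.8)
\textbf{Plan of proof for Lemma~\ref{LemLieG}.}
The three parts are closely linked and I would prove them in the order (a), (b), (c), since the later ones feed on the earlier. For part (a) the plan is to use the short exact sequence $0\to G[z^n]\xrightarrow{i_{n,m}}G[z^{n+m}]\xrightarrow{j_{m,n}}G[z^m]\to0$ from Proposition~\ref{PropTate}\ref{PropTate_A} and the six-term exact sequence of co-Lie modules from Lemma~\ref{LemmaLongExSeq}, applied with $m=1$:
\[
0\to n_{G[z^n]}\to n_{G[z^{n+1}]}\to n_{G[z]}\to\omega_{G[z^n]}\to\omega_{G[z^{n+1}]}\to\omega_{G[z]}\to0.
\]
Wait — I should instead arrange the sequence so that the quotient is $G[z]$ via $j_{1,n}$ (or use $0\to G[z]\to G[z^{n+1}]\to G[z^n]\to0$ via $j_{n,1}$), chosen so that the connecting map into $\omega$ is controlled. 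Using axiom~\ref{DefZDivGpAxiom4}, locally on $S$ we have $(z-\zeta)^d=0$ on every $\omega_{G[z^n]}$, so $\omega_{G[z^n]}$ is a module over $\CO_S\dbl z\dbr/(z-\zeta)^d$, hence the $\CO_S$-rank of $\omega_{G[z^n]}$ is bounded (it is at most $d\cdot h$, with $h$ the height), and the surjections $\omega_{G[z^{n+1}]}\onto\omega_{G[z^n]}$ from Lemma~\ref{LemmaLongExSeq} are eventually isomorphisms once the rank stabilizes — here I would invoke that a surjection of finite locally free $\CO_S$-modules of the same rank is an isomorphism (this is the same argument as in the proof of Lemma~\ref{LemmaGEtale}\ref{LemmaGEtale_C} and Proposition~\ref{PropEqvaxiom}). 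Since $\rk_{\CO_S}\omega_{G[z^n]}$ is a nondecreasing, locally constant, bounded $\BN_0$-valued function, it stabilizes locally on $S$ at some $N$, giving (a).

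For part (b), the Mittag-Leffler condition on $(n_{G[z^n]})_n$: from the six-term sequence with $m=1$, once $n\ge N$ (from part (a)) the map $\omega_{G[z^{n+1}]}\to\omega_{G[z^n]}$ is an isomorphism, so its kernel vanishes and the connecting map $n_{G[z]}\to\omega_{G[z^n]}$ is zero; therefore $n_{G[z^{n+1}]}\to n_{G[z]}$ is surjective, and more relevantly the transition maps $n_{G[z^{n+1}]}\to n_{G[z^n]}$ have image independent of the index in the tail. Actually the cleanest route: for $n\ge N$ the sequence degenerates to $0\to n_{G[z^n]}\to n_{G[z^{n+1}]}\to n_{G[z]}\to 0$, so $\rk n_{G[z^{n+1}]}=\rk n_{G[z^n]}+\rk n_{G[z]}$ grows linearly; the transition map $n_{G[z^{n+2}]}\to n_{G[z^{n+1}]}$ has image $n_{G[z^{n+1}]}$ wait, I need to check which way the inclusions go. Let me instead argue via $\ker F$: $n_{G[z^n]}\cong\ker F_{M_q(G[z^n])}$ by Theorem~\ref{ThmEqAModSch}\ref{ThmEqAModSchItem6}, and $M_q(G[z^n])=M_q(G)/z^nM_q(G)$ with $M_q(G)$ locally free of rank $r$ over $\CO_S\dbl z\dbr$ by Lemma~\ref{Lemlocallyfree}. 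Set $M:=M_q(G)$ and $F:=F_{M_q(G)}$. The system is $\ker(F\bmod z^n)$ on $\sigma_q^*M/z^n$, with transition maps induced by $M/z^{n+1}\onto M/z^n$. I would show these transition maps are surjective for $n$ large: an element of $\ker(F\bmod z^n)$ is $x+z^n\sigma_q^*M$ with $F(x)\in z^nM$; using Lemma~\ref{ExistenceOfe} (there is $e'$ with $(z-\zeta)^{e'}M\subset F(\sigma_q^*M)$, hence since $\zeta$ is locally nilpotent $z^{e'+k}M\subset F(\sigma_q^*M)$ for suitable $k$), one lifts such $x$ to $\ker(F\bmod z^{n+1})$ after adjusting by an element of $z^{n-c}\sigma_q^*M$ for a fixed constant $c$ — this is exactly a Mittag-Leffler-type estimate on the stable image. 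This is the step I expect to be the main obstacle: making the lifting estimate uniform and precise, i.e.\ showing the images $\mathrm{im}(n_{G[z^{n+c}]}\to n_{G[z^n]})$ stabilize.

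For part (c), I would first note $F_{M_q(G)}=\invlim_n F_{M_q(G[z^n])}$ is a genuine $\CO_S\dbl z\dbr$-homomorphism $\sigma_q^*M_q(G)\to M_q(G)$ (not merely after inverting $z-\zeta$), because each $F_{M_q(G[z^n])}$ is; combined with Lemma~\ref{Lemlocallyfree}, which gives $M_q(G)$ locally free of rank $r$, this shows $\ulM_q(G)$ is an effective local shtuka. For the cokernel: apply $\varinjlim\!/\varprojlim$ carefully to the canonical isomorphisms $\omega_{G[z^n]}\cong\coker F_{M_q(G[z^n])}$ from Theorem~\ref{ThmEqAModSch}\ref{ThmEqAModSchItem6}. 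Since $\coker(F\bmod z^n)=\coker(F)/z^n\coker(F)$ (right-exactness of $\otimes$), we get $\omega_G=\invlim_n\coker(F_{M_q(G[z^n])})=\invlim_n\coker(F)/z^n\coker(F)=\coker(F)\kompl$, the $z$-adic completion. By Lemma~\ref{ExistenceOfe} applied to $\ulM_q(G)$ (or directly: $z^N\coker F=0$ for $N$ a large power of $p$ with $\zeta^N=0$, since $z^NM=(z-\zeta)^NM\subset F(\sigma_q^*M)$), $\coker F$ is already killed by a power of $z$, so the $z$-adic completion is $\coker F$ itself, giving the canonical isomorphism $\coker F_{M_q(G)}\isoto\omega_G$. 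Finally $\coker F_{M_q(G)}$ is a finite locally free $\CO_S$-module by Lemma~\ref{ExistenceOfe} (the statement there that $(z-\zeta)^{-e}M/F(\sigma_q^*M)$ is finite locally free over $\CO_S$, combined with $(z-\zeta)^e F$ having the same cokernel up to the factor), so $\omega_G$ is finite locally free over $\CO_S$ as claimed; in particular the exchange of $\varprojlim$ with the cokernel used above causes no $\varprojlim^1$ trouble precisely because of part (b) and because $\coker$ stabilizes.
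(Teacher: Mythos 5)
Your argument for part (a) rests on treating $\omega_{G[z^n]}$ as a finite locally free $\CO_S$-module with a locally constant rank, but these modules are only finitely presented cokernels and are in general not flat: already for the height-one example $\ulM=(R\dbl z\dbr,\,F_M=z-\zeta)$ over $R=\BF_q[\zeta]/(\zeta^{N'})$ one finds $\omega_{G[z^n]}\cong R/\zeta^nR$ for $n<N'$. Hence the criterion ``a surjection of locally free modules of the same rank is an isomorphism'' is not available, and a surjection of finitely presented modules with equal fibre ranks need not be injective (e.g.\ $R\onto R/\zeta R$). The paper instead computes the kernel of $\omega_{G[z^{n+1}]}\onto\omega_{G[z^n]}$ exactly: applying Lemma~\ref{LemmaLongExSeq} to $0\to G[z^n]\to G[z^{n+1}]\xrightarrow{j_{n,1}}G[z]\to0$ and using $i_{1,n}\circ j_{n,1}=z^n$ identifies this kernel with $z^n\omega_{G[z^{n+1}]}$, which vanishes for $n\ge N$ because $z^N=(z-\zeta)^N$ kills $\omega_G$, hence every $\omega_{G[z^n]}$ onto which $\omega_G$ surjects, once $N\ge\max\{N',d\}$ is a power of $p$. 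For part (b) the same example (with $\zeta^2=0$) shows that your intended claim, eventual surjectivity of the transition maps $n_{G[z^{n+1}]}\to n_{G[z^n]}$, is false: each $\ker(F_M\bmod z^n)\cong R$, but the transition map has image $\zeta\cdot\ker(F_M\bmod z^{n-1})$. Only stabilization of the images holds, and you explicitly leave that, which is the entire content of (b), unproved. The paper obtains it by showing via the snake lemma applied to $0\to M_q(G[z^k])\to M_q(G[z^{n+k}])\to M_q(G[z^n])\to0$ that the map $\ker F_{M_q(G[z^{n+k}])}\to\ker F_{M_q(G[z^n])}$ has cokernel isomorphic to $\omega_{G[z^k]}$, and then feeding in part (a).

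In part (c) your reduction $\omega_G\cong\invlim\,\coker F_{M_q(G)}/z^n\coker F_{M_q(G)}$ (right-exactness of reduction mod $z^n$) is a genuine simplification over the paper's Mittag-Leffler bookkeeping, but the two facts you still need are exactly what you assume. First, being an effective local shtuka requires $F_{M_q(G)}$ to become an isomorphism after inverting $z-\zeta$; this does not follow from $M_q(G)$ being locally free (Lemma~\ref{Lemlocallyfree}) together with $F_{M_q(G)}$ being an honest homomorphism. Second, you need $\coker F_{M_q(G)}$ to be killed by a power of $z$, so that it coincides with its $z$-adic completion and is finite locally free; both of your justifications are circular, since Lemma~\ref{ExistenceOfe} presupposes that $\ulM_q(G)$ is already a local shtuka, and the inclusion $z^NM\subset F_M(\sigma_{\!q}^*M)$ is precisely the assertion that $z^N$ kills the cokernel. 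From axiom~\ref{DefZDivGpAxiom4} of Definition~\ref{DefZDivGp} you only learn that $(z-\zeta)^d$ kills the completion, i.e.\ $(z-\zeta)^d\coker F_{M_q(G)}\subset\bigcap_n z^n\coker F_{M_q(G)}$, and this intersection is not obviously zero. The paper avoids the circle by first proving $\omega_G\cong\coker F_{M_q(G)}$ itself, via the four-term exact sequence obtained from (a), (b) and the Mittag-Leffler criterion applied to the systems $\ker F_{M_q(G[z^n])}$ and $\im F_{M_q(G[z^n])}$; then axiom (d) applies directly to $\coker F_{M_q(G)}$, and the isomorphism after inverting $z-\zeta$ follows from surjectivity together with equality of ranks.
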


\begin{proof}
Working locally on $S$ we may assume that $\zeta^{N'}=0$ in $\CO_S$ and that $(z-\zeta)^d\omega_G=0$ for some integers $N'$ and $d$. Let $N\ge\max\{N',d\}$ be an integer which is a power of $p$. Then $z^N\omega_G=(z^N-\zeta^N)\omega_G=(z-\zeta)^N\omega_G=0$.

\medskip\noindent
\ref{LemLieG_A} The closed immersion $i_n\colon G[z^n]\into G[z^{n+1}]$ induces an epimorphism $\omega_{i_n}\colon\omega_{G[z^{n+1}]}\onto\omega_{G[z^n]}$ and therefore $\omega_G$ surjects onto each $\omega_{G[z^n]}$. This implies that $z^N\omega_{G[z^n]}=0$ for all $n$. Applying Lemma~\ref{LemmaLongExSeq} to the exact sequence \eqref{EqPropTate} for $m=1$, 
\[
0\longto G[z^n]\xrightarrow{\,i_{n,1}} G[z^{n+1}]\xrightarrow{\,j_{n,1}} G[z]\longto 0,
\]
and using $i_{1,n}\circ j_{n,1}=z^n\id_{G[z^{n+1}]}$ in 
\[
\xymatrix @C+1pc @R+1pc{
\omega_{G[z]} \ar[r]^{j_{n,1}^*\quad} & \omega_{G[z^{n+1}]} \ar[r]^{i_{n,1}^*} & \omega_{G[z^n]} \ar[r] & 0 \\
\omega_{G[z^{n+1}]} \ar@{->>}[u]^{i_{1,n}^*} \ar[ur]_{z^n}
}
\]
we obtain that $\ker(\omega_{G[z^{n+1}]}\onto\omega_{G[z^n]})=z^n\omega_{G[z^{n+1}]}$. Therefore $\omega_{G[z^{n+1}]}\isoto\omega_{G[z^n]}$ is an isomorphism for all $n\ge N$.

\medskip\noindent
To prove \ref{LemLieG_B} we fix an $n\ge N$. We abbreviate the $\CO_S$-modules $M_q(G[z^k])$ by $M_k$ and the map $F_{M_q(G[z^k])}$ by $F_k$. From Proposition~\ref{PropTate} and Theorem~\ref{ThmEqAModSch}\ref{ThmEqAModSchItem2} we have an exact sequence
\[
 0 \to M_k \xrightarrow{\,M_q(j_{n,k})} M_{n+k} \xrightarrow{\,M_q(i_{n,k})} M_n \to 0\,.
\]
It remains exact after applying $\sigma_{\!q}^*$ because $M_n$ is locally free.
For all $k$, we consider the commutative diagrams 
\[
\xymatrix @C-0.5pc{ 0 \ar[r]  & \ker F_{n+k} \ar[r]\ar[d]^{\rho_k} &\sigma_{\!q}^\ast M_{n+k} \ar[r]^{F'_{n+k}}\ar@{->>}[d]^{\sigma_{\!q}^*M_q(i_{n,k})}  & \im F_{n+k}  \ar[r]\ar@{->>}[d] & 0 & 0 \ar[r]  & \im F_{n+k} \ar[r]^{F''_{n+k}}\ar@{->>}[d] & M_{n+k} \ar[r]\ar@{->>}[d]^{M_q(i_{n,k})}  & \coker F_{n+k}  \ar[r]\ar[d]^\cong & 0 \\ 
0 \ar[r]  & \ker F_n \ar[r] & \sigma_{\!q}^\ast M_n \ar[r]_{F'_{n}}  & \im F_n  \ar[r] & 0 & 0 \ar[r]  & \im F_n \ar[r]_{F''_{n}} & M_n \ar[r]  & \coker F_n  \ar[r] & 0
}
\]
where we have split $F_n = F_n''\circ F'_n$ with $F_n'$ surjective and $F_n''$ injective, and where the vertical map on the right in the second diagram is an isomorphism by the identification $\coker F_n=\omega_{G[z^n]}$ from Theorem~\ref{ThmEqAModSch}\ref{ThmEqAModSchItem6} and by what we proved in \ref{LemLieG_A} above. We denote the vertical map on the left in the first diagram by $\rho_k$. The snake lemma applied to both diagrams yields the following exact sequence 
\[
 \sigma_{\!q}^\ast M_k \xrightarrow{\;F_k\,}  M_k \longto \coker \rho_k \longto 0.
\]
Therefore $\coker\rho_k \cong \coker F_k = \omega_{G[z^k]}$. In the diagram
\[
\xymatrix @R-0.5pc {
 0 \ar[r] & \im \rho_{k+1} \ar[r]\ar@{^{ (}->}[d] & \ker F_n \ar[r]\ar@{=}[d] & \coker \rho_{k+1} \ar[r]\ar[d]^\cong & 0\\
0 \ar[r] & \im \rho_{k} \ar[r] & \ker F_n \ar[r] & \coker \rho_{k} \ar[r] & 0
}
\]
the vertical map on the right is an isomorphism for $k\ge N$ by what we have proved in \ref{LemLieG_A} above. Therefore the image of $\rho_k$ stabilizes for $k\ge N$, that is $n_{G[z^n]} =\ker F_n$ satisfies the Mittag-Leffler condition. Note that also $(\im F_n)_n$ satisfies the Mittag-Leffler condition. We will use this for proving \ref{LemLieG_C}.

\medskip\noindent
\ref{LemLieG_C} We still abbreviate $M_q(G[z^n])$ by $M_n$ and $F_{M_q(G[z^n])}$ by $F_n$. The maps $F_n \colon \sigma_{\!q}^\ast M_n \to M_n$ give us two short exact sequences of projective systems  
\[
 0 \,\longto\, \ker F_n \,\longto\, \sigma_{\!q}^\ast M_n\,\longto\, \im F_n \,\longto\, 0  
\qquad\text{and}\qquad
0 \,\longto\, \im F_n \,\longto\, M_n \,\longto\, \coker F_n \,\longto\, 0\,.
\]
Taking the projective limit, using the Mittag-Leffler conditions via \cite[Proposition~II.9.1(b)]{Hartshorne}, the isomorphism $\sigma_{\!q}^\ast (M_q(G))\cong\invlim\sigma_{\!q}^\ast (M_n)$ which is due to the flatness of $M_q(G)$ over $\CO_S$, and combining both exact sequences we obtain an exact sequence 
\[
 0 \,\longto\,\invlim[n] \ker F_n \,\longto\,  \sigma_{\!q}^\ast (M_q(G))\,\xrightarrow{\,F_{M_q(G)}}\, M_q(G) \,\longto\, \invlim[n]\coker F_n\,\longto\, 0 \,.
\]
This shows that $\omega_G:=\invlim\omega_{G[z^n]}=\invlim \coker F_n=\coker F_{M_q(G)}$, which is finite locally free over $\CO_S$ by Lemma~\ref{ExistenceOfe}. Furthermore, condition \ref{DefZDivGpAxiom4} of Definition~\ref{DefZDivGp} implies that $(z -\zeta)^d$ annihilates $\coker F_{M_q(G)}$. This proves that the map $F_{M_q(G)}\colon\sigma_{\!q}^\ast(M_q(G))[\tfrac{1}{z-\zeta}]  \to M_q(G)[\tfrac{1}{z-\zeta}]$ is surjective. As both modules are locally free over $\CO_S \dbl z\dbr[\frac{1}{z-\zeta}]$ of the same rank, the map is an isomorphism. Thus $\ulM_q(G)$ is an effective local shtuka.
\end{proof}

We can now prove the following theorem. It generalizes \cite[\S\,3.4]{Anderson93}, who treated the case of formal $\BF_q\dbl z\dbr$-modules, which we state in \ref{ThmEqZDivGpsItem3}.

\begin{Theorem} \label{ThmEqZDivGps} Let $S\in \Nilp_{\BF_q\dbl\zeta\dbr}$.
\begin{enumerate}
\item  \label{ThmEqZDivGpsItem1}
The two contravariant functors $\Dr_q$ and $\ulM_q$ are mutually quasi-inverse anti-equivalences between the category of effective local shtukas over $S$ and the category of $z$-divisible local Anderson modules over $S$. 
\item  \label{ThmEqZDivGpsItem2}
Both functors are $\BF_q\dbl z\dbr$-linear, map short exact sequences to short exact sequences, and preserve (ind-) \'etale objects.
\end{enumerate}
Let $\ulM=(M,F_M)$ be an effective local shtuka over $S$ and let $G=\Dr_q(\ulM)$ be its associated $z$-divisible local Anderson module. Then
\begin{enumerate}
\setcounter{enumi}{2}
\item  \label{ThmEqZDivGpsItem3}
$G$ is a formal $\BF_q\dbl z\dbr$-module (Definition~\ref{DefFormalModule}) if and only if $F_M$ is topologically nilpotent.
\item  \label{ThmEqZDivGpsItem4}
the height (see Proposition~\ref{PropTate}) and dimension of $G$ are equal to the rank and dimension of $\ulM$.
\item  \label{ThmEqZDivGpsItem5}
the $\CO_S\dbl z\dbr$-modules $\omega_{\Dr_q(\ulM)}$ and $\coker F_M$ are canonically isomorphic.
\item  \label{ThmEqZDivGpsItem6}
if $\ulM$ is bounded by $(d,0,\ldots,0)$ for an integer $d\ge0$, then $\dim G=d$ is constant and $(z-\zeta)^d\cdot\omega_G=(0)$ globally on $S$, but the converse is false in general.
\end{enumerate}
\end{Theorem}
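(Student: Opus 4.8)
The plan is to deduce the whole theorem by passing to limits from the finite-level anti-equivalence of Theorem~\ref{ThmEqAModSch} together with the two structural Lemmas~\ref{Lemlocallyfree} and~\ref{LemLieG}. First I would prove \ref{ThmEqZDivGpsItem1}. To see that $\Dr_q(\ulM)$ is a $z$-divisible local Anderson module: axiom~\ref{DefZDivGpAxiom1} is immediate from the definition $\Dr_q(\ulM)=\dirlim[n]\Dr_q(\ulM/z^n\ulM)$; applying the exact functor $\Dr_q$ (Theorem~\ref{ThmEqAModSch}\ref{ThmEqAModSchItem2}) to the exact sequence $0\to M/z^{m-n}M\xrightarrow{z^n}M/z^mM\to M/z^nM\to 0$ of finite $\BF_q$-shtukas identifies $\Dr_q(\ulM)[z^n]$ with $\Dr_q(\ulM/z^n\ulM)$, which by Theorem~\ref{ThmEqAModSch}\ref{ThmEqAModSchItem1} is a finite locally free strict $\BF_q$-module scheme, giving axiom~\ref{DefZDivGpAxiom3}. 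For $z$-divisibility (axiom~\ref{DefZDivGpAxiom2}) I note that multiplication by $z$ on $\Dr_q(M/z^{m+1}M)$ factors as the faithfully flat epimorphism $\Dr_q(M/z^{m+1}M)\onto\Dr_q(M/z^mM)$ (the $\Dr_q$ of $M/z^mM\xhookrightarrow{z}M/z^{m+1}M$) followed by the closed immersion $i_m$, so an $\fppf$-local lifting argument gives surjectivity of $z$ on $\Dr_q(\ulM)$; axiom~\ref{DefZDivGpAxiom4} then follows from part~\ref{ThmEqZDivGpsItem5} below and Lemma~\ref{ExistenceOfe} (which locally gives $z^N\coker F_M=0$, hence $(z-\zeta)^{N+N'}\omega_G=0$ when $\zeta^{N'}=0$). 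Conversely $\ulM_q(G)$ is an effective local shtuka by Lemmas~\ref{Lemlocallyfree} and~\ref{LemLieG}\ref{LemLieG_C}. The quasi-inverse isomorphisms are obtained by taking $\invlim[n]$, resp.\ $\dirlim[n]$, of the finite-level isomorphisms of Theorem~\ref{ThmEqAModSch}\ref{ThmEqAModSchItem4}, using $\ulM_q(G)/z^n\ulM_q(G)=\ulM_q(G[z^n])$ (established in the proof of Lemma~\ref{Lemlocallyfree}) and the $z$-adic completeness of $\CO_S\dbl z\dbr$.

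For part~\ref{ThmEqZDivGpsItem2}, $\BF_q\dbl z\dbr$-linearity is clear. A short exact sequence of effective local shtukas stays exact after reduction mod $z^n$ by flatness of the quotients, so applying the exact functor $\Dr_q$ levelwise and passing to the exact functor $\dirlim[n]$ shows $\Dr_q$ is exact. For $\ulM_q$, given $0\to G''\to G\to G'\to 0$ I would first check that $0\to G''[z^n]\to G[z^n]\to G'[z^n]\to 0$ is exact, the surjectivity using $z$-divisibility of $G''$; applying the exact functor $\ulM_q$ levelwise and then $\invlim[n]$ gives exactness, where the needed Mittag-Leffler condition holds because the transition maps $\ulM_q(G[z^{n+1}])\to\ulM_q(G[z^n])$ are surjective by Theorem~\ref{ThmEqAModSch}\ref{ThmEqAModSchItem2} applied to Proposition~\ref{PropTate}\ref{PropTate_A}. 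Preservation of (ind-)\'etale objects comes from Lemma~\ref{LemLieG}\ref{LemLieG_C}: $\omega_G=0$ iff $\coker F_{M_q(G)}=0$ iff $F_{M_q(G)}$ is an isomorphism (a surjective endomorphism of a finitely generated module is bijective), and conversely an \'etale $\ulM$ has all $\Dr_q(M/z^nM)$ \'etale, so $G=\dirlim G[z^n]$ is ind-\'etale.

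Parts~\ref{ThmEqZDivGpsItem4} and~\ref{ThmEqZDivGpsItem5} are then formal: combining part~\ref{ThmEqZDivGpsItem1} with Lemma~\ref{LemLieG}\ref{LemLieG_C} yields $\omega_{\Dr_q(\ulM)}=\coker F_{\ulM_q(\Dr_q(\ulM))}=\coker F_M$, which is~\ref{ThmEqZDivGpsItem5}; and the height equals $\rk_{\CO_S\dbl z\dbr}M$ by Proposition~\ref{PropTate}\ref{PropTate_B} together with $\ord G[z^n]=\ord\Dr_q(M/z^nM)=q^{\rk_{\CO_S}(M/z^nM)}=q^{n\rk M}$ from Theorem~\ref{ThmEqAModSch}\ref{ThmEqAModSchItem5}, while $\dim G=\rk\coker F_M=\dim\ulM$ by~\ref{ThmEqZDivGpsItem5}. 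For~\ref{ThmEqZDivGpsItem3}, the "only if" direction is quick: if $G$ is a formal Lie group then each $G[z^n]$ is infinitesimal, in particular $G[z]$ is radicial, so $F_M\bmod z$ is nilpotent locally on $S$ by Theorem~\ref{ThmEqAModSch}\ref{ThmEqAModSchItem3}, i.e.\ $F_M$ is topologically nilpotent. For the "if" direction one needs the results of Section~\ref{SectFormalLieGps}: topological nilpotence of $F_M$ forces $\im(F_M^{nk})\subset z^nM$ whenever $\im(F_M^k)\subset zM$, hence every $G[z^n]$ is radicial and $G$ is ind-infinitesimal, and then $G$ is formally smooth (Theorem~\ref{DLAMFsmooth}) and a formal $\BF_q\dbl z\dbr$-module (Theorem~\ref{LoNilForLie}). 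For~\ref{ThmEqZDivGpsItem6}, if $\ulM$ is bounded by $(d,0,\ldots,0)$ then $\det F_M=(z-\zeta)^d\cdot u$ with $u$ a unit, and the adjugate identity $F_M^{\mathrm{adj}}F_M=\det F_M\cdot\id$ gives $(z-\zeta)^dM\subset F_M(\sigma_{\!q}^*M)$, whence $(z-\zeta)^d\omega_G=0$ globally; over each point $s$ the ring $\CO_S\dbl z\dbr\otimes\kappa(s)=\kappa(s)\dbl z\dbr$ is a discrete valuation ring and the elementary divisors of $F_M\otimes\kappa(s)$ are $z^{e_1},\ldots,z^{e_r}$ with $\sum e_i=d$, so $\dim_{\kappa(s)}(\coker F_M\otimes\kappa(s))=d$, and since $\coker F_M$ is locally free over $\CO_S$ (Lemma~\ref{ExistenceOfe}) its rank $\dim G$ equals $d$ constantly. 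The converse is false: over $S=\Spec\BF_q[a,b]/(a^2,b^2)$ with $\zeta=0$, the effective local shtuka $\ulM$ of rank $2$ with $F_M=\operatorname{diag}(z-a,z-b)$ has $\coker F_M\cong\CO_S^{\oplus2}$ with $z$ acting as $\operatorname{diag}(a,b)$, so $\dim G=2$ and $z^2\omega_G=0$, yet $\det F_M=(z-a)(z-b)$ generates a different ideal than $(z^2)$, so $\ulM$ is not bounded by $(2,0)$.

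The main obstacle is not conceptual but bookkeeping: the genuinely new work, beyond quoting Theorem~\ref{ThmEqAModSch} and Lemmas~\ref{Lemlocallyfree} and~\ref{LemLieG}, is the careful handling of the $\invlim[n]$ and $\dirlim[n]$ passages, in particular the Mittag-Leffler argument for the exactness of $\ulM_q$ and the identification of the $z^n$-torsion of $\Dr_q(\ulM)$ with $\Dr_q(\ulM/z^n\ulM)$; and the one point that cannot be closed self-containedly at this stage is the "if" direction of part~\ref{ThmEqZDivGpsItem3}, which relies on the formal-smoothness results of Section~\ref{SectFormalLieGps}.
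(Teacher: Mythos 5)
Your overall route is the paper's: everything is deduced by passing to limits from Theorem~\ref{ThmEqAModSch} together with Lemmas~\ref{Lemlocallyfree} and \ref{LemLieG}, the height comes from the order count, exactness of $\ulM_q$ from Mittag--Leffler, and the ``if'' direction of \ref{ThmEqZDivGpsItem3} is deferred to Section~\ref{SectFormalLieGps}, exactly as the paper does by citing Proposition~\ref{PropEquivAndersonLiegroups}; your direct verification of the axioms of Definition~\ref{DefZDivGp} merely re-derives what the paper packages as Proposition~\ref{PropEqvaxiom}, and your counterexample in \ref{ThmEqZDivGpsItem6} is a harmless variant of the paper's. However, there is one genuine logical gap: a circularity between axiom~\ref{DefZDivGpAxiom4} and part~\ref{ThmEqZDivGpsItem5}. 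You verify axiom~\ref{DefZDivGpAxiom4} for $G=\Dr_q(\ulM)$ by appealing to ``part~\ref{ThmEqZDivGpsItem5} below'', but your proof of \ref{ThmEqZDivGpsItem5} is to combine part~\ref{ThmEqZDivGpsItem1} with Lemma~\ref{LemLieG}\ref{LemLieG_C} --- and both of these presuppose that $G$ is already known to be a $z$-divisible local Anderson module, i.e.\ that axiom~\ref{DefZDivGpAxiom4} holds (the proof of Lemma~\ref{LemLieG} begins by assuming $(z-\zeta)^d\omega_G=0$ locally). As written, the bound $(z-\zeta)^{N+N'}\omega_G=0$ is never actually established, because at that stage you have no relation between $\omega_G:=\invlim\omega_{G[z^n]}$ and $\coker F_M$.

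The missing step is precisely what the paper supplies inside its proof of \ref{ThmEqZDivGpsItem1}: one must relate $\omega_G=\invlim\coker F_{M_n}$ to $\coker F_M$ \emph{before} invoking the equivalence. The paper applies the snake lemma to multiplication by $z^n$ on $0\to\sigma_{\!q}^*M\to M\to\coker F_M\to0$ and uses $z^N\coker F_M=0$ from Lemma~\ref{ExistenceOfe} to conclude that $\coker F_M\isoto\coker F_{M_n}$ for $n\ge N$; this proves \ref{ThmEqZDivGpsItem5} directly and gives axiom~\ref{DefZDivGpAxiom4} from $(z-\zeta)^{e'}\coker F_M=0$. A cheaper repair of your text is to observe that each $\omega_{G[z^n]}=\coker F_{M_n}$ is a quotient of $\coker F_M$ (since $M\onto M/z^nM$), hence is killed by $(z-\zeta)^{e'}$ locally by Lemma~\ref{ExistenceOfe}, so $\omega_G$ is too; this breaks the circle, after which your derivation of \ref{ThmEqZDivGpsItem5} from \ref{ThmEqZDivGpsItem1} and Lemma~\ref{LemLieG}\ref{LemLieG_C} becomes legitimate. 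Either way, the stabilization (or quotient) argument must appear explicitly; without it the proof of \ref{ThmEqZDivGpsItem1} is incomplete.
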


\begin{proof}
\ref{ThmEqZDivGpsItem1} We already saw in Lemma~\ref{LemLieG}\ref{LemLieG_C} that $\ulM_q$ sends $z$-divisible local Anderson modules to effective local shtukas. To prove the converse we use Proposition~\ref{PropEqvaxiom}. Let $\ulM=(M, F_M)$ be an effective local shtuka over $S$ and abbreviate $\ulM/z^n\ulM=:\ulM_n=(M_n,F_{M_n})$ and $G_n:=\Dr_q\bigl(\ulM_n)$. Then $G:=\Dr_q(\ulM)=\dirlim G_n$. Consider the locally constant function $h:=\rk_{\CO_S\dbl z\dbr}M$ on $S$. It satisfies $\rk_{\CO_S}(M_n)=nh$. By Theorem~\ref{ThmEqAModSch} the $G_n$ are finite locally free strict $\BF_q$-module schemes over $S$ of order $q^{nh}$, and the exact sequence of finite $\BF_q$-shtukas $\ulM_{n+1} \xrightarrow{\,z^n} \ulM_{n+1} \longto \ulM_n \longto 0$ yields an exact sequence of group schemes $0 \longto G_n \longto G_{n+1} \xrightarrow{\,z^n} G_{n+1}$. This implies that $G_n = \ker (z^n \colon G_{n+1} \to G_{n+1})=:G_{n+1}[z^n]$. By Lemma~\ref{ExistenceOfe} we know that locally on $S$ there exist positive integers $e', N$ such that $(z-\zeta)^{e'} = 0$ on $\coker F_M$ and $z^{N} = 0$ on  $\coker F_M$. Applying the snake lemma to the diagram
\[
\xymatrix @R-0.5pc @C+1pc { 0 \ar[r] &\sigma_{\!q}^\ast M \ar[r]^{F_M}\ar[d]^{ z^{n}} & M \ar[r]\ar[d]^{z^{n}}  &\coker F_M\ar[r]\ar[d]^{z^{n}}  &0 \\
0 \ar[r] &\sigma_{\!q}^\ast M \ar[r]^{F_M} & M \ar[r] &\coker F_M\ar[r] &0
}
\]
shows that $\coker F_M\to\coker F_{M_n}$ is an isomorphism for $n\ge N$. Therefore by Theorem~\ref{ThmEqAModSch}\ref{ThmEqAModSchItem6} 
\[
\omega_G\;:=\; \invlim \omega_{G_n} \;=\; \invlim \coker(F_{M_n}) \;=\; \coker F_M\,.
\]
This establishes \ref{ThmEqZDivGpsItem5} and implies $(z-\zeta)^{e'} = 0$ on $\omega_G$. Therefore $G = \dirlim G_n$ is a $z$-divisible local Anderson module by Proposition~\ref{PropEqvaxiom}. By Theorem~\ref{ThmEqAModSch} the functors $\Dr_q$ and $\ulM_q$ are quasi-inverse to each other. This proves \ref{ThmEqZDivGpsItem1}.

\medskip\noindent
\ref{ThmEqZDivGpsItem4} From our proof above, the height of $\Dr_q(\ulM)$ equals the rank of $\ulM$. The equality of dimensions follows from \ref{ThmEqZDivGpsItem5}.

\medskip\noindent
\ref{ThmEqZDivGpsItem2} The $\BF_q\dbl z\dbr$-linearity of the functors is clear by construction. From \ref{ThmEqZDivGpsItem5} it follows that both functors $\Dr_q$ and $\ulM_q$ preserve (ind-)\'etale objects. To prove the exactness of $\Dr_q$ let $0 \to \ulM'' \to \ulM \to \ulM' \to 0$ be a short exact sequence of effective local shtukas. Modulo $z^n$ it yields a short exact sequence of finite $\BF_q$-shtukas $0 \to \ulM''_n \to \ulM_n \to \ulM'_n \to 0$, where $\ulM''_n:=\ulM''/z^n\ulM''$, etc. Theorem~\ref{ThmEqAModSch} produces the exact sequence $0 \to G'[z^n] \to G[z^n] \to G''[z^n]\to 0$, where $G = \Dr_q(\ulM)$, $G' = \Dr_q(\ulM')$, $G'' = \Dr_q(\ulM'')$. This implies that $0 \to G' \to G \to G'' \to 0$ is exact, because taking direct limits in the category of sheaves is an exact functor. 

Conversely let $0 \to G' \to G \to G''\to 0$ be a short exact sequence of $z$-divisible local Anderson modules. Since multiplication with $z^n$ is an epimorphism on $G''$, the snake lemma yields the exact sequence of finite locally free strict $\BF_q$-module schemes $0 \to G'[z^n] \to G[z^n] \to G''[z^n]\to 0$. Theorem~\ref{ThmEqAModSch} implies that the sequence $0 \to \ulM''_n \to \ulM_n \to \ulM'_n \to 0$ is exact, where $\ulM = \ulM_q(G), \ \ulM' = \ulM_q(G'),\ \ulM'' = \ulM_q(G'')$. Since $\{\ulM''_n\}$ satisfies the Mittag-Leffler condition we obtain the exactness of $0 \to \ulM'' \to \ulM \to \ulM' \to 0$.

\medskip\noindent
\ref{ThmEqZDivGpsItem3} Let $G =  \Dr_q(\ulM)$. In Proposition~\ref{PropEquivAndersonLiegroups} below we will see that $G$ is a formal $\BF_q\dbl z\dbr$-module if and only if $G[z]=\ker(z\colon G\to G)$ is radicial, which by Theorem~\ref{ThmEqAModSch}\ref{ThmEqAModSchItem3} is equivalent to $F_{M_1}=(F_M\mod z)$ being nilpotent locally on $S$. The latter is the case if and only if locally on $S$ there is an integer $n$ such that $(F_M)^n\equiv 0\mod z$, that is, if and only if $F_M$ is topologically nilpotent. 

\medskip\noindent
\ref{ThmEqZDivGpsItem6} If $\ulM$ is bounded by $(d,0,\ldots,0)$ then $(z-\zeta)^d$ annihilates $\coker F_M=\omega_G$. The dimension of $G$ can be computed by pullback to the closed points $s\colon\Spec k\to S$. There $M\otimes_{\CO_S}k\cong k\dbl z\dbr^{\rk\ulM}\cong\sigma_{\!q}^*M\otimes_{\CO_S}k$ and $\zeta=0$ in $k$. The elementary divisor theorem implies $(\dim G)(s):=\dim_ks^*\omega_G=\dim_k(\coker F_M)\otimes_{\CO_S}k=\ord_z\det (F_M\otimes_{\CO_S}k)=d$ by definition of boundedness of $\ulM$ by $(d,0,\ldots,0)$. That the converse fails can be seen from the following 
\end{proof}

\begin{Example} \label{ExampleNotBounded}
Let $R=k[\epsilon]/(\epsilon^2)$ for a field $k$. Then the local shtuka $\ulM=(R\dbl z\dbr^2,F_M= \left(\begin{smallmatrix} z-\zeta & 0 \\ 0 & z-\zeta-\epsilon \end{smallmatrix}\right))$ satisfies $(z-\zeta)^2=0$ on $\coker F_M=R^2$. So its associated $z$-divisible local Anderson module $G=\Dr_q(\ulM)$ satisfies the conclusion of Theorem~\ref{ThmEqZDivGps}\ref{ThmEqZDivGpsItem6}. But $(\wedge^2 F_M)(1)=(z-\zeta)^2-\epsilon(z-\zeta)\notin(z-\zeta)^2\wedge^2M$, and hence $\ulM$ is not bounded by $(2,0)$. This also shows that \cite[Example~4.5]{HV1} is false.
\end{Example}

\begin{Corollary}\label{CorQuotientIsZDAM}
Let $S\in\Nilp_{\BF_q\dbl\zeta\dbr}$ and let $f\colon G' \to G$ be a monomorphism of $z$-divisible local Anderson modules over $S$. Then the quotient sheaf $G/G'$ is a $z$-divisible local Anderson module over $S$.
\end{Corollary}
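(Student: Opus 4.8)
The plan is to transport the statement to the category of effective local shtukas via Theorem~\ref{ThmEqZDivGps} and realize $G/G'$ as $\Dr_q$ of a sub-shtuka. Put $\ulM:=\ulM_q(G)$ and $\ulM':=\ulM_q(G')$, which by Theorem~\ref{ThmEqZDivGps}\ref{ThmEqZDivGpsItem1} are effective local shtukas with $\Dr_q(\ulM)\cong G$ and $\Dr_q(\ulM')\cong G'$, and let $\phi:=\ulM_q(f)\colon\ulM\to\ulM'$ be the induced morphism of effective local shtukas. Granting that $\phi\colon M\to M'$ is an epimorphism of $\CO_S\dbl z\dbr$-modules, its kernel $N:=\ker\phi$ is locally free of finite rank (the sequence $0\to N\to M\to M'\to 0$ splits Zariski-locally on $S$ since $M'$ is locally free), one has $\sigma_{\!q}^\ast N=\ker(\sigma_{\!q}^\ast\phi)$ by flatness of $M'$, and $F_M$ restricts to a map $F_N\colon\sigma_{\!q}^\ast N\to N$ because $\phi\circ F_M=F_{M'}\circ\sigma_{\!q}^\ast\phi$. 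A diagram chase applied to the two exact rows obtained after inverting $z-\zeta$ shows that $F_N[\tfrac{1}{z-\zeta}]$ is an isomorphism, since $F_M[\tfrac{1}{z-\zeta}]$ and $F_{M'}[\tfrac{1}{z-\zeta}]$ are; hence $\ulN:=(N,F_N)$ is an effective local shtuka fitting into a short exact sequence $0\to\ulN\to\ulM\to\ulM'\to 0$. Then $\Dr_q(\ulN)$ is a $z$-divisible local Anderson module by Theorem~\ref{ThmEqZDivGps}\ref{ThmEqZDivGpsItem1}, and applying the exact functor $\Dr_q$ (Theorem~\ref{ThmEqZDivGps}\ref{ThmEqZDivGpsItem2}) yields a short exact sequence $0\to G'\xrightarrow{f} G\to\Dr_q(\ulN)\to 0$ of \fppf-sheaves; therefore $G/G'\cong\Dr_q(\ulN)$ is a $z$-divisible local Anderson module.

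So the whole argument reduces to the surjectivity of $\phi$, which I would prove as follows. The claim is local on $S$, so I may assume $S=\Spec R$ with $\zeta$ nilpotent. Since $z$ lies in the Jacobson radical of $R\dbl z\dbr$ and $M=M_q(G)$ is a finitely generated $R\dbl z\dbr$-module (Lemma~\ref{Lemlocallyfree}), Nakayama reduces this to surjectivity of $\phi\bmod z$. Under the identifications $M_q(G)/zM_q(G)=M_q(G[z])$ and $M_q(G')/zM_q(G')=M_q(G'[z])$ established in the proof of Lemma~\ref{Lemlocallyfree}, the map $\phi\bmod z$ is $\ulM_q(f_1)$, where $f_1\colon G'[z]\to G[z]$ is the restriction of $f$; as $f$ is a monomorphism of \fppf-sheaves so is $f_1$, hence $f_1$ is a closed immersion of finite locally free strict $\BF_q$-module schemes by Remark~\ref{RemFactsOnG}(a). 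Surjectivity of the $\CO_S$-module map $\ulM_q(f_1)$ is equivalent to the vanishing of its cokernel, which by Nakayama may be tested after base change to each point $s$ of $S$; since $\Dr_q$ commutes with base change and $\ulM_q$ is quasi-inverse to it (Theorem~\ref{ThmEqAModSch}), this is the surjectivity of $\ulM_q\bigl((f_1)_s\bigr)$ for the closed immersion $(f_1)_s$ of finite locally free strict $\BF_q$-module schemes over the field $\kappa(s)$. Over a field the category of finite $\BF_q$-shtukas is abelian, and by Theorem~\ref{ThmEqAModSch}\ref{ThmEqAModSchItem1} the anti-equivalence $\ulM_q$ carries the monomorphism $(f_1)_s$ to an epimorphism there, i.e.\ to a surjection. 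This yields surjectivity of $\phi$ and completes the proof.

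The main obstacle is exactly this surjectivity step: over a non-reduced or non-Noetherian base one cannot assume a priori that $G[z]/G'[z]$ is a strict $\BF_q$-module scheme, so Theorem~\ref{ThmEqAModSch}\ref{ThmEqAModSchItem2} is not directly applicable; passing to fibres, where finite $\BF_q$-shtukas form an abelian category and the anti-equivalence sends monos to epis, is what makes it work. Everything else is the formal manipulation of the first paragraph, and the argument incidentally shows that each $(G/G')[z^n]=G[z^n]/G'[z^n]\cong\Dr_q(\ulN/z^n\ulN)$ is again a finite locally free strict $\BF_q$-module scheme.
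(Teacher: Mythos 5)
Your argument is correct, and its skeleton is the one the paper uses: prove that $\ulM_q(f)\colon\ulM_q(G)\to\ulM_q(G')$ is surjective, check that its kernel is an effective local shtuka, and recover $G/G'$ as $\Dr_q$ of that kernel via the exactness of $\Dr_q$ from Theorem~\ref{ThmEqZDivGps}\ref{ThmEqZDivGpsItem2} (which is proved there without this corollary, so there is no circularity). The genuine difference lies in how surjectivity is obtained. The paper argues at every finite level: $G'[z^n]\into G[z^n]$ is a closed immersion which is strict by Lemma~\ref{LemmaStrictF_qAction}, so its cokernel $G''_n:=G[z^n]/G'[z^n]$ is a finite locally free strict $\BF_q$-module scheme by \cite[Proposition~2]{Faltings02}; Theorem~\ref{ThmEqAModSch} then yields exact sequences $0\to\ulM_q(G''_n)\to\ulM_q(G[z^n])\to\ulM_q(G'[z^n])\to0$, a $3\times3$-diagram shows that the system $\bigl(\ulM_q(G''_n)\bigr)_n$ is Mittag-Leffler, and surjectivity of the limit map follows from \cite[Proposition~II.9.1(b)]{Hartshorne}. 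You instead reduce by Nakayama over $\CO_S\dbl z\dbr$ to level $1$, then by Nakayama over $R$ to the fibres, where the anti-equivalence carries the monomorphism $G'[z]_s\to G[z]_s$ to a categorical epimorphism of finite $\BF_q$-shtukas, hence a surjection since that category is abelian over a field. This bypasses Faltings's strictness-of-quotients result and all inverse-limit bookkeeping, exactly as you note; the paper's route, in exchange, establishes along the way that each $G[z^n]/G'[z^n]$ is a finite locally free strict $\BF_q$-module scheme, which your argument only recovers a posteriori as $\Dr_q(\ulN/z^n\ulN)$. Two details deserve a line in a final write-up, though neither is a real gap: that the locally split kernel $N$ is free over $R\dbl z\dbr$ Zariski-locally on $\Spec R$ in the sense the paper requires (lift a local basis of the projective $R$-module $N/zN$ and apply Nakayama, $z$ lying in the radical of $R\dbl z\dbr$, as in the proof of Lemma~\ref{Lemlocallyfree}), and the base-change compatibility $\ulM_q(G[z])\otimes_R\kappa(s)\cong\ulM_q(G[z]_s)$, which as you indicate follows from $\Dr_q$ commuting with base change together with Theorem~\ref{ThmEqAModSch}\ref{ThmEqAModSchItem4}.
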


\begin{proof}
Since the question is local on $S$ we may assume that $S=\Spec R$ is affine. For all $n$ the induced map $G'[z^n]\to G[z^n]$ is a monomorphism, hence a closed immersion by Remark~\ref{RemFactsOnG}(a). By Lemma~\ref{LemmaStrictF_qAction} it is strict $\BF_q$-linear in the sense of Faltings \cite[Definition~1]{Faltings02}, and by \cite[Proposition~2]{Faltings02} the cokernel $G''_n:=G[z^n]/G'[z^n]$ is a strict $\BF_q$-module scheme which is finite locally free by Remark~\ref{RemFactsOnG}(d). By Theorem~\ref{ThmEqAModSch} this induces the exact sequence of finite $\BF_q$-shtukas $0\to \ulM_q(G''_n)\to\ulM_q(G[z^n])\to\ulM_q(G'[z^n])\to0$. In the following diagram 
\[
\xymatrix @R-1pc @C+1pc {
& 0 \ar[d] & 0 \ar[d] & 0 \ar[d] \\
0 \ar[r] & G'[z^n] \ar[r]^{i'_{n,m}\quad} \ar[d] & G'[z^{n+m}] \ar[r]^{\es j'_{n,m}} \ar[d] & G'[z^m] \ar[r] \ar[d] & 0 \\
0 \ar[r] & G[z^n] \ar[r]^{i_{n,m}\quad} \ar[d] & G[z^{n+m}] \ar[r]^{\es j_{n,m}} \ar[d] & G[z^m] \ar[r] \ar[d] & 0 \\
0 \ar[r] & G''_n \ar[r]^{i''_{n,m}\quad} \ar[d] & G''_{n+m} \ar[r]^{\es j''_{n,m}} \ar[d] & G''_m \ar[r]  \ar[d] & 0 \\
& 0 & 0 & 0 
}
\]
the columns are exact by definition of $G''_n$, and the two upper rows are exact by Proposition~\ref{PropTate}. By the snake lemma this defines the exact sequence in the bottom row. By Theorem~\ref{ThmEqAModSch} this implies that $\ulM_q(i''_{n,1})\colon\ulM_q(G''_{n+1})\to\ulM_q(G''_n)$ is surjective for all $n$. In particular, the projective system $\ulM_q(G''_n)$ satisfies the Mittag-Leffler condition, and the morphism $\ulM_q(f)\colon \ulM:=\ulM_q(G)\to\ulM':=\ulM_q(G')$ of effective local shtukas corresponding to $f$ by Theorem~\ref{ThmEqZDivGps} is surjective by \cite[Proposition~II.9.1(b)]{Hartshorne}. The kernel $\ulM'':=\ker\ulM_q(f)=\invlim\ulM_q(G''_n)$ is a locally free $R\dbl z\dbr$-module with a morphism $F_{M''}\colon\sigma_{\!q}^*M''\to M''$ inducing an isomorphism $F_{M''}\colon\sigma_{\!q}^*M''[\tfrac{1}{z-\zeta}]\to M''[\tfrac{1}{z-\zeta}]$, because this is true for $\ulM$ and $\ulM'$. Thus $\ulM''$ is an effective local shtuka over $S$. Applying the snake lemma to the (injective) multiplication with $z^n$ on the sequence $0\to\ulM''\to\ulM\to\ulM'\to0$ shows that $\ulM''/z^n\ulM''=\ulM_q(G''_n)$. Therefore, Theorem~\ref{ThmEqZDivGps} implies that $G/G'=\Dr_q(\ulM'')=\dirlim G''_n$ is a $z$-divisible local Anderson module over $S$.
\end{proof}

\section{Frobenius, Verschiebung and deformations of local shtukas}\label{SectVersch}

\begin{Definition}\label{DefKernelOfFrob}
Let $G$ be an \fppf-sheaf of groups over an $\BF_q$-scheme $S$. For $n\in\BN_{0}$ we let $G[F_q^n]$ be the kernel of the relative $q^n$-Frobenius $F_{q^n,G}\colon G\to\sigma_{\!q^n}^*G$ of $G$ over $S$. In particular, $G[F_q^0]:=\ker(\id_G)=(0)$.
\end{Definition}

Let $S\in\Nilp_{\BF_q\dbl\zeta\dbr}$. Later we will assume that $\zeta=0$ in $\CO_S$. Let $G$ be a $z$-divisible local Anderson module over $S$ and let $\ulM=(M,F_M)=\ulM_q(G)$ be its associated local shtuka from Theorem~\ref{ThmEqZDivGps}. Then the $q$-Frobenius morphism $F_{q,G}:=\dirlim F_{q,G[z^n]}\colon G\to\sigma_{\!q}^*G$ corresponds by diagram~\eqref{EqDiagFrobenius} to the morphism 
\[
\ulM_q(F_{q,G})\,=\,F_M\colon\; M_q(\sigma_{\!q}^*G)\;=\;\sigma_{\!q}^*M_q(G)\;\longto\; M_q(G),\quad m\;\mapsto\; m\circ F_{q,G}\;=\;F_M(m)\,.
\]
In addition to the $q$-Frobenius, $G$ carries a $q$-Verschiebung which is identically zero by Theorem~\ref{Theofequivalence1}. Therefore, if $\zeta=0$ in $\CO_S$ we will introduce a ``$z^d$-Verschiebung'' in Remark~\ref{RemzdVersch} and Corollary~\ref{CorzdVersch} below, which is more useful for $z$-divisible local Anderson modules. We begin with the following

\begin{Lemma}\label{LemmaVerschLocSht}
Let $M$ be an effective local shtuka with $(z-\zeta)^d = 0$ on $\coker F_M$. Then there exists a uniquely determined homomorphism of $\CO_S\dbl z\dbr$-modules $V_M\colon M \to \sigma_{\!q}^\ast M$ with $F_M \circ V_M \, = \, (z-\zeta)^d\cdot\id_{M}$ and $V_M \circ F_M \,=\,(z-\zeta)^d\cdot\id_{\sigma_{\!q}^\ast M}$.
\end{Lemma}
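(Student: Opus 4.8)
The plan is to exploit the fact that $F_M\colon\sigma_{\!q}^\ast M[\frac{1}{z-\zeta}]\isoto M[\frac{1}{z-\zeta}]$ is already an isomorphism after inverting $z-\zeta$, so there is an obvious candidate for $V_M$, namely $V_M := (z-\zeta)^d\cdot F_M^{-1}\colon M[\frac{1}{z-\zeta}]\to\sigma_{\!q}^\ast M[\frac{1}{z-\zeta}]$. Uniqueness is then immediate: any two homomorphisms $V_M,V_M'\colon M\to\sigma_{\!q}^\ast M$ with $F_M\circ V_M=F_M\circ V_M'=(z-\zeta)^d\cdot\id_M$ agree after composing with the injective map $\sigma_{\!q}^\ast M\into\sigma_{\!q}^\ast M[\frac{1}{z-\zeta}]$ and using that $F_M$ is injective (here $\sigma_{\!q}^\ast M$ is locally free, hence torsion-free for $z-\zeta$ by Lemma~\ref{LemOSZ}, so $\sigma_{\!q}^\ast M\into\sigma_{\!q}^\ast M[\frac1{z-\zeta}]$ and $M\into M[\frac1{z-\zeta}]$ are injective). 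Similarly $V_M\circ F_M=(z-\zeta)^d\cdot\id_{\sigma_{\!q}^\ast M}$ holds automatically on $\sigma_{\!q}^\ast M[\frac{1}{z-\zeta}]$ once $F_M\circ V_M=(z-\zeta)^d\cdot\id$ does, since $F_M$ is an isomorphism there and the two relations are conjugate under $F_M$.

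So the entire content is the \emph{integrality} statement: that the $a$ priori only rationally-defined map $(z-\zeta)^d F_M^{-1}$ actually sends $M$ into $\sigma_{\!q}^\ast M$, not merely into $\sigma_{\!q}^\ast M[\frac{1}{z-\zeta}]$. The hypothesis $(z-\zeta)^d=0$ on $\coker F_M$ is exactly what one needs. First I would reduce to the affine case $S=\Spec R$ with $M$ and $\sigma_{\!q}^\ast M$ free of rank $r$, which is permissible since the assertion is local and the uniqueness just proved lets local constructions glue. Then, writing $C:=\coker F_M$ and using the exact sequence $0\to\sigma_{\!q}^\ast M\xrightarrow{F_M} M\to C\to0$, the hypothesis says $(z-\zeta)^d M\subset F_M(\sigma_{\!q}^\ast M)=\im F_M$. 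Hence for each $m\in M$ there is $m'\in\sigma_{\!q}^\ast M$ with $F_M(m')=(z-\zeta)^d m$; set $V_M(m):=m'$. This $m'$ is unique because $F_M$ is injective, so $V_M$ is well defined, and it is clearly $R\dbl z\dbr$-linear because $F_M$ is. By construction $F_M\circ V_M=(z-\zeta)^d\cdot\id_M$, and composing this with $F_M$ on the right and cancelling the (injective, after $\frac1{z-\zeta}$) map $F_M$ gives $V_M\circ F_M=(z-\zeta)^d\cdot\id_{\sigma_{\!q}^\ast M}$.

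The only mild subtlety — and the step I would watch most carefully — is making sure no Noetherian or flatness hypothesis sneaks in: everything above uses only that $M,\sigma_{\!q}^\ast M$ are locally free of finite rank (so torsion-free for $z-\zeta$ via Lemma~\ref{LemOSZ}, giving injectivity of $F_M$ and of $M\into M[\frac1{z-\zeta}]$) together with the plain set-theoretic fact that $(z-\zeta)^d$ kills $\coker F_M$. In particular one does \emph{not} need that $\coker F_M$ be locally free — only annihilated by $(z-\zeta)^d$ — so the lemma as stated (which only assumes $(z-\zeta)^d=0$ on $\coker F_M$, not effectivity-with-free-cokernel) is covered. I would close by remarking that on the fibres where $\zeta=0$ this recovers the literal $z^d$-Verschiebung, which is the use made of it in Remark~\ref{RemzdVersch} and Corollary~\ref{CorzdVersch}.
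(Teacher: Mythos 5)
Your proposal is correct and follows essentially the same route as the paper: the paper obtains $V_M$ by the same factorization of $(z-\zeta)^d\cdot\id_M$ through the injective map $F_M$ (injectivity coming from Lemma~\ref{ExistenceOfe} via Lemma~\ref{LemOSZ}), presented as a commutative-diagram chase on the exact sequence $0\to\sigma_{\!q}^\ast M\xrightarrow{F_M}M\to\coker F_M\to0$, with uniqueness and the relation $V_M\circ F_M=(z-\zeta)^d\cdot\id_{\sigma_{\!q}^\ast M}$ deduced by cancelling $F_M$ exactly as you do.
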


\begin{proof}
Since $F_M$ is injective by Lemma~\ref{ExistenceOfe} and $(z-\zeta)^d=0$ on $\coker F_M$, the lemma follows from the following diagram
\begin{equation}\label{EqDiagV}
\xymatrix @C+1pc  { 0 \ar[r]  & \sigma_{\!q}^\ast M \ar[r]^{F_M}\ar[d]_{(z-\zeta)^d} & M \ar[r] \ar[d]^{(z-\zeta)^d}\ar@{-->}[dl]_{V_M} & \coker F_M \ar[r] \ar[d]^{(z-\zeta)^d\,=\,0} & 0\; \\
 0 \ar[r] & \sigma_{\!q}^\ast M \ar[r]_{F_M} & M \ar[r] & \coker F_M \ar[r] & 0\,.
}
\vspace{-5ex}
\end{equation}
\end{proof}

\begin{Remark}\label{RemzdVersch}
If $\zeta=0$ in $\CO_S$, the Frobenius $f:=F_M\colon\sigma_{\!q}^*\ulM[\tfrac{1}{z}]\isoto\ulM[\tfrac{1}{z}]$ satisfies $F_M\circ\sigma_{\!q}^*f=F_M\circ\sigma_{\!q}^*F_M=f\circ\sigma_{\!q}^*F_M$, and hence is a quasi-isogeny between the local shtukas $\sigma_{\!q}^*\ulM=(\sigma_{\!q}^*M,\sigma_{\!q}^*F_M)$ and $\ulM$. Likewise, if $\ulM$ is effective with $(z-\zeta)^d = 0$ on $\coker F_M$, the homomorphism $V_M$ from Lemma~\ref{LemmaVerschLocSht} is an isogeny $V_{z^d,\ulM}:=V_M\colon\ulM[\tfrac{1}{z}]\isoto\sigma_{\!q}^*\ulM[\tfrac{1}{z}]$, called the \emph{$z^d$-Verschiebung of $\ulM$}. It satisfies $F_M \circ V_{z^d,\ulM} \, = \, z^d\cdot\id_{\ulM}$ and $V_{z^d,\ulM} \circ F_M \,=\,z^d\cdot\id_{\sigma_{\!q}^\ast\ulM}$. Indeed, $\zeta=0=\zeta^q$ implies that the following diagram is commutative
\[
\xymatrix @C+3pc {
\sigma_{\!q}^\ast {M} \ar[r]^{\sigma_{\!q}^\ast V_{z^d,\ulM}}\ar[d]^{F_{M}} & \sigma_{\!q}^*(\sigma_{\!q}^*M)\ar[d]^{\sigma_{\!q}^\ast F_{M}\,=\,F_{\sigma_{\!q}^*M}}\\
{M} \ar[r]^{V_{z^d,\ulM}} & \sigma_{\!q}^\ast {M}\;,
}
\]
as $F_{\sigma_{\!q}^*M} \circ \sigma_{\!q}^\ast V_{z^d,\ulM}=\sigma_{\!q}^\ast F_{M} \circ \sigma_{\!q}^\ast V_{z^d,\ulM}=\sigma_{\!q}^*\bigl((z-\zeta)^d\cdot\id_{M}\bigr)=(z-\zeta^q)^d\cdot\id_{\sigma_{\!q}^*M} = V_{z^d,\ulM} \circ F_{M}$. 
\end{Remark}

\begin{Corollary}\label{CorzdVersch}
Assume that $\zeta=0$ in $\CO_S$. Let $G$ be a $z$-divisible local Anderson module over $S$ with $(z-\zeta)^d = 0$ on $\omega_G$. Then there is a uniquely determined morphism $V_{z^d,G} \colon \sigma_{\!q}^*G \to G$ with $F_{q,G} \circ V_{z^d,G} \,=\, z^d\cdot\id_{\sigma_{\!q}^*G}$ and $V_{z^d,G} \circ F_{q,G} \,=\, z^d\cdot\id_{G}$. It is called the \emph{$z^d$-Verschiebung of $G$}. In particular, $G[F_q^n]:=\ker(F_{q^n,G}\colon G \to \sigma_{\!q^n}^*G)$ is contained in $G[z^{nd}]$ and $\ker(V_{z^d,G}^n\colon\sigma_{\!q^n}^*G\to G)\subset\sigma_{\!q^n}^*G[z^{nd}]$ for all $n$.
\end{Corollary}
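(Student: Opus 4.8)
The plan is to transport everything through the anti-equivalence $\ulM_q$ of Theorem~\ref{ThmEqZDivGps} and read off the Verschiebung on the shtuka side from Lemma~\ref{LemmaVerschLocSht} and Remark~\ref{RemzdVersch}. First I would set $\ulM=(M,F_M):=\ulM_q(G)$, which by Theorem~\ref{ThmEqZDivGps}\ref{ThmEqZDivGpsItem1} is an effective local shtuka over $S$, and by \ref{ThmEqZDivGpsItem5} satisfies $\coker F_M\cong\omega_G$, so that the hypothesis $(z-\zeta)^d=0$ on $\omega_G$ becomes $(z-\zeta)^d=0$ on $\coker F_M$. Since $\zeta=0$ in $\CO_S$, Lemma~\ref{LemmaVerschLocSht} supplies a unique $\CO_S\dbl z\dbr$-homomorphism $V_M\colon M\to\sigma_{\!q}^*M$ with $F_M\circ V_M=z^d\cdot\id_M$ and $V_M\circ F_M=z^d\cdot\id_{\sigma_{\!q}^*M}$ (using $z-\zeta=z$). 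Recall from the discussion before Lemma~\ref{LemmaVerschLocSht} that under $\ulM_q$ the relative $q$-Frobenius $F_{q,G}\colon G\to\sigma_{\!q}^*G$ corresponds to $F_M\colon\sigma_{\!q}^*M_q(G)\to M_q(G)$; that is, $\ulM_q(F_{q,G})=F_M$. Since $\ulM_q$ is an anti-equivalence that is $\BF_q\dbl z\dbr$-linear and compatible with the $\sigma_{\!q}^*$-twist (as $\ulM_q(\sigma_{\!q}^*G)=\sigma_{\!q}^*\ulM_q(G)$), the morphism $V_M\colon M_q(G)\to\sigma_{\!q}^*M_q(G)$ is $\ulM_q$ applied to a unique morphism $V_{z^d,G}\colon\sigma_{\!q}^*G\to G$ of $z$-divisible local Anderson modules. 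Applying $\ulM_q$ to the two desired identities $F_{q,G}\circ V_{z^d,G}=z^d\cdot\id_{\sigma_{\!q}^*G}$ and $V_{z^d,G}\circ F_{q,G}=z^d\cdot\id_G$ turns them into the identities $V_M\circ F_M=z^d\cdot\id$ and $F_M\circ V_M=z^d\cdot\id$ already established (the order reverses because $\ulM_q$ is contravariant); since $\ulM_q$ is an equivalence, the identities hold for $V_{z^d,G}$, and uniqueness of $V_{z^d,G}$ follows from uniqueness of $V_M$ together with faithfulness of $\ulM_q$.

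For the last assertion I would argue directly with the relations, on the level of sheaves. The endomorphism $z^{d}\cdot\id_G$ factors as $z^d\cdot\id_G=V_{z^d,G}\circ F_{q,G}$, so $G[F_q]=\ker F_{q,G}\subset\ker(z^d\cdot\id_G)=G[z^d]$; iterating, the $n$-fold composite $F_{q^n,G}=\sigma_{\!q^{n-1}}^*F_{q,G}\circ\cdots\circ F_{q,G}$ and the corresponding composite of $z^d$-Verschiebungs (using the commutative square in Remark~\ref{RemzdVersch}, which transports to $G$ under $\ulM_q$, so that $V_{z^d,G}\circ\sigma_{\!q}^*F_{q,G}=F_{q,G}\circ\cdots$ appropriately) give $\bigl(\text{composite of }\sigma^*\text{-twists of }V_{z^d,G}\bigr)\circ F_{q^n,G}=z^{nd}\cdot\id_G$, whence $G[F_q^n]\subset G[z^{nd}]$. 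The symmetric computation $F_{q,G}\circ V_{z^d,G}=z^d\cdot\id_{\sigma_{\!q}^*G}$, again composed $n$ times with the help of the same square, yields $F_{q^n,\ldots}\circ V_{z^d,G}^n=z^{nd}\cdot\id$ on the relevant $\sigma_{\!q^n}^*$-twist, so $\ker\bigl(V_{z^d,G}^n\bigr)\subset\sigma_{\!q^n}^*G[z^{nd}]$.

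The only genuinely delicate point is bookkeeping the Frobenius twists in the iterated relations: $V_{z^d,G}$ goes from $\sigma_{\!q}^*G$ to $G$, so composing $n$ copies requires the intermediate twists $\sigma_{\!q^i}^*V_{z^d,G}$, and one must check these are themselves the $z^d$-Verschiebungs of $\sigma_{\!q^i}^*G$ — which is exactly the content of the commutative diagram in Remark~\ref{RemzdVersch} (it shows $\sigma_{\!q}^*V_{z^d,\ulM}$ is compatible with $F_{\sigma_{\!q}^*M}$, i.e.\ is the $z^d$-Verschiebung of $\sigma_{\!q}^*\ulM$), transported through $\ulM_q$; the equality $\zeta=\zeta^q=0$ is what makes $(z-\zeta)^d$ stable under $\sigma_{\!q}^*$ and hence makes the twisted relations have the same constant $z^d$. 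Everything else is a formal consequence of the anti-equivalence and of Lemma~\ref{LemmaVerschLocSht}.
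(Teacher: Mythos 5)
Your proposal is correct and follows essentially the same route as the paper: the paper likewise takes $\ulM=\ulM_q(G)$, uses Lemma~\ref{LemmaVerschLocSht} together with Remark~\ref{RemzdVersch} (which is exactly where one checks that $V_M$ commutes with the Frobenii, i.e.\ is a morphism of local shtukas, using $\zeta=\zeta^q=0$), defines $V_{z^d,G}=\Dr_q(V_{z^d,\ulM})$ via the anti-equivalence of Theorem~\ref{ThmEqZDivGps}, and deduces the containments from the iterated relations $V_{z^d,G}^n\circ F_{q,G}^n=z^{nd}\cdot\id_G$ and $F_{q,G}^n\circ V_{z^d,G}^n=z^{nd}\cdot\id_{\sigma_{\!q^n}^*G}$. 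Your extra remarks on uniqueness via faithfulness and on the twist bookkeeping are consistent with, and only slightly more explicit than, the paper's argument.
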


\begin{proof}
Let $\ulM=\ulM_q(G)$ be the effective local shtuka associated with $G$. Since $(z-\zeta)^d = 0$ on $\omega_G=\coker F_{M}$, the $z^d$-Verschiebung $V_{z^d,\ulM}$ of $\ulM$ from Remark~\ref{RemzdVersch} corresponds by Theorem~\ref{ThmEqZDivGps} to a morphism $V_{z^d,G}:=\Dr_q(V_{z^d,\ulM})\colon\sigma_{\!q}^*G\to G$ with $F_{q,G} \circ V_{z^d,G} \,=\, z^d\cdot\id_{\sigma_{\!q}^*G}$ and $V_{z^d,G} \circ F_{q,G} \,=\, z^d\cdot\id_{G}$, and hence $V_{z^d,G}^n \circ F_{q,G}^n \,=\, z^{nd}\cdot\id_{G}$ and $F_{q,G}^n\circ V_{z^d,G}^n \,=\, z^{nd}\cdot\id_{\sigma_{\!q^n}^*G}$. This proves the corollary.
\end{proof}

\begin{Proposition}\label{PropTruncZDAM}
Let $G$ be a $z$-divisible local Anderson module with $(z-\zeta)^d = 0$ on $\omega_G$, and let $n\in\BN$. Then $G[z^n]:=\ker(z^n \colon G \to G)$ is a truncated $z$-divisible local Anderson module with order of nilpotence $d$ and level $n$; see Definition~\ref{DefTruncatedZDAM}.
\end{Proposition}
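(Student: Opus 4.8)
The plan is to prove that $G[z^n]$ satisfies the relevant case of Definition~\ref{DefTruncatedZDAM}, separating the cases $n \geq 2d$ and $n < 2d$. In the case $n \geq 2d$, I must verify that $G[z^n]$ is an $\BF_q[z]/(z^n)$-module scheme which is finite locally free and strict as $\BF_q$-module scheme, that $(z-\zeta)^d$ is homotopic to $0$ on $\CoL{G[z^n]/S}$, and that $G[z^n]$ satisfies the equivalent conditions of Lemma~\ref{LemmaequivalentconditionforImageandKernel}. The first point is immediate from axiom~\ref{DefZDivGpAxiom3} of Definition~\ref{DefZDivGp} together with the fact that $z^n$ annihilates $G[z^n]$. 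For the Mittag-Leffler/flatness condition, I would use Lemma~\ref{LemmaequivalentconditionforImageandKernel}\ref{LemmaequivalentconditionforImageandKernel_B}: by Proposition~\ref{PropTate}\ref{PropTate_A} the sequence $0\to G[z^i]\to G[z^n]\xrightarrow{j} G[z^{n-i}]\to 0$ is exact for $0 \leq i \leq n$, so $z^i\colon G[z^n] \to G[z^{n-i}]$ is an epimorphism and $\ker(z^{n-i}) = G[z^i] = \im(z^i)$ inside $G[z^n]$; this is exactly condition~\ref{LemmaequivalentconditionforImageandKernel_B}.

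The co-Lie complex condition is where I expect the real work. Via Theorem~\ref{ThmEqZDivGps}, write $\ulM = \ulM_q(G)$, so $G[z^n]$ corresponds to the finite $\BF_q$-shtuka $\ulM/z^n\ulM = (M/z^nM, F_M \bmod z^n)$, and by Theorem~\ref{ThmEqAModSch}\ref{ThmEqAModSchItem6} the co-Lie complex $\CoL{G[z^n]/S}$ is canonically isomorphic to $[\,\sigma_{\!q}^*(M/z^nM) \xrightarrow{F_{M_n}} M/z^nM\,]$ in degrees $-1, 0$. I need to exhibit a homotopy between $(z-\zeta)^d$ acting on this complex and $0$, i.e.\ a map $h\colon M/z^nM \to \sigma_{\!q}^*(M/z^nM)$ with $F_{M_n}\circ h = (z-\zeta)^d\cdot\id$ on $M/z^nM$ and $h\circ F_{M_n} = (z-\zeta)^d\cdot\id$ on $\sigma_{\!q}^*(M/z^nM)$. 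The natural candidate is the reduction mod $z^n$ of the Verschiebung $V_M\colon M \to \sigma_{\!q}^*M$ from Lemma~\ref{LemmaVerschLocSht}, which exists precisely because $(z-\zeta)^d$ annihilates $\coker F_M = \omega_G$ (Lemma~\ref{LemLieG}\ref{LemLieG_C} and axiom~\ref{DefZDivGpAxiom4}); note $V_M$ is defined without assuming $\zeta = 0$, which is the relevant setting here since Definition~\ref{DefTruncatedZDAM} does not impose that. Reducing the two identities $F_M \circ V_M = (z-\zeta)^d\cdot\id_M$ and $V_M \circ F_M = (z-\zeta)^d\cdot\id_{\sigma_{\!q}^*M}$ modulo $z^n$ gives the required homotopy, using that reduction mod $z^n$ commutes with $\sigma_{\!q}^*$ on locally free modules.

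For the case $n < 2d$, I simply observe that $G$ itself is a $z$-divisible local Anderson module with $(z-\zeta)^d = 0$ on $\omega_G$, so by the already-established case $n = 2d$ (note $2d \geq 2d$) the sheaf $G[z^{2d}]$ is a truncated $z$-divisible local Anderson module with order of nilpotence $d$ and level $2d$, and $G[z^n] = \ker(z^n\colon G[z^{2d}] \to G[z^{2d}])$ since $n < 2d$ forces $G[z^n] \subset G[z^{2d}]$. This is exactly the shape required by part~(2) of Definition~\ref{DefTruncatedZDAM}. The main obstacle is the homotopy computation: one must be careful that "homotopic to $0$" is the right notion (an honest homotopy of complexes, not merely inducing zero on cohomology), and that the identification of $\CoL{G[z^n]/S}$ with the two-term $F$-complex from Theorem~\ref{ThmEqAModSch}\ref{ThmEqAModSchItem6} is compatible with the $\BF_q[z]$-action, so that $(z-\zeta)^d$ acting on the co-Lie complex really is transported to $(z-\zeta)^d$ acting on $[\sigma_{\!q}^*M_n \to M_n]$; once that dictionary is in place, $V_M \bmod z^n$ does the job.
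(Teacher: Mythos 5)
Your proposal is correct and follows essentially the same route as the paper: the flatness conditions of Lemma~\ref{LemmaequivalentconditionforImageandKernel} are checked via Proposition~\ref{PropTate}, and the null-homotopy of $(z-\zeta)^d$ on $\CoL{G[z^n]/S}$ is produced, exactly as in the paper, by reducing $V_{M_q(G)}$ from Lemma~\ref{LemmaVerschLocSht} modulo $z^n$ and transporting it through the identification of Theorem~\ref{ThmEqAModSch}\ref{ThmEqAModSchItem6}; your explicit handling of the case $n<2d$ via $G[z^n]=(G[z^{2d}])[z^n]$ is left implicit in the paper but is the intended reading. (One cosmetic slip: the identification should read $\ker(z^{n-i})=G[z^{n-i}]=\im(z^i)$ inside $G[z^n]$, not $G[z^i]$.)
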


\begin{proof}
The equivalent conditions of Lemma~\ref{LemmaequivalentconditionforImageandKernel} for the $\BF_q[z]/(z^n)$-module scheme $G[z^n]$ follow from Proposition~\ref{PropTate} by considering for all $\nu=0,\ldots,n$ the commutative diagram
\[
\xymatrix @C+2pc {
0 \ar[r] & G[z^{n-\nu}] \ar[r]^{\TS i_{n-\nu,\nu}} & G[z^n] \ar[r]^{\TS j_{n-\nu,\nu}} \ar[rd]_{\TS z^{n-\nu}} & G[z^\nu] \ar@{^{ (}->}[d]^{\TS i_{\nu,n-\nu}} \ar[r] & 0\\
& G[z^n] \ar@{->>}[u]^{\TS j_{\nu,n-\nu}} \ar[ru]_{\TS z^\nu} & & G[z^n]
}
\]
in which $i_{\nu,n-\nu}$ is a monomorphism and $j_{\nu,n-\nu}$ an epimorphism, and hence $\ker(z^{n-\nu})=\ker(j_{n-\nu,\nu})=\im(i_{n-\nu,\nu})=\im(z^\nu)$.
 By Theorem~\ref{ThmEqZDivGps}\ref{ThmEqZDivGpsItem5}, $(z-\zeta)^d = 0$ on $\coker F_{M_q(G)}$. We reduce the map $V_{M_q(G)}$ from Lemma~\ref{LemmaVerschLocSht} modulo $z^n$ to obtain a homomorphism $V_{M_q(G[z^n])}\colon M_q(G[z^n])\to\sigma_{\!q}^*M_q(G[z^n])$ with $F_{M_q(G[z^n])} \circ V_{M_q(G[z^n])} \,=\, (z-\zeta)^d\cdot\id_{M_q(G[z^n])}$ and $V_{M_q(G[z^n])}\circ F_{M_q(G[z^n])} \,=\,(z-\zeta)^d\cdot\id_{\sigma_{\!q}^\ast {M_q(G[z^n])}}$. Under the identification of the co-Lie complex $\CoL{G[z^n]/S}$ with $0\to \sigma_{\!q}^*M_q(G[z^n])\xrightarrow{\,F_{M_q(G[z^n])}} M_q(G[z^n])\to 0$ from Theorem~\ref{ThmEqAModSch}\ref{ThmEqAModSchItem6} the map $V_{M_q(G[z^n])}$ corresponds to a homotopy $h\colon t^*_{G[z^n]}\to N_{G[z^n]}$ with $dh=(z-\zeta)^d$ on $t^*_{G[z^n]}$ and $hd=(z-\zeta)^d$ on $N_{G[z^n]}$. This means that $(z-\zeta)^d$ is homotopic to zero on $\CoL{G[z^n]/S}$.
\end{proof}

\begin{Proposition}\label{PropKerTruncZDAM}
Assume that $\zeta=0$ in $\CO_S$. Let $G = G[z^l]$ be a truncated $z$-divisible local Anderson module over $S$ with order of nilpotence $d$ and level $l$. Then
\begin{enumerate}
\item \label{PropKerTruncZDAM_D}
there exists a morphism $V_{z^d,G}\colon \sigma_{\!q}^*G \to G$ with $F_{q,G} \circ V_{z^d,G} \,=\, z^d\cdot\id_{\sigma_{\!q}^*G}$ and $V_{z^d,G} \circ F_{q,G} \,=\, z^d\cdot\id_{G}$. It is not uniquely determined, unless $G$ is \'etale.
\item \label{PropKerTruncZDAM_E}
$G[F_q^i] \subset G[z^{id}]$ and \ $\ker V_{z^d,G}^i \subset \sigma_{\!q^i}^*G[z^{id}]$ for all $i$.
\end{enumerate}
Now let $n\in\BN_{>0}$ and $l=nd$. In particular, if $n = 1$ there is a truncated divisible local Anderson module $\wt G$ of level $2d$ with $G=\wt G[z^d]$ and we assume that $V_{z^d,G}=V_{z^d,\wt G}\big|_{\sigma_{\!q}^*G}$. Then 
\begin{enumerate}
\setcounter{enumi}{2}
\item \label{PropKerTruncZDAM_A}
for all $i$ with $0\leq i \leq n$ the morphism $F_{q,G}^i\colon G[F_q^n] \to \sigma_{\!q^i}^*G[F_q^{n-i}]$ is an epimorphism,
\item \label{PropKerTruncZDAM_B}
the morphisms $V_{z^d,G}^n\colon\sigma_{\!q^n}^*G\to\ker F_{q,G}^n$ and $F_{q,G}^n\colon G\to\ker V_{z^d,G}^n$ are epimorphisms,
\item \label{PropKerTruncZDAM_C}
$G[F_q^i]$ and $\ker V_{z^d,G}^i$ are finite locally free strict $\BF_q$-module schemes over $S$ for all $0\le i\le n$,
\item \label{PropKerTruncZDAM_F}
for all $0\le i\le n$ we have $\omega_{G} = \omega_{G[z^{id}]} = \omega_{G[F_q]}$ and this is a finite locally free $\CO_S$-module.
\end{enumerate}
\end{Proposition}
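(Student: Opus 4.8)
Everything is pushed through the exact, $\BF_q\dbl z\dbr$-linear anti-equivalence $\ulM_q\leftrightarrow\Dr_q$ of Theorem~\ref{ThmEqAModSch}, applied to the finite $\BF_q$-shtuka $\ulM=(M,F_M):=\ulM_q(G)$. Here $M$ is locally free over $\CO_S\dbl z\dbr/(z^l)$ — the translation, via the exactness of $\ulM_q$, of the flatness condition of Lemma~\ref{LemmaequivalentconditionforImageandKernel} built into Definition~\ref{DefTruncatedZDAM} — $F_M$ is $\CO_S\dbl z\dbr$-linear with $\ulM_q(F_{q,G})=F_M$, and $\omega_H=\coker F_{\ulM_q(H)}$, $n_H=\ker F_{\ulM_q(H)}$ for every finite locally free strict $\BF_q$-module scheme $H$ by Theorem~\ref{ThmEqAModSch}\ref{ThmEqAModSchItem6}. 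For \ref{PropKerTruncZDAM_D}, when $l\ge 2d$ the hypothesis that $(z-\zeta)^d=z^d$ is homotopic to $0$ on $\CoL{G/S}$ means, under the identification $\CoL{G/S}\cong\bigl(\sigma_{\!q}^*M\xrightarrow{F_M}M\bigr)$, that there is an $\CO_S\dbl z\dbr$-linear $V_M\colon M\to\sigma_{\!q}^*M$ with $F_M\circ V_M=z^d\cdot\id_M$ and $V_M\circ F_M=z^d\cdot\id_{\sigma_{\!q}^*M}$ (the two chain-homotopy identities in degrees $0$ and $-1$). Since $\sigma_{\!q}^*$ fixes $z$ and $\zeta=0$, such a $V_M$ is automatically a morphism of finite $\BF_q$-shtukas $\ulM\to\sigma_{\!q}^*\ulM$, so $V_{z^d,G}:=\Dr_q(V_M)\colon\sigma_{\!q}^*G=\Dr_q(\sigma_{\!q}^*\ulM)\to\Dr_q(\ulM)=G$ is defined and, by contravariant functoriality and $\BF_q\dbl z\dbr$-linearity, satisfies $V_{z^d,G}\circ F_{q,G}=z^d\id_G$ and $F_{q,G}\circ V_{z^d,G}=z^d\id_{\sigma_{\!q}^*G}$. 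When $l<2d$ one takes $V_{z^d,G}:=V_{z^d,\wt G}\big|_{\sigma_{\!q}^*G}$ for the level-$2d$ object $\wt G$ of Definition~\ref{DefTruncatedZDAM}; $\BF_q[z]$-equivariance of $V_{z^d,\wt G}$ sends $\sigma_{\!q}^*G=\sigma_{\!q}^*\wt G[z^l]$ into $\wt G[z^l]=G$, and the identities restrict. Two choices of $V_M$ differ by a homomorphism killing $\im F_M$ with values in $\ker F_M$, i.e.\ by an element of $\Hom_{\CO_S}(\omega_G,n_G)$, which (since $\omega_G$ and $n_G$ are locally free of the same rank, cf.\ \ref{PropKerTruncZDAM_F}) is zero precisely when $\omega_G=0$, i.e.\ precisely when $G$ is étale by Lemma~\ref{LemmaGEtale}. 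Part \ref{PropKerTruncZDAM_E} is then immediate: iterating gives $F_M^i\circ V_M^i=z^{id}\id$ and $V_M^i\circ F_M^i=z^{id}\id$, hence $V_{z^d,G}^i\circ F_{q^i,G}=z^{id}\id_G$ and $F_{q^i,G}\circ V_{z^d,G}^i=z^{id}\id_{\sigma_{\!q^i}^*G}$, so a point killed by $F_{q^i,G}$ (resp.\ by $V_{z^d,G}^i$) is killed by $z^{id}$; the same computation on $\ulM_q(G)$ reproves Corollary~\ref{CorzdVersch}.

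Now let $l=nd$. Via the dictionary, $G[F_q^i]=\Dr_q\bigl(M/F_M^i(\sigma_{\!q^i}^*M)\bigr)$ and $\ker V_{z^d,G}^i=\Dr_q\bigl(\sigma_{\!q^i}^*M/V_M^i(M)\bigr)$, with induced Frobenius and $\BF_q[z]$-action. As $\Dr_q$ is an anti-equivalence sending closed immersions to epimorphisms, statement \ref{PropKerTruncZDAM_A} is equivalent to injectivity of the map $\ol{F_M^i}\colon\sigma_{\!q^i}^*M/\sigma_{\!q^i}^*(F_M^{n-i})(\sigma_{\!q^n}^*M)\to M/F_M^n(\sigma_{\!q^n}^*M)$ induced by $F_M^i$, which in turn amounts to the inclusion $\ker F_M^i\subset\sigma_{\!q^i}^*(F_M^{n-i})(\sigma_{\!q^n}^*M)$. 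This holds because $V_M^i\circ F_M^i=z^{id}\cdot\id$ forces $\ker F_M^i\subset(\sigma_{\!q^i}^*M)[z^{id}]=z^{(n-i)d}\,\sigma_{\!q^i}^*M$ (the equality by local freeness of $\sigma_{\!q^i}^*M$ over $\CO_S\dbl z\dbr/(z^{nd})$), while applying $\sigma_{\!q^i}^*$ to $F_M^{n-i}\circ V_M^{n-i}=z^{(n-i)d}\cdot\id$ gives $z^{(n-i)d}\,\sigma_{\!q^i}^*M\subset\sigma_{\!q^i}^*(F_M^{n-i})(\sigma_{\!q^n}^*M)$; so \ref{PropKerTruncZDAM_A} follows. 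Likewise \ref{PropKerTruncZDAM_B} is equivalent to the two identities $\ker F_M^n=\im V_M^n$ and $\ker V_M^n=\im F_M^n$; the inclusions $\im V_M^n\subset\ker F_M^n$ and $\im F_M^n\subset\ker V_M^n$ are immediate since $F_M^n\circ V_M^n=V_M^n\circ F_M^n=z^{nd}=0$ on the relevant modules.

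The heart of the matter — and the step I expect to be the main obstacle — is the reverse inclusions $\ker F_M^n\subset\im V_M^n$ and $\ker V_M^n\subset\im F_M^n$. For $n=1$ (so $z^d=0$ on $M$) one uses the level-$2d$ lift $\wt G$: with $\wt M:=\ulM_q(\wt G)$ one has $M=\wt M/z^d\wt M$, $F_M=F_{\wt M}\bmod z^d$, $V_M=V_{\wt M}\bmod z^d$; given $F_M(m)=0$, a lift $\tilde m$ satisfies $F_{\wt M}(\tilde m)=z^d w=F_{\wt M}V_{\wt M}(w)$ for some $w$, hence $\tilde m-V_{\wt M}(w)\in\ker F_{\wt M}\subset\wt M[z^d]=z^d\wt M$ (flatness of $\wt M$ over $\CO_S\dbl z\dbr/(z^{2d})$), whence $m\in\im V_M$; the inclusion $\ker V_M\subset\im F_M$ is symmetric. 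For general $n$ I would first verify the two identities fibrewise: over $\kappa(s)$ the elementary-divisor (Smith) form of $F_M$ over $\kappa(s)\dbl z\dbr/(z^{nd})$ together with $F_M V_M=V_M F_M=z^d$ pins down the $z$-adic orders of $\det F_M^n$ and $\det V_M^n$, forcing $\dim_{\kappa(s)}\ker F_M^n=\dim_{\kappa(s)}\im V_M^n$, so that the inclusions already proved become equalities over every point; then one descends to $S$ using that the flatness hypothesis makes $\ker F_M^n$, $\im V_M^n$ and their counterparts locally free of locally constant rank, so a fibrewise isomorphism is an isomorphism. This fibrewise-plus-descent bookkeeping can alternatively be organized as an induction on $n$, passing to the level-$(n-1)d$ truncation $G[z^{(n-1)d}]$, which by \ref{PropKerTruncZDAM_E} carries the same $G[F_q^i]$ and $\ker V_{z^d,G}^i$ for $i\le n-1$, and handling the top step with the flatness of $M$ over $\CO_S\dbl z\dbr/(z^{nd})$. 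Granting \ref{PropKerTruncZDAM_B}, order multiplicativity (Remark~\ref{RemFactsOnG}(c)) applied to the exact sequences furnished by \ref{PropKerTruncZDAM_A} yields $\ord G[F_q^i]=(\ord G[F_q])^i$ and the remaining epimorphism statements.

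Finally, \ref{PropKerTruncZDAM_C}: $G[F_q^i]=\ker(F_{q^i,G})$ and $\ker V_{z^d,G}^i$ are kernels of morphisms between strict $\BF_q$-module schemes, hence, by Lemma~\ref{LemmaStrictF_qAction}, \cite[Proposition~2]{Faltings02} and Remark~\ref{RemFactsOnG}(d), finite locally free strict $\BF_q$-module schemes — once one knows, from the analysis above, that the modules $M/F_M^i\sigma_{\!q^i}^*M$ and $\sigma_{\!q^i}^*M/V_M^i M$ are $\CO_S$-locally free. For \ref{PropKerTruncZDAM_F}: the Frobenius of the finite $\BF_q$-shtuka $\coker F_M=\omega_G$ is zero (its image is $F_M(\sigma_{\!q}^*M)$ modulo $F_M(\sigma_{\!q}^*M)$), so $\omega_{G[F_q]}=\omega_{\Dr_q(\coker F_M)}=\coker F_M=\omega_G$; and $\omega_{G[z^{id}]}=M/(F_M\sigma_{\!q}^*M+z^{id}M)=\coker F_M=\omega_G$ for $1\le i\le n$ because $z^d M=F_M V_M(M)\subset F_M\sigma_{\!q}^*M$. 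Local freeness of $\omega_G=\coker F_M$ follows from the presence of $V_M$ via the elementary-divisor description above, or, when $G$ is the $z^l$-torsion of a $z$-divisible local Anderson module, from Lemma~\ref{LemLieG}\ref{LemLieG_C}.
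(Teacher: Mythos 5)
Your parts \ref{PropKerTruncZDAM_D} and \ref{PropKerTruncZDAM_E} are fine and essentially the paper's argument (homotopy $\leftrightarrow$ $V_M$, apply $\Dr_q$, iterate the identities), and your $n=1$ lifting computation in $\wt M$ parallels the paper's $n=1$ step. The gap is in \ref{PropKerTruncZDAM_A}--\ref{PropKerTruncZDAM_F}, exactly at the place you call the heart of the matter. First, the translation ``epimorphism of group schemes $\Leftrightarrow$ injectivity of the induced map of finite $\BF_q$-shtukas'' is wrong: under the anti-equivalence of Theorem~\ref{ThmEqAModSch}, an injective morphism of finite $\BF_q$-shtukas corresponds to an epimorphism only when its cokernel is again a locally free $\CO_S$-module, so that Theorem~\ref{ThmEqAModSch}\ref{ThmEqAModSchItem2} can be applied to a short exact sequence. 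Over $R=\BF_q[x]$, multiplication by $x$ on $\ulM=(R,F_M=0)$ is injective, yet $\Dr_q$ of it is the endomorphism $X\mapsto xX$ of $\Balpha_q=\Spec R[X]/(X^q)$, which is not an epimorphism (it is zero on the fibre $x=0$); the same example shows that the kernel of a morphism of finite locally free strict $\BF_q$-module schemes need not be finite locally free, so your argument for \ref{PropKerTruncZDAM_C} also hinges on the missing input. Second, that missing input -- local freeness over $\CO_S$ of $M/F_M^i\sigma_{\!q^i}^*M$ and $\sigma_{\!q^i}^*M/V_M^iM$, equivalently flatness of $G[F_q^i]$ and $\ker V_{z^d,G}^i$ -- is itself one of the assertions to be proved, and your proposal never establishes it: flatness of $M$ over $\CO_S[z]/(z^{nd})$ does not give local freeness over $\CO_S$ of kernels and cokernels of $F_M^n$, and the ``fibrewise Smith form plus descent'' step both presupposes this (``locally free of locally constant rank'') and uses that formation of $\ker$, $\im$, $\coker$ commutes with passage to fibres, which fails precisely when that flatness is absent. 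So the argument is circular at the decisive point; even your correct $n=1$ identity $\ker F_M=\im V_M$ is only a statement about quasi-coherent sheaves and does not by itself yield the fppf epimorphism onto $G[F_q]$.

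The paper proceeds in the opposite order, which is what you need. The epimorphism statements \ref{PropKerTruncZDAM_A} and \ref{PropKerTruncZDAM_B} are proved directly on the level of fppf sheaves, by induction on $n$, using only \ref{PropKerTruncZDAM_E} and the epimorphy of $z^{id}\colon\sigma_{\!q^i}^*G\to\sigma_{\!q^i}^*G[z^{(n-i)d}]$ coming from the flatness condition in Lemma~\ref{LemmaTruncatedZDAM} (for $n=1$ one works inside $\wt G$ with the exact sequence $0\to\wt G[z^d]\to\wt G\xrightarrow{\,z^d}\wt G[z^d]\to0$, arguing with points of the group schemes rather than sections of $M$). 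Only then is flatness deduced: these epimorphisms are faithfully flat by Remark~\ref{RemFactsOnG}(b), hence $G[F_q^i]$ and $\ker V_{z^d,G}^i$ are flat, so finite locally free, and strict by Lemma~\ref{LemmaStrictF_qAction}, Faltings's kernel result and Lemma~\ref{LemmaStrictIsLocal}; finally \ref{PropKerTruncZDAM_F} follows from the conormal sequence (giving $\omega_G=\omega_{G[F_q]}$) together with \ref{PropKerTruncZDAM_C}, not the other way around. If you want to keep your module-theoretic route, you must first supply an independent proof of the local freeness of $\coker F_M^i$ and $\coker V_M^i$ over $\CO_S$; as written, that is the missing idea.
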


\begin{proof}  
\ref{PropKerTruncZDAM_D} Let $h\colon t^*_{G}\to N_{G}$ be a homotopy with $dh=(z-\zeta)^d$ on $t^*_{G}$ and $hd=(z-\zeta)^d$ on $N_{G}$. Note that $h$ is determined only up to adding a homomorphism $t^*_G\onto\coker d=\omega_G\to n_G=\ker d\into N_G$, and in particular, is not unique unless $G$ is \'etale.  Let $V\colon M_q(G)\to\sigma_{\!q}^*M_q(G)$ be the homomorphism which corresponds to $h$ under the identification of the co-Lie complex $\CoL{G/S}$ with $0\to \sigma_{\!q}^*M_q(G)\xrightarrow{\,F_{M_q(G)}} M_q(G)\to 0$ from Theorem~\ref{ThmEqAModSch}\ref{ThmEqAModSchItem6}. Then $V\circ F_{M_q(G)}=z^d\cdot\id_{\sigma_{\!q}^*M_q(G)}=\sigma_{\!q}^*(z^d\cdot\id_{M_q(G)})=\sigma_{\!q}^*(F_{M_q(G)}\circ V)=F_{\sigma_{\!q}^*M_q(G)}\circ \sigma_{\!q}^*V$ implies that $V\colon\ulM_q(G)\to\sigma_{\!q}^*\ulM_q(G)=\ulM_q(\sigma_{\!q}^*G)$ is a morphism of finite $\BF_q$-shtukas. It induces the desired morphism $V_{z^d,G}:=\Dr_q(V)\colon \sigma_{\!q}^*G \to G$ with $F_{q,G} \circ V_{z^d,G} \,=\, z^d\cdot\id_{\sigma_{\!q}^*G}$ and $V_{z^d,G} \circ F_{q,G} \,=\, z^d\cdot\id_{G}$. 

\medskip\noindent 
\ref{PropKerTruncZDAM_E} follows from $V_{z^d,G}^i \circ F_{q,G}^i \,=\, z^{id}\cdot\id_{G}$ and $F_{q,G}^i\circ V_{z^d,G}^i \,=\, z^{id}\cdot\id_{\sigma_{\!q^i}^*G}$ which are consequences of \ref{PropKerTruncZDAM_D}.

\medskip\noindent 
\ref{PropKerTruncZDAM_A} is trivial if $n = 1$ and $i=0$ or $1$. If $n \geq 2$ there is by \ref{PropKerTruncZDAM_D} a factorization $F_{q,G}^i \circ V_{z^d,G}^i = z^{id}\colon \sigma_{\!q^i}^*G \to \sigma_{\!q^i}^*G$. Since the morphism $z^{id}\colon \sigma_{\!q^i}^*G\to \sigma_{\!q^i}^*G[z^{(n-i)d}]$ is an epimorphism by Lemma~\ref{LemmaTruncatedZDAM}, and since $\sigma_{\!q^i}^*G[F_q^{n-i}] \subset \sigma_{\!q^i}^*G[z^{(n-i)d}]$ by \ref{PropKerTruncZDAM_E}, we obtain \ref{PropKerTruncZDAM_A}.

\medskip\noindent 
\ref{PropKerTruncZDAM_B} is proved by induction on $n$. For $n=1$ we use $G=\wt G[z^d]$. By Lemma~\ref{LemmaTruncatedZDAM} there is an exact sequence
\[
 0 \longrightarrow \wt G[z^{d}] \longrightarrow \wt G \xrightarrow{\;z^{d}\,} \wt G[z^{d}] \longrightarrow 0\,.
\]
Since $ G[F_q]\subset\wt G[F_q]\subset\wt G[z^{d}]$ by \ref{PropKerTruncZDAM_E}, the map $V_{z^d,\wt G} \circ F_{q,\wt G}=z^d\colon({z^{d}})^{-1}(G[F_q]) \to G[F_q]$ is an epimorphism. From $F_{q,\wt G}\colon ({z^{d}})^{-1}(G[F_q]) \to \sigma_{\!q}^*\wt G[z^{d}]=\sigma_{\!q}^*G$ we see that $V_{z^d,G}=V_{z^d,\wt G}\big|_{\sigma_{\!q}^*G} \colon \sigma_{\!q}^*G \to G[F_q]$ is an epimorphism. The statement for $F_{q,G}$ is proved in the analogous way using $\ker V_{z^d,G}\subset\ker V_{z^d,\wt G}\subset\sigma_q^*\wt G[z^d]$. Thus we have proved \ref{PropKerTruncZDAM_B} for the case $n = 1$. 

To prove it in general by induction on $n$, consider the diagram
\[
 \xymatrix @C+2pc { \sigma_{\!q^n}^*G[z^{nd}] \ar[rr]^{V_{z^d,G}^n} \ar@{->>}[d]_{z^d} & & G[F_q^n] \ar@{->>}[d]^{F_{q,G}}\\
\sigma_{\!q^n}^*G[z^{(n-1)d}] \ar@{->>}[r]_{\sigma_{\!q}^*V_{z^d,G}^{n-1}\qquad} & (\sigma_{\!q}^*G[z^{(n-1)d}])[F_q^{n-1}] \ar@{=}[r] & \sigma_{\!q}^*G[F_q^{n-1}]\,.
}
\]
In the bottom row, $\sigma_{\!q}^*V_{z^d,G}^{n-1}$ is an epimorphism by the induction hypothesis, and the equality comes from \ref{PropKerTruncZDAM_E}. The vertical map on the left is an epimorphism by Lemma~\ref{LemmaTruncatedZDAM}, and therefore $F_{q,G} \circ V_{z^d,G}^n$ is an epimorphism. Thus if we can show that $\ker(F_{q,G})=G[F_q]$ is contained in the image of $V_{z^d,G}^n$, it will follow that $V_{z^d,G}^n$ is an epimorphism. But by the case $n = 1$ settled above 
\[
 G[F_q] = V_{z^d,G}(\sigma_{\!q}^*G[z^d]) = V_{z^d,G} \circ z^{(n-1)d}(\sigma_{\!q}^*G[z^{nd}]) = V_{z^d,G}^n\circ F_{q,G}^{n-1}(\sigma_{\!q}^*G[z^{nd}]) \subset V_{z^d,G}^n(\sigma_{\!q^n}^*G[z^{nd}]).
\]
This proves that $V_{z^d,G}^n$ is an epimorphism. The statement for $F_{q,G}^n$ is proved in the same way. 

\medskip\noindent
\ref{PropKerTruncZDAM_C} The morphisms $F_{q,G}^i\colon G\to\sigma_{\!q^i}^*G$ and $V_{z^d,G}^i\colon\sigma_{\!q^i}^*G\to G$ between group schemes of finite presentation over $S$ are themselves of finite presentation by \cite[IV$_1$, Proposition~1.6.2(v)]{EGA}. Therefore $G[F_q^i]:=\ker F_{q,G}^i$ and $\ker V_{z^d,G}^i$ are of finite presentation over $S$ by \cite[IV$_1$, Proposition~1.6.2(iii)]{EGA}. As closed subschemes of $G$, respectively $\sigma_{\!q^i}^*G$, they are also finite over $S$. Since in \ref{PropKerTruncZDAM_B} we proved that $V_{z^d,G}^i\colon \sigma_{\!q^i}^*G[z^{id}]\onto (G[z^{id}])[F_q^i]=G[F_q^i]$ and $F_{q,G}^i\colon G[z^{id}]\onto \ker V_{z^d,G[z^{id}]}^i=\ker V_{z^d,G}^i$ are epimorphisms, they are faithfully flat by Remark~\ref{RemFactsOnG}(b). Therefore $G[F_q^i]$ and $\ker V_{z^d,G}^i$ are flat over $S$ by \cite[IV$_3$, Corollaire~11.3.11]{EGA}, and hence finite locally free. Over any affine open $U\subset S$ the $\BF_q$-equivariant morphisms $F_{q,G}^i$ and $V_{z^d,G}^i$ lift by Lemma~\ref{LemmaStrictF_qAction} to morphisms in $\DGr(\BF_q)_U$. Thus they are $\BF_q$-strict morphisms in the sense of Faltings \cite[Definition~1]{Faltings02}. By \cite[Proposition~2]{Faltings02} their kernels $G[F_q^i]\times_SU$ and $\ker (V_{z^d,G}^i)\times_SU$ are strict $\BF_q$-module schemes over $U$. So the $\BF_q$-strictness of $G[F_q^i]$ and $\ker V_{z^d,G}^i$ over all of $S$ follows from Lemma~\ref{LemmaStrictIsLocal}.

\medskip\noindent
\ref{PropKerTruncZDAM_F} For any group scheme $G=\Spec R[X_1,\ldots,X_r]/I$ of finite type over $\Spec R$, we compute $G[F_q]=\Spec R[X_1,\ldots,X_r]/(I,X_1^q,\ldots,X_r^q)$. By the conormal sequence \cite[Proposition~II.8.12]{Hartshorne} for the closed immersion $G[F_q]\subset G$ this implies $\omega_G=\omega_{G[F_q]}$. The inclusion $G[F_q] \subset G[z^d]$ from \ref{PropKerTruncZDAM_E} therefore implies $G[F_q]= (G[z^{id}])[F_q]$, and hence $\omega_{G} = \omega_{G[z^{id}]} = \omega_{G[F_q]}$ for all $i$. Moreover, since $G[F_q]$ is a finite locally free strict $\BF_q$-module scheme over $S$ by \ref{PropKerTruncZDAM_C}, we can compute $\omega_{G[F_q]}$ as $\coker F_{M_q(G[F_q])}$ where $(M_q(G[F_q]),F_{M_q(G[F_q])})$ is the associated finite $\BF_q$-shtuka from Theorem~\ref{ThmEqAModSch}. In particular, $F_{M_q(G[F_q])}=\ulM_q(F_{q,G[F_q]})=0$ and this implies that $\coker F_{M_q(G[F_q])}=M_q(G[F_q])$ is a finite locally free $\CO_S$-module. 
\end{proof}

\bigskip

In the remainder of this section we will show that to lift a $z$-divisible local Anderson-module is equivalent to lifting its ``Hodge filtration''. Let $S\in\Nilp_{\BF_q\dbl\zeta\dbr}$ and let $G$ be a $z$-divisible local Anderson-module over $S$ satisfying $(z-\zeta)^d\cdot\omega_G=0$. Let $(M,F_M)$ be its effective local shtuka. Then $(z-\zeta)^d\cdot\coker F_M=0$ and we consider the map $V_M$ from Lemma~\ref{LemmaVerschLocSht}. The injective morphism $F_M$ induces by diagram~\eqref{EqDiagV} an exact sequence of $\CO_S\dbl z\dbr$-modules
\[
0\;\longto\; \coker V_M\xrightarrow{F_M} M/(z-\zeta)^d M \;\longto\; \coker F_M\;\longto\; 0\,.
\]
In particular $\coker V_M$ is a locally free $\CO_S$-module of finite rank. Conversely, $V_M$ induces the exact sequence of $\CO_S\dbl z\dbr$-modules
\begin{equation}\label{EqHodgeFiltr}
0\;\longto\; \coker F_M\xrightarrow{V_M} \sigma_{\!q}^\ast M/(z-\zeta)^d \sigma_{\!q}^\ast M \;\longto\; \coker V_M\;\longto\; 0\,.
\end{equation}

\begin{Definition}[{compare \cite[\S\,5.7]{HartlJuschka}}]
We call $H(G):=\Koh^1_{\rm dR}\bigl(G,\CO_S[z]/(z-\zeta)^d\bigr):=\sigma_{\!q}^\ast M/(z-\zeta)^d \sigma_{\!q}^\ast M$ the \emph{de Rham cohomology of $G$ with coefficients in $\CO_S[z]/(z-\zeta)^d$}. It is a locally free $\CO_S[z]/(z-\zeta)^d$-module of rank equal to $\rk\ulM=\height G$.
The $\CO_S[z]$-submodule $V_M(\coker F_M)\subset H(G)$ is called the \emph{Hodge filtration} of the $z$-divisible local Anderson-module $G$.
\end{Definition}

Now let $i\colon S'\into S$ be a closed subscheme defined by an ideal $I$ with $I^q=0$. Then the morphisms $\Frob_{q,S}$ and $\Frob_{q,S'}$ factor through $i$
\[
\Frob_{q,S}\es = \es i\circ j\colon  \es S\to S'\to S\qquad\text{and}\qquad\Frob_{q,S'}\es = \es j\circ i\colon  \es S'\to S\to S'\,.
\]
where $j\colon S\to S'$ is the identity on the underlying topological space $|S'|=|S|$ and on the structure sheaf this factorization is given by
\begin{eqnarray*}
\TS\CO_S \es \xrightarrow{\es i^\ast\;} & \CO_{S'} & \TS\xrightarrow{\es j^\ast\;} \es \CO_S\\
\TS b\quad \mapsto\;\: & b \mod I& \TS\;\:\mapsto\quad b^q\,.
\end{eqnarray*}
Let $G'$ be a divisible local Anderson-module over $S'$ with $(z-\zeta)^d\cdot\omega_{G'}=0$, and denote by $(M',F_{M'})$ its local shtuka. We set $H(G')_S :=j^\ast M'/(z-\zeta)^d \,j^\ast M'$. This is a locally free module over $\CO_S[z]/(z-\zeta)^d$ and satisfies $i^\ast H(G')_S=H(G')$.

\begin{Theorem}\label{ThmDefo}
The functor $G\,\mapsto \,\bigr(\,i^*G,\, V_M(\coker F_M)\subset H(G)\bigl)$ defines an equivalence between 
\begin{enumerate}
\itemsep0ex
\topsep0ex
\item 
the category of $z$-divisible local Anderson-modules $G$ over $S$ with $(z-\zeta)^d\cdot\omega_G=0$, and
\item 
the category of pairs $\bigr(\,G',\, Fil\subset H(G')_S\,\bigl)$ where $G'$ is a $z$-divisible local Anderson-module over $S'$ and $Fil\subset H(G')_S$ is an $\CO_S\dbl z\dbr$-submodule whose quotient is a flat $\CO_S$-module, and which specializes to the $\CO_S'\dbl z\dbr$-submodule $V_{M'}(\coker F_{M'})\subset H(G')$ under $i$.
\end{enumerate}
\end{Theorem}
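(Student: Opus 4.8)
The plan is to pass from $z$-divisible local Anderson modules to effective local shtukas via Theorem~\ref{ThmEqZDivGps} and then exhibit an explicit quasi-inverse to the resulting functor. The structural point that makes everything work is that $\Frob_{q,S}=i\circ j$ factors through $S'$: consequently, for any sheaf $M$ of $\CO_S\dbl z\dbr$-modules one has $\sigma_{\!q}^\ast M=j^\ast(i^\ast M)$, so $\sigma_{\!q}^\ast M$, and hence $H(G)=\sigma_{\!q}^\ast M/(z-\zeta)^d\sigma_{\!q}^\ast M$, depends only on the reduction $M':=i^\ast M$ of the local shtuka $\ulM_q(G)=(M,F_M)$. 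This yields the canonical identification $H(G)=H(i^\ast G)_S$ which makes the functor $G\mapsto\bigl(i^\ast G,\,V_M(\coker F_M)\subset H(G)\bigr)$ well-defined with values in category~(b). Since $\Dr_q$ is compatible with base change (clear from its construction) and $\Dr_q,\ulM_q$ are mutually inverse, so is $\ulM_q$; translating through these anti-equivalences, it suffices to show that
\[
(M,F_M)\;\longmapsto\;\Bigl((M',F_{M'}),\ \operatorname{Fil}_M:=\im\bigl(V_M\colon M\to j^\ast M'\bigr)\ \subset\ H:=j^\ast M'/(z-\zeta)^dj^\ast M'\Bigr)
\]
is an equivalence from effective local shtukas $(M,F_M)$ over $S$ with $(z-\zeta)^d\coker F_M=0$ to pairs $\bigl((M',F_{M'}),\operatorname{Fil}\bigr)$, where $(M',F_{M'})$ is such a shtuka over $S'$ and $\operatorname{Fil}\subset H$ is an $\CO_S\dbl z\dbr$-submodule with $H/\operatorname{Fil}$ flat over $\CO_S$ reducing modulo $I$ to $V_{M'}(\coker F_{M'})$. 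Here $V_M$ is the Verschiebung from Lemma~\ref{LemmaVerschLocSht}.

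From the identities $F_MV_M=(z-\zeta)^d\id_M$ and $V_MF_M=(z-\zeta)^d\id_{j^\ast M'}$ one reads off that $V_M$ is injective (because $z-\zeta$ is a nonzerodivisor on $M$, by Lemma~\ref{LemOSZ}), that $F_M(j^\ast M')=(z-\zeta)^dj^\ast M'$, and that $V_M(M)$ equals the full preimage $P(\operatorname{Fil}_M)$ of $\operatorname{Fil}_M$ under $j^\ast M'\twoheadrightarrow H$; hence $V_M$ identifies $(M,F_M)$ with $\bigl(P(\operatorname{Fil}_M),\ \text{multiplication by }(z-\zeta)^d\colon j^\ast M'\to(z-\zeta)^dj^\ast M'\subset P(\operatorname{Fil}_M)\bigr)$. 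This dictates the quasi-inverse: given $\bigl((M',F_{M'}),\operatorname{Fil}\bigr)$, set $M:=P(\operatorname{Fil})$ and let $F_M\colon j^\ast M'\to M$ be multiplication by $(z-\zeta)^d$; granting the two facts below, $(M,F_M)$ is an effective local shtuka over $S$ with $\sigma_{\!q}^\ast M=j^\ast M'$, and we send $\bigl((M',F_{M'}),\operatorname{Fil}\bigr)$ to $\Dr_q(M,F_M)$. With $V_M$ taken to be the inclusion $P(\operatorname{Fil})\hookrightarrow j^\ast M'$, the relations $F_MV_M=(z-\zeta)^d$, $V_MF_M=(z-\zeta)^d$ are immediate, $\coker F_M=\operatorname{Fil}$ is killed by $(z-\zeta)^d$, so $F_M[\tfrac1{z-\zeta}]$ is an isomorphism; and $\Dr_q(M,F_M)$ is automatically a $z$-divisible local Anderson module with $(z-\zeta)^d\omega_G=0$ by Theorem~\ref{ThmEqZDivGps}\ref{ThmEqZDivGpsItem5}. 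The two composites are naturally isomorphic to the identity: in one direction via the isomorphism $V_M\colon M\xrightarrow{\ \sim\ }P(\operatorname{Fil}_M)$ above, in the other because $P(\operatorname{Fil})/(z-\zeta)^dj^\ast M'=\operatorname{Fil}$ tautologically; and morphisms are matched because a morphism of local shtukas commutes with $F$, hence with $V$, hence carries $P(\operatorname{Fil})\subset j^\ast(-)$ into $P(\widetilde{\operatorname{Fil}})$ and induces the expected maps on the $H$'s.

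What genuinely requires work are two assertions, the second being the heart of the matter since the module of the lifted shtuka is not part of the data in category~(b). First, $P(\operatorname{Fil})$ must be shown to be a locally free sheaf of $\CO_S\dbl z\dbr$-modules of the same rank $r$ as $M'$: working locally on $S=\Spec R$, the sequence $0\to P(\operatorname{Fil})\to j^\ast M'\to H/\operatorname{Fil}\to0$ has $j^\ast M'\cong R\dbl z\dbr^{\oplus r}$ and $H/\operatorname{Fil}$ flat over $R$ and killed by $(z-\zeta)^d$, so $P(\operatorname{Fil})$ is flat over $R$, finitely presented over $R\dbl z\dbr$, $z$-adically complete, and—since $\zeta$ dies in any residue field $k$—$P(\operatorname{Fil})\otimes_Rk$ is a finitely generated torsion-free, hence free, $k\dbl z\dbr$-module, of rank $r$ since it contains $z^dk\dbl z\dbr^{\oplus r}$; lifting a basis and combining Nakayama with $\Tor_1^R(P(\operatorname{Fil}),k)=0$ gives $P(\operatorname{Fil})\cong R\dbl z\dbr^{\oplus r}$, by an argument of the kind used in the proof of Lemma~\ref{ExistenceOfe}. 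Second, one must identify the reduction $i^\ast P(\operatorname{Fil})\cong M'$ compatibly with Frobenius: applying $-\otimes_{\CO_S}\CO_{S'}$ to the displayed sequence, flatness of $H/\operatorname{Fil}$ over $\CO_S$ kills the $\Tor_1$ and yields $0\to i^\ast P(\operatorname{Fil})\to\sigma_{\!q,S'}^\ast M'\to\coker V_{M'}\to0$, where $i^\ast j^\ast M'=\sigma_{\!q,S'}^\ast M'$ and the right-hand term is identified using the specialization hypothesis together with \eqref{EqHodgeFiltr} over $S'$; comparing with \eqref{EqHodgeFiltr} once more shows $i^\ast P(\operatorname{Fil})=V_{M'}(M')$, and $V_{M'}$ is the desired isomorphism $M'\xrightarrow{\ \sim\ }i^\ast P(\operatorname{Fil})$, under which $i^\ast F_M$ (the reduction of multiplication by $(z-\zeta)^d$) becomes $F_{M'}$ by $V_{M'}F_{M'}=(z-\zeta)^d$. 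Since all constructions and checks are local on $S$ and compatible with restriction, it suffices to argue over affine $S$ and then glue.
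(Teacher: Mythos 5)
Your construction coincides with the paper's own proof: the quasi-inverse sends $(G',Fil)$ to the local shtuka whose module is the preimage of $Fil$ in $j^\ast M'$ (the kernel of $j^\ast M'\onto H(G')_S/Fil$), with $F_M$ given by multiplication by $(z-\zeta)^d$ and $V_M$ the inclusion, and it uses the flatness of $H(G')_S/Fil$ together with the specialization hypothesis to identify $i^\ast M\cong M'$, hence $\sigma_{\!q}^\ast M\cong j^\ast M'$, before transporting everything through the anti-equivalence of Theorem~\ref{ThmEqZDivGps}. The only difference is that you additionally spell out why the preimage of $Fil$ is a locally free $\CO_S\dbl z\dbr$-module of rank $r$ and why the two composites are naturally isomorphic to the identity, points the paper leaves implicit.
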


\begin{proof}
We describe the quasi-inverse functor. Let $\bigr(\,G',\, Fil\subset H(G')_S\,\bigl)$ be given and let $(M',F_{M'})$ be the local shtuka of $G'$. We define $V_M\colon M\into j^\ast M'$ as the kernel of the morphism $j^\ast M'\onto H(G')_S/Fil$. Since $Fil\subset H(G')_S$ specializes to $V_M(\coker F_{M'})\subset H(G')$ we obtain $i^*(H(G')_S/Fil)=H(G')/V_{M'}(\coker F_{M'})=\coker V_{M'}$. This implies $i^*M\cong M'$ and $\sigma_{\!q}^\ast M=j^\ast i^\ast M\cong j^\ast M'$. Moreover $\coker V_M$ is annihilated by $(z-\zeta)^d$. Thus there is an injective morphism of $\CO_S\dbl z\dbr$-modules $F_M\colon \sigma_{\!q}^\ast M\to M$ with $F_MV_M=(z-\zeta)^d\id_M$ and $V_MF_M=(z-\zeta)^d\id_{\sigma_{\!q}^\ast M}$. From sequence~\eqref{EqHodgeFiltr} we see that the cokernel of $F_M$ is a locally free $\CO_S$-module. Clearly the $z$-divisible local Anderson-module $G$ over $S$ associated with the local shtuka $(M,F_M)$ specializes to $G'$ and has $Fil\subset H(G')_S=H(G)$ as its Hodge filtration.
\end{proof}

\begin{Remark}
We only treated the case where $S'\subset S$ is defined by an ideal $I$ with $I^q=0$. The general case for $d=1$ is treated by Genestier and Lafforgue~\cite[Proposition~6.3]{GL} using $\zeta$-divided powers in the style of Grothendieck and Berthelot.
\end{Remark}

\section{Divisible local Anderson modules and formal Lie groups} \label{SectFormalLieGps}

In this section we clarify the relation between $z$-divisible local Anderson modules and formal $\BF_q\dbl z\dbr$-modules; see Definition~\ref{DefFormalModule}. We follow the approach of Messing~\cite{Messing} who treated the analogous situation of $p$-divisible groups and formal Lie groups. 

\begin{Definition}
Let $G$ be an \fppf-sheaf of abelian groups over $S\in\Nilp_{\BF_q\dbl\zeta\dbr}$. We say that $G$ is 
\begin{itemize}
\item 
\emph{$F$-torsion} if $G = \dirlim[n] G[F_q^n]$, see Definition~\ref{DefKernelOfFrob},
\item 
\emph{$F$-divisible} if $F_{q,G}\colon G \to \sigma_{\!q}^*G$ is an epimorphism.
\end{itemize}
\end{Definition}

Recall that Messing \cite[Chapter~II, Theorem~2.1.7]{Messing} proved that a sheaf of groups $G$ on $S$ is a formal Lie group \cite[Chapter~II, Definitions~1.1.4 and 1.1.5]{Messing}, if and only if $G$ is $F$-torsion, $F$-divisible, and the $G[F_q^n]$ are finite locally free $S$-group schemes.

\begin{Theorem}
\label{THpzcase}
When $\zeta = 0$ in $\CO_S$ and $G$ is a $z$-divisible local Anderson module over $S$, then $\dirlim[n] G[F_q^n]$ is a formal $\BF_q\dbl z\dbr$-module. It is equal to $\olG := \es \dirlim[k] \Inf^k(G)$, where for any $S$-scheme $T$, Messing \cite[Chapter~II, (1.1)]{Messing} defines
\begin{eqnarray}\label{EqInf}
(\Inf^kG)(T) & := & \Bigl\{\,x\in G(T)\colon \text{there is an \fppf-covering $\{\Spec R_i\to T\}_i$}\\[0mm]
& & \quad\text{and for every $i$ an ideal $I_i\subset R_i$ with $I_i^{k+1}=(0)$}\nonumber\\[0mm]
& & \quad\text{such that the pull-back $x\in G(\Spec R_i/I_i)$ is zero}\,\Bigr\}\,.\nonumber
\end{eqnarray}
\end{Theorem}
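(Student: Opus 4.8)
The plan is to combine three ingredients: Messing's characterization of formal Lie groups by the properties $F$-torsion, $F$-divisible, and $G[F_q^n]$ finite locally free; the $z^d$-Verschiebung machinery of Section~\ref{SectVersch}; and the structural facts about $z$-divisible local Anderson modules established in Proposition~\ref{PropKerTruncZDAM} and Corollary~\ref{CorzdVersch}. Throughout we work locally on $S$, so by axiom~\ref{DefZDivGpAxiom4} of Definition~\ref{DefZDivGp} we may fix an integer $d$ with $(z-\zeta)^d = 0$ on $\omega_G$, and since $\zeta=0$ this reads $z^d=0$ on $\omega_G$. Write $G' := \dirlim[n] G[F_q^n]$.

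First I would verify that $G'$ is a formal $\BF_q\dbl z\dbr$-module in the sense of Definition~\ref{DefFormalModule}, i.e.\ a formal Lie group with an action of $\BF_q\dbl z\dbr$. The $\BF_q\dbl z\dbr$-action is inherited from $G$: each $G[F_q^n]$ is stable under the action of $\BF_q\dbl z\dbr$ on $G$ because $F_{q^n,G}$ is $\BF_q\dbl z\dbr$-equivariant (it is $\BF_q$-linear by construction, and it commutes with $z$ since $z\colon G\to G$ is a morphism of $\BF_q$-sheaves and Frobenius commutes with all $\BF_q$-morphisms on an $\BF_q$-scheme — concretely $F_{q,G}\circ[z] = \sigma_{\!q}^*[z]\circ F_{q,G}$). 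So it remains to apply Messing's criterion \cite[Chapter~II, Theorem~2.1.7]{Messing}. That each $G[F_q^n]$ is a finite locally free $S$-group scheme is exactly Proposition~\ref{PropKerTruncZDAM}\ref{PropKerTruncZDAM_C} applied to the truncated $z$-divisible local Anderson module $G[z^{nd}]$ (which has level $nd$ by Proposition~\ref{PropTruncZDAM}); note $G[F_q^n]\subset G[z^{nd}]$ by Corollary~\ref{CorzdVersch}, so this kernel is unchanged by passing to the truncation. That $G'$ is $F$-torsion is the definition of $G'$. For $F$-divisibility, I would use Proposition~\ref{PropKerTruncZDAM}\ref{PropKerTruncZDAM_B}: the morphism $V_{z^d,G}^n\colon \sigma_{\!q^n}^*G[z^{nd}]\to G[F_q^n]$ is an epimorphism, and composing $F_{q,G}$ with it and invoking part~\ref{PropKerTruncZDAM_A} shows $F_{q,G}\colon G[F_q^{n+1}]\onto \sigma_{\!q}^*G[F_q^n]$ is an epimorphism for all $n$; passing to the limit gives that $F_{q,G'}\colon G'\to\sigma_{\!q}^*G'$ is an epimorphism. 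Thus $G'$ is a formal Lie group, hence a formal $\BF_q\dbl z\dbr$-module.

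The remaining and more delicate point is the identification $G' = \olG := \dirlim[k]\Inf^k(G)$. One inclusion should be easy: if $x\in G[F_q^n](T)$ then $x$ is killed by $F_{q^n}$, and a standard computation (as in the proof of Proposition~\ref{PropKerTruncZDAM}\ref{PropKerTruncZDAM_F}, where $G[F_q]=\Spec R[\ulX]/(I,X_1^q,\ldots,X_r^q)$) shows that such $x$ factors through a nilpotent thickening of the unit section of order at most a power of $q$ depending on $n$; hence $G[F_q^n]\subset \Inf^k(G)$ for suitable $k$, giving $G'\subset\olG$. For the reverse inclusion one must show any $T$-point $x$ of $G$ that becomes zero after reduction modulo an ideal $I$ with $I^{k+1}=0$ is killed by some power of $F_q$. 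Here I would argue fpqc-locally, reduce to $T=\Spec R$ affine with $x$ in the unit thickening, and use that $G[F_q^n]$ exhausts the formal/infinitesimal neighbourhood of the identity: concretely, the unit section of $G$ has an affine neighbourhood $\Spec R[\ulX]/\mathfrak{a}$, a point supported there and nilpotent of order $k$ satisfies $X_\nu(x)^{k+1}=0$, and for $n$ large enough $q^n> k$ forces $X_\nu(x)^{q^n}=0$, i.e.\ $F_{q^n,G}(x)=0$; passing to the limit gives $\olG\subset G'$. I expect the main obstacle to be making this last step precise over a general (non-noetherian, non-affine) base and with the fpqc-coverings in the definition of $\Inf^k$ — in particular checking that the local bound on the order of nilpotence can be taken uniformly on a quasi-compact piece of $S$, which should follow because $G[z]$ is of finite presentation, hence cut out locally by finitely many equations, and one can then invoke Remark~\ref{RemQCDirLim} to see that any $T$-point of $G$ lands in some $G[z^m]$ and therefore in a fixed affine piece.
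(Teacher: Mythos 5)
Your plan is correct and follows essentially the same route as the paper: Messing's criterion together with Corollary~\ref{CorzdVersch}, Proposition~\ref{PropTruncZDAM} and Proposition~\ref{PropKerTruncZDAM} for the formal Lie group property, and the two inclusions $G[F_q^n]\subset\Inf^k(G)$ (via nilpotence of the augmentation ideal of $\CO_{G[F_q^n]}$) and $\Inf^k(G)\subset G[F_q^n]$ for $q^n>k$ (your coordinate computation $x^*(X_\nu)^{q^n}=0$ is just the explicit form of the paper's observation that $F_{q^n,G}(x)=x\circ\Frob_{q^n,T}$ factors through $\Spec R_i/I_i$ when $I_i^{q^n}=0$). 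The only cosmetic difference is that the paper pins down the exact exponent $G[F_q^n]=\Inf^{q^n-1}G$, whereas you settle for "some $k$ depending on $n$", which suffices for the equality of the direct limits.
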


\begin{proof} 
By \cite[Chapter~II, Theorem~2.1.7]{Messing} it suffices to show that $\dirlim G[F_q^n]$ is $F$-torsion, $F$-divisible and that the $G[F_q^n]$ are finite locally free. By construction $\dirlim G[F_q^n]$ is $F$-torsion. By Definition~\ref{DefZDivGp}\ref{DefZDivGpAxiom4} there is locally on $S$ an integer $d$ with $(z-\zeta)^d\cdot\omega_G=(0)$, and then $G[F_q^n]\subset G[z^{nd}]$ by Corollary~\ref{CorzdVersch} and $G[z^{nd}]$ is a truncated $z$-divisible local Anderson module with order of nilpotence $d$ and level $nd$ by Proposition~\ref{PropTruncZDAM}. Therefore Proposition~\ref{PropKerTruncZDAM} shows that $G[F_q^n]$ is finite locally free, and that $F_{q,G}\colon G[F_q^n]\to \sigma_{\!q}^*G[F_q^{n-1}]$ is an epimorphism. Consequently, $F_{q,G} \colon \dirlim G[F_q^n] \to \sigma_{\!q}^*(\dirlim G[F_q^n]) = \dirlim \sigma_{\!q}^*G[F_q^{n-1}]$ is an epimorphism and so $\dirlim G[F_q^n]$ is $F$-divisible, and hence a formal Lie group. The action of $\BF_q\dbl z\dbr$ makes it into a formal $\BF_q\dbl z\dbr$-module.

To prove the last statement of the theorem, observe that for any $S$-scheme $T$ the homomorphism $F_{q^n,G} \colon G(T) \to (\sigma_{\!q^n}^*G)(T)$ is simply the map sending $x$ to $x \circ \Frob_{q^n,T}$ as can be seen from the following diagram
\[
\xymatrix @C+3pc {
T \ar[rr]^{\Frob_{q^n,T}} \ar[d]_{x} \ar[dr]^{F_{q^n,G}(x)} & & T \ar[d]^{x}\\
G \ar[r]^{F_{q^n,G}\qquad} \ar@/_1.3pc/[rr]_{\qquad\qquad\quad\Frob_{q^n,G}}\ar[d] & \sigma_{\!q^n}^*G \ar[r] \ar[d] & G \ar[d] \\
S \ar@{=}[r] & S \ar[r]_{\Frob_{q^n,S}} & S\,.\hspace{-1ex}
}
\]
Therefore, the monomorphism $G[F_q^n]\into G$ defines an inclusion $G[F_q^n]\subset\Inf^{q^n-1}G$, the ideals $I_i$ in \eqref{EqInf} being the augmentation ideal in $\CO_{G[F_q^n]}$ defining the zero section. We claim that this inclusion is an equality. So let $x \in (\Inf^{q^n - 1}G)(T)$ and let $R_i$ and $I_i$ be as in \eqref{EqInf}. Then $I_i^{q^n}=(0)$ implies that $\Frob_{q^n,R_i}$ factors through $R_i\longonto R_i/I_i \xrightarrow{\es j\;} R_i$. So $F_{q^n,G}(x)|_{\Spec R_i}=x\circ\Frob_{q^n,R_i}=j^*(x|_{\Spec R_i/I_i})=0$, that is $x\in G[F_q^n]$. Thus we have $G[F_q^n]=\Inf^{q^n-1}G$ and $\dirlim G[F_q^n]= \dirlim\Inf^k(G) \subset G$ which completes the proof. 
\end{proof}

Our next aim is to extend the theorem to all $S\in\Nilp_{\BF_q\dbl\zeta\dbr}$. For that purpose we start with the following

\begin{Lemma}
\label{CorExtmapzero}
Let $S$ be a scheme with $\zeta^{N+1} = 0$ in $\CO_S$, and let $G = G[z^{nd}]$ be a truncated $z$-divisible local Anderson module over $S$ with order of nilpotence $d$ and level $nd$ with $n \geq N+1$. Then for any affine open subset $U$ of $S$ and any quasi-coherent sheaf $\CF$ of $\CO_U$-modules the natural homomorphism for the co-Lie complexes $\Ext^1_{\CO_U}(\CoL{G[z^{(n-N-1)d}]/U}, \CF) \longrightarrow \Ext^1_{\CO_U}(\CoL{G[z^{nd}]/U}, \CF)$ is zero.
\end{Lemma}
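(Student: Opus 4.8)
The plan is to exploit the $z^d$-Verschiebung from Proposition~\ref{PropKerTruncZDAM}, together with the description of the co-Lie complex via the associated finite $\BF_q$-shtuka from Theorem~\ref{ThmEqAModSch}\ref{ThmEqAModSchItem6}, to show that the morphism $\CoL{G[z^{(n-N-1)d}]/U} \to \CoL{G[z^{nd}]/U}$ factors through a complex on which multiplication by $(z-\zeta)^d$ — hence by $z^d$ up to the nilpotent error controlled by $\zeta^{N+1}=0$ — acts both as the given map and as zero. More precisely, first I would work over an affine open $U=\Spec R$ and write $\ulM=\ulM_q(G)=(M,F_M)$ for the effective local shtuka (more precisely finite $\BF_q$-shtuka, since $G=G[z^{nd}]$) attached to $G$, so that $\CoL{G[z^{jd}]/U}$ is canonically identified with the two-term complex $[\sigma_{\!q}^*M_q(G[z^{jd}]) \xrightarrow{F} M_q(G[z^{jd}])]$, i.e.\ the reduction mod $z^{jd}$ of $[\sigma_{\!q}^*M \xrightarrow{F_M} M]$. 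The inclusion $G[z^{(n-N-1)d}]\hookrightarrow G[z^{nd}]$ is $i_{(n-N-1)d,\,(N+1)d}$, and by Notation~\ref{NotShortExSeqX_n} and Proposition~\ref{PropTate} it fits into $j_{(N+1)d,(n-N-1)d}\circ i_{(n-N-1)d,(N+1)d}=z^{(N+1)d}$ on $G[z^{nd}]$; applying $\ulM_q$, the induced map on co-Lie complexes composes with $M_q(j)$ to give multiplication by $z^{(N+1)d}$ on $\CoL{G[z^{nd}]/U}$.

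The key point is then that $z^{(N+1)d}$, and in fact already $(z-\zeta)^{(N+1)d}$, annihilates the $\Ext^1$-functor $\Ext^1_{\CO_U}(\CoL{G[z^{nd}]/U},-)$. Indeed, by Proposition~\ref{PropKerTruncZDAM} (applied with level $nd$, which is legitimate since $n\ge N+1>1$) there is a $V_{M_q(G)}\colon M_q(G)\to\sigma_{\!q}^*M_q(G)$ with $F\circ V=(z-\zeta)^d\cdot\id$ and $V\circ F=(z-\zeta)^d\cdot\id$; reducing appropriately, $(z-\zeta)^d$ is homotopic to zero on $\CoL{G[z^{jd}]/U}$ for each relevant $j$, with the homotopy given by $V$. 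Consequently $(z-\zeta)^d$ acts as zero on every $\Ext^i_{\CO_U}(\CoL{G[z^{jd}]/U},\CF)$ and on the hyper-Ext groups; and since $\zeta^{N+1}=0$ in $R$, we have $z^{(N+1)d}=(z^d)^{N+1}\equiv(z-\zeta)^d\cdot(\text{unit-like factor})\pmod{\text{the nilpotent }\zeta}$ — more simply, $z^{(N+1)d}-(z-\zeta)^{(N+1)d}\in(\zeta)$ so it is a combination of $\zeta$-multiples, and after expanding $(z-\zeta)^{(N+1)d}$ it is divisible by $(z-\zeta)^d$ while the remaining terms are each divisible by $\zeta^{\ge 1}$ times lower powers — so in any case $z^{(N+1)d}$ lies in the ideal generated by $(z-\zeta)^d$ together with nilpotents, hence acts as zero on these $\Ext$-groups. (Cleanest: expand $z^{(N+1)d}=((z-\zeta)+\zeta)^{(N+1)d}$; every term is either divisible by $(z-\zeta)^d$ or contains a factor $\zeta^{(N+1)d-(d-1)}=\zeta^{Nd+1}$, which is $0$ since $\zeta^{N+1}=0$ and $Nd+1\ge N+1$.) Therefore multiplication by $z^{(N+1)d}$ on $\Ext^1_{\CO_U}(\CoL{G[z^{nd}]/U},\CF)$ is the zero map.

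Finally I would assemble these facts: the natural map $\Ext^1_{\CO_U}(\CoL{G[z^{(n-N-1)d}]/U},\CF)\to \Ext^1_{\CO_U}(\CoL{G[z^{nd}]/U},\CF)$ is, by functoriality of $\Ext^1$ applied to the factorization of $i_{(n-N-1)d,(N+1)d}$ through $j_{(N+1)d,(n-N-1)d}$, equal to the composite of the map induced by $M_q(j_{(N+1)d,(n-N-1)d})$ with multiplication by $z^{(N+1)d}$ on the target; since the latter is zero by the previous paragraph, the whole composite vanishes. The step I expect to require the most care is making the factorization $i\circ(\text{something})=z^{(N+1)d}$ interact correctly with $\Ext^1$ in the right variance — one must be careful that $\Ext^1(-,\CF)$ is contravariant, so the arrow $G[z^{(n-N-1)d}]\hookrightarrow G[z^{nd}]$ induces $\CoL{G[z^{nd}]/U}\to\CoL{G[z^{(n-N-1)d}]/U}$ on co-Lie complexes (via $\ulM_q$ being contravariant, this reverses again), and then $\Ext^1$ of that gives the stated natural map; keeping track of these three variance reversals, and checking that the composite with the $z^{(N+1)d}$-factor is the honest natural transformation appearing in the statement, is the genuine bookkeeping obstacle, but it is not deep.
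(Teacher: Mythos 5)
Your two preliminary observations are sound: via Theorem~\ref{ThmEqAModSch}\ref{ThmEqAModSchItem6} the co-Lie complexes are the reductions of $[\sigma_{\!q}^*M\xrightarrow{F_M}M]$, the homotopy $h$ (equivalently $V$) with $dh=hd=(z-\zeta)^d$ exists by Definition~\ref{DefTruncatedZDAM}/Proposition~\ref{PropKerTruncZDAM}, and since $\zeta^{N+1}=0$ the expansion $z^{(N+1)d}=((z-\zeta)+\zeta)^{(N+1)d}$ shows that multiplication by $z^{(N+1)d}$ is null-homotopic, hence acts as zero on every $\Ext^i_{\CO_U}(\CoL{G[z^{jd}]/U},\CF)$. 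The gap is in the final deduction. Put $a=(n-N-1)d$, $b=(N+1)d$, let $r\colon\CoL{G[z^{nd}]/U}\to\CoL{G[z^{a}]/U}$ be the map of complexes induced by the inclusion $i\colon G[z^{a}]\hookrightarrow G[z^{nd}]$ (i.e.\ reduction modulo $z^{a}$), and let $s$ be the map induced by $j$ with $i\circ j=z^{b}\cdot\id_{G[z^{nd}]}$ and $j\circ i=z^{b}\cdot\id_{G[z^{a}]}$. What these factorizations give, after applying $\Ext^1(-,\CF)$, is exactly that the natural map $E(r)$ of the lemma satisfies $E(r)\circ E(s)=z^{b}$ on $\Ext^1(\CoL{G[z^{nd}]/U},\CF)$ and $E(s)\circ E(r)=z^{b}$ on $\Ext^1(\CoL{G[z^{a}]/U},\CF)$; both composites vanish, but from $AB=0$ and $BA=0$ one cannot conclude $A=0$. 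Your key claim, that the natural map \emph{itself} equals ``(map induced by $M_q(j)$) followed by multiplication by $z^{(N+1)d}$ on the target'', is not what the variance bookkeeping yields: it would require factoring $r$ (at least up to homotopy) through multiplication by $z^{b}$, and $r$ is degreewise surjective, so no such factorization exists on the nose; producing one up to homotopy is essentially equivalent to the assertion being proved. So the proposal does not prove the lemma.

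For comparison, the paper proceeds by induction on $N$. For $N=0$ (so $\zeta=0$ on $S$) it applies Messing's vanishing criterion \cite[Chapter~II, Corollary~3.3.9]{Messing} to the exact sequence $0\to G[z^{(n-1)d}]\to G[z^{nd}]\xrightarrow{z^{(n-1)d}}G[z^{d}]\to0$; the hypotheses needed there — that $\omega_{G[z^{nd}]}\to\omega_{G[z^{(n-1)d}]}$ is an isomorphism, that these $\omega$'s are finite locally free, and the rank estimate — are supplied by Proposition~\ref{PropKerTruncZDAM}\ref{PropKerTruncZDAM_F} (which does use the Frobenius/Verschiebung structure you invoke, but through $\omega_G=\omega_{G[F_q]}$ rather than through an $\Ext$-factorization). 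For general $N$ it uses the d\'evissage $0\to\zeta\CF\to\CF\to\CF/\zeta\CF\to0$: the component landing in $\Ext^1(\,\cdot\,,\CF/\zeta\CF)$ is computed after base change to $\Var(\zeta)$ and vanishes by the case $N=0$, and the component coming from $\Ext^1(\,\cdot\,,\zeta\CF)$ is killed by the induction hypothesis after base change to $\Var(\zeta^{N})$. If you want to rescue your approach, the missing ingredient is precisely a proof that $r$ is homotopic to a map divisible by $z^{(N+1)d}$, which your argument does not provide.
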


\begin{proof} 
We proceed by induction on $N$ and begin with $N = 0$. If $n = 1$, then $G[z^{(n-N-1)d}] = (0)$ and there is nothing to prove. If $n \geq 2$ we use \cite[Chapter~II, Corollary~3.3.9]{Messing} for the sequence 
\[
 0 \longrightarrow G[z^{(n-1)d}] \longrightarrow G[z^{nd}]  \xrightarrow{\,z^{(n-1)d}} G[z^d] \longrightarrow 0\,.
\]
So we have to show that
\begin{enumerate}
\item \label{MessingCrit_A}
$\omega_{G[z^{nd}]}\onto\omega_{G[z^{(n-1)d}]}$ is an isomorphism,
\item \label{MessingCrit_B}
$\omega_{G[z^{nd}]}$ and $\omega_{G[z^d]}$ are locally free $\CO_S$-modules, and
\item \label{MessingCrit_C}
$\rk\omega_{s^*G[z^{(n-1)d}]}\le\rk\omega_{s^*G[z^d]}$ for all points $s\in S$.
\end{enumerate}
All three statements follow from Proposition~\ref{PropKerTruncZDAM}\ref{PropKerTruncZDAM_F}. This concludes the proof when $N=0$.

For general $N$ we take the exact sequence 
\[
 0 \to \zeta \CF \to \CF \to \CF/{\zeta \CF} \to 0.
\]
and consider the commutative diagram with exact rows
\[
\xymatrix @C-1pc @R-0.5pc {\Ext^1_{\CO_U}(\CoL{G[z^{(n-N-1)d}]/U}, \zeta \CF) \ar[r]\ar[d] &\Ext^1_{\CO_U}(\CoL{G[z^{(n-N-1)d}]/U}, \CF)\ar[r] \ar[d]& \Ext^1_{\CO_U}(\CoL{G[z^{(n-N-1)d}]/U}, \CF/\zeta \CF)\ar[d]   \\
\Ext^1_{\CO_U}(\CoL{G[z^{(n-N)d}]/U}, \zeta \CF)\ar[r]\ar[d] & \Ext^1_{\CO_U}( \CoL{G[z^{(n-N)d}]/U}, \CF) \ar[r]\ar[d] & \Ext^1_{\CO_U}(\CoL{G[z^{(n-N)d}]/U}, \CF/\zeta \CF)  \\
\Ext^1_{\CO_U}(\CoL{G[z^{nd}]/U}, \zeta \CF)\ar[r] & \Ext^1_{\CO_U}( \CoL{G[z^{nd}]/U}, \CF) \,.
} 
\]
Since $\zeta\cdot(\CF/\zeta\CF)=(0)$, the right vertical arrow can be computed by base change to the zero locus $\Var(\zeta)\subset S$ of $\zeta$. So it is the zero map by what we have proved above, and hence the image of $\Ext^1_{\CO_U}(\CoL{G[z^{(n-N-1)d}]/U},  \CF)$ in $\Ext^1_{\CO_U}( \CoL{G[z^{(n-N)d}]/U}, \CF)$ lies inside the image of $\Ext^1_{\CO_U}(\CoL{G[z^{(n-N)d}]/U}, \zeta \CF)$. Since $\zeta^N\cdot(\zeta\CF)=(0)$, the lower left vertical arrow can similarly be computed by base change to the zero locus $\Var(\zeta^N)\subset S$, and hence it is the zero map by our induction hypothesis. This proves the lemma.
\end{proof}

\begin{Theorem}\label{DLAMFsmooth}
If $S\in\Nilp_{\BF_q\dbl\zeta\dbr}$ and $G$ is a $z$-divisible local Anderson module over $S$, then $G$ is formally smooth.
\end{Theorem}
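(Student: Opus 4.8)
The plan is to verify the infinitesimal lifting criterion for $G$: for every affine $S$-scheme $\bar T$ and every square-zero ideal $\CF\subset\CO_{\bar T}$, with $T\subset\bar T$ the closed subscheme it defines, the restriction $G(\bar T)\to G(T)$ is surjective. Since a nilpotent ideal is a finite iterated square-zero extension, this suffices. Being affine, hence quasi-compact, $\bar T$ carries a nilpotent $\zeta$, say $\zeta^{N+1}=0$; moreover the base change $G_{\bar T}$ is again a $z$-divisible local Anderson module over $\bar T$, and because $(z-\zeta)^{d}$ annihilates $\omega_{G_{\bar T}}$ locally on $\bar T$ while ``being zero'' is a local condition on the quasi-compact scheme $\bar T$, there is a single $d\in\BN$ with $(z-\zeta)^d=0$ on $\omega_{G_{\bar T}}$.

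Fix $x_0\in G(T)$. Since $\bar T$ is quasi-compact and $G=\dirlim[n] G[z^n]$, Remark~\ref{RemQCDirLim} shows $x_0$ factors through $G[z^m](T)$ for some $m$, and enlarging $m$ we may assume $m=kd$. Put $\bar m:=(k+N+1)d$ and regard $x_0$ as a $T$-point of $G[z^{\bar m}]$ via the closed immersion $i_{kd,(N+1)d}\colon G[z^{kd}]\into G[z^{\bar m}]$. By Proposition~\ref{PropTruncZDAM}, applied to $G_{\bar T}$ over $\bar T$, both $G[z^{kd}]_{\bar T}$ and $G[z^{\bar m}]_{\bar T}$ are truncated $z$-divisible local Anderson modules with order of nilpotence $d$. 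It therefore suffices to lift $x_0$ to a point of $G[z^{\bar m}](\bar T)\subset G(\bar T)$.

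For this I would invoke the deformation theory of a point of a finite locally free group scheme $H$ over $S$. Such an $H$ is a relative complete intersection, and for $x_0\in H(T)$ translation by $x_0$ on $H\times_S T$ is an automorphism carrying the unit section to $x_0$, whence $x_0^\ast L_{H/S}$ is identified with $\CoL{H/S}\otimes_{\CO_S}\CO_T$. So by Illusie's deformation theory \cite{Illusie71} (see also Messing~\cite{Messing}) there is a functorial obstruction class in $\Ext^1_{\CO_T}\bigl(\CoL{H/S}\otimes_{\CO_S}\CO_T,\CF\bigr)$ which vanishes exactly when $x_0$ lifts to $H(\bar T)$, and when it vanishes the lifts form a torsor under $\Hom_{\CO_T}(\omega_H\otimes_{\CO_S}\CO_T,\CF)$. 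Applying this to $H=G[z^{kd}]$ and to $H=G[z^{\bar m}]$, and using naturality of obstruction classes along the group homomorphism $i_{kd,(N+1)d}$ — which is compatible with the translations, because it is a homomorphism — the obstruction to lifting $x_0$ as a point of $G[z^{\bar m}]$ is the image of the obstruction for $G[z^{kd}]$ under the map $\Ext^1_{\CO_T}\bigl(\CoL{G[z^{kd}]/S}\otimes\CO_T,\CF\bigr)\to\Ext^1_{\CO_T}\bigl(\CoL{G[z^{\bar m}]/S}\otimes\CO_T,\CF\bigr)$ induced by $\CoL{G[z^{\bar m}]}\to\CoL{G[z^{kd}]}$. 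After identifying these groups with the ones over $U=\bar T$ — legitimate since $\CF$ is an $\CO_T$-module and the co-Lie complexes are finite locally free — this is exactly the transition map of Lemma~\ref{CorExtmapzero} with $n=\bar m/d=k+N+1\ge N+1$, hence zero. Thus the obstruction vanishes, $x_0$ lifts to $G[z^{\bar m}](\bar T)$, and we obtain the desired element of $G(\bar T)$ restricting to $x_0$. This proves that $G$ is formally smooth.

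The step I expect to be the main obstacle is the bookkeeping of the third paragraph: identifying, for an \emph{arbitrary} point $x_0$ rather than the unit section, the obstruction group with an $\Ext^1$ of the co-Lie complex by means of the translation automorphism, and checking that this identification is natural with respect to the closed immersion $G[z^{kd}]\into G[z^{\bar m}]$ (in particular that the co-Lie functoriality map used in Lemma~\ref{CorExtmapzero} is the one induced by that immersion), so that the lemma really governs the relevant transition map. The remaining reductions — to square-zero thickenings, to a fixed truncated object $G[z^{\bar m}]$, and the passage to a single global $d$ and $N$ over the quasi-compact $\bar T$ — are routine.
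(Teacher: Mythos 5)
Your argument is correct and is essentially the paper's proof: reduce to a finite level $G[z^{kd}]$ via quasi-compactness, bound $\zeta^{N+1}=0$ and $(z-\zeta)^d\omega_G=0$ after localizing/base-changing, pass to level $(k+N+1)d$, and kill the obstruction using the vanishing of the transition map on $\Ext^1$ of co-Lie complexes from Lemma~\ref{CorExtmapzero}. The only (cosmetic) difference is that where you unpack the obstruction theory directly from Illusie via the translation trick and its naturality along the inclusion $G[z^{kd}]\into G[z^{(k+N+1)d}]$, the paper simply cites \cite[Chapter~II, Proposition~3.3.1]{Messing}, whose proof is exactly that argument.
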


\begin{proof}
Let $X'$ be an affine scheme over $S$ and let $X$ be a closed subscheme defined by an ideal of square zero. Let $f \colon X \to G$ be an $S$-morphism. We must show that $f$ can be lifted to an $S$-morphism $f' \colon X' \to G$.
\[
\xymatrix { 
X \ \ar[d]_f \ar@{^{(}->}[r]  & X'\ar@{-->}[dl]^{f'} \\ 
G
} 
\]
As $X$ is quasi-compact we have $G(X) = \dirlim[n] G[z^n](X) = \dirlim[n] G[z^{nd}](X)$, and hence $f \colon X \to G[z^{nd}]$ for some $n$ by Remark~\ref{RemQCDirLim}. We cover $X$ by  a finite number of affine opens $U_i, \ i = 1,\ldots , m$ such that the image of $U_i$ in $S$ is contained in an affine open $V_i$. Since  $\zeta$ is nilpotent on each $V_i$ there is an integer $N$ such that $\zeta^{N+1}$ is zero on $\bigcup V_i$. Replacing $S$ by $S' = \bigcup V_i$ and $G$ by $G_{S'}$ we are led to the case where $\zeta^{N+1}=0$ in $\CO_S$. But now Lemma~\ref{CorExtmapzero} and \cite[Chapter~II, Proposition~3.3.1]{Messing} show that $f$ can be lifted to an $f'\colon X'\to G[z^{(n+N+1)d}]$ and the theorem is proved. 
\end{proof}

\begin{Lemma}\label{Liftlemma}
Let $G$ be a $z$-divisible local Anderson module over $S$ with  $(z-\zeta)^d = 0$ on $\omega_G$ for some $d \in \BN$. Assume we are given an $S$-scheme $X'$ and a subscheme $X$ defined by a sheaf of ideals $I$ such that $I^{k+1} = (0)$ and $\zeta^{N}\cdot {{I/I^2}} = 0$ for some integer $N$. Let $N'$ be the smallest integer which is a power of $p$ and greater or equal to $N$ and $d$. If an $S$-morphism $f' \colon X' \to G$ satisfies $f = f'\vert_X\colon X \to G[z^n]$, then $f'$ factors through $f'\colon X' \to G[z^{n+kN'}]\subset G$.
\end{Lemma}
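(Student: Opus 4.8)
## Proof proposal

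The plan is to translate the lifting problem into a statement about morphisms landing in the kernel of multiplication by $z^n$, and to exploit the fact that $I^{k+1}=0$ lets us induct on the "thickness" $k$ of the pair $X\subset X'$. First I would reduce to the case $k=1$, i.e.\ $I^2=(0)$: if $I^{k+1}=(0)$, filter $X'$ by the closed subschemes $X'_j$ defined by $I^{j+1}$, so that $X=X'_0\subset X'_1\subset\dots\subset X'_k=X'$ with each step of square zero, and each step still satisfies $\zeta^N\cdot(I^{j+1}/I^{j+2})=0$ because this quotient is a subquotient of $I/I^2$ up to the $\CO_X$-action. Applying the $k=1$ statement $k$ times, each time the level to which $f'$ provably factors grows by $N'$, giving the claimed bound $n+kN'$ after $k$ steps.

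For the square-zero step, let $f'\colon X'\to G$ restrict to $f\colon X\to G[z^n]$. Since $G$ is $z$-torsion and $X'$ is (locally) quasi-compact, $f'$ factors through $f'\colon X'\to G[z^m]$ for some $m$ by Remark~\ref{RemQCDirLim}; I want to show $m$ can be taken to be $n+N'$. The key point is to analyse $z^n\circ f'\colon X'\to G$, which vanishes on $X$. So $z^n\circ f'$ defines an element of the kernel of the restriction map $G(X')\to G(X)$, and by Messing's deformation theory for the pair $X\subset X'$ of square zero — cf.\ \cite[Chapter~II, Proposition~3.3.1 and Corollary~3.3.9]{Messing} applied to the co-Lie complex $\CoL{G[z^{n+N'}]/S}$, exactly as in the proof of Theorem~\ref{DLAMFsmooth} — this kernel is identified with $\Hom_{\CO_X}$ of the co-Lie complex into the square-zero ideal $I$. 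Now the hypothesis $\zeta^N\cdot(I/I^2)=0$, together with $N'\ge\max\{N,d\}$ a power of $p$, gives $(z-\zeta)^{N'}\cdot I=z^{N'}\cdot I=0$ (using $\zeta^{N'}=0$ on $I$ since $\zeta^{N'}$ kills $I/I^2$ and $I^2=0$). Hence multiplication by $z^{N'}$ annihilates this $\Hom$-group, i.e.\ $z^{N'}\cdot(z^n\circ f')=z^{n+N'}\circ f'=0$ in $G(X')$. This says precisely that $f'$ factors through $G[z^{n+N'}]$, completing the square-zero case.

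The main obstacle I anticipate is bookkeeping the interplay between the Messing-style identification of the relevant deformation/obstruction groups and the action of $z$ on them: one must be careful that the $\CO_X[z]$-module structure on $\mathrm{Hom}_{\CO_X}(\CoL{G[z^{n+N'}]/X},I)$ is the one induced by the $z$-action on $G$ (equivalently, via the $F_M$-side, on $M_q(G[z^{n+N'}])$), so that "$z^{N'}$ kills $I$" really does force "$z^{N'}$ kills the Hom-group." This is where Corollary~\ref{CorzdVersch} and the explicit description $\CoL{G[z^m]/S}\simeq\bigl(\sigma_{\!q}^*M_q(G[z^m])\xrightarrow{F}M_q(G[z^m])\bigr)$ from Theorem~\ref{ThmEqAModSch}\ref{ThmEqAModSchItem6} should be invoked to make the $z$-action transparent; alternatively one can avoid the explicit complex and argue directly that $z^n\circ f'$, viewed in $\ker(G(X')\to G(X))$, is killed by $z^{N'}$ because that kernel is an $I$-module via the group law and $z^{N'}$ acts on it through its action on $I$, which is zero. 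Once the $z$-action compatibility is pinned down, the induction on $k$ and the arithmetic $\zeta^{N'}=0$ on $I$ are routine.
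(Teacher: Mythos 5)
Your overall architecture coincides with the paper's proof: work locally so $X'$ is affine, factor $f'$ through some $G[z^{n'}]$ by quasi-compactness (Remark~\ref{RemQCDirLim}), reduce to a square-zero thickening by filtering with the subschemes $\Var(I^l)$, and identify the sections of $G[z^{n'}]$ over $X'$ vanishing on $X$ with $\Hom_{\CO_X}(\omega_{G[z^{n'}]}\otimes_{\CO_S}\CO_X,\,I)$ (the paper quotes SGA\,3, III, Th\'eor\`eme~0.1.8(a) for this; your $\mathrm{H}^0$ of the co-Lie complex is the same module). The gap is in the one step that actually uses the hypotheses, namely why $z^{N'}$ annihilates this group. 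Your justifications --- ``$(z-\zeta)^{N'}\cdot I=z^{N'}\cdot I=0$'' and, in the alternative, ``$z^{N'}$ acts on the kernel through its action on $I$, which is zero'' --- are not correct: $z$ does not act on $I$ at all. Under the identification, $z$ acts on $\Hom_{\CO_X}(\omega_{G[z^{n'}]}\otimes\CO_X,I)$ by precomposition with the endomorphism it induces on $\omega_{G[z^{n'}]}$, and this is \emph{not} scalar multiplication by $\zeta$; the whole content of axiom \ref{DefZDivGpAxiom4} is that it differs from it by the nilpotent $(z-\zeta)$. If your alternative were valid, the constant $d$ and the requirement that $N'$ be a power of $p$ at least $d$ would be superfluous ($N'\ge N$ would suffice), which is a sign the reasoning cannot be right.

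The missing argument, which uses exactly the data you have assembled, runs as follows (and is how the paper closes the step): since $N'\ge d$ is a power of $p$, $(z-\zeta)^{N'}=z^{N'}-\zeta^{N'}=0$ on $\omega_G$, hence also on $\omega_{G[z^{n'}]}$ because $\omega_G$ surjects onto it compatibly with the $z$-action; therefore $z^{N'}$ acts on $\Hom_{\CO_X}(\omega_{G[z^{n'}]}\otimes\CO_X,I)$ as the scalar $\zeta^{N'}$, and $\zeta^{N'}\cdot h=0$ because $\zeta^{N'}I=(0)$ (here $\zeta^N\cdot I/I^2=0$ and $I^2=0$ give $\zeta^N I=0$). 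Hence $z^{N'}(z^nf')=0$, i.e.\ $f'\in G[z^{n+N'}](X')$. Two further small corrections: the identification must be applied to $G[z^{n'}]$, the truncation you already know contains the image of $f'$, not to $G[z^{n+N'}]$ as you wrote (that would be circular); and Messing II, Proposition~3.3.1/Corollary~3.3.9 concern the obstruction in $\Ext^1$ for lifting, not the torsor description of sections restricting to zero, so the reference should be to SGA\,3 (or the analogous $\Ext^0$ statement). Your reduction to $k=1$, including the check that $\zeta^N$ kills each graded piece $I^{j+1}/I^{j+2}$, is fine and matches the paper.
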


\begin{proof} The problem is local on $X'$ and hence we can assume that $X'$ is affine  and thus quasi-compact. But then $f' \in G(X') = \dirlim G[z^m](X')$ and hence we can assume that $f'\colon X' \to G[z^{n'}]$ for some $n'$ by Remark~\ref{RemQCDirLim}. We now use induction on $k$ and the sequence of closed subschemes $\Var(I^l)\subset X'$ for $l=1,\ldots,k+1$. Thus we can assume that $I^2 = 0$ and $k=1$. 

Since $f\in G[z^n](X)$ we have $z^nf = 0$, and so $z^nf' \in G[z^{n'}](X')$ has the property that its restriction to $G[z^{n'}](X)$ is zero. Since $I^2 = 0$, the group of sections of $G[z^{n'}]$ over $X'$ whose restriction to $X$ is zero, is by \cite[III, Th\'eor\`eme~0.1.8(a)]{SGA3} isomorphic to the group $\Hom_{\CO_X}(\omega_{G[z^{n'}]}\otimes_{\CO_S}\CO_X, I)$ under an isomorphism which sends the zero morphism $X'\to G[z^{n'}]$ to the zero element, and the morphism $z^nf'$ to an element which we denote by $h \in  \Hom_{\CO_X}(\omega_G\otimes_{\CO_S}\CO_X, I)$. Since $\zeta^N$ kills $I$ and $N'\ge N$ we obtain $\zeta^{N'}\cdot h=0$. On $\omega_G$ the assumption $(z-\zeta)^d=0$ implies $z^{N'}=\zeta^{N'}$, and so the section $z^{N'}(z^nf')$ is sent to $z^{N'}\cdot h=\zeta^{N'}\cdot h=0$. This implies $z^{n+N'}f'=0$, that is, $f' \in G[z^{n+N'}](X')$.
\end{proof}

\begin{Corollary}
\label{Corforrep}
Let $\zeta^N=0$ in $\CO_S$ and let $G$ and $d$ be as in Lemma~\ref{Liftlemma}. Let $N'$ be the smallest integer which is power of $p$ and greater or equal to $N$ and $d$. Then the $k$-th infinitesimal neighborhood of $G[z^n]$ in $G$ is the same as that of $G[z^n]$ in $G[z^{n+kN'}]$. In particular, $\Inf^k(G) = \Inf^k(G[z^{kN'}])$ and this is therefore representable.
\end{Corollary}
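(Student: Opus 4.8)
The plan is to deduce the statement from Lemma~\ref{Liftlemma} by a faithfully-flat-descent argument. First I would pin down what ``$k$-th infinitesimal neighborhood of $G[z^n]$'' means inside the (possibly non-representable) \fppf-sheaf $G$: for a closed subsheaf $Z$ of a sheaf $Y$, define $\Inf^k_Z(Y)$ by the recipe of \eqref{EqInf}, but asking that the pull-back $x|_{\Spec R_i/I_i}$ \emph{factor through $Z$} rather than vanish. With $Z=G[z^n]$ and $Y=G$ (resp.\ $Y=G[z^m]$ for $m\ge n$) this is the notion in the statement. I would also record the elementary fact that when $Y=\Spec B$ is an $S$-scheme and $Z=\Spec(B/\CJ)$, a routine descent check identifies $\Inf^k_Z(Y)$ with the classical thickening $\Spec(B/\CJ^{k+1})$; in particular this recovers Messing's $\Inf^k$ when $Z$ is the zero section, and it shows such a neighborhood of a closed subscheme of a scheme is again a scheme (finite over $S$ if $Y$ is).

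Next I would prove the equality of the two infinitesimal neighborhoods. The inclusion $\Inf^k_{G[z^n]}(G[z^{n+kN'}])\subseteq\Inf^k_{G[z^n]}(G)$ is immediate, since $G[z^{n+kN'}]\subseteq G$ and the defining condition does not mention the ambient sheaf. For the converse, let $x\in\Inf^k_{G[z^n]}(G)(T)$, with witnessing \fppf-covering $\{\Spec R_i\to T\}_i$ and ideals $I_i\subset R_i$ satisfying $I_i^{k+1}=(0)$ and such that $x|_{\Spec R_i/I_i}$ factors through $G[z^n]$. Because $\zeta^N=0$ in $\CO_S$ one has $\zeta^N\cdot(I_i/I_i^2)=0$ for free, so Lemma~\ref{Liftlemma} applies to $X'=\Spec R_i$, $X=\Spec R_i/I_i$, the ideal $I_i$ with $I_i^{k+1}=0$, and the morphism $x|_{\Spec R_i}\colon X'\to G$ with restriction $f=x|_{\Spec R_i/I_i}\colon X\to G[z^n]$; the integer $N'$ in Lemma~\ref{Liftlemma} is exactly the smallest power of $p$ that is $\ge N$ and $\ge d$, i.e.\ the $N'$ of the present statement. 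Hence $x|_{\Spec R_i}$ factors through $G[z^{n+kN'}]\subseteq G$, equivalently $z^{n+kN'}\cdot x$ vanishes after pullback to each $\Spec R_i$; since $\{\Spec R_i\to T\}_i$ is \fppf\ this forces $z^{n+kN'}\cdot x=0$, so $x$ factors through $G[z^{n+kN'}]$, and the same covering and ideals exhibit it in $\Inf^k_{G[z^n]}(G[z^{n+kN'}])$. This is the ``same neighborhood'' assertion.

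Finally, for the ``in particular'' I would take $n=0$. Then $G[z^0]=\ker(\id_G)$ is the zero section of $G$ (and of every $G[z^m]$), so the equality just proved reads $\Inf^k(G)=\Inf^k(G[z^{kN'}])$ in Messing's sense. Since $G[z^{kN'}]$ is representable by a finite locally free strict $\BF_q$-module scheme by Definition~\ref{DefZDivGp}\ref{DefZDivGpAxiom3}, the first paragraph shows $\Inf^k(G[z^{kN'}])$, and hence $\Inf^k(G)$, is a closed subscheme of $G[z^{kN'}]$ and in particular representable.

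I do not foresee a real obstacle, since all the substance is in Lemma~\ref{Liftlemma}. The only points that want a little care are (i) setting up $\Inf^k$ relative to a closed subsheaf and matching it by faithfully flat descent with the usual scheme-theoretic thickening, and (ii) noting that ``factors through $G[z^m]$'' means ``is killed by $z^m$'', an \fppf-local condition, which is what lets the fibrewise output of Lemma~\ref{Liftlemma} glue over the covering.
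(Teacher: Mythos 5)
Your proof is correct and follows essentially the same route as the paper: unwind the \fppf-local definition of the $k$-th infinitesimal neighborhood, apply Lemma~\ref{Liftlemma} to each $\Spec R_i\supset\Spec R_i/I_i$ of the witnessing covering (the hypothesis $\zeta^N\cdot I_i/I_i^2=0$ being automatic from $\zeta^N=0$ in $\CO_S$), glue via the sheaf property to get $z^{n+kN'}x=0$, and obtain the ``in particular'' as the case $n=0$. The extra remarks on identifying the sheaf-theoretic $\Inf^k$ with the scheme-theoretic thickening for representable $G[z^{kN'}]$ just make explicit what the paper leaves implicit in ``therefore representable''.
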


\begin{proof} By definition \cite[Chapter~II, Definition~(1.01)]{Messing}, an $S$-morphism $f\colon T' \to G$ belongs to the $k$-th infinitesimal neighborhood of $G[z^n]$ in $G$, if and only if there is an \fppf-covering $\{\Spec R_i \to T'\}_i$ and ideals $I_i\subset R_i$ with $I_i^{k+1}=(0)$ such that $f|_{\Spec R_i/I_i}\in G[z^n](\Spec R_i/I_i)$. But then $f \in G[z^{n+kN'}](T')$ by Lemma~\ref{Liftlemma}. The last statement is the special case with $n=0$.
\end{proof}

\begin{Theorem}\label{LoNilForLie}
Let $G$ be a $z$-divisible local Anderson module over $S\in\Nilp_{\BF_q\dbl\zeta\dbr}$. Then $\olG = \dirlim\Inf^k(G)$ is a formal $\BF_q\dbl z\dbr$-module.
\end{Theorem}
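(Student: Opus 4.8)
The statement is the promised extension of Theorem~\ref{THpzcase} from the case $\zeta=0$ to arbitrary $S\in\Nilp_{\BF_q\dbl\zeta\dbr}$. By the criterion of Messing \cite[Chapter~II, Theorem~2.1.7]{Messing}, it suffices to show that $\olG=\dirlim_k\Inf^k(G)$ is $F$-torsion, is $F$-divisible, and that each piece of the filtration is a finite locally free $S$-group scheme. The first two properties are essentially formal and the only real content is the representability and finite-local-freeness of the infinitesimal neighbourhoods, which is exactly what Corollary~\ref{Corforrep} provides, and the $F$-divisibility, which will follow from formal smoothness (Theorem~\ref{DLAMFsmooth}).

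\textbf{Key steps.} First, I would reduce to the case $\zeta^N=0$ in $\CO_S$ for some $N$: the question is local on $S$, so I may assume $S=\Spec R$ is affine and then $\zeta$ is nilpotent. Next, fix $d$ with $(z-\zeta)^d=0$ on $\omega_G$ (which exists locally by Definition~\ref{DefZDivGp}\ref{DefZDivGpAxiom4}) and let $N'$ be the smallest power of $p$ that is $\geq\max\{N,d\}$. By Corollary~\ref{Corforrep} we have $\Inf^k(G)=\Inf^k(G[z^{kN'}])$, and since $G[z^{kN'}]$ is a truncated $z$-divisible local Anderson module with order of nilpotence $d$ and level $kN'$ by Proposition~\ref{PropTruncZDAM}, its $k$-th infinitesimal neighbourhood of the unit section is a closed subscheme of the finite locally free scheme $G[z^{kN'}]$; Corollary~\ref{Corforrep} already asserts it is representable. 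Then I need to check it is finite locally free over $S$: it is finite (closed in a finite scheme) and of finite presentation, and flatness follows because by Lemma~\ref{Liftlemma}/Corollary~\ref{Corforrep} the functor $\Inf^k(G)$ is insensitive to nilpotent thickenings in the appropriate range, which one pins down by comparing with $G[F_q^n]$ for a suitable $n$ — indeed $G[F_q^n]\subset G[z^{nd}]$ by Corollary~\ref{CorzdVersch}, and after working $\BF_q$-strictly (Lemma~\ref{LemmaStrictF_qAction}, Lemma~\ref{LemmaStrictIsLocal}) the argument in Proposition~\ref{PropKerTruncZDAM}\ref{PropKerTruncZDAM_C} shows these kernels are finite locally free strict $\BF_q$-module schemes. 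Third, $\olG$ is $F$-torsion: every section over a quasi-compact test scheme lands in some $\Inf^k(G)$ by Remark~\ref{RemQCDirLim}, and on $\Inf^k(G)$ the relative $q^n$-Frobenius kills the $k$-th infinitesimal neighbourhood once $q^n>k$ — precisely as in the last part of the proof of Theorem~\ref{THpzcase}, the map $F_{q^n,G}$ sends $x$ to $x\circ\Frob_{q^n,T}$, and $\Frob_{q^n}$ factors through the reduction modulo an ideal of nilpotence order $\leq k$ when $q^n\geq k+1$. Fourth, $\olG$ is $F$-divisible: since $G$ is formally smooth over $S$ by Theorem~\ref{DLAMFsmooth}, any section of $\sigma_{\!q}^*\olG$ over an affine test scheme lifts along the square-zero (indeed nilpotent) thickening given by the relative Frobenius, and one checks the lift lands in a suitable $\Inf^k(G)$ again via Lemma~\ref{Liftlemma}; hence $F_{q,G}\colon\olG\to\sigma_{\!q}^*\olG$ is an epimorphism of \fppf-sheaves. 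Finally, the $\BF_q\dbl z\dbr$-action on $G$ restricts to $\olG$ (each $\Inf^k(G)$ is stable under the action since the unit section and the thickening structure are), so the resulting formal Lie group is a formal $\BF_q\dbl z\dbr$-module in the sense of Definition~\ref{DefFormalModule}.

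\textbf{Main obstacle.} The subtle point is the $F$-divisibility in the case $\zeta\neq 0$: unlike the $\zeta=0$ situation of Theorem~\ref{THpzcase}, one cannot directly use the $z^d$-Verschiebung (Corollary~\ref{CorzdVersch} requires $\zeta=0$), so the surjectivity of $F_{q,G}$ on $\olG$ must be deduced from formal smoothness together with the control on infinitesimal neighbourhoods from Lemma~\ref{Liftlemma} and Corollary~\ref{Corforrep}. Concretely, given a section $y$ of $\sigma_{\!q}^*\olG$ over an affine $T=\Spec R'$, it lies in $\sigma_{\!q}^*\Inf^k(G)(T)$ for some $k$; viewing $T\to\sigma_{\!q}^*G$ and using that $T\xrightarrow{\Frob_{q,T}}T$ is a nilpotent thickening when restricted appropriately, formal smoothness of $G$ produces a lift $x\colon T\to G$ with $F_{q,G}(x)=y$, and Lemma~\ref{Liftlemma} forces $x$ to factor through some $\Inf^{k'}(G)$; assembling these gives the epimorphism. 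Making the bookkeeping of the indices $k,k',n,N'$ consistent — so that all the cited lemmas apply simultaneously — is the one place where care is required, but no new idea beyond the results already established is needed.
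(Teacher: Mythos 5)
There is a genuine gap, and it sits exactly at the point you flag as the ``main obstacle''. You propose to verify Messing's criterion \cite[Chapter~II, Theorem~2.1.7]{Messing} ($F$-torsion, $F$-divisible, finite locally free $F$-kernels), i.e.\ to run the proof of Theorem~\ref{THpzcase} in the general case. But the two nontrivial inputs of that criterion are precisely the ones that are only available when $\zeta=0$: the containment $G[F_q^n]\subset G[z^{nd}]$ of Corollary~\ref{CorzdVersch} and the finite local freeness of the $F$-kernels in Proposition~\ref{PropKerTruncZDAM}\ref{PropKerTruncZDAM_C} both rest on the $z^d$-Verschiebung, which does not exist as a morphism of group schemes once $\zeta\neq0$ (see Remark~\ref{RemzdVersch}: $\sigma_{\!q}^*$ turns $(z-\zeta)^d$ into $(z-\zeta^q)^d$). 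Citing these results ``after working $\BF_q$-strictly'' does not remove their hypothesis $\zeta=0$, and the phrase ``flatness follows because $\Inf^k(G)$ is insensitive to nilpotent thickenings'' is not an argument for flatness. Likewise your $F$-divisibility step does not work: $\Frob_{q,T}\colon T\to T$ is not a nilpotent closed immersion (it is in general not even an immersion), so formal smoothness of $G$ (Theorem~\ref{DLAMFsmooth}), which only lifts morphisms along thickenings $X\hookrightarrow X'$, cannot produce a solution of $F_{q,G}(x)=y$; solving this equation amounts to extracting Frobenius roots, which requires passing to a faithfully flat cover (as one sees already for $\wh\BG_a$, where one must adjoin a root of $x^q=u$), something no infinitesimal lifting property provides.

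The paper's proof avoids these issues by using a different criterion of Messing, namely the definition of a formal Lie variety \cite[Chapter~II, Definition~1.1.4]{Messing} checked via \cite[Chapter~II, Proposition~3.1.1]{Messing}: $\olG$ is ind-infinitesimal by construction; locally on $S$ one chooses $N,d$ as in Corollary~\ref{Corforrep}, so that $\Inf^k(G)=\Inf^k(G[z^{kN'}])$ is representable; formal smoothness of $G$ (Theorem~\ref{DLAMFsmooth}) gives formal smoothness of $\olG$; and the lifting condition of Proposition~3.1.1 is verified for the system of the $G[z^{kN'}]$ using Theorem~\ref{DLAMFsmooth} together with Lemma~\ref{Liftlemma}. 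In this route neither $F$-divisibility nor flatness of the $F$-kernels is needed as an input --- they come out afterwards because $\olG$ is a formal Lie group. So the correct fix for your plan is to replace Theorem~2.1.7 by Definition~1.1.4/Proposition~3.1.1 as the target criterion; the reduction to $\zeta^{N+1}=0$, the use of Corollary~\ref{Corforrep}, Theorem~\ref{DLAMFsmooth} and Lemma~\ref{Liftlemma}, and the observation that the $\BF_q\dbl z\dbr$-action restricts to $\olG$ are all as in your proposal.
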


\begin{proof} 
As $\olG$ clearly is an $\BF_q\dbl z\dbr$-submodule of $G$, we must show that it is a formal Lie variety; see \cite[Chapter~II, Definition~1.1.4]{Messing}. By construction it is ind-infinitesimal. Since the question is local on $S$ we may assume that there are integers $N$ and $d$ as in Corollary~\ref{Corforrep}. Then the sheaf $\Inf^k(G)$ is representable for all $k$. By Theorem~\ref{DLAMFsmooth} we know that $G$ is formally smooth and by definition \eqref{EqInf} of $\Inf^k(G)$ this implies that $\olG$ is formally smooth. Let $N'$ be the smallest integer which is power of $p$ and greater or equal to $N$ and $d$. Then $G[z^{kN'}]$ satisfies the lifting condition 2) of \cite[Chapter~II, Proposition~3.1.1]{Messing} by Theorem~\ref{DLAMFsmooth} and Lemma~\ref{Liftlemma}. Therefore, by \cite[loc.\ cit.]{Messing} $G[z^{kN'}]$ satisfies condition 2) and 3) of \cite[Chapter~II, Definition~1.1.4]{Messing}, and hence is a formal $\BF_q\dbl z\dbr$-module.
\end{proof}

\begin{Remark} 
We already know from Theorem~\ref{ThmEqZDivGps} and Lemma~\ref{ExistenceOfe} that $\omega_G$ is locally free of finite rank. This now follows again from the theorem, because $\omega_G=\omega_{\olG}$.
\end{Remark}

\bigskip

Next we pursue the question when a $z$-divisible local Anderson module is a formal $\BF_q\dbl z\dbr$-module and vice versa.

\begin{Lemma}\label{LemmaPowerOfIdealIsZero}
Let $B$ be a ring in which $\zeta$ is nilpotent, and let $I$ be a nilpotent ideal of $B$. Define a sequence of ideals  $I_1 := \zeta I + I^2,\ldots, I_{n+1} := \zeta I_n + {(I_n)}^2$. Then for $n$ sufficiently large $I_n = (0)$.
\end{Lemma}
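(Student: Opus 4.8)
The plan is to show that the sequence of ideals $I_n$ stabilizes at $(0)$ by combining the nilpotence of $\zeta$ with the nilpotence of $I$. First I would observe that $I_n \subseteq I$ for all $n$, and in fact $I_{n+1} \subseteq I_n$, so the $I_n$ form a decreasing chain; the point is to find a single power that vanishes. The key is to iterate the defining relation $I_{n+1} = \zeta I_n + I_n^2$ symbolically: expanding $I_{n+1}$ in terms of $I_n$, then $I_n$ in terms of $I_{n-1}$, and so on down to $I_1$ and then to $I$, one sees that $I_{n+1}$ is contained in the ideal generated by all products of the form $\zeta^a \cdot (\text{product of } b \text{ elements of } I)$ where the total ``weight'' $a + b$ grows with $n$.

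More precisely, I would prove by induction on $n$ the claim that $I_n \subseteq \sum_{a+b \,\ge\, n+1,\ b\ge 1} \zeta^a I^b$, i.e.\ $I_n$ lies in the ideal generated by monomials $\zeta^a x_1\cdots x_b$ with $x_i \in I$ and $a + b \ge n+1$. The base case $n=1$ is immediate from $I_1 = \zeta I + I^2$ (the terms have weights $2$ and $2$, both $\ge 2$). For the inductive step, $I_{n+1} = \zeta I_n + I_n^2$: the term $\zeta I_n$ raises the weight $a$ by one, giving weight $\ge n+2$; and $I_n^2$ multiplies two monomials of weight $\ge n+1$ each, giving weight $\ge 2(n+1) \ge n+2$ since $n \ge 0$. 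This establishes the claim. Now choose $s$ with $\zeta^s = 0$ in $B$ and $t$ with $I^t = (0)$; then for $n + 1 \ge s + t$, every generating monomial $\zeta^a x_1 \cdots x_b$ of $I_n$ has either $a \ge s$ (hence the monomial is $0$) or $b \ge t$ (hence the monomial is $0$), since $a + b \ge s+t$ forces one of the two. Therefore $I_n = (0)$ for all $n \ge s + t - 1$.

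The main obstacle — really the only thing requiring care — is getting the bookkeeping of the weight bound right in the inductive step, in particular checking that the quadratic term $I_n^2$ does not spoil the linear growth of the weight; the inequality $2(n+1) \ge n+2$ for $n \ge 0$ handles this cleanly, so there is no genuine difficulty. An alternative, perhaps slicker, phrasing avoids explicit weights: note $I_{n} \subseteq \zeta I_{n-1} + I^2$ (since $I_{n-1}^2 \subseteq I^2$), so modulo $I^2$ we get $I_n \subseteq \zeta^{n-1} I \pmod{I^2}$ roughly speaking; but to make this rigorous across the quadratic terms one still ends up with essentially the weight argument, so I would present the weight induction as above. I would conclude by simply stating that for $n \ge s+t-1$ we have $I_n = (0)$, which is what is claimed.
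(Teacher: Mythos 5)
Your proof is correct and is essentially the paper's argument: the paper sets $J=\zeta B+I$, notes $I_n\subset J^{n+1}$ and that $J$ is nilpotent, and your weight bound ($I_n$ generated by monomials $\zeta^a x_1\cdots x_b$ with $a+b\ge n+1$) is precisely the statement $I_n\subseteq J^{n+1}$ spelled out on generators. The bookkeeping in your inductive step and the final case split $a\ge s$ or $b\ge t$ are sound, so no gap.
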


\begin{proof}
 Let $J = \zeta B + I$. Then it is easy to check that $I_n \subset J^{n+1}$. Since $\zeta$ and $I$ both are nilpotent, so is the ideal $J$. This implies $I_n = 0$ for $n$ sufficiently large. 
\end{proof}

\begin{Lemma}\label{Lemmaztorsion}
If $S\in\Nilp_{\BF_q\dbl\zeta\dbr}$ and $G$ is a formal $\BF_q\dbl z\dbr$-module over $S$ such that locally on $S$ there is an integer $d$ with $(z-\zeta)^d = 0$ on $\omega_G$, then $G$ is $z$-torsion.
\end{Lemma}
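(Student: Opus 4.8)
We must show that a formal $\BF_q\dbl z\dbr$-module $G$ over $S\in\Nilp_{\BF_q\dbl\zeta\dbr}$, which locally on $S$ satisfies $(z-\zeta)^d=0$ on $\omega_G$, is $z$-torsion, i.e.\ $G=\dirlim[n]G[z^n]$.

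\medskip

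The plan is to reduce to an affine, quasi-compact base and then show that every section $x\in G(T)$ of the formal Lie group over an affine $S$-scheme $T$ is killed by a power of $z$. Since $G$ is a formal Lie variety, working Zariski-locally on $S$ we may pick formal coordinates, so that as a pointed formal scheme $G\cong\Spf\CO_S\dbl t_1,\ldots,t_r\dbr$ (with $r=\dim G$), and a section $x\in G(T)$ over an affine $T=\Spec B$ is given by an $r$-tuple $(x_1,\ldots,x_r)$ of elements of an ideal $I\subset B$ that is topologically nilpotent for the relevant adic topology; because $T$ is quasi-compact and the $x_i$ come from a continuous homomorphism out of a power-series ring, in fact $I$ can be taken to be a \emph{nilpotent} ideal of $B$ (each $x_i$ lies in the augmentation-type ideal and its image lands in a finite-level truncation). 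The action of $z$ on $G$ is given by a power series $[z](\ult)\in\bigl(t_1,\ldots,t_r\bigr)\CO_S\dbl\ult\dbr^{\oplus r}$ with no constant term, whose linear part is multiplication by $\zeta$ on $\omega_G$ up to the nilpotence built into axiom~\ref{DefZDivGpAxiom4}: precisely, $(z-\zeta)^d=0$ on $\omega_G$ means the linear part of $[z]-[\zeta]=[z]-\zeta\cdot$ acts nilpotently, and after replacing $d$ by a power of $p$ with $\zeta^d=0$ Zariski-locally we get that the linear part of $[z^d]=[z]^{\circ d}$ lies in $(z-\zeta)^d\CO_S\subset(\text{nilpotents})$, hence $[z^d](\ult)$ has \emph{no linear term modulo the nilradical}; combined with $\zeta$ being nilpotent this forces $[z^d](\ult)\in\Fm\cdot(\ult)$ for a suitable nilpotent ideal.

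\medskip

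The technical heart is then exactly the bookkeeping lemma already provided in the excerpt: Lemma~\ref{LemmaPowerOfIdealIsZero}. Concretely, set $B$ to be the coordinate ring of $T$ (or a suitable affine piece), let $I$ be the nilpotent ideal generated by $x_1,\ldots,x_r$, and observe that the components of $[z](x_1,\ldots,x_r)$ lie in $\zeta I+I^2$ up to a correction coming from the linear part of $[z]$; using $(z-\zeta)^d=0$ on $\omega_G$ one upgrades this to: the components of $[z^N]$ applied to $I$ lie in $I_n$ with $I_{n+1}=\zeta I_n+I_n^2$ after $n$ iterations (each application of $z$ contributes one step, modulo absorbing the nilpotent linear part of $z-\zeta$). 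By Lemma~\ref{LemmaPowerOfIdealIsZero} we have $I_n=(0)$ for $n\gg0$, hence $z^M\cdot x=0$ in $G(T)$ for $M$ large. Since this holds for every affine $T$, it holds for every $T$ by covering, so $G=\dirlim G[z^M]$.

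\medskip

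The main obstacle I anticipate is the careful matching of ``one multiplication by $z$'' with ``one step $I_n\mapsto I_{n+1}$'' in the presence of the linear term: $[z]$ does have a linear part (it is $\zeta$ on $\omega_G$, not $0$), so a single application of $z$ sends $I$ into $\zeta I+I^2$ only after one also observes that the \emph{semisimple} part $\zeta$ of that linear term is nilpotent and the \emph{nilpotent-on-$\omega_G$} part $z-\zeta$ contributes, after $d$-fold composition, a further factor landing in $I^2+(\text{nilpotents})\cdot I$. Handling this cleanly — and making sure the ideal $I$ one feeds into Lemma~\ref{LemmaPowerOfIdealIsZero} genuinely contains all the iterated images, uniformly over a Zariski cover of $S$ and over the affine cover of $T$ — is the part that needs care; everything else is formal. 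One should also remark that $G[z^n]$ is then automatically representable (it is a closed formal subscheme cut out by the $n$-th iterate of $[z]$, which by the above is a genuine ideal defining a finite scheme), which is what makes the statement a sensible prelude to recognizing such $G$ as $z$-divisible local Anderson modules.
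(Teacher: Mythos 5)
Your proposal is correct and follows essentially the same route as the paper: reduce Zariski-locally to $S=\Spec R$ with $\zeta$ nilpotent, write $G$ in formal coordinates so a section over affine $T=\Spec B$ is a tuple of nilpotent elements generating a nilpotent ideal $I$, use $(z-\zeta)^d=0$ on $\omega_G$ to control the linear part of a suitable power of $z$, and iterate via Lemma~\ref{LemmaPowerOfIdealIsZero}. The paper states the linear-term step slightly more cleanly than your middle paragraph: taking $N'$ a power of $p$ with $N'\ge d$, one has $(z-\zeta)^{N'}=z^{N'}-\zeta^{N'}$ on $\omega_G$, so the power series for $z^{N'}$ has linear term exactly $\zeta^{N'}X_i$, whence the components of $z^{N'}\cdot(b_1,\ldots,b_d)$ lie in $\zeta^{N'}I+I^2\subset\zeta I+I^2=I_1$ and the iteration you describe goes through verbatim (no need for $\zeta^{N'}=0$).
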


\begin{proof}
We must show $G = \dirlim G[z^n]$ and since this is a statement about sheaves, it suffices to check it locally on $S$. Thus we can assume $S = \Spec R$ with $\zeta\in R$ nilpotent and $G$ is given by a power series ring $R\dbl X_1,\ldots ,X_d\dbr$; see \cite[p.~26]{Messing}. If $T$ is any affine $S$-scheme, say $T = \Spec B$, then an element of $G(T)$ will be an $N$-tuple $(b_1,\ldots ,b_d)$ with each $b_i$ nilpotent. Let $I$ be the ideal generated by $\{ b_1,\ldots ,b_d\}$. Let $N'$ be a power of $p$ with $N'\geq d$. Then multiplication with $z^{N'}$ on $G$ is given by power series $(z^{N'})^*(X_i)\in R\dbl X_1,\ldots ,X_d\dbr$ with linear term $\zeta^{N'}X_i$ and without constant term, because $\omega_G=(X_1,\ldots,X_d)/(X_1\ldots,X_d)^2$. Therefore each component of $z^{N'} \cdot (b_1,\ldots ,b_d)$ belongs to  $\zeta^{N'} I + I^2\subset \zeta I+I^2=:I_1$. Then each component of $z^{nN'} \cdot (b_1,\ldots ,b_d)$ belongs to the ideal $I_n$ from Lemma~\ref{LemmaPowerOfIdealIsZero}, and hence the lemma shows that $(b_1,\ldots,b_d)$ is $z$-torsion.
\end{proof}

The next result is analogous to Messing's characterization \cite[Chapter~II, Proposition~4.4]{Messing} for a $p$-divisible group to be a formal Lie group, and also its proof follows similarly using Theorem~\ref{LoNilForLie}.

\begin{Proposition} \label{PropEquivAndersonLiegroups}
Let $S\in\Nilp_{\BF_q\dbl\zeta\dbr}$ and let $G$ be a $z$-divisible local Anderson module over $S$. Then the following conditions are equivalent:
\begin{enumerate}
\item \label{PropEquivAndersonLiegroups_A}
$G = \olG$.
\item \label{PropEquivAndersonLiegroups_B}
$G$ is a formal $\BF_q\dbl z\dbr$-module.
\item \label{PropEquivAndersonLiegroups_C}
$G[z^n]$ is radicial for all $n$.
\item \label{PropEquivAndersonLiegroups_D}
$G[z]$ is radicial. \qed
\end{enumerate}
\end{Proposition}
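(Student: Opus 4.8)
The plan is to prove the cycle of implications \ref{PropEquivAndersonLiegroups_A}$\Rightarrow$\ref{PropEquivAndersonLiegroups_B}$\Rightarrow$\ref{PropEquivAndersonLiegroups_C}$\Rightarrow$\ref{PropEquivAndersonLiegroups_D}$\Rightarrow$\ref{PropEquivAndersonLiegroups_A}, since this is the most economical way to link the four conditions. The implication \ref{PropEquivAndersonLiegroups_A}$\Rightarrow$\ref{PropEquivAndersonLiegroups_B} is immediate from Theorem~\ref{LoNilForLie}, which asserts precisely that $\olG=\dirlim\Inf^k(G)$ is a formal $\BF_q\dbl z\dbr$-module; so if $G=\olG$ then $G$ itself is a formal $\BF_q\dbl z\dbr$-module. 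For \ref{PropEquivAndersonLiegroups_B}$\Rightarrow$\ref{PropEquivAndersonLiegroups_C}, recall that a formal $\BF_q\dbl z\dbr$-module is, locally on $S$, given by a power series ring $R\dbl X_1,\ldots,X_d\dbr$, so each section over an affine test scheme is a tuple of nilpotents; hence every $G[z^n]$, being a closed subscheme, has nilpotent augmentation ideal on geometric fibres, i.e.\ $G[z^n]$ is connected, equivalently radicial over $S$ (radicial can be checked fibrewise on the finite locally free scheme $G[z^n]$, and on a field a finite group scheme is radicial iff connected). The implication \ref{PropEquivAndersonLiegroups_C}$\Rightarrow$\ref{PropEquivAndersonLiegroups_D} is trivial.

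The substantive step is \ref{PropEquivAndersonLiegroups_D}$\Rightarrow$\ref{PropEquivAndersonLiegroups_A}. Here I would first reduce to the affine case $S=\Spec R$ with $\zeta$ nilpotent and pick, locally, an integer $d$ with $(z-\zeta)^d=0$ on $\omega_G$, so that Corollary~\ref{Corforrep} applies and each $\Inf^k(G)$ is representable, with $\Inf^k(G)=\Inf^k(G[z^{kN'}])$ for a suitable $p$-power $N'\ge\max\{N,d\}$. Next, pass to the effective local shtuka $\ulM=(M,F_M)=\ulM_q(G)$ of Theorem~\ref{ThmEqZDivGps}. By Theorem~\ref{ThmEqAModSch}\ref{ThmEqAModSchItem3}, $G[z]=\Dr_q(M/zM,F_M\bmod z)$ is radicial precisely when $F_M\bmod z$ is nilpotent locally on $S$, and as explained in the proof of Theorem~\ref{ThmEqZDivGps}\ref{ThmEqZDivGpsItem3} this is equivalent to $F_M$ being topologically nilpotent, i.e.\ $\im(F_M^n)\subset zM$ for some $n$. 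So assumption \ref{PropEquivAndersonLiegroups_D} gives topological nilpotence of $F_M$, and by Theorem~\ref{ThmEqZDivGps}\ref{ThmEqZDivGpsItem3} again, $G$ is then a formal $\BF_q\dbl z\dbr$-module. It remains to deduce $G=\olG$ from this.

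For that final deduction I would argue that $\olG\subset G$ is always a subsheaf, and that when $G$ is a formal $\BF_q\dbl z\dbr$-module with $(z-\zeta)^d=0$ on $\omega_G$, Lemma~\ref{Lemmaztorsion} shows $G$ is $z$-torsion, $G=\dirlim G[z^n]$. On the other hand, $G$ being formally smooth (Theorem~\ref{DLAMFsmooth}) and given locally by a power series ring, any section of $G[z^n]$ over an affine test scheme $\Spec B$ is a tuple of elements of a nilpotent ideal $I$, hence lies in some infinitesimal neighbourhood $\Inf^k(G)$ (take $k$ with $I^{k+1}=0$). This proves $G[z^n]\subset\olG$ for all $n$, whence $G=\dirlim G[z^n]\subset\olG$, so $G=\olG$. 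I expect the main obstacle to be book-keeping the local choices of $d$ and $N'$ and making sure the identification $\Inf^k(G)=\Inf^k(G[z^{kN'}])$ from Corollary~\ref{Corforrep} is invoked correctly, together with a clean statement of why ``section with nilpotent coordinates'' forces membership in $\olG$; the shtuka-theoretic input (topological nilpotence $\Leftrightarrow$ $G[z]$ radicial) is already packaged in Theorem~\ref{ThmEqAModSch}\ref{ThmEqAModSchItem3} and Theorem~\ref{ThmEqZDivGps}\ref{ThmEqZDivGpsItem3}, so that part is routine.
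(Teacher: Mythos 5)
Your cycle breaks at its crucial link: the step \ref{PropEquivAndersonLiegroups_D}$\Rightarrow$\ref{PropEquivAndersonLiegroups_B} is circular. You deduce it from Theorem~\ref{ThmEqZDivGps}\ref{ThmEqZDivGpsItem3} (``$G$ is a formal $\BF_q\dbl z\dbr$-module iff $F_M$ is topologically nilpotent''), but the paper's proof of that very item begins ``In Proposition~\ref{PropEquivAndersonLiegroups} below we will see that $G$ is a formal $\BF_q\dbl z\dbr$-module if and only if $G[z]$ is radicial\dots'', i.e.\ Theorem~\ref{ThmEqZDivGps}\ref{ThmEqZDivGpsItem3} is itself obtained from the equivalence \ref{PropEquivAndersonLiegroups_B}$\Leftrightarrow$\ref{PropEquivAndersonLiegroups_D} you are trying to prove, combined with Theorem~\ref{ThmEqAModSch}\ref{ThmEqAModSchItem3}. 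So the present proposition must be proved without it; the intended argument is the one of Messing \cite[Chapter~II, Proposition~4.4]{Messing}, with Theorem~\ref{LoNilForLie} supplying \ref{PropEquivAndersonLiegroups_A}$\Rightarrow$\ref{PropEquivAndersonLiegroups_B}.

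The repair is to prove \ref{PropEquivAndersonLiegroups_D}$\Rightarrow$\ref{PropEquivAndersonLiegroups_C}$\Rightarrow$\ref{PropEquivAndersonLiegroups_A} directly, with no shtuka input. From the exact sequences $0\to G[z^n]\to G[z^{n+1}]\to G[z]\to0$ of Proposition~\ref{PropTate}, radiciality of $G[z]$ propagates to all $G[z^n]$ (test on field-valued points: $G[z^{n+1}](K)$ is an extension of trivial groups). A finite locally free radicial group scheme is killed by a power of its relative Frobenius, so locally on $S$ its augmentation ideal is nilpotent of bounded exponent; hence every $T$-valued point of $G[z^n]$ vanishes modulo a nilpotent ideal, giving $G[z^n]\subset\Inf^k(G)$ for suitable $k$, and since $G=\dirlim G[z^n]$ this yields $G\subset\olG$, i.e.\ \ref{PropEquivAndersonLiegroups_A}. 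Your remaining implications are fine as written: \ref{PropEquivAndersonLiegroups_A}$\Rightarrow$\ref{PropEquivAndersonLiegroups_B} by Theorem~\ref{LoNilForLie}, \ref{PropEquivAndersonLiegroups_B}$\Rightarrow$\ref{PropEquivAndersonLiegroups_C} because field-valued points of a formal Lie group are tuples of nilpotents, \ref{PropEquivAndersonLiegroups_C}$\Rightarrow$\ref{PropEquivAndersonLiegroups_D} trivially; and your closing observation that a $z$-torsion formal $\BF_q\dbl z\dbr$-module equals $\olG$ is sound (here $z$-torsion is even automatic from Definition~\ref{DefZDivGp}\ref{DefZDivGpAxiom1}). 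Only the detour through Theorem~\ref{ThmEqZDivGps}\ref{ThmEqZDivGpsItem3} has to be removed.
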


\begin{Corollary}\label{CorEqvcat1}
For $S\in\Nilp_{\BF_q\dbl\zeta\dbr}$, there is an equivalence of categories between that of $z$-divisible local Anderson modules over $S$ with $G[z]$ radicial, and the category of $z$-divisible formal $\BF_q\dbl z\dbr$-modules $G$ with $G[z]$ representable by a finite locally free group scheme, such that locally on $S$ there is an integer $d$ for which $(z-\zeta)^d = 0$ on $\omega_G$.
\end{Corollary}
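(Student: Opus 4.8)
The plan is to use the characterization of "being a formal $\BF_q\dbl z\dbr$-module coming from a $z$-divisible local Anderson module" supplied by Proposition~\ref{PropEquivAndersonLiegroups} together with the anti-equivalence of Theorem~\ref{ThmEqZDivGps}. First I would set up the two functors. Going from left to right, a $z$-divisible local Anderson module $G$ with $G[z]$ radicial is, by Proposition~\ref{PropEquivAndersonLiegroups} (equivalence of \ref{PropEquivAndersonLiegroups_B} and \ref{PropEquivAndersonLiegroups_D}), a formal $\BF_q\dbl z\dbr$-module; moreover $z\colon G\to G$ is an epimorphism (Definition~\ref{DefZDivGp}\ref{DefZDivGpAxiom2}), so $G$ is $z$-divisible; $G[z]=\ker(z\colon G\to G)$ is representable by a finite locally free (strict) $\BF_q$-module scheme by Definition~\ref{DefZDivGp}\ref{DefZDivGpAxiom3}; and the nilpotence condition $(z-\zeta)^d=0$ on $\omega_G$ is exactly Definition~\ref{DefZDivGp}\ref{DefZDivGpAxiom4}. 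So $G$ lies in the target category, and morphisms are just morphisms of \fppf-sheaves of $\BF_q\dbl z\dbr$-modules on both sides, so the functor is simply "forget that $G$ is a divisible local Anderson module, remember it is a formal module."

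For the quasi-inverse, I would start with a $z$-divisible formal $\BF_q\dbl z\dbr$-module $G$ with $G[z]$ representable by a finite locally free group scheme and $(z-\zeta)^d=0$ on $\omega_G$ locally on $S$, and show it is a $z$-divisible local Anderson module. The key steps: (1) By Lemma~\ref{Lemmaztorsion}, $G$ is $z$-torsion, so $G=\dirlim G[z^n]$ and axiom~\ref{DefZDivGpAxiom1} holds. (2) $z$-divisibility is the hypothesis, giving axiom~\ref{DefZDivGpAxiom2}. (3) Axiom~\ref{DefZDivGpAxiom4} is the nilpotence hypothesis, noting $\omega_G=\invlim\omega_{G[z^n]}$ surjects onto each $\omega_{G[z^n]}$ once we know these are finite locally free. (4) The real content is axiom~\ref{DefZDivGpAxiom3}: each $G[z^n]$ is representable by a finite locally free \emph{strict} $\BF_q$-module scheme. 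Representability and finite local freeness of $G[z^n]$ should follow by induction from that of $G[z]$ using the exact sequences $0\to G[z^n]\to G[z^{n+1}]\xrightarrow{z^n} G[z]$ (the map $z^n$ lands in $G[z]$ because $G=G[z^{n+1}]$ on the relevant part) together with Remark~\ref{RemFactsOnG}(d), plus the order computation $\ord G[z^n]=(\ord G[z])^n$ from multiplicativity; since $G$ is a formal Lie group with the $G[F_q^n]$ finite locally free (Messing), one can also sandwich $G[z^n]$ between $G[F_q^m]$'s. Strictness: $G[z]$ is radicial (by Proposition~\ref{PropEquivAndersonLiegroups}, as $G$ is a formal module), hence connected; I would show the $G[z^n]$ are strict $\BF_q$-module schemes. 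The cleanest route is to invoke Proposition~\ref{PropKerTruncZDAM}\ref{PropKerTruncZDAM_C} and the kernel-of-Frobenius/Verschiebung machinery: for $\zeta=0$ in $\CO_S$, $G[F_q^n]$ is strict, and one bootstraps to $G[z^n]$; for general $S\in\Nilp_{\BF_q\dbl\zeta\dbr}$ one reduces to the nilpotent-$\zeta$ case locally and uses that strictness is local (Lemma~\ref{LemmaStrictIsLocal}).

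Having shown both functors are well defined, I would check they are mutually quasi-inverse. This is essentially formal: in both directions the underlying sheaf of $\BF_q\dbl z\dbr$-modules is unchanged, and a morphism of \fppf-sheaves of $\BF_q\dbl z\dbr$-modules between two objects that happen to lie in both categories is the same datum either way. So the forgetful functor "divisible local Anderson module $\leadsto$ formal module" is fully faithful, and what remains is exactly the surjectivity on objects established in the previous paragraph. Thus the functor is an equivalence, with explicit inverse "a formal $\BF_q\dbl z\dbr$-module as above, viewed as a divisible local Anderson module."

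The main obstacle I expect is step (4): verifying that the $G[z^n]$ are \emph{strict} $\BF_q$-module schemes and finite locally free, starting only from the assumption on $G[z]$. Representability and flatness of the higher $G[z^n]$ should follow by the inductive closed-immersion/quotient argument of Proposition~\ref{PropEqvaxiom}, but strictness is subtler because strictness is not obviously inherited by kernels or extensions in general — one genuinely needs Faltings's \cite[Proposition~2]{Faltings02} on kernels of strict morphisms together with Lemma~\ref{LemmaStrictF_qAction} to see that the relevant multiplication-by-$z$ maps are $\BF_q$-strict. I would expect to phrase this using the $z^d$-Verschiebung of Proposition~\ref{PropKerTruncZDAM} and Corollary~\ref{CorzdVersch} to identify $G[z^n]$ as a truncated $z$-divisible local Anderson module, so that strictness comes from Proposition~\ref{PropKerTruncZDAM}\ref{PropKerTruncZDAM_C}; the bookkeeping to reduce from arbitrary $S$ to $\zeta$ nilpotent and then to $\zeta=0$ (where Frobenius/Verschiebung arguments apply) is where most of the care is needed.
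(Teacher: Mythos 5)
Your overall skeleton matches the paper's (very short) proof: both categories are viewed as full subcategories of \fppf-sheaves of $\BF_q\dbl z\dbr$-modules, the two functors are the identity on underlying sheaves, the direction from local Anderson modules to formal modules is Proposition~\ref{PropEquivAndersonLiegroups}, and $z$-torsion in the other direction is Lemma~\ref{Lemmaztorsion}; your induction via extensions of $G[z]$ for representability and finite local freeness of the $G[z^n]$ is also fine (the paper does not even spell this out). The genuine gap is exactly the step you flag yourself: strictness of the $G[z^n]$. Your proposed route through Proposition~\ref{PropKerTruncZDAM}\ref{PropKerTruncZDAM_C}, Corollary~\ref{CorzdVersch} and the $z^d$-Verschiebung is circular: those results are proved for (truncated) $z$-divisible local Anderson modules, whose definition already requires the $G[z^n]$ to be finite locally free \emph{strict} $\BF_q$-module schemes, and likewise \cite[Proposition~2]{Faltings02} on kernels of strict morphisms needs ambient finite locally free modules already equipped with strict actions --- which is precisely what you are trying to produce. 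Starting only from a $z$-divisible formal $\BF_q\dbl z\dbr$-module you have no truncated local Anderson module and no Verschiebung to bootstrap from, so the plan as written does not close, and the reduction to $\zeta=0$ does not help.

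The paper closes this gap by a different mechanism: it observes that $G[z^n]=\ker(z^n\colon G\to G)$ is the kernel of an $\BF_q$-linear homomorphism of \emph{formal Lie groups} which are $\BF_q$-modules, and deduces strictness directly from this presentation --- the formally smooth ambient object $G$ (Theorem~\ref{DLAMFsmooth}, Theorem~\ref{LoNilForLie}) itself supplies the deformation with its $\BF_q$-action to which Faltings's theory is applied, rather than any finite flat strict module, and no Frobenius/Verschiebung argument is needed. So the correct repair of your step (4) is to present $G[z^n]$ as a kernel inside the formal Lie group and get strictness from there, not from the Section~\ref{SectVersch} machinery. A smaller slip: you invoke Proposition~\ref{PropEquivAndersonLiegroups} to conclude that $G[z]$ is radicial for a formal module, but that proposition is stated for $z$-divisible local Anderson modules; radiciality is in any case immediate because a formal Lie group is ind-infinitesimal.
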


\begin{proof}
By Lemma~\ref{Lemmaztorsion} and Proposition~\ref{PropEquivAndersonLiegroups} both categories are identified with the same full sub-category of \fppf-sheaves of $\BF_q[z]$-modules on $S$, once we observe that $G[z^n]:=\ker(z^n\colon G\to G)$ is a strict $\BF_q$-module as the kernel of an $\BF_q$-linear homomorphism of formal Lie groups which are $\BF_q$-modules.
\end{proof}

\begin{Corollary}
Let $S\in\Nilp_{\BF_q\dbl\zeta\dbr}$ be the spectrum of an Artinian local ring. Then a $z$-divisible formal $\BF_q\dbl z\dbr$-module, such that locally on $S$ there is an integer $d$ for which $(z-\zeta)^d = 0$ on $\omega_G$, is a $z$-divisible local Anderson module with $G[z]$ radicial and conversely.
\end{Corollary}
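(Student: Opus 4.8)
The plan is to deduce the corollary from Corollary~\ref{CorEqvcat1}. Since $S=\Spec R$ is the spectrum of an Artinian local ring it is connected, so the phrase ``locally on $S$ there is an integer $d$'' just means ``there is an integer $d$'', and the only discrepancy between the two categories in the present statement and the two categories in Corollary~\ref{CorEqvcat1} is that the latter requires, on the side of formal modules, that $G[z]$ be representable by a finite locally free group scheme. The converse direction then needs no Artinian hypothesis at all: if $G$ is a $z$-divisible local Anderson module over $S$ with $G[z]$ radicial, then $G$ is a formal $\BF_q\dbl z\dbr$-module by Proposition~\ref{PropEquivAndersonLiegroups}, it is $z$-divisible by axiom~\ref{DefZDivGpAxiom2} of Definition~\ref{DefZDivGp}, and $(z-\zeta)^d=0$ on $\omega_G$ locally on $S$ by axiom~\ref{DefZDivGpAxiom4}. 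For the forward direction it therefore suffices to show: for a $z$-divisible formal $\BF_q\dbl z\dbr$-module $G$ over $S$ with $(z-\zeta)^d=0$ on $\omega_G$, the sheaf $G[z]$ is representable by a finite locally free group scheme. Granting this, such a $G$ lies in the formal-module category of Corollary~\ref{CorEqvcat1}, hence is --- as the very same \fppf-sheaf --- a $z$-divisible local Anderson module, automatically with $G[z]$ radicial (Proposition~\ref{PropEquivAndersonLiegroups}). So the whole content is the representability of $G[z]$.

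To prove it, I would first note that by Lemma~\ref{Lemmaztorsion} the module $G$ is $z$-torsion. Being a formal Lie group over the affine base $\Spec R$, it is the formal spectrum of a power series ring $\CO_G=R\dbl X_1,\dots,X_d\dbr$ with $d=\rk\omega_G$ (cf.\ \cite[p.~26]{Messing}). Writing $[z]^\ast\colon\CO_G\to\CO_G$ for the ring endomorphism induced by multiplication by $z$ on $G$, the kernel $G[z]=\ker([z]\colon G\to G)$ is represented by $\Spec\bigl(R\dbl X\dbr/J\bigr)$, where $J$ is the ideal generated by $[z]^\ast(X_1),\dots,[z]^\ast(X_d)$; this scheme is of finite presentation over $R$, and it remains to see that it is finite and flat over $R$.

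For this I would base change to the residue field $k=R/\Fm_R$, on which $\zeta$ vanishes. There $G_k$ is a $z$-divisible formal $\BF_q\dbl z\dbr$-module of dimension $d$ over the field $k$, and $[z]\colon G_k\to G_k$ is an \fppf-epimorphism of formal Lie groups over a field. Such an epimorphism is automatically faithfully flat, $[z]^\ast$ is injective, and since the image of $[z]$ is all of $G_k$ and has the same dimension as $G_k$, the kernel $G_k[z]$ has dimension $0$ and is thus a finite group scheme over $k$. Equivalently, the reductions $[z]^\ast(X_1)\bmod\Fm_R,\dots,[z]^\ast(X_d)\bmod\Fm_R$ form a system of parameters for the regular --- hence Cohen--Macaulay --- local ring $k\dbl X\dbr$ of dimension $d$, and thus a regular sequence in $k\dbl X\dbr$. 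Since $R\dbl X\dbr$ is flat over the Noetherian ring $R$ and these $d$ elements reduce modulo the nilpotent ideal $\Fm_R$ to a regular sequence, a standard dévissage over the Artinian base (local criterion of flatness, cf.\ \cite[IV$_3$, \S\,11]{EGA} and \cite[Chapter~III, \S\,5]{BourbakiAlgCom}) shows that $[z]^\ast(X_1),\dots,[z]^\ast(X_d)$ is a regular sequence in $R\dbl X\dbr$ and that $R\dbl X\dbr/J$ is flat over $R$; being moreover finite over $R$ (as $R$ is Artinian local with residue field $k$ and $R\dbl X\dbr/J\otimes_Rk=k\dbl X\dbr/J_k$ is finite over $k$) and finitely presented, it is finite locally free. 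This yields the representability of $G[z]$, and hence the corollary via Corollary~\ref{CorEqvcat1}. As a byproduct one checks, using that $[z]^\ast$ acts on $\omega_G\cong R^d$ by the matrix $\zeta\cdot\id+\bigl([z]^\ast-\zeta\cdot\id\bigr)$, a commuting sum of two nilpotent matrices and hence itself nilpotent, that $\invlim_n\omega_{G[z^n]}$ recovers $\omega_G$, so that axiom~\ref{DefZDivGpAxiom4} of Definition~\ref{DefZDivGp} holds for $G$ in the intended sense.

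The hard part will be the input used over the residue field: that an \fppf-epimorphism $z\colon G_k\to G_k$ of finite-dimensional formal Lie groups over a field is faithfully flat with kernel of complementary dimension. This is the formal-group analogue of Chevalley's dimension theorem for homomorphisms of algebraic groups, and of the fact that a surjective endomorphism of a $p$-divisible formal group is an isogeny; it is contained in, or easily extracted from, Messing's \cite{Messing} analysis of formal Lie groups, but pinning down a clean citation --- or supplying a short self-contained argument via the formal inverse function theorem together with a dévissage along the Frobenius filtration $G_k[F_q^n]$ --- is really the only step beyond bookkeeping with Corollary~\ref{CorEqvcat1}, Lemma~\ref{Lemmaztorsion} and Proposition~\ref{PropEquivAndersonLiegroups}.
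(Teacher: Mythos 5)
Your reduction is exactly the paper's proof: the corollary is deduced from Corollary~\ref{CorEqvcat1}, the only point needing the artinian hypothesis being that the kernels $G[z^n]$ of a $z$-divisible formal $\BF_q\dbl z\dbr$-module are automatically representable by finite locally free group schemes, for which the paper simply cites Messing, Chapter~II, Proposition~4.3. Your dévissage over the artinian local base in effect re-derives that citation, and the single step you leave open (that a surjective multiplication by $z$ on a formal Lie group over the residue field has finite kernel) is precisely what Messing's result supplies, so your argument is correct modulo the same reference the paper uses.
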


\begin{proof}
This follows from Corollary~\ref{CorEqvcat1}, because the $G[z^n]$ are automatically representable by finite locally free group schemes by \cite[Chapter~II, Proposition~4.3]{Messing}.
\end{proof}

The next result is analogous to Messing's characterization \cite[Chapter~II, Proposition~4.7]{Messing} for a $p$-divisible group to be ind-\'etale, and also its proof follows verbatim.

\begin{Proposition}
Let $S\in\Nilp_{\BF_q\dbl\zeta\dbr}$ and let $G$ be a $z$-divisible local Anderson module over $S$. In order that $\olG=0$ it is necessary and sufficient that $G$ is (ind-)\'etale. \qed
\end{Proposition}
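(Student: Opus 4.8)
The plan is to mimic Messing's argument for $p$-divisible groups \cite[Chapter~II, Proposition~4.7]{Messing}, translating it line by line into the function-field setting, using $\olG = \dirlim\Inf^k(G)$ and the formal-smoothness results established above. First I would treat sufficiency: if $G$ is (ind-)\'etale, then by Definition~\ref{DefZDivGp} each $G[z^n]$ is \'etale, so $\Omega^1_{G[z^n]/S}=0$ and in particular $G[z^n]$ has trivial $k$-th infinitesimal neighborhood of its zero section for every $k$; hence $\Inf^k(G[z^n])=G[z^n][z^0]$-type considerations show $\Inf^k(G[z^n])$ is just the zero section. Passing to the limit over $n$ and over $k$ gives $\olG = \dirlim_k \dirlim_n \Inf^k(G[z^n]) = 0$. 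More precisely, one uses that $\Inf^k(G)=\dirlim_n\Inf^k(G[z^n])$ since $G=\dirlim G[z^n]$ and $\Inf^k$ commutes with this filtered colimit, and that for an \'etale scheme every map from an infinitesimal thickening of a point into it is constant.

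For necessity I would argue contrapositively, assuming $G$ is not (ind-)\'etale, i.e.\ $\omega_G\ne 0$, and showing $\olG\ne 0$. The key input is Theorem~\ref{LoNilForLie}, which says $\olG$ is a formal $\BF_q\dbl z\dbr$-module, together with the remark after it that $\omega_G=\omega_{\olG}$. If $\olG$ were zero then $\omega_{\olG}=0$, contradicting $\omega_G\ne 0$. Thus $\olG\ne 0$. Conversely if $\olG=0$, then $\omega_G=\omega_{\olG}=\omega_0=0$, so $G$ is (ind-)\'etale by the characterization in Definition~\ref{DefZDivGp} (that $\omega_G=0$ iff all $G[z^n]$ are \'etale, via Lemma~\ref{LemmaGEtale}). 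This already gives the equivalence in a clean way, so the proof is genuinely short once Theorem~\ref{LoNilForLie} and the identity $\omega_G=\omega_{\olG}$ are in hand.

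The only subtlety I anticipate is making the identity $\omega_G=\omega_{\olG}$ fully rigorous at this point in the text; it was stated as a Remark immediately after Theorem~\ref{LoNilForLie} but perhaps without a detailed proof. I would spell it out: $\olG = \dirlim\Inf^k(G)$ and each $\Inf^k(G)$ (locally on $S$, being representable by Corollary~\ref{Corforrep}) has the same co-Lie module as $G[z^n]$ for large $n$, because by Proposition~\ref{PropKerTruncZDAM}\ref{PropKerTruncZDAM_F} and Corollary~\ref{CorzdVersch} one has $G[F_q]\subset G[z^{nd}]$ with $\omega_{G[F_q]}=\omega_{G[z^{nd}]}=\omega_G$, and $G[F_q]\subset\Inf^{q-1}G\subset\olG$; by the conormal sequence for the closed immersion $G[F_q]\hookrightarrow\olG$ (as in the proof of Proposition~\ref{PropKerTruncZDAM}\ref{PropKerTruncZDAM_F}, since $\olG$ is cut out in a formal affine space by equations all of whose relevant terms kill $\omega$ after pullback along the zero section), $\omega_{\olG}=\omega_{G[F_q]}=\omega_G$. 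With this established, the proposition follows immediately, so the ``hard part'' is really just bookkeeping with co-Lie modules of the various infinitesimal and Frobenius-kernel pieces rather than any new idea.

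\begin{proof}
By Theorem~\ref{LoNilForLie}, $\olG=\dirlim\Inf^k(G)$ is a formal $\BF_q\dbl z\dbr$-module, and as noted in the Remark following that theorem its co-Lie module satisfies $\omega_{\olG}=\omega_G$. Indeed, working locally on $S$ we may assume there are integers $N,d$ as in Corollary~\ref{Corforrep}, so that each $\Inf^k(G)=\Inf^k(G[z^{kN'}])$ is representable and $G[F_q]\subset\Inf^{q-1}G\subset\olG$. By Proposition~\ref{PropKerTruncZDAM}\ref{PropKerTruncZDAM_F} we have $\omega_{G[F_q]}=\omega_{G[z^{kN'}]}=\omega_G$ for $k\ge1$, and the conormal sequence for the closed immersion $G[F_q]\hookrightarrow\olG$ gives $\omega_{\olG}=\omega_{G[F_q]}$, as in the proof of Proposition~\ref{PropKerTruncZDAM}\ref{PropKerTruncZDAM_F}. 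Hence $\omega_{\olG}=\omega_G$.

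If $\olG=0$ then $\omega_G=\omega_{\olG}=0$, so by Lemma~\ref{LemmaGEtale} every $G[z^n]$ is \'etale over $S$, i.e.\ $G$ is (ind-)\'etale. Conversely, if $G$ is (ind-)\'etale, then $\omega_G=0$, and every $G[z^n]$ is \'etale over $S$. For an \'etale $S$-scheme the zero section has trivial infinitesimal neighborhoods, so $\Inf^k(G[z^n])$ is the zero section for all $k,n$; since $\Inf^k$ commutes with the filtered colimit $G=\dirlim_n G[z^n]$ we get $\Inf^k(G)=\dirlim_n\Inf^k(G[z^n])=0$ for all $k$, and therefore $\olG=\dirlim_k\Inf^k(G)=0$.
\end{proof}
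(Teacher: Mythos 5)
Your overall structure is sound and close to the intended argument (the paper itself only says the proof follows Messing verbatim), and your sufficiency direction -- \'etale implies formally \'etale, so every section of $G[z^n]$ that dies on a nilpotent thickening is already zero, whence $\Inf^k(G)=0$ -- is fine. The genuine gap is in your justification of $\omega_{\olG}=\omega_G$, on which your necessity direction entirely rests. You invoke Proposition~\ref{PropKerTruncZDAM}\ref{PropKerTruncZDAM_F}, and implicitly (through the containment $G[F_q]\subset G[z^{kN'}]$ needed to make sense of $\omega_{G[F_q]}=\omega_{G[z^{kN'}]}$) Corollary~\ref{CorzdVersch}; but both of these are stated and proved only under the hypothesis $\zeta=0$ in $\CO_S$. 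That hypothesis cannot be arranged by localizing on $S$: for general $S\in\Nilp_{\BF_q\dbl\zeta\dbr}$ one only gets $\zeta$ nilpotent, and the $z^d$-Verschiebung which produces $G[F_q]\subset G[z^d]$ does not exist in the paper's framework outside the case $\zeta=0$. This is precisely why Theorem~\ref{THpzcase} (identifying $\olG$ with $\dirlim G[F_q^n]$) assumes $\zeta=0$, while Theorem~\ref{LoNilForLie} for general $S$ makes no statement about Frobenius kernels. So, as written, the chain $\omega_{\olG}=\omega_{G[F_q]}=\omega_{G[z^{kN'}]}=\omega_G$ uses results you are not entitled to in the generality of the proposition.

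The repair is easy and avoids Frobenius kernels altogether. For an affine pointed scheme $X=\Spec A$ with augmentation ideal $I_\epsilon$ one has $\omega_X=I_\epsilon/I_\epsilon^2=\omega_{\Inf^1\!X}$, and $\Inf^1(G[z^n])\subset\Inf^1(G)\subset\olG$ for every $n$ directly from the definition \eqref{EqInf}. Hence if $\olG=0$, then $\Inf^1(G[z^n])$ coincides with the zero section, so $I_\epsilon/I_\epsilon^2=\omega_{G[z^n]}=0$ and $G[z^n]$ is \'etale by Lemma~\ref{LemmaGEtale}, for every $n$; thus $G$ is (ind-)\'etale. (Alternatively, one may establish $\omega_{\olG}=\omega_G$ as asserted in the Remark after Theorem~\ref{LoNilForLie} by combining $\Inf^1(G)=\Inf^1(G[z^{N'}])$ from Corollary~\ref{Corforrep} with Lemma~\ref{LemLieG}\ref{LemLieG_A}, again without any appeal to the $\zeta=0$ results.) With this substitution your proof is complete; the sufficiency half needs no change beyond the quasi-compactness remark (Remark~\ref{RemQCDirLim}) you already allude to when commuting $\Inf^k$ with the colimit over $n$.
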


We have the  following lemma for $z$-divisible local Anderson modules over $S$ similarly to and with the same proof as \cite[Chapter~II, Proposition~4.11]{Messing}.

\begin{Lemma}\label{LemmaolGExact}
Let $S\in\Nilp_{\BF_q\dbl\zeta\dbr}$ and let $0 \to G_1 \to G_2 \to G_3 \to 0$ be an exact sequence of $z$-divisible local Anderson modules over $S$. Then $0 \to \olG_1 \to \olG_2 \to \olG_3 \to 0$ is also exact. \qed
\end{Lemma}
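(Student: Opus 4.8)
The plan is to reduce the exactness of $0 \to \olG_1 \to \olG_2 \to \olG_3 \to 0$ to properties already established for the formal completions $\olG_i = \dirlim\Inf^k(G_i)$, following Messing~\cite[Chapter~II, Proposition~4.11]{Messing} almost verbatim. Since $\olG_i \subset G_i$ is an $\BF_q\dbl z\dbr$-submodule and the question of exactness of a sequence of \fppf-sheaves is local on $S$, I would first pass to an affine base $S = \Spec R$ and, by Theorem~\ref{LoNilForLie} together with Corollary~\ref{Corforrep}, assume there are integers $N$ and $d$ with $\zeta^N = 0$ in $R$ and $(z-\zeta)^d = 0$ on all $\omega_{G_i}$, so that each $\Inf^k(G_i) = \Inf^k(G_i[z^{kN'}])$ is representable (here $N'$ is the least power of $p$ with $N'\ge N,d$) and $\olG_i$ is a formal $\BF_q\dbl z\dbr$-module.

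The key steps are then: (1) Left exactness and exactness in the middle are easy — $\Inf^k$ is left exact as it is defined by a kernel-type condition on points and sheafifies well, so $0 \to \olG_1 \to \olG_2 \to \olG_3$ is exact; concretely, a section of $\Inf^k(G_2)$ mapping to $0$ in $G_3$ lies in $G_1$ and satisfies the same infinitesimal vanishing condition, hence lies in $\Inf^k(G_1)$, and compatibility with $k$ gives the claim after $\dirlim$. (2) The real content is surjectivity of $\olG_2 \to \olG_3$. Given an \fppf-local section $x$ of $\Inf^k(G_3)$, I would first lift it \fppf-locally to a section $\tilde x$ of $G_2$ using the surjectivity of $G_2 \to G_3$ from the hypothesis, then show that $\tilde x$ can be taken to lie in some $\Inf^{k'}(G_2)$. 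For this I would invoke Theorem~\ref{DLAMFsmooth}, namely that $G_1$ (hence also $G_2$) is formally smooth: the obstruction to adjusting $\tilde x$ by a section of $G_1$ so that the adjusted lift is infinitesimal is controlled by liftings along the square-zero thickenings defining the infinitesimal neighborhoods, and formal smoothness of $G_1$ makes these liftings exist. Combined with Lemma~\ref{Liftlemma} (which bounds how far an infinitesimal section can be pushed into a $z$-power-torsion subgroup) and Corollary~\ref{Corforrep} (which identifies $\Inf^k(G_i)$ with $\Inf^k$ of a truncation), one sees that the lift lands in $\Inf^{k'}(G_2)$ for a suitable $k'$, and passing to the limit $\dirlim_k$ gives surjectivity of $\olG_2 \to \olG_3$.

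I expect the surjectivity step to be the main obstacle, specifically the bookkeeping that a section of $G_2$ lifting an element of $\Inf^k(G_3)$ can, after modification by an element of $G_1$, be chosen infinitesimal with a controlled value of the infinitesimal order $k'$. The cleanest route is Messing's: view the problem as a lifting problem along the canonical square-zero (or nilpotent) thickenings, use formal smoothness of the kernel $G_1$ from Theorem~\ref{DLAMFsmooth} to solve it, and use Lemma~\ref{Liftlemma} to convert ``infinitesimal after the modification'' into ``lies in a fixed $\Inf^{k'}$''. Since the excerpt already provides all these ingredients — representability of $\Inf^k(G_i)$, formal smoothness, the truncation identities, and the relation $\omega_G = \omega_{\olG}$ — the argument reduces to citing Messing's proof \cite[Chapter~II, Proposition~4.11]{Messing} with the evident substitutions ($p \rightsquigarrow z$, $p$-divisible group $\rightsquigarrow$ $z$-divisible local Anderson module, formal Lie group $\rightsquigarrow$ formal $\BF_q\dbl z\dbr$-module), which is exactly the ``with the same proof'' remark preceding the statement.
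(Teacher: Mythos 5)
Your proposal is correct and follows essentially the same route as the paper, which gives no independent argument but simply declares the lemma to hold ``with the same proof as'' Messing, Chapter~II, Proposition~4.11; your sketch (left exactness and exactness in the middle by the kernel-type definition of $\Inf^k$, surjectivity by lifting an infinitesimal section of $G_3$ \fppf-locally to $G_2$ and correcting the lift by a section of $G_1$ produced by formal smoothness, Theorem~\ref{DLAMFsmooth}) is exactly that argument transposed. The only cosmetic remark is that the corrected lift already vanishes on the same nilpotent thickening and hence lies in $\Inf^k(G_2)$ with the same $k$, so the extra bookkeeping via Lemma~\ref{Liftlemma} and Corollary~\ref{Corforrep} is not needed.
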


Finally there is a criterion when $\olG$ is itself a $z$-divisible local Anderson module in analogy to Messing's criterion \cite[Chapter~II, Proposition~4.9]{Messing}.

\begin{Proposition}\label{Propextension}
Let $S\in\Nilp_{\BF_q\dbl\zeta\dbr}$ and let $G$ be a $z$-divisible local Anderson module over $S$. Then the following conditions are equivalent.
\begin{enumerate}
\item \label{Propextension_A}
$\olG$ is a $z$-divisible local Anderson module.
\item \label{Propextension_B}
$G$ is an extension of an (ind-)\'etale $z$-divisible local Anderson module $G''$ by an ind-infinitesimal $z$-divisible local Anderson module $G'$. 
\item \label{Propextension_C}
$G$ is an extension  of an (ind-)\'etale $z$-divisible local Anderson module $G''$ by a $z$-divisible formal $\BF_q\dbl z\dbr$-module $G'$.
\item \label{Propextension_D}
For all $n$, $G[z^n]$ is an extension of a finite \'etale group by a finite locally-free radicial group.
\item \label{Propextension_E}
$G[z]$ is an extension of a finite \'etale group by a finite locally-free radicial group.
\item \label{Propextension_F}
the map $S\to\BZ$, $s \mapsto\ord(G[z]_s)_\et=:$ separable rank $(G[z]_s)$ is a locally constant function on $S$.
\end{enumerate}
\end{Proposition}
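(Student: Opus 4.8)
The plan is to mimic Messing's proof of \cite[Chapter~II, Proposition~4.9]{Messing} step by step, using the tools already assembled in this section, in particular Proposition~\ref{PropEquivAndersonLiegroups}, Theorem~\ref{LoNilForLie}, Lemma~\ref{LemmaolGExact}, Corollary~\ref{CorQuotientIsZDAM}, and the description of $\olG$ as $\dirlim\Inf^k(G)$. The logical skeleton will be a cycle of implications \ref{Propextension_A}$\Rightarrow$\ref{Propextension_C}$\Rightarrow$\ref{Propextension_B}$\Rightarrow$\ref{Propextension_A}, together with the descent of the extension property to torsion levels \ref{Propextension_B}$\Rightarrow$\ref{Propextension_D}$\Rightarrow$\ref{Propextension_E}$\Rightarrow$\ref{Propextension_F}, and finally a gluing argument \ref{Propextension_F}$\Rightarrow$\ref{Propextension_A} (or back to one of the already-established conditions). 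Throughout, the question is local on $S$, so I may assume $S=\Spec R$ with $\zeta$ nilpotent and pick an integer $d$ with $(z-\zeta)^d=0$ on $\omega_G$.

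First I would treat \ref{Propextension_A}$\Rightarrow$\ref{Propextension_C}. Assume $G':=\olG$ is a $z$-divisible local Anderson module. By Proposition~\ref{PropEquivAndersonLiegroups} (equivalence of \ref{PropEquivAndersonLiegroups_A} and \ref{PropEquivAndersonLiegroups_B} there) $\olG$ is a formal $\BF_q\dbl z\dbr$-module; it is $z$-divisible because it is one as a $z$-divisible local Anderson module. The quotient $G'':=G/\olG$ is a $z$-divisible local Anderson module by Corollary~\ref{CorQuotientIsZDAM} (the inclusion $\olG\into G$ is a monomorphism of \fppf-sheaves). It remains to see $G''$ is (ind-)\'etale, i.e.\ $\omega_{G''}=0$: apply $\omega_{(-)}$ to the exact sequence $0\to\olG\to G\to G''\to0$ and use the long exact sequence of Lemma~\ref{LemmaLongExSeq} at each finite level together with the identity $\omega_{\olG}=\omega_G$ noted in the Remark after Theorem~\ref{LoNilForLie}; the surjection $\omega_G\onto\omega_{\olG}$ being an isomorphism forces $\omega_{G''}=0$. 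For \ref{Propextension_C}$\Rightarrow$\ref{Propextension_B} I would invoke Lemma~\ref{Lemmaztorsion} (a $z$-divisible formal $\BF_q\dbl z\dbr$-module with $(z-\zeta)^d=0$ on $\omega$ is $z$-torsion) and Theorem~\ref{LoNilForLie} applied to $G'$ to conclude that a $z$-divisible formal $\BF_q\dbl z\dbr$-module satisfying our hypotheses is ind-infinitesimal, so $G'$ is an ind-infinitesimal $z$-divisible local Anderson module. For \ref{Propextension_B}$\Rightarrow$\ref{Propextension_A}: given $0\to G'\to G\to G''\to0$ with $G'$ ind-infinitesimal and $G''$ (ind-)\'etale, apply Lemma~\ref{LemmaolGExact} to get $0\to\olG'\to\olG\to\olG''\to0$; since $G'$ is ind-infinitesimal $\olG'=G'$, and since $G''$ is ind-\'etale $\olG''=0$ by the Proposition just preceding Lemma~\ref{LemmaolGExact}. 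Hence $\olG=G'$, which is a $z$-divisible local Anderson module, giving \ref{Propextension_A}.

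Next comes the descent to torsion levels. For \ref{Propextension_B}$\Rightarrow$\ref{Propextension_D}: apply $\ker(z^n\colon-\to-)$ to the exact sequence $0\to G'\to G\to G''\to0$; since $z^n$ is an epimorphism on $G''$, the snake lemma gives $0\to G'[z^n]\to G[z^n]\to G''[z^n]\to0$, with $G''[z^n]$ finite \'etale (as $G''$ is ind-\'etale) and $G'[z^n]$ finite locally free and radicial by Proposition~\ref{PropEquivAndersonLiegroups}\ref{PropEquivAndersonLiegroups_C} applied to the ind-infinitesimal $G'$. The implication \ref{Propextension_D}$\Rightarrow$\ref{Propextension_E} is the trivial specialization $n=1$. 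For \ref{Propextension_E}$\Rightarrow$\ref{Propextension_F}: if $G[z]$ is an extension of a finite \'etale group $H''$ by a finite locally free radicial group $H'$, then $\ord(G[z]_s)_\et=\ord(H''_s)$ for every point $s$, because the radicial $H'$ has trivial \'etale part fibrewise and the \'etale part is multiplicative in exact sequences (Remark~\ref{RemFactsOnG}(c)); since $\ord H''$ is a locally constant function on $S$, so is $s\mapsto\ord(G[z]_s)_\et$. Finally I would close the loop with \ref{Propextension_F}$\Rightarrow$\ref{Propextension_B} along Messing's argument: the locally constant separable-rank hypothesis lets one, locally on $S$, build the maximal ind-\'etale quotient $G''$ of $G$ level by level — at each level $G''[z^n]$ is the largest \'etale quotient of $G[z^n]$ (Proposition~\ref{PropCanonDecompAMod} fibrewise, with constancy of the \'etale order ensuring these fit into a flat system), it is a strict $\BF_q$-module by Lemma~\ref{LemmaEtaleIsStrict}, and the kernel $G'$ of $G\to G''$ is then ind-infinitesimal; Proposition~\ref{PropEqvaxiom} checks $G''$ is a $z$-divisible local Anderson module and Corollary~\ref{CorQuotientIsZDAM} checks $G'$ is one, yielding \ref{Propextension_B}.

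The main obstacle I anticipate is exactly the last implication \ref{Propextension_F}$\Rightarrow$\ref{Propextension_B}: one must show that the fibrewise \'etale quotients $G[z^n]^\et$ glue into a $z$-divisible local Anderson module over $S$, and this is where the local constancy of $s\mapsto\ord(G[z]_s)_\et$ is genuinely used — without it the order of the \'etale quotient can jump and the $G[z^n]^\et$ fail to form a flat inductive system. Concretely I expect to need: (i) that the largest \'etale quotient of a finite locally free $\BF_q$-module scheme over $S$ with locally constant \'etale fibre-order exists as a finite locally free strict $\BF_q$-module scheme (via Theorem~\ref{ThmEqAModSch}\ref{ThmEqAModSchItem2} and Proposition~\ref{PropCanonDecompDMod}, reducing to the statement that $\im(F_{M_q(G[z^n])}^{\,\rk})$ is a locally free direct summand of locally constant rank — this is where flatness over $S$ is forced by the rank being locally constant); and (ii) that the transition maps $G[z^n]^\et\to G[z^{n+1}]^\et$ and multiplication by $z$ satisfy the hypotheses \ref{PropEqvaxiom_A}, \ref{PropEqvaxiom_B} of Proposition~\ref{PropEqvaxiom}, with \ref{PropEqvaxiom_C} automatic since $\omega_{G''}=0$. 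Everything else is a formal transcription of Messing's proof with ``formal Lie group'' replaced by ``formal $\BF_q\dbl z\dbr$-module'', ``$p$-divisible group'' by ``$z$-divisible local Anderson module'', and ``$p^n$-torsion'' by ``$z^n$-torsion'', using the section's dictionary.
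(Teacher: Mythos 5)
Your plan is correct and is essentially the paper's own proof, which likewise just transposes Messing's \cite[Chapter~II, Proposition~4.9]{Messing} argument using the very lemmas you invoke (Corollary~\ref{CorQuotientIsZDAM} and Lemma~\ref{LemmaolGExact} for the equivalence with \ref{Propextension_B}, the identification $\omega_G=\omega_{\olG}$ together with Corollary~\ref{CorEqvcat1} for passing between \ref{Propextension_B} and \ref{Propextension_C}, and Lemma~\ref{LemmaEtaleIsStrict} for strictness of the \'etale pieces). One micro-repair: to get $\omega_{G''}=0$ in \ref{Propextension_A}$\Rightarrow$\ref{Propextension_C} argue with the short exact sequence of local shtukas from Theorem~\ref{ThmEqZDivGps}\ref{ThmEqZDivGpsItem2} and the injectivity of $F_M$ (Lemma~\ref{ExistenceOfe}), rather than with the finite-level sequence of Lemma~\ref{LemmaLongExSeq}, since there the terms $n_{G[z^n]}$ intervene and the isomorphism $\omega_G\isoto\omega_{\olG}$ alone does not force $\omega_{G''[z^n]}=0$.
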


\begin{proof}
The proof proceeds in the same way as \cite[Chapter~II, Proposition~4.9]{Messing} using Corollary~\ref{CorQuotientIsZDAM} and Lemma~\ref{LemmaolGExact} in \ref{Propextension_A}$\Longleftrightarrow$\ref{Propextension_B}, Corollary~\ref{CorEqvcat1} and $\omega_G=\omega_{G'}$ in \ref{Propextension_B}$\Longleftrightarrow$\ref{Propextension_C}, and Lemma~\ref{LemmaEtaleIsStrict} in \ref{Propextension_D}$\Longrightarrow$\ref{Propextension_C}.
\end{proof}

\begin{Corollary} \label{CorCanonDecompZDiv}
If $S$ is the spectrum of a field $L$ every $z$-divisible local Anderson-module $G=\dirlim G[z^n]$ over $S$ is canonically an extension of an (ind-)\'etale divisible local Anderson-module $G^\et$ by a $z$-divisible formal $\BF_q\dbl z\dbr$-module $\olG$
\[
0\es\longto\es \olG\es\longto\es G\es\longto\es G^\et\es\longto\es 0\,.
\]
$G^\et$ is the largest (ind-)\'etale quotient of $G$. With notation as in Proposition~\ref{PropCanonDecompAMod} we have $\olG=\dirlim G[z^n]^0$ and $G^\et=\dirlim G[z^n]^\et$. If $L$ is perfect the extension splits canonically.

This decomposition is compatible with the decomposition of the local shtuka $\ulM_q(G)$ from Proposition~\ref{PropCanonDecompZCryst} under the functors $\ulM_q$ and $\Dr_q$ from Theorem~\ref{ThmEqZDivGps}.
\end{Corollary}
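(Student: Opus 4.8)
The plan is to transport the canonical decomposition of Proposition~\ref{PropCanonDecompZCryst} through the anti-equivalence of Theorem~\ref{ThmEqZDivGps} and then identify the resulting objects with the limits of the componentwise decompositions from Proposition~\ref{PropCanonDecompAMod}. Concretely, let $\ulM_q(G)=(M,F_M)$ be the effective local shtuka attached to $G$ over $S=\Spec L$. By Proposition~\ref{PropCanonDecompZCryst} it sits in a canonical short exact sequence $0\to(M_\et,F_\et)\to(M,F_M)\to(M_\nil,F_\nil)\to0$ with $F_\et$ an isomorphism and $F_\nil$ topologically nilpotent. Applying $\Dr_q$, which by Theorem~\ref{ThmEqZDivGps}\ref{ThmEqZDivGpsItem2} is an anti-equivalence taking short exact sequences to short exact sequences, produces a canonical short exact sequence of $z$-divisible local Anderson modules
\[
0\es\longto\es \Dr_q(M_\nil,F_\nil)\es\longto\es G\es\longto\es \Dr_q(M_\et,F_\et)\es\longto\es 0\,.
\]
By Theorem~\ref{ThmEqZDivGps}\ref{ThmEqZDivGpsItem2} again, since $F_\et$ is an isomorphism, $G^\et:=\Dr_q(M_\et,F_\et)$ is (ind-)\'etale; and by Theorem~\ref{ThmEqZDivGps}\ref{ThmEqZDivGpsItem3}, since $F_\nil$ is topologically nilpotent, $\Dr_q(M_\nil,F_\nil)$ is a formal $\BF_q\dbl z\dbr$-module, which I will then identify with $\olG$.

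The identification $\Dr_q(M_\nil,F_\nil)=\olG$ I would get from Proposition~\ref{PropEquivAndersonLiegroups}: $\Dr_q(M_\nil,F_\nil)$ is a $z$-divisible local Anderson module which is a formal $\BF_q\dbl z\dbr$-module, hence equals its own $\olG$ by \ref{PropEquivAndersonLiegroups_A}$\Leftrightarrow$\ref{PropEquivAndersonLiegroups_B}; and Lemma~\ref{LemmaolGExact} applied to the exact sequence above, together with $\olG{}^\et=0$ for the (ind-)\'etale quotient, forces $\olG=\Dr_q(M_\nil,F_\nil)$. For the maximality statement I would argue that any (ind-)\'etale quotient $G\onto Q$ corresponds under $\ulM_q$ to an \'etale sub-local-shtuka of $(M,F_M)$, which by the maximality clause in Proposition~\ref{PropCanonDecompZCryst} factors through $(M_\et,F_\et)$; dually this means $Q$ is a quotient of $G^\et$, so $G^\et$ is the largest (ind-)\'etale quotient. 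The splitting when $L$ is perfect follows by applying $\Dr_q$ to the canonical splitting of Proposition~\ref{PropCanonDecompZCryst}.

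For the identification $\olG=\dirlim G[z^n]^0$ and $G^\et=\dirlim G[z^n]^\et$: reducing the exact sequence $0\to(M_\et,F_\et)\to(M,F_M)\to(M_\nil,F_\nil)\to0$ modulo $z^n$ stays exact (the modules are free over $L\dbl z\dbr$), and $\Dr_q$ of it gives $0\to\Dr_q(M_\nil/z^n)\to G[z^n]\to\Dr_q(M_\et/z^n)\to0$. By Theorem~\ref{ThmEqAModSch}\ref{ThmEqAModSchItem2}, $\Dr_q$ carries the canonical decomposition of finite $\BF_q$-shtukas (Proposition~\ref{PropCanonDecompDMod}) to that of $\BF_q$-module schemes (Proposition~\ref{PropCanonDecompAMod}); since $(M/z^n)_\et=M_\et/z^n$ and $(M/z^n)_\nil=M_\nil/z^n$ (the $z$-adic reduction of the canonical decomposition is the canonical decomposition, as $F_\et\bmod z^n$ is still an isomorphism and $F_\nil\bmod z^n$ still nilpotent), we get $\Dr_q(M_\et/z^n)=G[z^n]^\et$ and $\Dr_q(M_\nil/z^n)=G[z^n]^0$. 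Passing to the direct limit over $n$ and using the definitions of $\Dr_q$ on effective local shtukas gives the claimed formulas. Finally, the compatibility with the decomposition of $\ulM_q(G)$ under $\ulM_q$ and $\Dr_q$ is precisely the construction itself, since the whole decomposition of $G$ was obtained by applying $\Dr_q$ to Proposition~\ref{PropCanonDecompZCryst}.

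I expect the only genuinely delicate point to be checking that the canonical decomposition of $(M,F_M)$ really does reduce modulo $z^n$ to the canonical decomposition of the finite $\BF_q$-shtuka $(M/z^n, F_M\bmod z^n)$ in the sense of Proposition~\ref{PropCanonDecompDMod} — i.e.\ that taking ``$\et$'' and ``$\nil$'' parts commutes with the $z$-adic reduction — and correspondingly that $\omega$-vanishing is preserved so that ind-\'etaleness of $G^\et$ and formalness of $\olG$ really follow. This is a short verification ($F_\et$ invertible implies $F_\et\bmod z^n$ invertible; $\im F_\nil^n\subset zM$ implies $\im(F_\nil\bmod z^m)^n\subset zM/z^mM$, whence topological nilpotence passes to the reduction and nilpotence holds on $M_\nil/zM_\nil$), but it is the load-bearing compatibility that glues the ``local'' Theorem~\ref{ThmEqZDivGps} picture to the ``finite'' Theorem~\ref{ThmEqAModSch} picture. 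Everything else is formal manipulation with the exactness and anti-equivalence properties already established.
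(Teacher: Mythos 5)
Your argument is correct, but it runs in the opposite direction from the paper's. The paper works on the group-scheme side: it invokes Proposition~\ref{Propextension}, whose condition \ref{Propextension_F} (local constancy of the separable rank of $G[z]_s$) is trivially satisfied over a field, and that proposition already delivers the extension together with the identifications $\olG=\dirlim G[z^n]^0$ and $G^\et=\dirlim G[z^n]^\et$; the maximality of $G^\et$ and the splitting over perfect $L$ are then read off from the finite-level decomposition of Proposition~\ref{PropCanonDecompAMod}, and the compatibility with Proposition~\ref{PropCanonDecompZCryst} is deduced at the end from the characterizations in Theorem~\ref{ThmEqZDivGps} (\'etale $\Leftrightarrow$ $F_M$ an isomorphism, formal $\Leftrightarrow$ $F_M$ topologically nilpotent) together with the canonicity of the shtuka-side decomposition. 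You instead start on the shtuka side, push Proposition~\ref{PropCanonDecompZCryst} through the anti-equivalence $\Dr_q$, identify the kernel with $\olG$ via Proposition~\ref{PropEquivAndersonLiegroups} and Lemma~\ref{LemmaolGExact} (using that $\olG=0$ for ind-\'etale modules), and recover the level-wise equalities by reducing mod $z^n$ and appealing to Theorem~\ref{ThmEqAModSch}\ref{ThmEqAModSchItem2}; the compatibility statement then holds by construction. The trade-off is exactly the one you flag: your route must verify that taking \'etale and nilpotent parts commutes with reduction mod $z^n$ (equivalently, that a sub-quotient extension with $F_\et$ invertible and $F_\nil$ nilpotent is automatically the canonical one of Proposition~\ref{PropCanonDecompDMod}, since $M_\et=\im(F_M^N)$ for $N\gg0$), a short check you sketch correctly, whereas the paper's route hides the group-theoretic work inside Proposition~\ref{Propextension} (ultimately Messing's argument) and is correspondingly shorter at this point. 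Both are complete proofs; yours makes the final compatibility clause tautological, the paper's makes the existence of the extension tautological.
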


\begin{proof}
Proposition~\ref{Propextension}, whose condition \ref{Propextension_F} is trivially satisfied, provides the extension and the equalities $\olG=\dirlim G[z^n]^0$ and $G^\et=\dirlim G[z^n]^\et$. From this the characterization of $G^\et$ and the canonical splitting for perfect $L$ follows; see Proposition~\ref{PropCanonDecompAMod}. Finally the compatibility with the decomposition of the local shtuka $\ulM_q(G)$ from Proposition~\ref{PropCanonDecompZCryst} follows from the characterization of $G^\et$ being (ind-)\'etale, respectively $\olG$ being a formal $\BF_q\dbl z\dbr$-module in terms of their associated local shtukas proved in Theorem~\ref{ThmEqZDivGps}.
\end{proof}

\begin{appendix}
\section{Review of the cotangent complex}\label{AppCotCom}

In this appendix we carry out the elementary exercise to compare the definitions of the cotangent complex given by Illusie~\cite[\S\,VII.3.1]{Illusie72}, Lichtenbaum and Schlessinger~\cite[\S\,2.1]{LS}, and Messing~\cite[Chapter~II, \S\,3.2]{Messing} for a finite locally free group scheme $G=\Spec A$ over $S=\Spec R$. Recall that $G$ is a relative complete intersection by \cite[Proposition~III.4.15]{SGA3}. This means that locally on $S$ we can take $A = R[X_1,\ldots , X_n]/I$ where the ideal $I$ is generated by a regular sequence $(f_1,\ldots,f_n)$ of length $n$; compare \cite[IV$_4$, Proposition~19.3.7]{EGA}.

\subsection*{The cotangent complex in the sense of Lichtenbaum and Schlessinger}
\label{CotComLSApplication}

We follow the notation of Lichtenbaum and Schlessinger \cite[\S\,2.1]{LS} and take the free $R[\ulX]$-module $F=R[\ulX]\cdot g_1\oplus\ldots\oplus R[\ulX]\cdot g_n$. We set $U:=\ker(j\colon F\onto I,\,g_\nu\mapsto f_\nu)$ and let $U_0$ be the image of the $R[\ulX]$-linear map $F\otimes_{R[\ulX]} F\to F,\,x\otimes y\mapsto j(x)y-j(y)x$.

\begin{Lemma}\label{Lemmaforfinitepresentation}
There is an exact sequence of $R[\ulX]$-modules 
\begin{equation}\label{finitelypresented}
\xymatrix @R=0pc {
\bigoplus_{1\le \mu<\nu\le n}\; R[\ulX]\cdot h_{\mu\nu} \ar[r]^{\quad i} & \bigoplus_{\nu=1}^n \; R[\ulX]\cdot g_\nu \ar[r]^{\qquad\quad j} & I \ar[r] & 0\,.\\
h_{\mu\nu}\ar@{|->}[r] & f_\nu g_\mu-f_\mu g_\nu\,,\quad g_\nu \ar@{|->}[r] & f_\nu
}
\end{equation}
In particular the ideal $I$ is finitely presented and $U = U_0$.
\end{Lemma}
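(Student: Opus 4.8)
The plan is to recognize the sequence \eqref{finitelypresented} as the tail of the Koszul complex of the regular sequence $(f_1,\ldots,f_n)$ in $P:=R[\ulX]$ and to invoke its acyclicity in positive degrees. Write $K_\bullet:=K_\bullet(f_1,\ldots,f_n;P)$ for that Koszul complex, so $K_1=\bigoplus_\nu P\cdot e_\nu$, $K_2=\bigoplus_{\mu<\nu}P\cdot(e_\mu\wedge e_\nu)$, with the usual differentials $d_1(e_\nu)=f_\nu$ and $d_2(e_\mu\wedge e_\nu)=f_\mu e_\nu-f_\nu e_\mu$. Under the identification $F\cong K_1$ sending $g_\nu\mapsto e_\nu$, the map $j$ becomes $d_1$, with image $I$, so that $U=\ker j=\ker d_1$, and the map $i$ of the lemma becomes $-d_2$ (because $i(h_{\mu\nu})=f_\nu g_\mu-f_\mu g_\nu=-(f_\mu e_\nu-f_\nu e_\mu)$), so $\im i=\im d_2$.

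First I would recall the classical fact that for a regular sequence $(f_1,\ldots,f_n)$ the Koszul complex $K_\bullet$ is a resolution of $P/I$, in particular $H_1(K_\bullet)=0$, i.e.\ $\ker d_1=\im d_2$; this needs no Noetherian hypothesis and is the same input behind \cite[Corollaire~III.3.2.7]{Illusie71} used in the main text, and is also \cite{Eisenbud}. Combining the identifications above gives $\im i=\im d_2=\ker d_1=\ker j$, which is exactly the asserted exactness of \eqref{finitelypresented}; since the two left-hand terms are finite free over $P$, this exhibits $I$ as finitely presented. For the last assertion, I would observe that by definition $U_0$ is the image of $F\otimes_P F\to F,\ x\otimes y\mapsto j(x)y-j(y)x$, hence is generated by the elements $j(g_\mu)g_\nu-j(g_\nu)g_\mu=f_\mu g_\nu-f_\nu g_\mu$; this is the very submodule $\im i$ (restricting to $\mu<\nu$ and changing signs does not alter the generated submodule), so the exactness just proved yields $U_0=\im i=\ker j=U$.

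The only substantive ingredient is the vanishing $H_1(K_\bullet)=0$ for a regular sequence, i.e.\ that the first syzygy module of a regular sequence is generated by the Koszul relations; I do not anticipate any genuine obstacle. If one prefers a self-contained argument, this is proved by induction on $n$: given $\sum_\nu a_\nu f_\nu=0$, use that $f_n$ is a non-zero-divisor on $P/(f_1,\ldots,f_{n-1})$ to get $a_n=\sum_{\mu<n}b_\mu f_\mu$, note that $(a_\nu+b_\nu f_n)_{\nu<n}$ is then a syzygy of the shorter regular sequence $(f_1,\ldots,f_{n-1})$, apply the inductive hypothesis to it, and reassemble the original syzygy using the relations $b_\mu\,(f_\mu e_n-f_n e_\mu)$. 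This bookkeeping is entirely routine, so I would relegate it to a one-line remark or simply cite it.
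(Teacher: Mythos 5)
Your proposal is correct. The only substantive point, as you say, is the vanishing of $H_1$ of the Koszul complex of $(f_1,\ldots,f_n)$ in $R[\ulX]$, and your identification of $i$ with (a sign of) $d_2$, of $j$ with $d_1$, and of $U_0$ with $\im i$ (via bilinearity of $x\otimes y\mapsto j(x)y-j(y)x$, so that the images of the basis elements $g_\mu\otimes g_\nu$ generate) is all accurate. The route differs from the paper only in packaging: the paper does not invoke the Koszul complex by name but proves the statement directly by successive elimination --- given $\sum_\nu a_\nu f_\nu=0$, it uses that $f_n$ is a non-zerodivisor modulo $(f_1,\ldots,f_{n-1})$ to write $a_n=\sum_{\mu<n}b_{n\mu}f_\mu$, subtracts the corresponding Koszul relations, and repeats down to $f_1$ being a non-zerodivisor in $R[\ulX]$; this is exactly the self-contained induction you sketch in your last paragraph, so your ``fallback'' argument \emph{is} the paper's proof, while your main argument replaces it by a citation of the general acyclicity statement. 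What the citation buys is brevity and a conceptual frame (first syzygies of a regular sequence are the Koszul relations); what the paper's hands-on version buys is that the proof is visibly free of any Noetherian, local, or ``sequence contained in the Jacobson radical'' hypotheses, which matters here since $R$ is an arbitrary base ring. If you go the citation route, take care to quote a version of Koszul acyclicity stated for an arbitrary regular sequence in an arbitrary commutative ring (some textbook formulations carry Noetherian local hypotheses, or conclude regularity from acyclicity only under such hypotheses); the direction you need holds in full generality and is proved by precisely the induction you indicate.
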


\begin{proof}
By definition the map $j$ in \eqref{finitelypresented} is surjective. To prove exactness in the middle let $\sum_{\nu=1}^{n}a_\nu g_\nu \in \ker j$, that is $\sum_{\nu=1}^{n}a_\nu f_\nu = 0$ in $R[\ulX]$.  This implies $a_nf_n = 0$ in $R[\ulX]/(f_1,\ldots,f_{n-1})$. Since $f_n \in I$ is a non-zero-divisor in  $R[\ulX]/(f_1,\ldots,f_{n-1})$ we have $a_n = 0$ in $R[\ulX]/(f_1,\ldots,f_{n-1})$. Thus there exist $b_{n\mu}\in R[\ulX]$ for $1 \leq \mu \leq n-1$ such that ${a}_n = \sum_{\mu=1}^{n-1}b_{n\mu}f_\mu$. It follows that 
\[
\sum_{\nu=1}^{n}a_\nu g_\nu \;\equiv\; \sum_{\nu=1}^{n}a_\nu g_\nu - i\Bigl(\sum_{\mu=1}^{n-1}b_{n\mu}h_{n\mu}\Bigr) \;\equiv\; (a_1 + b_{n1}f_n)g_1 + \ldots + (a_{n-1} +b_{n,n-1}f_n) g_{n-1}  \mod \im(i)\,.
\]
Continuing in this way we get
\[
\sum_{\nu=1}^{n}a_\nu g_\nu \;\equiv\; (a_1 +  b_{n1}f_n +  b_{n-1,1}f_{n-1} + \ldots + b_{2,1}f_2)g_1 \;\mod\im(i),
\]
and hence $(a_1 +  b_{n1}f_n +  b_{n-1,1}f_{n-1} + \ldots + b_{2,1}f_2)f_1 = 0$ in $R[\ulX]$. Since $f_1$ is a non-zero-divisor in $R[\ulX]$ we conclude $\sum_{m=1}^{n}a_\nu g_\nu\in \im(i)$. This proves that $I$ is finitely presented over $R[\ulX]$. Moreover, $U=\ker(j)=\im(i)=U_0$.
\end{proof}

Consequently the cotangent complex of Lichtenbaum and Schlessinger \cite[Definition~2.1.3]{LS} is the complex of ${\mathcal{O}_G}$-modules concentrated in degrees $-1$ and $0$ given by
\[
\CoCLS{G/S}\colon \enspace 0 \longrightarrow  I/{I^2}   \longrightarrow  \Omega_ {R[\ulX]/R} \otimes_{R[\ulX]} A \longto 0\,.
\]
By \cite[Corollaire~III.3.2.7]{Illusie71} this complex is quasi-isomorphic to the cotangent complex $\CoCI{G/S}$ defined by Illusie~\cite[II.1.2.3]{Illusie71}, which we considered in Section~\ref{CotCom} before Definition~\ref{DefOmega}.

\subsection*{The cotangent complex in the sense of Messing} 
\label{CotComMessing}
We next recall the definition of the cotangent complex of $G/S$ given by Messing~\cite[Chapter~II, \S\,3.2]{Messing}. Since $G$ is a group scheme, $A$ is a bi-algebra with comultiplication $\Delta \colon A \to A \otimes_R A$ and counit $\epsilon_A\colon A \to R$. Then $\check{A} = \Hom_{R\text{-Mod}}(A,R)$ carries an $R$-algebra structure via the dual morphisms $\check{\Delta}$ of $\Delta$ and $\check{\epsilon}_A$ of $\epsilon_A$.

\begin{Definition}
We let $U(G):=\Spec(\Sym^\bullet_R A)$. It represents the contravariant functor from $S$-schemes to rings, sending an $S$-scheme $T$ to the ring $\Gamma(T,\ \check{A} \otimes_R\mathcal{O}_T)$; see \cite[II, 1.7.9]{EGA}.

We let $U(G)\mal$ be the contravariant functor from $S$-schemes to abelian groups whose points with values in an $S$-scheme $T$ are the invertible elements in the ring $U(G)(T)=\Gamma(T,\ \check{A} \otimes_R \mathcal{O}_T)$.
\end{Definition}

So $U(G)\mal$ is defined by the fiber product diagram 
\[
\xymatrix{    U(G)\mal =  S\times _{U(G)}(U(G)\times U(G)) \ar[d] \ar[r]  & U(G)\times U(G)\ar[d]^{\TS\check{\Delta}} \\
                           S\ar[r]^{\TS\check{\epsilon}_A} &  U(G)}
\]
Since $U(G)$ is affine and of finite presentation over $S$, its unit section $\check{\epsilon}_A$ is a closed immersion of finite presentation by \cite[IV$_1$, Proposition~1.6.2]{EGA}, and the same is true for $U(G)\mal \hookrightarrow  U(G)\times U(G)$. Therefore  $U(G)\mal$ is an affine group scheme of finite presentation over $S$. By \cite[IV$_4$, Proposition~19.3.7]{EGA} the unit section of the smooth $S$-scheme $U(G)$ is a regular immersion. Therefore the immersion $U(G)\mal \hookrightarrow  U(G)\times U(G)$ is also regular by \cite[IV$_4$, Proposition~19.1.5]{EGA}.

$U(G)\mal$ is smooth over $S$ because $U(G)$ is and the inclusion $U(G)\mal\to U(G)$ is a smooth monomorphism (and hence an open immersion by \cite[IV$_4$, Th\'eor\`eme~17.9.1]{EGA}). Indeed, smoothness can be tested by the infinitesimal lifting criterion \cite[\S\,2.2, Proposition~6]{BLR} as follows. Let $I\subset B$ be an ideal in a ring $B$ with $I^2=0$ and let $b\in \check{A}\otimes_R B=U(G)(B)$ be a point with $(b\mod I)\in U(G)\mal(B/I)$. Then any lift $b'\in\check{A}\otimes_R B$ of $(b\mod I)^{-1}\in U(G)\mal(B/I)\subset \check{A}\otimes_R B/I$ satisfies $bb'-1\in \check{A}\otimes_R I$ and $0=(bb'-1)^2=1-bb'(2-bb')$. It follows that $b\in U(G)\mal(B)$.

\medskip

Messing considers the natural monomorphism $i\colon G \hookrightarrow  U(G)\mal$ which is defined by viewing a $T$-valued point of $G$ as a homomorphism of $\mathcal{O}_T$-algebras $A\otimes_R \mathcal{O}_T\to \mathcal{O}_T$ and hence as an element of $\Gamma(T,\ \check{A} \otimes_R \mathcal{O}_T)$. The fact that such a homomorphism when viewed as an element of $U(G)(T)$ is invertible, follows from the commutativity of the following diagram
\begin{equation*}  
\xymatrix @C+1.5pc  {        {G(T)\times G(T)} \ar[r]^{\TS i\times i \ \ \ \ \ \ \ \ \  \ \ \ \ \ \ } \ar[d]^{\TS\Spec\Delta}  & \enspace \Gamma(T,\ \check{A} \otimes_{\mathcal{O}_S} \mathcal{O}_T)\times  \Gamma(T,\ \check{A} \otimes_{\mathcal{O}_S} \mathcal{O}_T) \ar[d]^{\TS\check{\Delta}} \\
                          G(T)\ar[r]^{\TS i}&  \Gamma(T,\ \check{A} \otimes_{\mathcal{O}_S} \mathcal{O}_T)}
\end{equation*}
This diagram is commutative because both the left and right vertical arrow come from $\Delta$. For every $f \in G(T)$ there exists a $g \in G(T)$ such that $(\Spec\Delta)(f,g) = 1$. Since  $i(f)\cdot i(g) = \check{\Delta}\circ(i \times i)(f,g) = i\circ(\Spec\Delta)(f,g) = i(1)$, it is enough to prove that $i(1)= 1$. Now $1 \in G(T)$ is equivalent to  $\epsilon_A\colon A \otimes_{\mathcal{O}_S} \mathcal{O}_T \to \mathcal{O}_T $ which in turn is equivalent to  ($\check{\epsilon}_A \colon \mathcal{O}_T \to \check{A} \otimes_{\mathcal{O}_S} \mathcal{O}_T) \equiv 1\in \Gamma(T,\ \check{A} \otimes_{\mathcal{O}_S} \mathcal{O}_T)$. This shows $i(1)= 1$. Also it shows that the morphism  $i\colon G \hookrightarrow  U(G)\mal$ is a homomorphism of group schemes. Since $G$ is a relative complete intersection and finite over $S$, it follows that the monomorphism $i\colon G \hookrightarrow  U(G)\mal$ is a regular closed immersion; see~\cite[Chapter~II, Lemmas~3.2.5 and 3.2.6]{Messing}. 
Let $J$ be the ideal defining $G$ in $U(G)\mal$. Then Messing \cite[Chapter~II, Definition~3.2.8]{Messing} defines the cotangent complex of $G$ over $S$ as the complex of ${\mathcal{O}_G}$-modules concentrated in degrees $-1$ and $0$ 
\[
\CoCM{G/S}\colon \es 0\longto J/{J^2} \longto i^*(\Omega_ {U(G)\mal\!/S})\longto 0\,.
\]

\begin{Proposition}\label{PropLS-M}
The cotangent complexes $\CoCLS{G/S}$ and $\CoCM{G/S}$ are homotopically equivalent.
\end{Proposition}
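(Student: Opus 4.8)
The plan is to identify both two-term complexes, in the derived category $D^-(A)$ of $A=\Gamma(G,\CO_G)$-modules, with Illusie's cotangent complex $\CoCI{G/S}$, and then to pass from a derived-category isomorphism to a genuine homotopy equivalence using that both complexes consist of projective modules.

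The left-hand side is already handled: as recalled just before the statement, $\CoCLS{G/S}$ is quasi-isomorphic to $\CoCI{G/S}$ by \cite[Corollaire~III.3.2.7]{Illusie71}, since $A=R[\ulX]/I$ with $I$ generated by a regular sequence. Next I would establish the analogue for $\CoCM{G/S}$. All the geometric input has been assembled above: $U(G)\mal=\Spec B$ is affine and smooth over $S=\Spec R$, and the unit section $i\colon G\into U(G)\mal$ is a regular closed immersion with ideal $J$. Now Illusie's comparison of the cotangent complex with the naive truncated complex of a local complete intersection presentation \cite[Corollaire~III.3.2.7]{Illusie71} applies verbatim to a regular closed immersion into an arbitrary smooth $S$-scheme, not only into affine space: it shows that $[J/J^2\xrightarrow{d}i^*\Omega_{P/S}]$ — with $J/J^2$ in degree $-1$, finite locally free over $A$ because $i$ is regular, and $i^*\Omega_{P/S}$ in degree $0$, finite locally free over $A$ because $P/S$ is smooth — is a representative of $\CoCI{G/S}$; equivalently this follows from the transitivity triangle $i^*\CoCI{P/S}\to\CoCI{G/S}\to\CoCI{G/P}\xrightarrow{+1}$ together with $\CoCI{P/S}\simeq\Omega_{P/S}$ (degree $0$, $P/S$ smooth) and $\CoCI{G/P}\simeq(J/J^2)[1]$ (degree $-1$, $i$ regular). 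Taking $P=U(G)\mal$ gives $\CoCM{G/S}\simeq\CoCI{G/S}$ in $D^-(A)$.

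Combining the two quasi-isomorphisms, $\CoCLS{G/S}$ and $\CoCM{G/S}$ become isomorphic objects of $D^-(A)$. Both are bounded complexes of finite locally free, hence projective, $A$-modules, so this derived-category isomorphism is induced by an honest morphism of complexes; and any quasi-isomorphism of bounded-above complexes of projectives is a homotopy equivalence, since its mapping cone is a bounded-above acyclic complex of projectives and is therefore split exact and contractible. This yields the desired homotopy equivalence $\CoCLS{G/S}\simeq\CoCM{G/S}$.

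The step requiring the most care is the middle one: one must invoke the comparison theorem at the right level of generality — a regular closed immersion into a scheme that is merely smooth over $S$, not necessarily affine space — and check that $i\colon G\into U(G)\mal$ genuinely satisfies these hypotheses, which rests on the facts recalled above that $U(G)\mal$ is smooth over $S$ (infinitesimal lifting criterion) and that $i$ is a regular closed immersion (\cite[Chapter~II, Lemmas~3.2.5 and 3.2.6]{Messing}). As an alternative one could write down a completely explicit homotopy equivalence by choosing, locally on $S$, a surjection $R[\ulX]\onto A$ and a surjection from an affine space onto $U(G)\mal$, hence onto $A$, and comparing the two naive cotangent complexes through a morphism of presentations; but this amounts to the same two-out-of-three invariance of the naive cotangent complex under change of smooth presentation, and the derived-category argument is cleaner.
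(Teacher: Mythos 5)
Your argument is correct, but it is genuinely different from the one in the paper. You identify both two-term complexes with Illusie's cotangent complex $\CoCI{G/S}$ in the derived category — once via the affine-space presentation $A=R[\ulX]/I$ (which the paper also cites) and once via the regular closed immersion $i\colon G\into U(G)\mal$ into the smooth $S$-scheme $U(G)\mal$ — and then upgrade the resulting derived-category isomorphism to a homotopy equivalence using that both complexes are bounded complexes of finite locally free, hence projective, $A$-modules; all of these steps are sound, the key external input being Illusie's comparison for a regular immersion into a smooth scheme (or, equivalently, the transitivity triangle together with $\CoCI{G/P}\simeq(J/J^2)[1]$ and $\CoCI{P/S}\simeq\Omega^1_{P/S}$, where one must also know that the connecting map is the natural differential, which Illusie's statement provides). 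The paper instead never leaves the world of explicit complexes: it uses the factorization $G\into U(G)\mal\into U(G)\times U(G)$, where $U(G)\times U(G)$ is locally on $S$ an affine space $\Spec R[\ulX]$, writes down the two conormal exact sequences $0\to i^*(K/K^2)\to I/I^2\to J/J^2\to 0$ and $0\to i^*(K/K^2)\to\Omega^1_{R[\ulX]/R}\otimes A\to\Omega^1_{\olR/R}\otimes A\to 0$, chooses a splitting of the latter (possible by local freeness), and then constructs by hand chain maps $f,g$ with $gf=\id$ and an explicit homotopy $h^{(-1)}$ witnessing $\id-fg\simeq 0$. What your route buys is brevity and conceptual clarity, at the price of invoking the full cotangent-complex formalism and a nonconstructive passage from $D^-(A)$ back to chain level; what the paper's route buys is a completely elementary, self-contained and explicit homotopy equivalence that uses only the geometric facts already established about $U(G)\mal$ and linear algebra of locally free modules — essentially the "explicit comparison of presentations" you mention as an alternative in your last paragraph, carried out via the smooth intermediate $U(G)\mal$ rather than a second affine space.
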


\begin{proof}
The scheme $U(G)\times U(G)$ is locally on $S$ of the form $U(G)\times U(G) = \Spec R[\ulX]$ for a polynomial algebra and we can form the cotangent complex $\CoCLS{G/S}$ using $R[\ulX]$. We set $U(G)\mal = \Spec\olR$. Let $I$ be the ideal defining  $G$ in  $U(G)\times U(G)$ and let $K$ be the ideal defining $U(G)\mal$ in $U(G)\times U(G)$. Then $J=I/K$. By \cite[IV$_4$, Proposition~19.1.5]{EGA} 
the composition $G \xrightarrow{\es i\;} U(G)\mal \longto U(G)\times U(G)$ is a regular closed immersion and the canonical sequence 
\[
0 \longto i^\ast (K/K^2)  \longto I/{I^2}  \xrightarrow{\es g^{(-1)}\;} J/{J^2}  \longto 0
\]
is also exact on the left. We have denoted the third map by $g^{(-1)}$. Since $U(G)\mal$ is smooth over $S$, \cite[IV$_4$, Proposition~17.2.5]{EGA} yields an exact sequence of finite locally free $\olR$-modules 
\[
 0 \longto K/K^2 \longto \Omega^1_{R[\ulX]/R}\otimes_{R[\ulX]} \olR \longto \Omega^1_{\olR/R} \longto 0
\]
which we tensor with $A$ to get an exact sequence of $A$-modules
\[
 0 \longto i^\ast (K/K^2) \longto  \Omega^1_{R[\ulX]/R}\otimes_{R[\ulX]} A \xrightarrow{\es g^{(0)}\;} \Omega^1_{\olR/R}\otimes_{\olR} A \longto 0. 
\]
We have denoted the third map by $g^{(0)}$. Since $\Omega^1_{\olR/R}\otimes_{\olR}A$ is a finite locally free $A$-module we may choose a section $f^{(0)}$ of $g^{(0)}$. In the diagram with commuting solid arrows
\[ 
\xymatrix @C+2pc  { 
& 0\ar[d] & \quad 0\ar@<1ex>[d] \\
& i^\ast (K/K^2)\ar[d]^{\alpha^{(-1)}} \ar@{=}[r]& i^\ast (K/K^2)\ar@<1ex>[d]^{\alpha^{(0)}} \\
 0 \ar[r] & I/{I^2} \ar[r]_{d^{(-1)}\qquad} \ar[d]^{g^{(-1)}} \ar@<1ex>@{..>}[u]^{s^{(-1)}}  & \Omega^1_ {R[\ulX]/R} \otimes_{R[\ulX]} A \ar[r] \ar@{..>}[u]^{s^{(0)}} \ar@<1ex>[d]^{g^{(0)}}\ar@/_1.5pc/@{..>}[l]_{h^{(-1)}} & 0 \\
0 \ar[r] & J/{J^2} \ar[d] \ar[r]_{\tilde d^{(-1)}\qquad} \ar@<1ex>@{..>}[u]^{f^{(-1)}} & \Omega^1_ {\olR/R}\otimes_{\olR}A \ar@<1ex>[d] \ar[r] \ar@{..>}[u]^{f^{(0)}} & 0 \\
& 0 & \quad 0 
}
\]
we define the section $s^{(0)}$ of $\alpha^{(0)}$ by $\id-f^{(0)}g^{(0)}=\alpha^{(0)}s^{(0)}$. Then $s^{(-1)}:=s^{(0)}d^{(-1)}$ satisfies $s^{(-1)}\alpha^{(-1)}=s^{(0)}d^{(-1)}\alpha^{(-1)}=s^{(0)}\alpha^{(0)}=\id_{i^\ast(K/K^2)}$. We define the section $f^{(-1)}$ of $g^{(-1)}$ by $\id-\alpha^{(-1)}s^{(-1)}=f^{(-1)}g^{(-1)}$. Then $d^{(-1)}f^{(-1)}g^{(-1)}=d^{(-1)}(\id-\alpha^{(-1)}s^{(-1)})=d^{(-1)}-\alpha^{(0)}s^{(0)}d^{(-1)}=f^{(0)}g^{(0)}d^{(-1)}=f^{(0)}\tilde d^{(-1)}g^{(-1)}$ and hence $d^{(-1)}f^{(-1)}=f^{(0)}\tilde d^{(-1)}$. This means that we obtain homomorphisms of complexes
\[ 
\xymatrix{ \CoCLS{G/S}\colon \ar[d]^g & 0 \ar[r] & I/{I^2} \ar[r]\ar[d]^{g^{(-1)}}    & \Omega^1_ {R[\ulX]/R} \otimes_{R[\ulX]} A \ar[r]\ar[d]^{g^{(0)}} & 0 \\
\CoCM{G/S}\colon \ar@<1ex>[u]^f & 0 \ar[r] & J/{J^2} \ar[r]\ar@<1ex>[u]^{f^{(-1)}} & \Omega^1_ {\olR/R}\otimes_{\olR}A \ar[r]\ar@<1ex>[u]^{f^{(0)}} & 0
}
\]
with $gf=\id$. We define the homotopy $h^{(-1)}:=\alpha^{(-1)}s^{(0)}$. Then
\[
\begin{array}{rcccccl}
\id-f^{(0)}g^{(0)}&=&\alpha^{(0)}s^{(0)}&=&d^{(-1)}\alpha^{(-1)}s^{(0)}&=&d^{(-1)}h^{(-1)} \qquad \text{and} \\[1mm]
\id-f^{(-1)}g^{(-1)}&=&\alpha^{(-1)}s^{(-1)}&=&\alpha^{(-1)}s^{(0)}d^{(-1)}&=&h^{(-1)}d^{(-1)}\,.
\end{array}
\]
This proves that $f$ and $g$ form a homotopy equivalence between $\CoCLS{G/S}$ and $\CoCM{G/S}$.
\end{proof}
\end{appendix}


\vfill

\begin{minipage}[t]{0.5\linewidth}
\noindent
Urs Hartl\\
Universit\"at M\"unster\\
Mathematisches Institut \\
Einsteinstr.~62\\
D -- 48149 M\"unster
\\ Germany
\\[1mm]
\href{http://www.math.uni-muenster.de/u/urs.hartl/index.html.en}{www.math.uni-muenster.de/u/urs.hartl/}
\end{minipage}
\begin{minipage}[t]{0.45\linewidth}
\noindent
Rajneesh Kumar Singh\\
Ramakrishna Vivekananda University\\
PO Belur Math, Dist Howrah \\
711202, West Bengal\\
India
\\[1mm]
\end{minipage}


\begin{thebibliography}{SGA 3}
\addcontentsline{toc}{section}{References}

\bibitem[EGA]{EGA} A.~Grothendieck: \emph{{\'E}lements de G{\'e}o\-m{\'e}trie Alg{\'e}\-brique}, Publ.\ Math.\ IHES {\bfseries 4}, {\bfseries 8}, {\bfseries 11}, {\bfseries 17}, {\bfseries 20}, {\bfseries 24}, {\bfseries 28}, {\bfseries 32}, Bures-Sur-Yvette, 1960--1967; see also Grundlehren {\bfseries 166}, Springer-Verlag, Berlin etc.\ 1971; also available at \href{http://www.numdam.org/numdam-bin/recherche?au=Grothendieck}{http:/\!/www.numdam.org/numdam-bin/recherche?au=Grothendieck}.

\bibitem[SGA 3]{SGA3} M.~Demazure, A.~Grothendieck: \emph{SGA 3: Sch\'emas en Groupes I, II, III}, LNM {\bfseries 151}, {\bfseries 152}, {\bfseries 153}, Springer-Verlag, Berlin etc.\ 1970; also available at \href{http://library.msri.org/books/sga/sga/pdf/}{http:/\!/library.msri.org/books/sga/} or reedited on \href{http://webusers.imj-prg.fr/~patrick.polo/SGA3/}{http:/\!/webusers.imj-prg.fr/$\sim$patrick.polo/SGA3/}.

\bibitem[SGA 6]{SGA6} P.~Berthelot, A.~Grothendieck, L.~Illusie: \emph{SGA 6: Th\'eorie des intersections et th\'eor\`eme de Riemann-Roch}, Lecture Notes in Mathematics {\bfseries 225}, Springer-Verlag, Berlin-New York, 1971; also available at \href{http://library.msri.org/books/sga/sga/pdf/sga6.pdf}{http:/\!/library.msri.org/books/sga/}.

\bibitem[SGA 7]{SGA7} P.~Deligne, A.~Grothendieck, et al.: \emph{SGA 7: Groupes de monodromie en g\'eom\'etrie alg\'ebrique}, LNM {\bfseries 288}, Springer, Berlin-Heidelberg 1972; also available at \href{http://library.msri.org/books/sga/sga/pdf/}{http:/\!/library.msri.org/books/sga/}.

\bibitem[Abr06]{Abrashkin} V.~Abrashkin: \emph{Galois modules arising from Faltings's strict modules}, Compos.\ Math.\ {\bfseries 142} (2006), no.~4, 867--888; also available as \href{http://arxiv.org/abs/math/0403542}{arXiv:math/0403542}.

\bibitem[And86]{Anderson} G.~Anderson: \emph{$t$-Motives}, Duke Math.~J.\ {\bfseries 53} (1986), 457--502. 

\bibitem[And93]{Anderson93} G.~Anderson: \emph{On Tate Modules of Formal $t$-Modules}, Internat.\ Math.\ Res.\ Notices {\bfseries 2} (1993), 41--52. 

\bibitem[AH14]{AH_Local} E.~Arasteh Rad, U.~Hartl: \emph{Local $\BP$-shtukas and their relation to global $\FG$-shtukas}, Muenster J.~Math {\bfseries 7} (2014), 623--670; available at \href{http://dx.doi.org/10.17879/58269757072}{http:/\!/miami.uni-muenster.de}.


\bibitem[BH09]{BH2} M.~Bornhofen, U.~Hartl: \emph{Pure Anderson Motives over Finite Fields}, J.\ Number Th.\ {\bfseries 129}, n.~2 (2009), 247-283; also available as \href{http://arxiv.org/abs/0709.2815}{arXiv:math.NT/0709.2815}.

\bibitem[BLR90]{BLR} S.~Bosch, W.~L\"utkebohmert, M.~Raynaud: \emph{N\'eron models}, Ergebnisse der Mathematik und ihrer Grenzgebiete (3) {\bfseries 21}, Springer-Verlag, Berlin, 1990. 

\bibitem[Bou61]{BourbakiAlgCom} N.~Bourbaki: \emph{\'El\'ements de Math\'ematique, Alg\`ebre Commutative}, Hermann, Paris 1961. 



\bibitem[CS86]{Cornell-Silverman} G.~Cornell, J.~Silverman, eds.: \emph{Arithmetic geometry}, Springer-Verlag, New York, 1986. 

\bibitem[CSS97]{FLT} G.~Cornell, J.~Silverman, G.~Stevens, eds.: \emph{Modular forms and Fermat's last theorem}, Springer-Verlag, New York, 1997. 

\bibitem[Dri74]{Drinfeld} V.G.~Drinfeld: \emph{Elliptic Modules}, Math.\ USSR-Sb.\ {\bfseries 23} (1974), 561--592. 

\bibitem[Dri76]{Drinfeld76} V.G.~Drinfeld: \emph{Coverings of $p$-adic symmetric domains}, Funct.\ Anal.\ Appl.\ {\bfseries 10} (1976), 107--115. 

\bibitem[Dri77]{Drinfeld77} V.G.~Drinfeld: \emph{A proof of Langlands' global conjecture for ${\rm GL}(2)$ over a function field}, Funct.\ Anal.\ Appl.\ {\bfseries 11} (1977), no.~3, 223--225. 

\bibitem[Dri87]{Drinfeld87} V.G.~Drinfeld: \emph{Moduli variety of $F$-sheaves}, Funct.\ Anal.\ Appl.\ {\bfseries 21} (1987), no.~2, 107--122. 

\bibitem[Eis95]{Eisenbud} D.~Eisenbud: \emph{Commutative Algebra with a View Toward Algebraic Geometry}, GTM {\bfseries 150}, Springer-Verlag, Berlin etc.\ 1995. 

\bibitem[Fal83]{Faltings} G.~Faltings: \emph{Endlichkeitss{\"a}tze f{\"u}r abelsche Variet{\"a}ten {\"u}ber Zahlk{\"o}rpern}, Invent.\ Math.\ {\bfseries 73} (1983), 349--366. 

\bibitem[Fal02]{Faltings02} G.~Faltings: \emph{Group schemes with strict $\CO$-action}, Mosc.\ Math.~J.\ {\bfseries 2} (2002), no.~2, 249--279. 


\bibitem[Gen96]{Genestier} A.~Genestier: \emph{Espaces sym{\'e}triques de Drinfeld\/}, Ast{\'e}risque {\bfseries 234}, Soc.\ Math.\ France, Paris 1996. 

\bibitem[GL11]{GL} A.~Genestier, V.~Lafforgue: \emph{Th\'eorie de Fontaine en \'egales charact\'eristiques}, Ann.\ Sci.\ \'Ecole Norm.\ Sup\'er.\ {\bfseries 44} (2011), no.~2, 263--360; also available at \href{http://www.math.jussieu.fr/~vlafforg/fontaine.pdf}{http:/\!/www.math.jussieu.fr/$\sim$vlafforg/}.

\bibitem[HT01]{Harris-Taylor} M.~Harris, R.~Taylor: \emph{The geometry and cohomology of some simple Shimura varieties}, Annals of Mathematics Studies {\bfseries 151}, Princeton University Press, Princeton, NJ, 2001. 

\bibitem[Har05]{HartlAbSh} U.~Hartl: \emph{Uniformizing the Stacks of Abelian Sheaves}, in Number Fields and Function fields - Two Parallel Worlds, Papers from the 4th Conference held on Texel Island, April 2004, Progress in Math.\ {\bfseries 239}, Birkh\"auser-Verlag, Basel 2005, pp.~167--222; also available as \href{http://arxiv.org/abs/math/0409341}{arXiv:math.NT/0409341}.

\bibitem[Har09]{HartlDict} U.~Hartl: \emph{A Dictionary between Fontaine-Theory and its Analogue in Equal Characteristic}, J.\ Number Th.\ {\bfseries 129} (2009), 1734--1757; also available as \href{http://arxiv.org/abs/math/0607182}{arXiv:math.NT/0607182}.

\bibitem[Har11]{HartlPSp} U.~Hartl: \emph{Period Spaces for Hodge Structures in Equal Characteristic}, Annals of Math. {\bfseries 173}, n.~3 (2011), 1241--1358; also available as \href{http://arxiv.org/abs/math/0511686}{arXiv:math.NT/0511686}.

\bibitem[Har19]{HartlIsog} U.~Hartl: \emph{Isogenies of abelian Anderson $A$-modules and $A$-motives}, to appear in Annali della Scuola Normale Superiore di Pisa, Classe di Scienze (2019); also available as \href{http://arxiv.org/abs/1706.06807}{arXiv:math/1706.06807}
.

\bibitem[HH16]{HartlHuesken} U.~Hartl, S.~H\"usken: \emph{A criterion for good reduction of Drinfeld modules and Anderson motives in terms of local shtukas}, to appear in  Ann.\ Sc.\ Norm.\ Super.\ Pisa Cl.\ Sci.~(5), 2014; also available as \href{http://arxiv.org/abs/1304.6851}{arxiv:math/1304.6851}.

\bibitem[HJ19]{HartlJuschka} U.~Hartl, A.-K.~Juschka: \emph{Pink's theory of Hodge structures and the Hodge conjecture over function fields}, Proceedings of the conference on ``$t$-motives: Hodge structures, transcendence and other motivic aspects'', BIRS, Banff, Canada 2009, eds.\ G.~B\"ockle, D.~Goss, U.~Hartl, M.~Papanikolas, EMS 2019; also available as \href{http://arxiv.org/abs/1607.01412}{arxiv:math/1607.01412}.

\bibitem[HK19]{HartlKim} U.~Hartl, W.~Kim: \emph{Local Shtukas, Hodge-Pink Structures and Galois Representations}, Proceedings of the conference on ``$t$-motives: Hodge structures, transcendence and other motivic aspects'', BIRS, Banff, Canada 2009, eds.\ G.~Böckle, D.~Goss, U.~Hartl, M.~Papanikolas, EMS 2019; also available as \href{http://arxiv.org/abs/1512.05893}{arXiv:1512.05893}.

\bibitem[HS19]{HartlSingh} U.~Hartl, R.K.~Singh: \emph{Local Shtukas and Divisible Local Anderson Modules}, Canadian Journal of Mathematics; \href{http://dx.doi.org/10.4153/CJM-2018-016-2}{http:/\!/dx.doi.org/10.4153/CJM-2018-016-2}.

\bibitem[HV11]{HV1} U.~Hartl, E.~Viehmann: \emph{The Newton stratification on deformations of local $G$-shtukas}, J.~reine angew.\ Math.\ (Crelle) {\bfseries 656} (2011), 87--129; also available as \href{http://arxiv.org/abs/0810.0821}{arXiv:0810.0821}.

\bibitem[Har77]{Hartshorne} R.~Hartshorne: \emph{Algebraic Geometry}, GTM {\bfseries 52}, Springer-Verlag, Berlin etc.\ 1977. 

\bibitem[Ill71]{Illusie71} L.~Illusie: \emph{Complexe cotangent et d\'eformations I}, Lecture Notes in Mathematics {\bfseries 239}, Springer-Verlag, Berlin-New York, 1971. 

\bibitem[Ill72]{Illusie72} L.~Illusie: \emph{Complexe cotangent et d\'eformations II}, Lecture Notes in Mathematics {\bfseries 283}, Springer-Verlag, Berlin-New York, 1972. 


\bibitem[Kim09]{Kim} W.~Kim: \emph{Galois deformation theory for norm fields and its arithmetic applications}, PhD-Thesis, University of Michigan, June 3, 2009; available at \href{https://deepblue.lib.umich.edu/handle/2027.42/63878}{https:/\!/deepblue.lib.umich.edu/handle/2027.42/63878}.

\bibitem[Laf02]{Lafforgue02} L.\ Lafforgue, \emph{Chtoucas de Drinfeld et correspondance de Langlands}, Invent.\ Math.\ {\bfseries 147} (2002), 1--241; also available at \href{http://www.ihes.fr/~lafforgue/math/fulltext.pdf}{http:/\!/www.ihes.fr/$\sim$lafforgue/}.

\bibitem[Laf18]{Lafforgue18} V.~Lafforgue: \emph{Chtoucas pour les groupes r\'eductifs et param\'etrisation de Langlands globale}, J.~Amer.\ Math.\ Soc.\ {\bfseries 31} (2018), no.~3, 719--891; also available as \href{http://arxiv.org/abs/1209.5352}{arXiv:math/1209.5352}.

\bibitem[Lau96]{Laumon} G.~Laumon: \emph{Cohomology of Drinfeld Modular Varieties I\/}, Cambridge Studies in Advanced Mathematics {\bfseries 41}, Cambridge University Press, Cambridge, 1996. 

\bibitem[LRS93]{LRS} G.~Laumon, M.~Rapoport, U.~Stuhler: \emph{$\mathscr{D}$-elliptic sheaves and the Langlands correspondence}, Invent.\ Math.\ {\bfseries 113} (1993), 217--338; also available at \href{http://www.math.uni-bonn.de/ag/alggeom/preprints/Dellipticsheaves.pdf}{http:/\!/www.math.uni-bonn.de/ag/alggeom/}.

\bibitem[LS67]{LS} S.~Lichtenbaum, M.~Schlessinger: \emph{The cotangent complex of a morphism}, Trans.\ Amer.\ Math.\ Soc.\ {\bfseries 128} (1967) 41--70; available at \href{http://www.ams.org/journals/tran/1967-128-01/S0002-9947-1967-0209339-1}{http:/\!/www.ams.org/journals/tran/1967-128-01/S0002-9947-1967-0209339-1}. 

\bibitem[Mes72]{Messing} W.~Messing: \emph{The Crystals Associated to Barsotti-Tate Groups}, LNM {\bfseries 264}, Springer-Verlag, Berlin etc.\ 1972. 

\bibitem[Pog17]{Poguntke17} T.~Poguntke: \emph{Group schemes with $\BF_q$-action}, Bull.\ Soc.\ Math.\ France {\bfseries 145} (2017), no.~2, 345--380; also available as \href{http://arxiv.org/abs/1502.02150}{arXiv:1502.02150}.

\bibitem[Qui73]{Quillen} D.~Quillen: \emph{Higher algebraic K-theory I}, in Algebraic K-Theory, I: Higher $K$-theories (Proc.\ Conf., Battelle Memorial Inst., Seattle, Wash., 1972), pp.~85--147, LNM {\bfseries 341}, Springer-Verlag, Berlin 1973.  

\bibitem[Ros03]{Rosen} M.~Rosen: \emph{Formal Drinfeld modules}, J.~Number Theory {\bfseries 103}  (2003), 234--256. 

\bibitem[Rot09]{Rotman} J.~Rotman: \emph{An introduction to homological algebra}, Second edition, Springer, New York, 2009. 

\bibitem[Tag93]{Taguchi93} Y.~Taguchi: \emph{Semi-simplicity of the Galois Representations Attached to Drinfeld Modules over Fields of ``Infinite Characteristics''}, J.~Number Theory {\bfseries 44} (1993), no.~3, 292--314. 

\bibitem[Tag95]{Taguchi95} Y.~Taguchi: \emph{A duality for finite $t$-modules}, J.~Math.\ Sci.\ Univ.\ Tokyo {\bfseries 2} (1995), 563--588; also available at \href{http://www2.math.kyushu-u.ac.jp/~taguchi/bib/tmod.pdf}{http:/\!/www2.math.kyushu-u.ac.jp/$\sim$taguchi/bib/}.

\bibitem[TW96]{TW} Y.~Taguchi, D.~Wan: \emph{$L$-functions of $\phi$-sheaves and Drinfeld modules}, J.~Amer.\ Math.\ Soc.\ {\bfseries 9} (1996), no.~3, 755--781; also available at \href{http://www2.math.kyushu-u.ac.jp/~taguchi/bib/lfunction.pdf}{http:/\!/www2.math.kyushu-u.ac.jp/$\sim$taguchi/bib/}.

\bibitem[Tat66]{Tate66} J.~Tate: $p$-Divisible Groups, \emph{Proc.\ Conf.\ Local Fields (Driebergen, 1966)},  pp.~158--183, Springer-Verlag, Berlin 1967. 

\bibitem[TW95]{Taylor-Wiles} R.~Taylor, A.~Wiles: \emph{Ring-theoretic properties of certain Hecke algebras}, Ann.\ of Math.\ (2) {\bfseries 141} (1995), no.~3, 553--572; available at \href{https://www.jstor.org/stable/pdf/2118560.pdf}{https:/\!/www.jstor.org/stable/pdf/2118560.pdf}.

\bibitem[Wil95]{Wiles} A.~Wiles: \emph{Modular elliptic curves and Fermat's last theorem}, Ann.\ of Math.\ (2) {\bfseries 141} (1995), no.~3, 443--551; available at \href{https://www.jstor.org/stable/pdf/2118559.pdf}{https:/\!/www.jstor.org/stable/pdf/2118559.pdf}.

\end{thebibliography}
\end{document}